\newcommand{\no}[1]{}
\providecommand{\abs}[1]{\lvert #1\rvert}
\DeclareMathOperator{\tr}{tr}
\DeclareMathOperator{\Fr}{Fr}
\DeclareMathOperator{\Gr}{Gr}
\DeclareMathOperator{\ord}{ord}
\DeclareMathOperator{\id}{id}
\DeclareMathOperator{\pr}{pr}
\DeclareMathOperator{\GKdim}{GK dim}
\newcommand{\qeq}{{\,\, \stackrel{(q)}{=}\, \,}}
\definecolor{ForestGreen}{RGB}{34,139,34}
\definecolor{dartmouthgreen}{rgb}{0.05, 0.5, 0.06}
\newcommand{\orange}[1]{{\color{orange}#1}}
\newcommand{\blue}[1]{{\color{blue}#1}}
\definecolor{darkviolet}{rgb}{0.58, 0.0, 0.83}
\newtheorem{theorem}{Theorem}[section]
\newtheorem{thm}{Theorem}
\newtheorem{lemma}[theorem]{Lemma}
\newtheorem{corrr}[thm]{Corollary}
\newtheorem{proposition}[theorem]{Proposition}
\newtheorem{conjecture}{Conjecture}
\newtheorem{question}{Question}
\newtheorem{definition}[theorem]{Definition}
\theoremstyle{definition}
\newtheorem{remark}[theorem]{Remark}
\newtheorem{example}[theorem]{Example}
\newtheorem{corollary}[theorem]{Corollary}
\newcommand{\bcon}{\begin{conjecture}}
\newcommand{\econ}{\end{conjecture}}
\newcommand{\bcor}{\begin{corollary}}
\newcommand{\ecor}{\end{corollary}}
\newcommand{\bdf}{\begin{definition}}
\newcommand{\edf}{\end{definition}}
\newcommand{\benu}{\begin{enumerate}}
\newcommand{\eenu}{\end{enumerate}}
\newcommand{\beq}{\begin{equation}}
\newcommand{\eeq}{\end{equation}}
\newcommand{\bexa}{\begin{example}}
\newcommand{\eexa}{\end{example}}
\newcommand{\bexe}{\begin{exercise}}
\newcommand{\eexe}{\end{exercise}}
\newcommand{\bfac}{\begin{fact}}
\newcommand{\efac}{\end{fact}}
\newcommand{\bite}{\begin{itemize}}
\newcommand{\eite}{\end{itemize}}
\newcommand{\blem}{\begin{lemma}}
\newcommand{\elem}{\end{lemma}}
\newcommand{\bmat}{\begin{pmatrix}}
\newcommand{\emat}{\end{pmatrix}}
\newcommand{\bprb}{\begin{problem}}
\newcommand{\eprb}{\end{problem}}
\newcommand{\bpro}{\begin{proposition}}
\newcommand{\epro}{\end{proposition}}
\newcommand{\bque}{\begin{question}}
\newcommand{\eque}{\end{question}}
\newcommand{\brem}{\begin{remark}}
\newcommand{\erem}{\end{remark}}
\newcommand{\bthm}{\begin{theorem}}
\newcommand{\ethm}{\end{theorem}}
\newcommand{\bpr}{\begin{proof}}
\newcommand{\epr}{\end{proof}}
\newcommand{\ignore}[1]{}
\newcommand{\xra}{\xrightarrow}
\def\onto{\twoheadrightarrow}
\newcommand{\cC}{\mathcal{C}}
\newcommand{\bm}{\mathbf{m}}
\def\BC{\mathbb C}
\def\BN{\mathbb N}
\def\BZ{\mathbb Z}
\def\BR{\mathbb R}
\def\cA{\mathcal A}
\def\cD{\mathcal D}
\def\Pd{\cP_\partial}
\def\Dd{{\Delta_\partial}}
\def\ft{\mathfrak t}
\def\la{\langle}
\def\ra{\rangle}
\def\cM{\mathcal M}
\def\tQ{\tilde Q}
\def\cS{\mathscr S}
\def\ot{\otimes}
\def\cE{\mathcal E}
\def\cF{\mathcal F}
\def\bk{{\mathbf k}}
\def\bn{{\mathbf n}}
\def\cP{\mathcal P}
\def\tD{\tilde \Delta}
\def\Id{\mathrm{Id}}
\def\fS{\mathfrak S}
\def\bfS{\overline{\fS}}
\def\pfS{\partial \fS}
\def\cY{\mathcal Y}
\def\embed{\hookrightarrow}
\def\sX{\mathfrak X}
\newcommand{\al}{\alpha}
\newcommand{\fm}{\mathfrak m}
 \def\bm{\mathbf{m}}
 \def\cC{\mathcal C}
\def\bl{{\mathbf{l}} }
\def\tr{\mathrm{tr}}
\def\uSS{{\underline{\cS}(\fS)}}
\def\uSSR{{\underline{\cS}(\fS;\cR)}}
\def\cI{{\mathcal I}}
\def\uS{{\underline{\cS}}}
\def\SSp{\cS^+(\fS)}
\def\SSpR{\cS^+(\fS;\cR)}
\def\al{\alpha}
\def\be { \begin{equation} }
\def\ee { \end{equation} }
\def\bcS{\overline{\mathscr{S}} }
\def\bD{{\bar \Delta }}
\def\bTheta{\overline \Theta  }
\def\bt{ {\mathbf t}}
\def\bT{\mathbb T}
\def\lt{\mathrm{lt}}
\def\nc{\newcommand}
\def\Fr{\mathrm{Fr}}
\def\Weyl{{\mathrm{Weyl}}}
                  \nc\FI[2]{\begin{figure}
    \begin{center}\input{#1.pstex_t}\end{center}
    \caption{#2}
    \lbl{#1}
  \end{figure}}
\nc\FIG[3]{\begin{figure}
    \includegraphics[#3]{#1.eps}
    \caption{#2}
    \lbl{fig:#1}
    \end{figure}}
\nc\FF[3]{\begin{figure}
    \includegraphics[#3]{#1.eps}
    \caption{#2}
    \lbl{#1}
    \end{figure}}
    \nc\FIGc[3]{\begin{figure}[htpb]
    \includegraphics[height=#3]{#1.eps}
    \caption{#2}
    \label{fig:#1}
    \end{figure}}
    \nc\FIGjpg[3]{\begin{figure}[htpb]
    \includegraphics[height=#3]{draws/#1.jpg}
    \caption{#2}
    \label{fig:#1}
    \end{figure}}
    \nc\FIGh[3]{\begin{figure}[htpb]
    \includegraphics[height=#3]{#1.eps}
    \caption{#2}
    \lbl{fig:#1}
    \end{figure}}
\def\ord{\mathrm{ord}}
     \def\tr{\mathrm{tr}}
\def\lt{\mathrm{lt}}
\def\cT{\mathcal T}
\def\cR{{\mathcal R}}
\def\BC{{\mathbb C}}
\def\Do{{\mathring \Delta}}
\def\bQ{{\overline{ \mathsf Q }}}
\def\hDd{{\hat \Delta_\partial}}
\def\DD{{ \tilde \Delta}  }
\def\mQ{{\mathsf Q}}
\def\bsX{{\overline{\sX}}}
\def\xar{\xrightarrow}
\def\EEE{{\mathsf E}}
\def\barm{{\bar m}}
\def\tmQ{{\tilde{\mQ}}}
\def\Cut{{\mathsf{Cut}}}
\def\stno{\no}
\newcommand\incl[2]{{\includegraphics[height=#1]{draws/#2.eps}}}
\def\Dbu{{ \ddot\Delta}}
\def\Pbu{{ \overline\mP}}
\def\Ad{{{\cA}^\diamond}}
\def\aaa{{\mathbold{\mathbbm a}}}
\def\kkk{{\mathbold{\mathbbm k}}}
\def\SSR{{\cS(\fS;\cR)}}
\title
{Degenerations of Skein Algebras and Quantum Traces}
\author{Wade Bloomquist}
\author
{Hiroaki Karuo}
\date{}
\author
{ Thang L\^{e}}
\begin{document}

\begin{abstract}

We introduce a joint generalization, called LRY skein algebras, of Kauffman bracket skein algebras (of surfaces) that encompasses both Roger-Yang skein algebras and stated skein algebras. We will show that, over an arbitrary ground ring which is a commutative domain, the LRY skein algebras are  domains and have degenerations (by filtrations) equal to monomial subalgebras of quantum tori.  
For surfaces without interior punctures, this integrality generalizes a result of Moon and Wong to the most general ground ring.
We also calculate the Gelfand-Kirillov dimension of LRY algebras and show they are Noetherian if the ground ring is. Moreover they are orderly finitely generated.

To study the LRY algebras and prove the above-mentioned results, we construct quantum traces, both the so-called $X$-version for all surfaces and also an $A$-version for a smaller class of surfaces. We also introduce a modified version of Dehn-Thurston coordinates for curves which are more suitable for the study of skein algebras as they pick up  the highest degree terms of products in certain natural filtrations.

\end{abstract}

\maketitle

\tableofcontents

\def\No{\mathring N}
\def\Tei{{Teichm\"uller} }
\def\RS{{ \cR }}
\def\SS{{  \mathscr{S}(\fS)}}
    
\newcommand{\relemp}{
\begin{tikzpicture}[scale=0.8,baseline=0.3cm]
\fill[gray!20!white] (0,0)rectangle(1,1);
\draw (1,0)--(1,1);
\end{tikzpicture}\,
}
\newcommand{\relconn}{
\begin{tikzpicture}[scale=0.8,baseline=0.3cm]
\fill[gray!20!white] (0,0)rectangle(1,1);
\draw (1,0)--(1,1);
\draw [very thick] (0,0.67)..controls(0.8,0.67) and (0.8,0.33)..(0,0.33);
\end{tikzpicture}\,
}
\newcommand{\relup}[2]{
\begin{tikzpicture}[scale=0.8,baseline=0.3cm]
\fill[gray!20!white] (0,0)rectangle(1,1);
\draw[-stealth] (1,0)--(1,1);
\draw[very thick] (0,0.67)--(1,0.67) (0,0.33)--(1,0.33);
\draw[inner sep=1pt,right] (1,0.67)node{\tiny #1} (1,0.33)node{\tiny #2};
\end{tikzpicture}
}
\newcommand{\reldown}[2]{
\begin{tikzpicture}[scale=0.8,baseline=0.3cm]
\fill[gray!20!white] (0,0)rectangle(1,1);
\draw[-stealth] (1,1)--(1,0);
\draw[very thick] (0,0.67)--(1,0.67) (0,0.33)--(1,0.33);
\draw[inner sep=1pt,right] (1,0.67)node{\tiny #1} (1,0.33)node{\tiny #2};
\end{tikzpicture}
}
\newcommand{\relarc}[2]{
\begin{tikzpicture}[scale=0.8,baseline=0.3cm]
\fill[gray!20!white] (0,0)rectangle(1,1);
\draw[-stealth] (1,0)--(1,1);
\draw[very thick] (1,0.67)..controls(0.2,0.67) and (0.2,0.33)..(1,0.33);
\draw[inner sep=1pt,right] (1,0.67)node{\tiny #1} (1,0.33)node{\tiny #2};
\end{tikzpicture}
}
    
\section{Introduction}

\subsection{What are the skein algebras studied in this paper?}

Roughly speaking,  the skein algebras we will study are closely related to the \Tei space of a punctured surface, where a puncture has either a ``holed" structure (a la Thurston and Fock) or a ``decorated" structure (a la Penner). When there is no interior decorated puncture this algebra recovers the stated skein algebra introduced by the third author \cite{Le:triangular}. On the partially opposite side, when there is no boundary and no holed interior punctures, this algebra recovers the Roger-Yang skein algebra \cite{RY}. In the  simplest case, when there is no boundary and no  decorated punctures, the algebra is the well known Kauffman bracket skein algebra.  Our results are new even for the ordinary Kauffman bracket skein algebra.

\subsection{The Kauffman Bracket Skein Algebra}

Let $\fS=\Sigma_g \setminus \cP$ where $\Sigma_g$ is the oriented closed surface of genus $g$ and $\cP$ is a finite collection of points.  Let the ground ring $\mathcal{R}$ be a commutative domain with a distinguished invertible element $q^{1/2}$. 
The Kauffman bracket skein algebra $\SSR$, introduced by Przytycki and Turaev \cite{Prz,Turaev}, is the  $\cR$-module freely spanned by isotopy classes of link diagrams on $\fS$ subject to the Kauffman bracket relations \cite{Kauffman}

\begin{align}
\label{eqSkein0}
\begin{tikzpicture}[scale=0.8,baseline=0.3cm]
\fill[gray!20!white] (-0.1,0)rectangle(1.1,1);
\begin{knot}[clip width=8,background color=gray!20!white]
\strand[very thick] (1,1)--(0,0);
\strand[very thick] (0,1)--(1,0);
\end{knot}
\end{tikzpicture}
&=q 
\begin{tikzpicture}[scale=0.8,baseline=0.3cm]
\fill[gray!20!white] (-0.1,0)rectangle(1.1,1);
\draw[very thick] (0,0)..controls (0.5,0.5)..(0,1);
\draw[very thick] (1,0)..controls (0.5,0.5)..(1,1);
\end{tikzpicture}
+q^{-1}
\begin{tikzpicture}[scale=0.8,baseline=0.3cm]
\fill[gray!20!white] (-0.1,0)rectangle(1.1,1);
\draw[very thick] (0,0)..controls (0.5,0.5)..(1,0);
\draw[very thick] (0,1)..controls (0.5,0.5)..(1,1);
\end{tikzpicture}\, , \quad 
\begin{tikzpicture}[scale=0.8,baseline=0.3cm]
\fill[gray!20!white] (0,0)rectangle(1,1);
\draw[very thick] (0.5,0.5)circle(0.3);
\end{tikzpicture}
=(-q^2 -q^{-2})
\begin{tikzpicture}[scale=0.8,baseline=0.3cm]
\fill[gray!20!white] (0,0)rectangle(1,1);
\end{tikzpicture}\, . 
\end{align}

The product is given by stacking. See Section \ref{secLRY} for details.

The skein algebra $\SSR$ and its analogs have played an important role in low dimensional topology and quantum algebra.  For this reason we want  to understand algebraic properties of the skein algebra $\SSR$, for example its representation theory.

Bonahon and Wong \cite{BW} proved that when $|\cP|\ge 1$, the skein algebra $\SSR$ can be embedded into a \emph{quantum torus} (the Chekhov-Fock algebra) by the  {\bf quantum trace map}
\[\tr: \SSR \embed \bT(Q;\cR).\]

 When $q=1$ the quantum trace  sends a curve to the $SL(2,\mathbb{R})$ trace  expressed in shear coordinates of Teichm\"uller space. 

 Here the quantum torus of an antisymmetric $r\times r$ integral matrix $Q$ is the $\cR$-algebra
\be \bT(Q;\cR):= \cR\la x_i^{\pm 1}, i=1, \dots, r\ra /(x_i x_j = q^{Q_{ij}} x_j x_i).
\label{eqQto}
\ee
Thus $\bT(Q;\cR)$ is the algebra of Laurent polynomials in the $r$ variables $x_i$ which might not commute but are $q$-commuting in the sense that $x_i x_j= q^{Q_{ij}} x_j x_i$. A quantum torus is a domain, and is the simplest type of noncommutative algebra.
The quantum trace is an essential tool used to understand the skein algebra algebraically, and opens possibilities to quantize Thurston's theory of hyperbolic surfaces to build hyperbolic topological field theory. 
 
The case $\cP=\emptyset$ is more difficult as there is no known embedding of $\SSR$ into a quantum torus except when $g=1$ \cite{FrG}. We have the following

\begin{thm} [Part of Theorem \ref{thmMain2}] Assume $\fS= \Sigma_g$, where $g \ge 2$,  and
 the ground ring $\mathcal{R}$ is a commutative domain with a distinguished invertible element $q^{1/2}$.  \label{thmSg}  Let $r=3g-3$.

(a) There exists an algebra $\mathbb{N}$-filtration of $\SSR$ such that 
the associated graded algebra embeds into a quantum torus $\bT(\tmQ;\cR)$, where $\tmQ$ is an integral $2r\times 2r$ antisymmetric matrix.

(b) There exists an algebra $(\BN\times \BZ)$-filtration of $\SSR$ such that  the associated graded algebra  is isomorphic to a monomial subalgebra $\bT(\tmQ, \Lambda;\cR)$ of $\bT(\tmQ;\cR)$, for a submonoid $\Lambda\subset \BZ^{2r}$.
\end{thm}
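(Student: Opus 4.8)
The plan is to build both filtrations from a pants decomposition together with the modified Dehn--Thurston coordinates introduced earlier, and to identify the associated graded algebra by transporting products into a quantum torus by means of the $X$-version of the quantum trace.

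Fix a pants decomposition $\cC=\{c_1,\dots,c_r\}$ of $\Sigma_g$, $r=3g-3$. The modified Dehn--Thurston construction gives a bijection $\mathrm{DT}$ from the set of isotopy classes of essential simple closed multicurves on $\Sigma_g$ onto an explicit submonoid $\Lambda\subset\BZ^{2r}$ that generates $\BZ^{2r}$ as a group, the pair of coordinates attached to $c_i$ recording the geometric intersection number with $c_i$ and the twist around $c_i$, the modification being tailored so as to behave well on the loci where an intersection number vanishes. Over the commutative domain $\cR$ the multicurves form an $\cR$-basis $\{b_x\}_{x\in\Lambda}$ of $\SSR$, so it is enough to pick an ordered monoid to filter by and to control the leading term of products $b_xb_y$.

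The heart of the argument is the identity: for all $x,y\in\Lambda$,
\[
b_x\,b_y \;=\; q^{\,\langle x,y\rangle}\,b_{x+y}\;+\;(\text{a combination of }b_z\text{ with }z\text{ strictly lower}),
\]
where $\langle\,\cdot\,,\,\cdot\,\rangle\colon\BZ^{2r}\times\BZ^{2r}\to\BZ$ is a fixed bi-additive form and ``lower'' refers to the order coming from the chosen filtration. To extract the leading term I would puncture $\Sigma_g$ at a point $p$, choose an ideal triangulation, and invoke the $X$-quantum trace constructed earlier, which embeds the LRY (stated) skein algebra of $\Sigma_g\setminus\{p\}$ into a Chekhov--Fock quantum torus $\bT(\mQ;\cR)$; filling in $p$ gives a surjection onto $\SSR$ whose kernel is concentrated in the lowest filtration level, so leading terms are unaffected by it. The modified Dehn--Thurston coordinates are built precisely so that, in the weight grading of $\bT(\mQ;\cR)$ dual to $\cC$, the quantum trace of $b_x$ has a \emph{unique} highest-weight monomial $Z^{\Psi(x)}$ with $\Psi$ injective and additive on $\Lambda$; hence $Z^{\Psi(x)}Z^{\Psi(y)}=q^{\langle x,y\rangle}Z^{\Psi(x+y)}$, and pulling this identity back through the injective quantum trace yields the displayed formula, with the highest term a single multicurve $b_{x+y}$ carrying the unit coefficient $q^{\langle x,y\rangle}$ (a unit because $q$ is invertible and $\cR$ is a domain, so no cancellation can occur).

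Granting the identity, both parts follow. For (b) I would take the $(\BN\times\BZ)$-filtration in which the $\BN$-component is the total geometric intersection $\sum_i m_i$ with $\cC$ and the $\BZ$-component is a weighted total twist; the identity shows $F_uF_v\subseteq F_{u+v}$ and that $\gr\SSR$ has $\cR$-basis $\{\bar b_x\}_{x\in\Lambda}$ with $\bar b_x\bar b_y=q^{\langle x,y\rangle}\bar b_{x+y}$, so $\bar b_x\mapsto Z^x$ identifies $\gr\SSR$ with the monomial subalgebra $\bT(\tmQ,\Lambda;\cR)$ of $\bT(\tmQ;\cR)$, where $\tmQ$ is the integral $2r\times 2r$ antisymmetric matrix of the alternation $(x,y)\mapsto\langle x,y\rangle-\langle y,x\rangle$ (well defined since $\Lambda$ generates $\BZ^{2r}$). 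For (a) the coarser $\BN$-filtration by $\sum_i m_i$ already suffices: $\gr\SSR$ again carries the $\cR$-basis $\{\bar b_x\}$, and the identity makes $\bar b_x\mapsto Z^x$ a well-defined injective algebra homomorphism $\gr\SSR\hookrightarrow\bT(\tmQ;\cR)$. I expect the main obstacle to lie exactly in the ``heart'': proving that the modified Dehn--Thurston coordinates genuinely select unique, additive highest-weight monomials of the quantum traces, which forces a careful simultaneous analysis of the modified coordinates, the $X$-quantum trace of the once-punctured surface, and the bookkeeping both for the filled puncture and for the degenerate strata $m_i=0$ (parallel copies of pants curves, which are $q$-central). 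By contrast the passage from $\BZ[q^{\pm1/2}]$ to an arbitrary commutative domain is essentially automatic, the only ring-theoretic input being that $q$ is a unit.
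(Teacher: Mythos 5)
Your overall scaffolding matches the paper's: a pants decomposition $\cC$, modified Dehn--Thurston coordinates giving a bijection $\nu\colon B(\fS)\to\Lambda\subset\BZ^{2r}$, the leading-term product identity $\mu(\bk)\mu(\bl)=q^{\frac12\la\bk,\bl\ra}\mu(\bk+\bl)+(\text{lower})$, and the passage from that identity to the two filtrations. Where you diverge --- and where the gap lies --- is in \emph{how} you propose to prove that identity.

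You plan to puncture $\Sigma_g$ at $p$, embed $\cS(\Sigma_g\setminus p;\cR)$ into a Chekhov--Fock torus via the $X$-quantum trace attached to an ideal triangulation, and then read off leading monomials ``in the weight grading of $\bT(\mQ;\cR)$ dual to $\cC$.'' This step does not go through as stated. The $X$-torus $\bsX(\fS,\Delta;\cR)$ is generated by variables indexed by \emph{edges of the triangulation} (shear coordinates); there is no natural grading on it ``dual to $\cC$,'' and the modified DT coordinates are built from the pants structure, not from a triangulation. Relating them would amount to a quantum change of variables from shear to Fenchel--Nielsen-type coordinates, and even at the classical level the $X$-trace of a multicurve is a large polynomial in shear coordinates whose top piece (for filtration by $\sum_{c\in\cC}I(\cdot,c)$) is far from a single monomial. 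There is no reason the leading part should be monomial in shear coordinates, and certainly no easy reason the exponent map should be additive. The paper develops entirely different machinery --- the boundary-simplified skein algebra $\uSS$ of a pair of pants, the $A$-version trace $\utr^A_\Delta$ of Theorem \ref{thmAtrace}, and the further composition into a pants-adapted quantum torus $\cY(\PPb_j;\cR)$ with explicit ``length'' generators $x_i$ and ``twist'' generators $u_i$ --- exactly because the $A$-side matches DT coordinates and the $X$-side does not. The modified DT coordinates were then reverse-engineered (see the remarks around Propositions \ref{rP3}--\ref{rP1} and the defining formulas for $\ddd_{\PP_j}$ in Subsection \ref{ssQtPj}) so that the dominant monomial of $\utr$ has exponent exactly $\nu(\al)$. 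That reverse-engineering is the content you are implicitly assuming already done for the $X$-trace, which is not the case.

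A secondary imprecision: the kernel of $\cS(\Sigma_g\setminus p;\cR)\onto\SSR$ is the two-sided ideal generated by $\ell_p+q^2+q^{-2}$, which is \emph{not} ``concentrated in the lowest filtration level.'' It meets every piece of the filtration by $\sum_{c\in\cC}I(\cdot,c)$: multiplying the generator by an arbitrary multicurve shows this. What is true --- and what you should have said --- is that $\SSR$ already has its own basis $B(\Sigma_g)$ of multicurves on the closed surface, so one can define the filtration directly on $\SSR$ and never pass through $\Sigma_g\setminus p$ at all. This is what the paper does, cutting the closed surface along $\cC$ into pairs of pants and working with $\uSS$ of each piece. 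Your secondary remarks (the form $\tmQ$ as symplectic double, the coarser $\BN$-filtration for part (a), the finer one for part (b), and the ground-ring change) are all in line with the paper, but they ride on an unproved and, in this form, unprovable ``heart.''
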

Here for a submonoid $\Lambda\subset \BZ^{2r}$, the monomial subalgebra $\bT(\tmQ, \Lambda)$ is the $\cR$-span of
 monomials $x_1^{k_1} \dots x_{2r}^{k_{2r}}$ in the presentation \eqref{eqQto} with $(k_1,\dots, k_{2r})\in \Lambda$.
 
Let us describe $\tmQ$ and $\Lambda$.  Choose a  pants decomposition  of $\Sigma_g$ and a dual graph $\Gamma$, which is a trivalent graph in $\fS$, see Section \ref{secDT} for full details. Then $\Gamma$ has $r$ edges $e_1,\dots, e_{r}$. Define the antisymmetric $r\times r$ matrix $\mQ$ by
\begin{align*}
&{\mQ}(i,j)=
\# \left(\begin{array}{c}\includegraphics[scale=0.17]{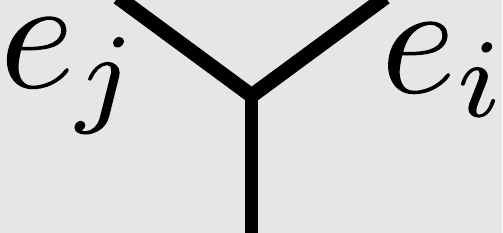}\end{array}\right)-\#\left(\begin{array}{c}\includegraphics[scale=0.17]{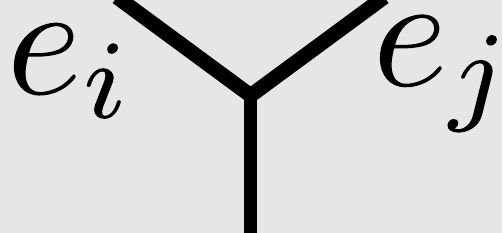}\end{array}\right).
\end{align*}

Here  the right hand side is the signed number of times a half-edge of $e_i$ meets a half-edge of $e_j$ at a vertex of $\Gamma$, where the sign is $+1$ if $e_i$ is clockwise to $e_j$, and $-1$ otherwise. Then $\tmQ$ is the {\bf symplectic double of $\mQ$}, defined by
\be 
\tmQ= \begin{bmatrix} \mQ &  \Id_{r} \\ -\Id_{r} & 0_{r}
\end{bmatrix}
\ee
where
$\Id_{r}$ and $0_{r}$ are respectively the  identity matrix and the  0 matrix of size $r$.  Additionally, $\Lambda$ is the submonoid of all possible Dehn-Thurston coordinates, see Section \ref{secDT}. w

\newcommand{\need}[1]{\orange {Need?: #1}}

\begin{remark} When we first announced our result,
    Detcherry and Santharoubane \cite{DS} also announced an embedding of $\cS(\Sigma_g)$ into a {\em localized} quantum torus (but not into a quantum torus). Neither Theorem \ref{thmSg} nor the result of \cite{DS} implies the other, and actually they complement each other. The techniques used in \cite{DS} are also quite different, and there the quantum torus is associated to the double of the trivial matrix.   {Our matrix $\mQ$ has a geometric meaning as it describes the symplectic structure of the \Tei space of a tubular neighborhood of the graph $\Gamma$.}

\end{remark}

\subsection{The Roger-Yang Skein Algebra}

\def\pbfS{\partial \bfS}
\def\tal{\tilde \al}
\def\Td{\cT^{\mathrm{dec}}}

\def\SP{(\Sigma, \cP)}
\def\SRY{\cS^{\mathrm{RY}}}
\def\SPT{\cS^{\mathrm{PT}}}
 
\def\SRYSR{\cS^{\mathrm{RY}}(\fS;\cR)}

Roger and Yang introduced the RY skein algebra $\SRYSR$ in an attempt to quantize the decorated \Tei space of a surface. This is in contrast to the above  $\SSR$ which is related to a quantization of the holed \Tei space (or enhanced \Tei space in the language of Bonahon and Wong).

In $\SRYSR$, where $\fS= \Sigma_g \setminus \cM$ with $|\cM| < \infty$, we allow not only link diagrams but also  arcs ending on points in $\cM$ with some height order, subject to the additional relations:
\begin{align}
&v \begin{array}{c}\includegraphics[scale=0.17]{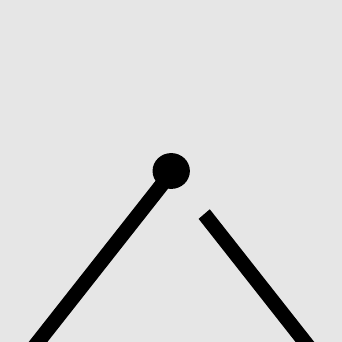}\end{array}=q^{1/2}\begin{array}{c}\includegraphics[scale=0.17]{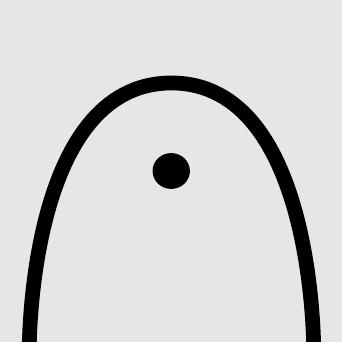}\end{array}+q^{-1/2}\begin{array}{c}\includegraphics[scale=0.17]{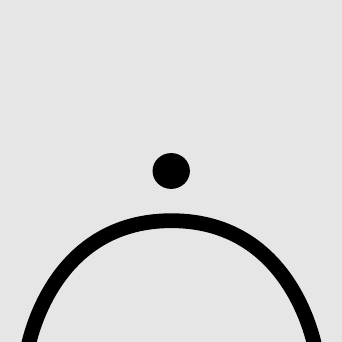}\end{array},\qquad  \begin{array}{c}\includegraphics[scale=0.17]{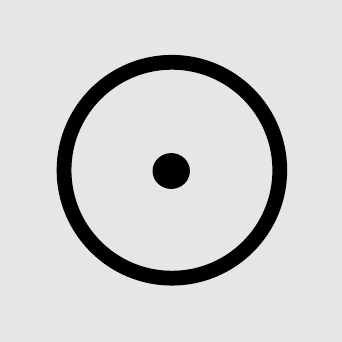}\end{array}=(q+q^{-1})\begin{array}{c}\includegraphics[scale=0.17]{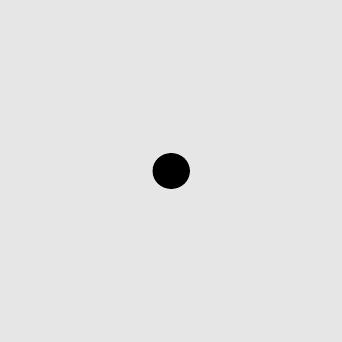}\end{array}. 
\label{eqEF}
\end{align}

Here we extend the ground ring to $\cR[ v^{\pm 1}, v \in \cM]$.  For details see Section \ref{secLRY}. One of our main results  is the following theorem about the structure of the Roger-Yang skein algebra.

\begin{thm} [See Theorem \ref{thmMain2}] \label{thmSRY} Assume $\fS= \Sigma_g \setminus \cM$,   {with $(g, |\cM|)\not \in \{ (1,0), (0,k) \mid k \le 3\} $}. Let  $\cR$ be a commutative domain with a distinguished invertible element $q^{1/2}$.

(a) There exists an algebra $\mathbb{N}$-filtration of $\SRYSR$ whose associated graded algebra embeds into a quantum torus $\bT(\tmQ;\cR)$, where $\tmQ$ is an integral $2r\times 2r$ antisymmetric matrix.

(b) The algebra $\SRYSR$ is a domain and is orderly finitely generated. Moreover $\SRYSR$ is Noetherian whenever $\cR$ is.

(c) There exists an algebra $(\BN\times \BZ^k)$-filtration of $\SRYSR$ such that  the associated graded algebra  is isomorphic to a monomial subalgebra of the quantum torus $\bT(\tmQ;\cR)$.

(d) The Gelfand-Kirillov dimension of $\SRY$ is $6g-6+3|\cM|$.
\end{thm}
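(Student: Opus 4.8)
The plan is to recognize $\SRYSR$ as a special case of an LRY skein algebra --- the one attached to $\fS=\Sigma_g\setminus\cM$ in which every puncture carries the decorated (Penner) structure and there is no boundary --- and then to run the general machinery of Theorem~\ref{thmMain2}. So the first step is to verify this identification from the definitions of Section~\ref{secLRY}, to observe that the hypothesis on $(g,|\cM|)$ is exactly what places $\fS$ in the scope of Theorem~\ref{thmMain2} (it guarantees a pants decomposition with a dual trivalent graph $\Gamma$, equivalently $\chi(\fS)<0$ and $\fS$ is not the thrice-punctured sphere), and to fix once and for all such a decomposition, the graph $\Gamma$ with $r$ edges, the antisymmetric matrix $\mQ$ of Section~\ref{secDT}, its symplectic double $\tmQ$, and the submonoid $\Lambda$ of realizable modified Dehn--Thurston coordinate vectors.

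For (a) and (c) I would build the filtration directly from the modified Dehn--Thurston coordinates. Every basis skein of $\SRYSR$ is a disjoint union of a multicurve and a family of arcs ending at punctures; assign to it the $\BN$-degree given by its total geometric intersection with the pants curves plus the number of arc-ends at punctures (for (a)), and the finer $(\BN\times\BZ^{|\cM|})$-degree that in addition records the signed twisting/winding data at each puncture together with the exponents of the decoration variables (for (c)). The heart of the matter is to prove, using the Kauffman and Roger--Yang relations \eqref{eqSkein0}, \eqref{eqEF}, that these are algebra filtrations and that in the associated graded the product of two basis skeins equals (a $q$-power times) their geometric superposition; this is exactly the defining feature of the new modified Dehn--Thurston coordinates, namely that they are additive under the skein product up to strictly lower order --- and exactly, with no lower-order correction, for the finer grading. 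Granting this, the $X$-version of the quantum trace (available for all surfaces, including those with decorated punctures) sends the leading term of a multicurve with coordinates $(m,t)$ to the corresponding Laurent monomial of the quantum torus $\bT(\tmQ;\cR)$, which yields an embedding of the coarse associated graded into $\bT(\tmQ;\cR)$ (part (a)) and an isomorphism of the fine associated graded onto the monomial subalgebra $\bT(\tmQ,\Lambda;\cR)$ (part (c)). Finally $\Lambda$, being cut out of a lattice by finitely many linear inequalities and congruences, is a finitely generated monoid by Gordan's lemma, so $\bT(\tmQ,\Lambda;\cR)$ is an affine $\cR$-algebra and lifting a finite generating set of $\Lambda$ to skeins gives an orderly finite generating set of $\SRYSR$.

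For (b) and (d) the remaining assertions are formal consequences. A monomial subalgebra of a quantum torus is a domain, so $\gr\SRYSR$ is a domain, and as the filtration is by a totally ordered monoid this forces $\SRYSR$ itself to be a domain. A quantum torus over a Noetherian commutative domain is Noetherian, and a finitely generated monomial subalgebra of it is again Noetherian (e.g.\ via a further filtration whose associated graded is an affine commutative monoid algebra, Noetherian by the Hilbert basis theorem), so $\gr\SRYSR$ and then $\SRYSR$ are Noetherian whenever $\cR$ is. For (d): since $\SRYSR$ is orderly finitely generated and $\gr\SRYSR$ is affine, $\GKdim\SRYSR=\GKdim\gr\SRYSR=\GKdim\bT(\tmQ,\Lambda;\cR)$, and because the $q$-twisting does not affect growth this equals the Gelfand--Kirillov (= Krull) dimension of the commutative monoid algebra $\cR[\Lambda]$, i.e.\ the rank of the subgroup $\langle\Lambda\rangle$. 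That rank I would compute by exhibiting explicit skeins realizing an independent pair (intersection number, twist) for each of the $3g-3+|\cM|$ pants curves --- contributing $2(3g-3+|\cM|)=6g-6+2|\cM|$ --- together with $|\cM|$ independent decoration coordinates at the punctures, while checking that the leading monomials of arcs add nothing new, since \eqref{eqEF} forces an arc's coordinate vector to be a ``half'' of a combination of curve and decoration coordinates already accounted for. This gives $\GKdim\SRYSR=6g-6+3|\cM|$, reassuringly the dimension of Penner's decorated Teichm\"uller space of $\Sigma_{g,|\cM|}$ and consistent with the $q=1$ semiclassical picture.

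The main obstacle is the single geometric--combinatorial input used everywhere above: that the modified Dehn--Thurston coordinates are additive (up to lower order, resp.\ exactly) under the skein product, including the delicate interaction of arcs with the decoration variables $v$ and with small loops encircling the punctures --- equivalently, that the $X$-quantum trace exists and is injective on the associated graded for surfaces carrying decorated punctures. Once that is in hand, parts (a)--(d) follow from standard facts about filtered algebras, quantum tori, and affine monoid algebras.
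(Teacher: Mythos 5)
There is a genuine gap at the step you call "the heart of the matter": you propose to drive the whole argument with the $X$-version of the quantum trace, asserting that it is "available for all surfaces, including those with decorated punctures." It is not. In the paper's setup, a decorated (Penner) puncture is realized as a \emph{circle} boundary component, so $\fS=\Sigma_g\setminus\cM$ in your case has no ideal points at all: $\cP=\emptyset$. A surface is triangulable only if every component has at least one puncture, so this $\fS$ admits no ideal triangulation, and Theorems~\ref{thm.embed3} and \ref{thm.embed3e} (the construction of $\tr_\Delta$ and the degeneration into $\bT(\mQ_\Delta,\Lambda_\Delta;\cR)$) simply do not apply. This is precisely why the paper introduces a separate $A$-version quantum trace for the Muller--Roger--Yang and boundary-simplified skein algebras (Theorems~\ref{thmAtrace}, \ref{thm.emb2}, \ref{thmbtr}), constructs it on the three basic DT pairs of pants, and patches those local maps along a pants decomposition to build the embedding $\phi:\Gr^F(\SSR)\hookrightarrow\cY(\fS,\cC,\Gamma;\cR)$ of Theorem~\ref{thmMain2}. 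Substituting the $X$-trace here is not a stylistic choice you can make; that trace does not exist for the surface in question.

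A second, more minor mismatch: you plan to build the filtrations directly on $\SRYSR$ and fold the ``exponents of the decoration variables'' into the finer degree, but the defining relation \eqref{eqEF} has the decoration variable $v$ on the arc-resolution side, and the natural identification of $\SRYSR$ with the LRY algebra involves the \emph{square roots} $m^{1/2}$: Proposition~\ref{rRYRY}(a) is an isomorphism $\SRY(\fS;\cR)\ot_{\cR_\fS}\cR'_\fS\cong\cS(\fS;\cR'_\fS)$, rescaling each diagram by $\prod m^{-|\al\cap m|/2}$, and $\SRYSR$ is then recovered as the degree-$0$ part of a $(\BZ/2)^\cM$-grading of this extension. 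The paper therefore proves Theorem~\ref{thmMain2} for the LRY algebra $\SSR$ first and then transfers (a)--(e) to $\SRYSR$ formally through Proposition~\ref{rRYRY}; your sketch never addresses how the half-integer decoration exponents are to be controlled if one works on $\SRYSR$ directly. The remaining parts of your plan --- Gordan's lemma for finite generation of $\Lambda$, lifting domain/Noetherian/GK-dimension facts from the associated graded, and the rank count $2(3g-3+|\cM|)+|\cM|=6g-6+3|\cM|$ with the extra $|\cM|$ coming from the decoration variables --- are correct and match the paper, but they all rest on having an embedding into a quantum torus at the graded level, which in this case must come from the $A$-trace and pants decomposition, not from the $X$-trace.
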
 

The matrix $\tmQ$ is determined by a  pants decomposition and its dual graph in a fashion similar to the case $\cP=\emptyset$.

One special case of (b) is 
\begin{corrr} The algebra $\SRY(\fS;\cR=\BC,q^{1/2}=1)$ is a domain.
\label{rIntgral0}
\end{corrr}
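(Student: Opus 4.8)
The plan is to obtain Corollary~\ref{rIntgral0} as the specialization $\cR=\BC$, $q^{1/2}=1$ of Theorem~\ref{thmSRY}(b) (equivalently of Theorem~\ref{thmMain2}): since $\BC$ is a commutative domain and $1$ is an invertible element of it, the hypotheses of that theorem are met, so once it is available there is nothing new to do. For orientation I would spell out the mechanism through which the general statement forces the domain property. By part~(a) of Theorem~\ref{thmSRY} there is a multiplicative, exhaustive algebra $\BN$-filtration $0=F_{-1}\subseteq F_0\subseteq F_1\subseteq\cdots$ of $\SRY(\fS;\BC)$ — the ambient ground ring here being $\BC[v^{\pm 1}, v\in\cM]$, again a commutative domain — whose associated graded algebra $\gr\SRY(\fS;\BC)$ embeds as a subalgebra of a quantum torus $\bT(\tmQ;\BC[v^{\pm 1},v\in\cM])$; at $q^{1/2}=1$ this quantum torus is merely a Laurent polynomial ring, but that specialization plays no role in the argument.

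From here two standard facts do the rest. First, a quantum torus over a commutative domain is a domain (it is an iterated skew-Laurent extension of the ground ring; alternatively one argues directly via the leading-exponent map to the monoid $\BZ^{2r}$), and any subalgebra of a domain is a domain; hence $\gr\SRY(\fS;\BC)$ is a domain. Second, if a ring $A$ carries a multiplicative exhaustive $\BN$-filtration with $F_{-1}=0$ and $\gr A$ is a domain, then $A$ is a domain: for nonzero $a,b\in A$ let $m=\deg a$, $n=\deg b$ be the minimal filtration levels and $\sigma(a)\in F_m/F_{m-1}$, $\sigma(b)\in F_n/F_{n-1}$ the corresponding nonzero symbols; multiplicativity gives that the image of $ab$ in $F_{m+n}/F_{m+n-1}$ equals $\sigma(a)\sigma(b)$, which is nonzero because $\gr A$ is a domain, so $ab\neq 0$. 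Applying this with $A=\SRY(\fS;\BC)$ yields the corollary for every $\fS$ in the topological range of Theorem~\ref{thmSRY}, in particular at $q^{1/2}=1$.

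Finally, a remark on scope and difficulty. If one wants the corollary literally for \emph{all} $\fS$, the finitely many surfaces excluded from Theorem~\ref{thmSRY} — the closed torus and the sphere with at most three marked points — must be handled by hand; these have small, explicitly presentable RY skein algebras (for instance $\SRY(\Sigma_1;\BC,1)\cong\BC[x,y,z]/(x^2+y^2+z^2-xyz-4)$, a domain because that cubic is irreducible, and the sphere cases can likewise be computed directly), so this costs essentially nothing. The key point is that the corollary itself presents no genuine obstacle: the whole weight rests on the construction, carried out in the proof of Theorem~\ref{thmSRY}, of the filtration together with the quantum-trace-type embedding of $\gr\SRY(\fS)$ into a quantum torus; checking that ``commutative domain with invertible $q^{1/2}$'' covers $(\BC,1)$ is immediate.
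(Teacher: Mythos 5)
Your proposal matches the paper's proof: the paper states Corollary~\ref{rIntgral0} as an immediate specialization of Theorem~\ref{thmSRY}(b) at $\cR=\BC$, $q^{1/2}=1$, and the underlying mechanism you spell out (embed the associated graded algebra of an $\BN$-filtration into a quantum torus via Theorem~\ref{thmMain2}, then lift the domain property across the filtration, which is Proposition~\ref{liftfacts}(2)(a)) is exactly the paper's route, with the exceptional small surfaces handled separately just as the paper does in its appendix. The only minor difference is that for $\Sigma_{1,0}$ you invoke the explicit $q=1$ cubic presentation, whereas the paper cites the Frohman--Gelca quantum-torus embedding valid for all $q$; both suffice here.
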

\def\Zq{{\BZ[q^{\pm 1/2}]}}
This fact is interesting because it is related to the origin of $\SRYSR$ as follows. Roger and Yang \cite{RY} introduced  relations \eqref{eqEF} specifically to encode Mondello's Poisson structure \cite{Mondello}  on the decorated Teichmuller space, where $v$  is the length of the horocycle about the corresponding puncture. They  showed that there is a Poisson algebra homomorphism 
\be \SRY(\fS;\cR=\BC, {q^{1/2}=1})\to \BC[\Td(\fS)],\label{eq.RY1} 
\ee
where $\BC[\Td(\fS)]$ is the ring of (complex valued) regular functions on $\Td(\fS)$. To have a meaningful theory of quantization, Roger and Yang conjectured that  the map \eqref{eq.RY1} is injective. This conjecture follows fairly easily from Corollary \ref{rIntgral0}, as observed in \cite{MW}, where  Corollary~\ref{rIntgral0} was  proved  under some restriction on the topology of the surface. When we announced our result, Moon and Wong \cite{MW2} also announced a proof of Corollary \ref{rIntgral0} for general surfaces.   { Since $\SRYSR$ is a free module over $\cR$, Corollary~\ref{rIntgral0} implies $\SRYSR$ is a domain when $\cR= \Zq$; however it does not imply integrality in general case. For example  part (b) of our theorem  implies the integrality in the important case when $\cR=\BC$ and  $q^{1/2}$ is a root of 1, which is not covered by \cite{MW2}. }

The {\bf orderly} finite generation in part (b) means there are $x_1, \dots, x_k\in \SRYSR$ such that the set  $\{ x_1^{n_1}\dots x_k^{n_k} \mid n_i \in \BN\}$ spans the algebra. This is stronger than  finite generation of $\SRYSR$, which was shown previously \cite{Wong}, and \cite{Bull} for  the ordinary skein algebra.   Pzrytycki and Sikora \cite{PS2} showed that $\cS(\Sigma_g;\cR)$ is a domain, and Noetherian whenever $\cR$ is Noetherian.

\def\SLRY{\cS^\mathrm{LRY}}
\def\LRYSR{\cS^\mathrm{LRY}(\fS;\cR)}

\subsection{LRY Skein Algebras}  One efficient way to study skein algebras is to cut the surface into elementary pieces, study the pieces, then glue them back to obtain global results. This was done by the third author \cite{Le:triangular} where he reproved the existence of the Bonahon-Wong quantum trace, and many other results.
We can use the same method here. Even though we begin with surfaces without boundary, it is useful to extend the theory to involve boundary in a meaningful way so that we can glue back later. 

Thus, now $\fS$ is an oriented surface where each connected component of the boundary $\pfS$ is an open interval $(0,1)$. The  punctures (or ideal points) in the interior of $\fS$ are partitioned into two types: ``holed" and``decorated". Consider tangle diagrams on $\fS$ where the end points can go to  $\pfS$ or to decorated points, with height order, and states $\pm $ on $\pfS$. The LRY skein algebra $\LRYSR$ is the module freely spanned by isotopy classes of such tangle diagrams subject to Kauffman bracket relations \eqref{eqSkein0}, Roger-Yang Relations \eqref{eqEF}, and the additional relations involving the endpoints on $\pfS$. See Section \ref{secLRY} for details. This is a unification of the Roger-Yang skein algebra  and the stated skein algebra of the third author.

An important feature of the introduction of the boundary is the existence of the cutting homomorphism. Suppose $c$ is an interior ideal arc of $\fS$ whose endpoints are not at interior decorated points. By cutting $\fS$ along $c$ we get a new surface $\Cut_c(\fS)$. 

\begin{thm} [Part of Theorem \ref{t.splitting2}] There exists an algebra embedding, given by an explicit state sum, from $\LRYSR$ into $\SLRY(\Cut_c(\fS);\cR)$.
\end{thm}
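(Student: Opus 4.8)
The plan is to mimic the construction of the splitting (cutting) homomorphism for stated skein algebras from \cite{Le:triangular}, adapting it to accommodate the decorated interior punctures and the Roger-Yang type relations. First I would fix the combinatorial setup: choose a small bicollar neighborhood $c\times(-1,1)$ of the ideal arc $c$ in $\fS$, so that cutting along $c$ replaces this neighborhood by two half-open collars, creating two new boundary intervals $c_+,c_-$ of $\Cut_c(\fS)$ which are identified with $c$ in the gluing. Given a tangle diagram $D$ on $\fS$ in generic position with respect to $c$, I would isotope it so it meets $c$ transversally in finitely many points, none of which is an endpoint of $D$ and none of which lies at an interior decorated puncture (legitimate by the hypothesis that the endpoints of $c$ avoid interior decorated points, and by general position). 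Then define
\[
\Theta_c(D) \;=\; \sum_{s\colon D\cap c \to \{+,-\}} \big(\text{coefficient}\big)\cdot \big(\text{cut diagram } D_s \text{ with induced states on } c_\pm\big),
\]
extended $\cR$-linearly; the coefficients are the standard ones coming from the reordering/bad-arc relations on the two sides of $c$ (a power of $q^{1/4}$ or similar, as in the stated skein splitting), and the height order along $c_\pm$ is inherited from a chosen coorientation of $c$.

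The key steps, in order, are: (1) show $\Theta_c$ is well defined, i.e.\ it respects all defining relations of $\LRYSR$ — the Kauffman relations \eqref{eqSkein0} (local, away from $c$, so routine), the Roger-Yang relations \eqref{eqEF} (also local near the decorated punctures, which lie away from $c$, so they transport verbatim), the boundary/state relations on $\pfS$ (these are supported near $\pfS$, hence unaffected), and finally invariance under isotopies that move strands across $c$ or change the order of intersection points on $c$ — this last point is exactly where the chosen coefficients must be calibrated, and it is the same computation as in the stated case since the local model near $c$ involves no punctures. (2) Show $\Theta_c$ is an algebra homomorphism: products in $\LRYSR$ are by stacking, and since the stacking is performed away from a generic position representative of $c$, the state sum factors through the product; this is a direct check on diagrams. (3) Show $\Theta_c$ is injective: here I would use a basis argument — $\LRYSR$ has an $\cR$-basis of "reduced" tangle diagrams (simple diagrams with no contractible or trivial components, in a suitable normal form, together with the Dehn-Thurston / state bookkeeping), and $\Theta_c$ sends such a basis element, after choosing the representative that minimizes $|D\cap c|$, to a sum whose leading term (with respect to, say, the ordering by the state function or by a filtration on $\Cut_c(\fS)$) is a distinct basis element of $\SLRY(\Cut_c(\fS);\cR)$; distinct basis elements have distinct leading terms, giving injectivity over the domain $\cR$.

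The main obstacle I anticipate is step (3), the injectivity, and more specifically producing the right normal form and leading-term argument in the presence of the interior decorated punctures: the arcs ending on decorated points interact with the height order in a way that has no analogue in the purely stated setting, so one must check that cutting along $c$ does not create new relations among the images of basis elements. I expect this is handled by the general machinery developed earlier in the paper — in particular the modified Dehn-Thurston coordinates, which by design pick up highest-degree terms of products in the natural filtration, and the fact (quoted from Theorem \ref{thmMain2}) that the associated graded of $\LRYSR$ embeds in a quantum torus; transporting the problem to the associated graded reduces injectivity of $\Theta_c$ to injectivity of an induced monomial map between quantum tori, which is then a matter of checking that the corresponding map of exponent lattices is injective. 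The remaining verifications (well-definedness, multiplicativity, compatibility of the explicit state sum with gluing) are local and parallel to \cite{Le:triangular}, so I would state them and refer to that source for the calibration of coefficients rather than reproducing the diagrammatic bookkeeping in full.
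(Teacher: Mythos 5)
Your overall plan is the right one, and it matches the paper: the construction is modeled directly on the splitting homomorphism for stated skein algebras from \cite{Le:triangular}, and the paper's actual proof of Theorem \ref{t.splitting2} simply observes that the argument there goes through verbatim here because it concerns only the boundary relations \eqref{eq.arcs}, \eqref{eq.order}, which are unchanged, while the Roger--Yang relations \eqref{eqE}, \eqref{eqF} are supported away from $c$ and transport without modification. Your steps (1) and (2) are correct in spirit, and you correctly identify the key locality observations. However, one detail is off: the state sum in \eqref{eq.cut00} has \emph{no} per-term coefficients,
\[
\Theta_c(\al)=\sum_{s\colon D\cap c\to\{\pm\}}(D,h,s),
\]
the induced states and the transported height order $h$ being all the data needed. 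There is no auxiliary power of $q^{1/4}$, and bad arcs play no role in the unreduced splitting; they only enter for the reduced version $\bar\Theta_c$ in part (b). Claiming unspecified coefficients to be ``calibrated'' obscures what is actually the cleanest feature of the construction, and would also complicate your step (2), which relies precisely on the coefficient-free sum factoring over $D\cap c$ and $D'\cap c$ separately.

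The more serious problem is the injectivity argument in your step (3). Your fallback --- invoking Theorem \ref{thmMain2} (or Theorem \ref{thm4}) to embed the associated graded of $\LRYSR$ into a quantum torus and then checking a monomial map of exponent lattices --- is circular. Those embeddings are themselves \emph{constructed} by cutting the surface along interior edges of a triangulation (respectively a pants decomposition) and patching local quantum traces, and the cutting step is exactly Theorem \ref{t.splitting2}: see the proof of Theorem \ref{thm.embed3} in Subsection \ref{reducedqtrace}, which begins ``By cutting $\fS$ along edges in $\Do\,\dots$'' and explicitly uses the cutting homomorphism, and whose consequences propagate into Theorem \ref{thmAtrace} and onward to Theorems \ref{thmMain2} and \ref{thm4}. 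You cannot use any of that downstream machinery here. The self-contained argument, as in \cite{Le:triangular}, is combinatorial: using the basis $B(\fS)$ of increasingly stated simple diagrams (Theorem \ref{basisDiamond}), one evaluates $\Theta_c$ on a basis element taken in taut position with respect to $c$, rewrites the non-increasing state assignments on $c_1,c_2$ via the reordering relation \eqref{eq.order}, and checks that the resulting transition matrix between $B(\fS)$ and $B(\Cut_c\fS)$ is in a triangular form with invertible diagonal entries (powers of $q^{1/2}$). That argument uses only the height-exchange relation on the new boundary edges $c_1,c_2$ and nothing about quantum tori, which is precisely why it extends unchanged to the LRY setting.
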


By cutting $\fS$ along interior edges of a triangulation,  where edges are not allowed to end at interior decorated punctures (see Section \ref{secTrig}), we can prove the following result about the quantum trace and the structure of $\LRYSR$.

\begin{thm} [Part of Theorems \ref{thm.embed3e} and \ref{thm-noether1}] \label{thm4}
 Suppose $\fS$ has a triangulation $\Delta$ with $r$ edges.   Let the ground ring $\cR$ be a commutative domain with a distinguished invertible element $q^{1/2}$.
 
 (a) There is an algebra embedding of $\LRYSR $ into a quantum torus
 $$\tr_{\Delta;\cR}: \LRYSR \embed \bT(\mQ_\Delta;\cR)$$
 where $\mQ_\Delta$ is an antisymmetric integral $r\times r$ matrix depending on the triangulation $\Delta
 $. Consequently $\LRYSR$ is a domain.
 
 (b) There is an algebra $\BN$-filtration of $\LRYSR$ whose associated graded algebra is a monomial subalgebra $\bT(\mQ_\Delta,\Lambda;\cR)$, where $\Lambda$ is the monoid of all possible triangular coordinates of simple diagrams on $\fS$.
 
 (c)  
 The algebra $\LRYSR$ is orderly finitely generated, has  Gelfand Kirillov dimension~$r$; and  moreover it is Noetherian if the ground ring $\cR$ is. 
 
\end{thm}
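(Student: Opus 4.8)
The plan is to deduce Theorem \ref{thm4} as a consequence of the cutting homomorphism together with a careful analysis of the quantum trace on a triangulated surface, reducing everything to a computation on a single triangle and on gluing faces.

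\textbf{Step 1: Build the quantum trace for a triangulation.} First I would handle the case of a triangle $\fS = \mathbb{T}$ by hand: write down the explicit state-sum map $\tr_{\Delta;\cR}$ sending a stated tangle diagram to a Laurent polynomial in the edge variables, and verify that it respects the defining relations \eqref{eqSkein0}, \eqref{eqEF}, and the boundary relations. For a general triangulation $\Delta$ with faces $F_1,\dots,F_m$, I would iterate the cutting embedding of Theorem \ref{t.splitting2} along all interior edges of $\Delta$ to obtain
\[
\LRYSR \embed \bigotimes_{i} \SLRY(F_i;\cR),
\]
then compose with the tensor product of the triangle quantum traces. The image lands in a tensor product of triangle quantum tori; identifying the two copies of each glued edge variable cuts this down to the quantum torus $\bT(\mQ_\Delta;\cR)$ whose matrix $\mQ_\Delta$ records, for each pair of edges, the signed count of corners where they meet inside a face. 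This gives part (a), and since a quantum torus is a domain and $\LRYSR$ injects into it, $\LRYSR$ is a domain.

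\textbf{Step 2: Identify the associated graded as a monomial subalgebra.} For part (b) I would introduce the $\BN$-filtration on $\LRYSR$ by total geometric intersection with the edges of $\Delta$ (equivalently, total degree of the image under $\tr_{\Delta;\cR}$ in the $x_i$). The key point — this is where the modified Dehn-Thurston / triangular coordinates advertised in the abstract do the work — is that for a simple diagram $D$ the leading term of $\tr_{\Delta;\cR}(D)$ is a single monomial $x^{\br(D)}$ whose exponent vector $\br(D)$ is the tuple of triangular coordinates of $D$, and that these leading monomials multiply: $\br(D\cdot D') = \br(D) + \br(D')$ up to the filtration, because stacking two simple diagrams in generic position realizes the intersection numbers additively. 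Hence $\gr \LRYSR \cong \bT(\mQ_\Delta, \Lambda;\cR)$ where $\Lambda = \{\br(D) : D \text{ simple}\}$ is closed under addition, i.e. a submonoid of $\BZ^r$. One must also check $\Lambda$ generates a full-rank sublattice and describe it by the usual triangular-coordinate inequalities (triangle inequalities and parity conditions at each face), so that $\bT(\mQ_\Delta,\Lambda;\cR)$ has the expected size.

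\textbf{Step 3: Orderly finite generation, GK dimension, Noetherianity.} Part (c) follows from (b) by standard filtration arguments. Since $\Lambda$ is a finitely generated monoid (it is a rational polyhedral monoid, being cut out by finitely many linear inequalities), pick monoid generators $\br_1,\dots,\br_k$ and lift them to simple diagrams $x_1,\dots,x_k \in \LRYSR$; an induction on the filtration degree shows the ordered monomials $x_1^{n_1}\cdots x_k^{n_k}$ span $\LRYSR$, giving orderly finite generation. For GK dimension, $\GKdim \LRYSR = \GKdim \gr\LRYSR = \GKdim \bT(\mQ_\Delta,\Lambda;\cR)$, and the latter equals the rank of the sublattice generated by $\Lambda$, which is $r$ because $\Lambda$ spans $\BZ^r$ (or a finite-index subgroup) — one exhibits $r$ simple diagrams with linearly independent coordinate vectors. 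Finally, a monomial subalgebra of a quantum torus over a Noetherian $\cR$ is Noetherian when the defining monoid is finitely generated (reduce to the commutative/affine-monoid case via a further filtration, or cite the known Noetherianity of such ``quantum affine monoid algebras''), and Noetherianity passes from $\gr\LRYSR$ back to $\LRYSR$; hence $\LRYSR$ is Noetherian whenever $\cR$ is.

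\textbf{Main obstacle.} The genuinely delicate step is Step 2: proving that the leading term of $\tr_{\Delta;\cR}(D)$ is exactly one monomial with exponent the triangular coordinates, \emph{and} that leading terms are multiplicative under the product. This requires the right normalization of the triangular coordinates (the ``modified Dehn-Thurston coordinates'' of the paper) so that no cancellation occurs among highest-degree contributions when resolving crossings via \eqref{eqSkein0}, and a careful check that when two simple diagrams are stacked the resulting crossings can be resolved so that the top term corresponds to the diagram realizing the sum of the intersection numbers. Everything else is either a finite check on the triangle (Step 1) or formal filtered-algebra bookkeeping (Step 3).
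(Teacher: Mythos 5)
Your overall plan — cut along interior edges, construct a quantum trace face by face, glue, then analyze the filtration — matches the paper's strategy, and Step 3 is essentially the paper's argument. But there are two concrete gaps in Step 1 that your sketch as written would run into.

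\emph{The faces are not all triangles.} For an LRY surface with circle boundary components (i.e.\ with interior decorated punctures), cutting along a triangulation produces 1-marked monogons as faces, not only triangles. You say ``compose with the tensor product of the triangle quantum traces,'' but there is a genuinely new object to build here: the quantum trace for the monogon. The paper devotes Theorems \ref{monogonpres} and \ref{thm.trmon} to this — it first finds the explicit presentation $\cS(\fm)=\ZQ\langle u_+,u_-\mid qu_+u_--q^{-1}u_-u_+=q-q^{-1}\rangle$, identifies the unique bad arc, and shows the reduced algebra is $\ZQ[u^{\pm1}]$. Without this, the state-sum simply does not exist on a general LRY surface.

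\emph{Cut-and-glue produces the reduced trace, which is not injective.} Your proposed composition, $\LRYSR \xhookrightarrow{\Theta} \bigotimes_i \SLRY(F_i;\cR) \to \bigotimes_i (\text{face tori})$, uses face quantum traces that are only defined (and injective) on the \emph{reduced} face skein algebras — so the middle arrow must factor through the quotient by bad arcs. That is exactly the reduced quantum trace $\btr_\Delta:\bSSR\to\bsX(\fS,\Delta;\cR)$, and the paper explicitly notes that it is \emph{not} injective on $\SSR$ once $\fS$ has boundary edges. The injective unreduced trace $\tr_\Delta$ needs an extra idea: embed $\fS$ into a larger surface $\fS^\ast$ by attaching a triangle along each boundary edge, observe that $\iota_\ast$ maps simple diagrams of $\fS$ to non-bad-arc diagrams of $\fS^\ast$ (so $p\circ\iota_\ast$ is injective), and then apply the reduced trace of $\fS^\ast$; the image is then recognized inside a subalgebra isomorphic to $\sX(\fS,\Delta;\cR)$ built on the doubled matrix $\mQ_\Delta$. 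There is also no simple gluing operation for the unreduced torus, which is why this detour is needed. (This also explains why the matrix and the GK dimension involve $|\Delta|+|\Dd|$, not just $|\Delta|$; your rank count in Step 3 inherits this.)

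A smaller point: the ``modified Dehn-Thurston coordinates'' you invoke belong to the pants-decomposition story (Theorems \ref{thmSg}, \ref{thmSRY}), not this one. For a triangulated surface the relevant parametrization is the much simpler monoid $\Lambda_\Delta$ of Theorem \ref{thm.tri-coord} (edge intersection numbers plus the state count on boundary edges), with no twist variables. Your intuition about leading terms and no cancellation is correct, but in the paper this is delivered by Lemma \ref{r.phi1}, which reduces it to the explicit degree computations on each face (Theorems \ref{thm.trtri}(c) and \ref{thm.trmon}(c)) rather than a direct crossing-resolution argument.
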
 
We also construct a reduced version of the quantum trace $\tr_\Delta$. \

When $\fS$ has no boundary and no decorated punctures, part (a) of Theorem \ref{thm4} was proved by Bonahon and Wong. When $\fS$ has no decorated punctures, Theorem \ref{thm4} was proved in \cite{LY2}, whose proof is adapted here: First cutting  $\fS$ along edges of $\Delta$,  we get a collection of ideal triangles and 1-decorated monogons (monogons with one decorated puncture in the interior). The reduced quantum trace for triangles was constructed already in \cite{Le:triangular}. New here is the 1-decorated monogon, for which we will construct a reduced quantum trace. Then we patch the quantum traces of the faces of the triangulation together to get a reduced quantum trace for $\fS$. A trick of gluing a triangle along each boundary edge used in \cite{LY2} allows to extend it to a full quantum trace and prove Theorem \ref{thm4}.

\subsection{Modified Dehn-Thurston coordinates and another quantum trace} Theorems \ref{thmSg} and \ref{thmSRY} correspond to cases when $\fS$ does not admit an ideal triangulation and as such Theorem  \ref{thm4} does not apply. Instead, a  pants decomposition is used for the proofs of these theorems.

We need to use another type of quantum trace, which is an extension of the  $A$-version developed in \cite{LY2} and  an earlier version of Muller \cite{Muller}. Thus we develop a quantum trace for the so-called boundary simplified skein algebra which was considered in \cite{Le:Qtrace,PS2}. We calculate this quantum trace for each pair of pants in the decomposition, see Theorem \ref{thmbtr}, and show that they can be glued together to get a global algebra map described in part ~a) of Theorems \ref{thmSg} and \ref{thmSRY}.

To parameterize simple diagrams on $\Sigma_g\setminus \cP$, we use a modified version of Dehn-Thurston (DT) coordinates. The reason is the ordinary DT coordinates do not behave well under the product of the skein algebra. In \cite{PS2, FKL} one has to restrict to the so-called triangular multicurves in order to have control over the behavior with respect to the product. To get deeper results, we need to consider all multicurves. The modified DT coordinates, developed in Section \ref{secDT}, help to extract the highest degree term of the product, and help to prove Theorems \ref{thmSg} and \ref{thmSRY}.

\newcommand{\eqdef}{\overset{\mathrm{def}}{=\joinrel=}}
\def\ZQ{{\BZ_q}}
\def\Zq{{\mathbb Z[q^{\pm1/2}]}}
\subsection{Organization of the paper}
{Section \ref{sec.alg} contains algebraic background material. In Section \ref{secLRY}, we define LRY skein algebras and their reduced versions. 
Section \ref{secTrig} recalls details on ideal triangulations and the cutting homomorphism with an eye towards embedding LRY skein algebras into quantum tori. 
In Section \ref{secQtraces} we give quantum traces to embed LRY skein algebras into quantum tori using ideal triangles and 1-marked monogons. 
In Section \ref{secMRY} we give a definition of boundary simplified skein algebras to define modified Dehn--Thurston coordinates for (non-compact) pairs of pants in Section \ref{secDT}. 
In Section \ref{secPPP} we embed the associated graded algebras of the LRY skein algebra of a (non-compact) pair of pants into a quantum torus. 
By combining the elementary cases in Section \ref{secPPP}, we embed the associated graded algebra of the LRY skein algebra of a surface into quantum tori in Section \ref{secDegen}.  }

\subsection{Acknowledgements} The authors would like to thank  F. Bonahon,
 F. Costantino, C. Frohman, J. Kania-Bartoszynska, H. Moon, A. Poudel,  A. Sikora, H. Wong, and T. Yang  for helpful discussions.   The authors warmly thank the referee for substantial comments and suggestions which significantly helped to improve the presentation of paper. W. B. is supported in part by NSF Grant DMS-1745583. H. K. is supported by JSPS KAKENHI Grant Numbers JP22K20342, JP23K12976. T. L. is supported by NSF grant DMS-2203255.

\section{Notation and algebraic preliminaries}\label{sec.alg}

We fix notation and review the theory of quantum tori and Gelfand-Kirillov dimension.

\subsection{Notation and conventions} Denote by $\BN, \BZ, \BC$ respectively the set of non-negative integers, the set of integers, and the set of complex numbers. Note that our $\BN$ contains 0.

We assume that all rings are associative and unital, and ring morphisms preserve the unit.

Following \cite{MR} a ring is a {\bf domain} if $ab=0$ implies $a=0$ or $b=0$.

Let $\ZQ:=\Zq$, the ring of Laurent polynomials with coefficients in $\BZ$ and an indeterminate $q^{1/2}$. By a 
{\bf ground ring} $\cR$ we mean a commutative $\ZQ$-domain, i.e. a commutative $\ZQ$-algebra which is a domain.   For each ground ring there is a $\BZ$-algebra map $f:\ZQ \to \cR$ given by $f(x)= x* 1_\cR$. By abuse of notation, we also denote by $q^{1/2}= f(q^{1/2})\in \cR$.

Throughout this section we fix a ground ring $\cR$. If $f: A \to B$ is a morphism between $\ZQ$-modules,  we usually denote $f_\cR:= f \ot_\ZQ \cR: A \ot_\ZQ \cR \to B \ot _\ZQ \cR$.

Two elements $x,y$ of an $\cR$-algebra $A$ are {\bf $q$-proportional}, denoted by $x\qeq y$, if there is $k\in \BZ$ such that $x = q^{k} y$.  Two elements $x,y\in A$ are {\bf $q$-commuting} if $xy$ and $yx$ are $q$-proportional.

 \no{Suppose to every ground ring $\cR$ there is defined a morphism $f_\cR: A_\cR \to B_\cR$ of $\cR$-modules. We say the family of morphisms $\{ f_\cR\}$ is {\bf ground ring universal} if $f_\cR$ is obtained from $f_\ZQ$ by the change of ground ring,   $f _\cR = f_\ZQ \ot_ \ZQ \cR$.

Some properties of a $\ZQ$-module $A$ will transfer to $A_\cR=A\ot_ \ZQ \cR$ for any ground ring. For example, if $A$ is a free  $\ZQ$-module, then $A_\cR$ is free over $\cR$. Many properties are not transferable. Notably, $A$ is a domain does not imply that $A_\cR$ is a domain.
}

\def\qq{{q^{1/2}}}

\def\term#1{{\bf #1}}

\subsection{Weyl normalization and algebras with reflection}  Assume $A$ is a $\ZQ$-algebra which is torsion free as a $\ZQ$-module. 

 Suppose $x_1,x_2,\dots,x_n\in A$ are pairwise $q$-commuting elements. Since $A$ is $\Zq$-torsion free,  
 there is a unique
 $c_{ij}\in \mathbb{Z}$ such that $x_i x_j = q^{c_{ij}} x_j x_i$ unless $x_i x_j=0$. 
 The {\bf Weyl normalization} of the product $x_1x_2\dots x_n$ is
\[[x_1x_2\dots x_n]_\Weyl=q^{-\frac{1}{2}\sum_{i<j}c_{ij}}x_1x_2\dots x_n.\]
If $\sigma$ is a permutation of $\{1,2,\dots,n\}$, then $[x_1x_2\dots x_n]_\Weyl=[x_{\sigma(1)}x_{\sigma(2)}\dots x_{\sigma(n)}]_\Weyl$, which motivates the use of the Weyl normalization. 

If $\cR$ is a ground ring, i.e. a commutative $\Zq$-domain, then we usually use the notation $[ab]_\Weyl$ to denote the element $[ab]_\Weyl \ot 1 \in A \ot_\ZQ \cR$.

 \label{ss.reflection}
We say $A$ is  a \term{$\ZQ$-algebra with reflection} if it is equipped with a $\BZ$-linear anti-involution $\omega$, called the \term{reflection}, such that $\omega(\qq)=q^{-1/2}$. In other words, $\omega : A \to A$ is  $\BZ$-linear  such that for all $x,y \in A$,
\[\omega(xy)=\omega(y)\omega(x),\qquad \omega(\qq x)=q^{-1/2} \omega(x).\]

An element $z\in A$ is  \term{reflection invariant} if $\omega(z)=z$. If $B$ is another $\ZQ$-algebra with reflection $\omega'$, then a map $f:A\to B$ is \term{reflection invariant} if $f\circ \omega=\omega'\circ f$.

The following trivial statement is very helpful in many calculations.
\blem\label{rReflection} Let $a, b$ be reflection invariant elements of a $\ZQ$-algebra with reflection.

(a) If $a \qeq b$  then $a=b$.

(b) If $a$ and $b$ are $q$-commuting, then $[ab]_\Weyl$ is reflection invariant.

\elem

\subsection{Quantum tori}

Let $Q$ be an antisymmetric $r\times r$ integral matrix. 
The {\bf quantum torus} associated to $Q$ is the algebra
\[{\mathbb{T}(Q)}\eqdef \ZQ\langle x_1^{\pm1},\dots,x_r^{\pm1}\rangle/\langle x_ix_j=q^{Q_{ij}}x_jx_i\rangle.\]

For $\bk=(k_1,\dots, k_r)\in \BZ^r$, let
\[ x^\bk \eqdef [ x_1 ^{k_1} x_2^{k_2} \dots x_r ^{k_r} ]_\Weyl= q^{-\frac{1}{2} \sum_{i<j} Q_{ij} k_i k_j} x_1 ^{k_1} x_2^{k_2} \dots x_r ^{k_r}\]
be the Weyl normalized monomial. Then $\{ x^\bk \mid \bk \in \BZ^r\}$ is a free $\ZQ$-basis of $\bT(Q)$, and
\begin{align}
\label{eq.prod}
x^\bk x ^{\bk'}& = q ^{\frac 12 \la \bk, \bk'\ra_Q} x^{\bk + \bk'},\quad \text{where } \la \bk, \bk'\ra_Q := \sum_{1\le i, j \le r} Q_{ij} k_i k'_j.
\end{align}

Suppose $Q'$ is another antisymmetric $r'\times r'$ integral matrix such that $HQ' H^T= Q$, where $H$ is an $r\times r'$ integral matrix and $H^T$ is its transpose. Then the $\ZQ$-linear map $\bT(Q)\to \bT(Q')$, given on the basis by $x^\bk \mapsto x^{\bk H}$, is an algebra homomorphism, called a {\bf multiplicatively linear homomorphism}. Here $\bk H$ is the product of the row vector $\bk$ and the matrix $H$.

The quantum torus $\bT(Q)$ has a reflection anti-involution
\[\omega:\mathbb{T}(Q)\to\mathbb{T}(Q), \quad \text{given by} \quad
\omega(q^{1/2})=q^{-1/2},\quad \omega(x_i)=x_i.\]
All normalized monomials $x^\bk$ are reflection invariant, and all multiplicatively linear homomorphisms are reflection invariant.

If $\Lambda\subset \BZ^r$ is a submonoid, then the $\ZQ$-submodule
{$\bT(Q,\Lambda)\subset \bT(Q)$} spanned by $\{ x^\bk\mid \bk \in \Lambda \}$ is a $\ZQ$-subalgebra of $\bT(Q)$, called a {\bf monomial subalgebra}. When $\Lambda= \BN^r$, the corresponding subalgebra is called the  {\bf quantum space} associated to $Q$, denoted by $\bT_+(Q)$.

For a ground ring $\cR$, the quantum $\cR$-torus and its $\Lambda$-monomial subalgebra  are defined by
\[{\mathbb{T}(Q;\cR):=\mathbb{T}(Q)\otimes_{\ZQ}\cR}, \quad \bT(Q, \Lambda; \cR) = \bT(Q,\Lambda)\ot _\ZQ \cR.\]

An $\cR$-algebra $A$ is {\bf orderly finitely generated} if there is a finite list of its elements $y_1, \dots, y_n\in A$ such that the set $\{ y_1^{k_1}\dots y_{n}^{k_n} \mid k_i \in \BN\}$ spans $A$ over $\cR$.

\begin{lemma}[ Lemma 2.1 of \cite{LY2}  ]\label{r.mono} 

Let $\Lambda\subset \BZ^r$ be a submonoid finitely generated as an $\BN$-module, and $\cR$ be a ground ring, i.e. a commutative $\ZQ$-domain.

(a) The monomial algebra {$\bT(Q, \Lambda;\cR)$} is orderly finitely generated.

(b) If $\cR$ is Noetherian then {$\bT(Q,\Lambda;\cR)$} is a Noetherian domain.
\end{lemma}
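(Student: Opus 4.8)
The plan is to reduce both parts to standard facts about quantum affine spaces by fixing a finite set of generators of the monoid $\Lambda$.

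\textbf{Setup and part (a).} Since $\Lambda$ is finitely generated as an $\BN$-module, choose $\mathbf{b}_1,\dots,\mathbf{b}_n\in\Lambda$ with $\Lambda=\BN\mathbf{b}_1+\dots+\BN\mathbf{b}_n$, and put $y_i:=x^{\mathbf{b}_i}\in\bT(Q,\Lambda;\cR)$. Iterating the product rule \eqref{eq.prod}, for every $\mathbf{k}=(k_1,\dots,k_n)\in\BN^n$ one has $y_1^{k_1}\cdots y_n^{k_n}=q^{c(\mathbf{k})}\,x^{k_1\mathbf{b}_1+\dots+k_n\mathbf{b}_n}$ for an explicit $c(\mathbf{k})\in\tfrac12\BZ$. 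Because $q^{1/2}$ is invertible in $\cR$, each basis monomial $x^{\mathbf{m}}$ with $\mathbf{m}\in\Lambda$ is an $\cR$-multiple of such a product, and conversely each such product is an $\cR$-multiple of some $x^{\mathbf{m}}$ with $\mathbf{m}\in\Lambda$. Hence $\{y_1^{k_1}\cdots y_n^{k_n}\mid k_i\in\BN\}$ spans $\bT(Q,\Lambda;\cR)$ over $\cR$, which is precisely orderly finite generation.

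\textbf{Part (b).} For the domain statement, the free $\ZQ$-basis $\{x^{\mathbf{k}}\}_{\mathbf{k}\in\BZ^r}$ exhibits $\bT(Q;\cR)$ as a twisted group algebra of $\BZ^r$ over the domain $\cR$; ordering $\BZ^r$ lexicographically and comparing leading terms shows $\bT(Q;\cR)$ has no zero divisors (cf.\ \cite{MR}), so its $\cR$-subalgebra $\bT(Q,\Lambda;\cR)$ is a domain. For Noetherianity, note that by \eqref{eq.prod} the elements $y_i$ satisfy $y_iy_j=q^{\la\mathbf{b}_i,\mathbf{b}_j\ra_Q}y_jy_i$, so $z_i\mapsto y_i$ defines an $\cR$-algebra homomorphism from the quantum $\cR$-space $\bT_+(Q';\cR)$ onto $\bT(Q,\Lambda;\cR)$, where $Q'$ is the antisymmetric integral $n\times n$ matrix with $Q'_{ij}=\la\mathbf{b}_i,\mathbf{b}_j\ra_Q$; surjectivity is exactly part (a). Now $\bT_+(Q';\cR)$ is an iterated skew polynomial extension $\cR[z_1][z_2;\sigma_2]\cdots[z_n;\sigma_n]$, with $\sigma_j$ the $\cR$-algebra automorphism of $\cR[z_1]\cdots[z_{j-1};\sigma_{j-1}]$ rescaling $z_i$ by $q^{Q'_{ij}}$; hence by the Hilbert basis theorem for skew polynomial rings (\cite{MR}) it is left and right Noetherian whenever $\cR$ is, and a homomorphic image of a Noetherian ring is Noetherian.

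\textbf{Main obstacle.} Everything is routine once \eqref{eq.prod} is in hand; the only point that needs care is organizing the iterated Ore extension structure on $\bT_+(Q';\cR)$ correctly, namely checking that each $\sigma_j$ is a well-defined $\cR$-algebra automorphism of the previously constructed ring and that these twistings reproduce exactly the $q$-commutation relations $z_iz_j=q^{Q'_{ij}}z_jz_i$. After that, Noetherianity follows immediately from the classical skew Hilbert basis theorem, so there is no genuine difficulty beyond bookkeeping.
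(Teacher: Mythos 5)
Your argument is correct. Note that the paper gives no proof of this lemma; it simply cites \cite{LY2}, so there is no in-paper proof to compare against. The route you take is the standard one (and almost certainly what \cite{LY2} does): pick monoid generators $\mathbf{b}_1,\dots,\mathbf{b}_n$ of $\Lambda$, form $y_i=x^{\mathbf{b}_i}$, observe via \eqref{eq.prod} that ordered products of the $y_i$ recover all $x^{\mathbf{m}}$, $\mathbf{m}\in\Lambda$, up to a unit power of $q^{1/2}$, giving (a), and then present $\bT(Q,\Lambda;\cR)$ as a quotient of the quantum affine space $\bT_+(Q';\cR)$, which is an iterated Ore extension and hence Noetherian over a Noetherian $\cR$ by the skew Hilbert basis theorem. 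The domain claim by leading-term comparison in lexicographic order is also standard and correct (one could equally cite \cite[Corollary 1.8.2]{MR} on group-graded rings, or the iterated skew Laurent structure of $\bT(Q;\cR)$). One small bookkeeping slip: with the convention $z_jr=\sigma_j(r)z_j$ in an Ore extension $R[z_j;\sigma_j]$, imposing $z_iz_j=q^{Q'_{ij}}z_jz_i$ for $i<j$ forces $\sigma_j(z_i)=q^{Q'_{ji}}z_i$, not $q^{Q'_{ij}}z_i$; since $Q'$ is antisymmetric this is just a sign, each $\sigma_j$ is still an automorphism, and nothing in the Noetherian conclusion changes, but it is worth getting the convention aligned if you write this out in full.
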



\subsection{Gelfand-Kirillov dimension}
The {\bf Gelfand-Kirillov dimension} (or GK dimension) provides a noncommutative analog of the Krull dimension.
Let $A$ be a finitely generated algebra over a field $\kkk$, and let $V$ be a finite dimensional generating subspace, e.g., the span of a finite set of  generators. The \emph{Gelfand-Kirillov dimension}, or GK dimension,
is defined as
\[\GKdim A \eqdef \limsup_{n\to\infty}\frac{\log \dim_k (\sum_{i=0}^n V^i)}{\log n}.\]
The dimension is independent of the choice of $V$. We extend the definition to an $\mathcal{R}$-algebra $A$ using $\kkk:=\Fr(\mathcal{R})$, the field of fractions of $\cR$ by
\[\GKdim A \eqdef \GKdim(A\otimes_{\mathcal{R}} \Fr(\mathcal{R})).\]

\begin{lemma}\label{lemma-GKdim}
Let $A$ be a finitely generated $\cR$-algebra.
\begin{enumerate}
\item If $B$ is a finitely generated subalgebra or a quotient of $A$, then $\GKdim B\le\GKdim A$.
\item The GK dimension of the monomial subalgebra  {$\bT(Q,\Lambda;\cR)$} is the $\BZ$-rank of $\Lambda$.
\end{enumerate}
\end{lemma}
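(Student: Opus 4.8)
The plan is to treat the two parts separately, in each case first reducing to the situation of a ground field. For part (1), recall that by definition $\GKdim$ is computed after the base change along $\cR \to \kkk := \Fr(\cR)$. Since $\kkk$ is a localization of $\cR$, it is flat over $\cR$, so a quotient map $A \onto B$ (resp.\ an inclusion of a finitely generated subalgebra $B \subseteq A$) remains a quotient map (resp.\ an inclusion) after applying $-\ot_\cR \kkk$, and $B \ot_\cR \kkk$ is still finitely generated over $\kkk$, generated by the images of a finite generating set of $B$. Over a field the statement is the classical one: given a finite-dimensional generating subspace $V \ni 1$ of $A \ot_\cR \kkk$, its image in the quotient generates, and $\dim_\kkk \sum_{i\le n}(\text{image of }V)^i \le \dim_\kkk \sum_{i\le n} V^i$; for a subalgebra with finite-dimensional generating subspace $W \ni 1$, one enlarges a generating subspace $V_0$ of $A\ot_\cR\kkk$ to $V := V_0 + W$, so that $W^i \subseteq V^i$ for all $i$, using that the limsup defining the GK dimension is independent of the chosen generating subspace. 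In both cases $\GKdim B \le \GKdim A$.

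For part (2), set $\kkk = \Fr(\cR)$ and let $d = \rk_\BZ \Lambda$ be the rank of the subgroup $L \subseteq \BZ^r$ generated by $\Lambda$. Note $\bT(Q,\Lambda;\cR)\ot_\cR \kkk = \bT(Q,\Lambda;\kkk)$ has $\kkk$-basis $\{x^\bk : \bk \in \Lambda\}$ by the discussion following \eqref{eq.prod}, and it is finitely generated precisely when $\Lambda$ is finitely generated as a monoid, which we assume (otherwise define $\GKdim$ as the supremum over finitely generated subalgebras; each sits inside some $\bT(Q,\Lambda';\kkk)$ with $\Lambda' \subseteq \Lambda$ finitely generated, and $\sup_{\Lambda'}\rk_\BZ\Lambda' = \rk_\BZ\Lambda$, which reduces to the finitely generated case). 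For the upper bound $\GKdim \le d$: pick monoid generators $y_1,\dots,y_n$ of $\Lambda$ and let $V$ be the $\kkk$-span of $1,x^{y_1},\dots,x^{y_n}$. By \eqref{eq.prod} each length-$m$ product of these is a scalar multiple of some $x^\bk$ with $\bk = \sum_j a_j y_j$, $a_j \in \BN$, $\sum_j a_j \le m$; all such $\bk$ lie in $L$ and satisfy $\|\bk\|_\infty \le m \max_j \|y_j\|_\infty$. Since the rank-$d$ lattice $L$ has $O(m^d)$ points in a box of side $O(m)$, we get $\dim_\kkk \sum_{i\le m} V^i = O(m^d)$, hence $\GKdim \bT(Q,\Lambda;\kkk) \le d$.

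For the lower bound $\GKdim \ge d$: because $\Lambda$ spans $L$ and $\rk L = d$, choose $z_1,\dots,z_d \in \Lambda$ that are $\BZ$-linearly independent, and let $B \subseteq \bT(Q,\Lambda;\kkk)$ be the subalgebra they generate, with generating subspace $W = \kkk\text{-span}\{1,x^{z_1},\dots,x^{z_d}\}$. By \eqref{eq.prod}, $\sum_{i\le m} W^i$ contains a nonzero scalar multiple of $x^{\sum_j a_j z_j}$ for every $(a_1,\dots,a_d) \in \BN^d$ with $\sum_j a_j \le m$; linear independence of the $z_j$ makes $(a_j) \mapsto \sum_j a_j z_j$ injective, so these are $\binom{m+d}{d}$ distinct basis monomials, giving $\dim_\kkk \sum_{i\le m} W^i \ge \binom{m+d}{d}$ and hence $\GKdim B \ge d$. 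By part (1), $\GKdim \bT(Q,\Lambda;\kkk) \ge \GKdim B \ge d$, so $\GKdim \bT(Q,\Lambda;\cR) = \GKdim \bT(Q,\Lambda;\kkk) = d = \rk_\BZ \Lambda$. The only genuinely delicate points are the flat base change to $\Fr(\cR)$ in part (1), and the lattice-point estimate producing the sharp exponent $d$ in the upper bound of part (2); the rest is the standard machinery of Gelfand--Kirillov dimension.
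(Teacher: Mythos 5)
Your proof is correct, and on both points it is more detailed than what the paper actually writes: the paper disposes of the lemma purely by citation, \cite[Propositions 8.2.2, 8.6.5]{MR} for part (1) and \cite[Lemma 2.2]{LY2} for part (2). You instead supply a self-contained argument. For part (1) your base change along the flat localization $\cR\to\Fr(\cR)$ followed by the standard quotient/subalgebra comparison is exactly the content of the cited McConnell--Robson statements (and you correctly handle the subalgebra case by enlarging the generating subspace, which is legal because GK dimension is insensitive to that choice). For part (2) your two-sided estimate is the right one: the upper bound via the lattice-point count $|L\cap B_\infty(0,Cm)|=O(m^d)$ for the rank-$d$ group $L$ generated by $\Lambda$, and the lower bound via a $\BZ$-linearly independent $d$-tuple $z_1,\dots,z_d\in\Lambda$ producing $\binom{m+d}{d}$ distinct Weyl-normalized monomials in $\sum_{i\le m}W^i$; note \eqref{eq.prod} is exactly what guarantees these products are nonzero scalars times $x^{\sum a_jz_j}$, so no product collapses. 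The one thing you should be careful to justify if this were written out formally is the existence of $d$ elements of $\Lambda$ (not merely of $L=\Lambda-\Lambda$) that are $\BZ$-independent: this follows because $\Lambda$ already $\BQ$-spans $L\ot\BQ$, so a maximal $\BQ$-independent subset of $\Lambda$ has $d$ elements. What your approach buys over the paper's is self-containedness and a transparent reason for the answer being the rank of $\Lambda$; what it costs is space, which is presumably why the authors elected simply to cite.
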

Here the $\BZ$-rank of $\Lambda$ is the rank of the free abelian group $\Lambda\ot_\BN \BZ$.
\begin{proof}
(1)  is \cite[Propositions 8.2.2, 8.6.5]{MR}, and 
(2) is \cite[Lemma 2.2]{LY2}
\end{proof}

\subsection{Filtrations and associated graded algebras} 
\label{ssFiltr} Let $\Gamma$ be a submonoid of $\BZ^r$ for some $r\in \BN$. The lexicographic order on $\BZ^r$ induces a linear order on $\Gamma$.

A {\bf $\Gamma$-filtered $\cR$-module} $A$
 is an $\cR$-module equipped with a  {\bf $\Gamma$-filtration}, which is a family $F= (F_k(A))_{k\in \Gamma}$ of $\cR$-submodules of $A$ such that $F_k(A)\subset F_l(A)$ if $k \le l$ and $\cup_{k\in \Gamma} F_k(A) = A$. The {associated graded module} of $F$ is 
$$
  \Gr^F(A):=\bigoplus_{k\in\Gamma}\Gr^F_k, \ \text{where $\Gr_k^F:=F_k/F_{<k}$ and $F_{<k}:=\displaystyle{\sum_{k'<k}F_{k'}}$. }
$$
An $\cR$-linear map $f: A \to A'$ between two $\Gamma$-filtered {modules} {\bf respects the $\Gamma$-filtrations} if $f( F_k(A)) \subset F_k(A')$ for all $k \in \Gamma$. Such a map induces the {\bf associated graded map}
$$ \Gr^F(f): \Gr^F(A) \to \Gr^F(A'),\   a  + F_{<k}(A) \to f(a) + F_{<k} (A')\ \text{for } \ a\in F_k(A).$$

 A $\Gamma$-filtration $F$ is {\bf good} if for every non-zero $a\in A$, there is $k\in \Gamma$ denoted by $\deg^F(a)$, such that $a \in F_k(A) \setminus F_{<k}(A)$. Then define the {\bf lead term} of $a$ by $\lt(a) = p_k(a)\in \Gr^F(A)$, where $p_k: F_k \onto \Gr^F_k$ is the natural projection. By convention $\lt(0)=0$. Note that $\lt(a) \neq 0$ unless $a=0$.
 
 \blem \label{rLift1}
  Let $A, A'$ be $\Gamma$-filtered $\cR$-modules and $f: A \to A'$ be an $\cR$-linear map respecting the $\Gamma$-filtrations. Assume the $\Gamma$-filtration of $A$ is good.
If $\Gr^F(f): \Gr^F(A) \to \Gr^F(A')$ is an isomorphism then $f^{-1}(F_k(A')) = F_k(A)$ and $f$ is injective.
\elem
\bpr
Since $f(F_k(A)) \subset F_k(A')$  it is clear that $f^{-1}(F_k(A')) \supset F_k(A)$.
Assume the contrary that there exists $a \in f^{-1}(F_k(A')) \setminus F_k(A)$.
Then $l:=\deg^F(a) >k$. Because $f(a) \in F_k(A') \subset F_{< l}(A')$ we have $\Gr^F(f)(a)=0$, contradicting the fact that $\Gr^F(f)$ is an isomorphism.
\epr

\no{ Assume $A$ is a free $\cR$-module. An $\cR$-basis $S$ of $A$ respects to a $\Gamma$-filtration $F$ if $S \cap F_k(A)$ is a free $\cR$-basis of $F_k(A)$ for all $k\in \Gamma$.}

When $A$ is an $\cR$-algebra, we say a  $\Gamma$-filtration $F$ {\bf respects the product}, or we call $F$ an {\bf algebra $\Gamma$-filtration}, if $1\in F_0(A)$ and $F_k(A) F_l(A) \subset F_{k+l}(A)$. Then $\Gr^F(A)$ has an $\cR$-algebra structure defined by
$
p_k(x)p_{k'}(y)=p_{k+k'}(xy).
$

Many properties of the associated graded algebra can be lifted to the original algebra.

\begin{proposition}\label{liftfacts} 

Let $A$ be a finitely generated $\cR$-algebra, equipped with an algebra $\Gamma$-filtration.

\begin{enumerate}
\item  One has  $\GKdim(\mathrm{Gr}(A))\leq \GKdim(A)$.

\item   Assume $\Gamma=\BN$. \\
(a) If $\mathrm{Gr}(A)$ is a domain, then $A$ is a domain.\\
(b)  If $\mathrm{Gr}(A)$ is Noetherian, then $A$ is Noetherian.\\
(c) If   $F_k$ is finite dimensional over $\Fr(R)$ for {all} $k \in \BN$ then $\GKdim\Gr(A)= \GKdim(A)$.\\
(d)  If $\Gr(A)$ is orderly finitely generated, then so is $A$.\\

\end{enumerate}

\end{proposition}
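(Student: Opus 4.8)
The plan is to prove each of the five assertions by reducing to a standard statement about $\BN$-filtered algebras (or rather, for (1), a general $\Gamma$-filtered statement), using the fact that $\Gr^F(A)$ and $A$ have the ``same size'' in each graded piece. Throughout I would fix a finite generating subspace $V\subset A$ with $1\in V$; since $F_0(A)\supset\cR\cdot 1$ and $V$ is finite-dimensional over $\Fr(\cR)$ (after base change), $V\subset F_N(A)$ for some $N\in\BN$ when $\Gamma=\BN$. The key elementary observation, used repeatedly, is that $\lt$ is \emph{multiplicative up to lower order}: for $a,b\in A$ nonzero, $\lt(ab)=\lt(a)\lt(b)$ whenever the latter is nonzero, and more importantly $\deg^F(ab)\le\deg^F(a)+\deg^F(b)$ always. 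This is immediate from the definition of the algebra filtration and the product on $\Gr^F(A)$.

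\textbf{Proof of (1).} The image of $V$ in $\Gr^F(A)$ under $\lt$ spans a finite-dimensional subspace $\bar V$ (over $\Fr(\cR)$) which generates $\Gr^F(A)$ as an algebra, because any homogeneous element of $\Gr^F(A)$ of degree $k$ is $\lt(a)$ for some $a\in F_k(A)$, and $a$ is a polynomial in the generators. Using $\lt(ab)=\lt(a)\lt(b)$ when nonzero, one gets $\lt\bigl(\sum_{i\le n}V^i\bigr)$ spans $\sum_{i\le n}\bar V^i$, so $\dim_{\Fr(\cR)}\sum_{i\le n}\bar V^i\le\dim_{\Fr(\cR)}\sum_{i\le n}V^i$. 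Taking $\limsup\log(\cdot)/\log n$ gives $\GKdim\Gr(A)\le\GKdim A$. (Base change to $\Fr(\cR)$ commutes with forming $\Gr$ of a filtration that is $\cR$-split in each piece — here I'd invoke that $A$ is free and the relevant submodules are direct summands, or simply that $\Gr$ and $\otimes\Fr(\cR)$ are both exact on the relevant modules.)

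\textbf{Proof of (2).} Here $\Gamma=\BN$ with the usual order. \emph{(a)} If $ab=0$ with $a,b\ne0$, then $\lt(a)\lt(b)$ would be the degree-$(\deg a+\deg b)$ part of $\lt(ab)$; since $\Gr(A)$ is a domain and $\lt(a),\lt(b)\ne0$, we get $\lt(a)\lt(b)\ne0$, hence $\deg^F(ab)=\deg^F(a)+\deg^F(b)$ and $ab\ne0$, a contradiction. \emph{(b)} This is the standard filtered–graded lifting of the Noetherian property: given a left ideal $I\subset A$, the ``leading term ideal'' $\lt(I):=\Gr(A)\text{-span of }\{\lt(a):a\in I\}$ is a left ideal of $\Gr(A)$ (using $\lt(xa)=\lt(x)\lt(a)$ when nonzero, plus that lower-order corrections live in $I$), hence finitely generated; lifting a finite generating set of $\lt(I)$ to elements of $I$ and running the usual degree-induction argument shows $I$ is finitely generated. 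I'd cite \cite[Ch.\ I]{MR} or reprove in two lines. \emph{(c)} From (1) we have $\GKdim\Gr(A)\le\GKdim A$; for the reverse, the finite-dimensionality hypothesis on each $F_k$ lets us compare $\dim\sum_{i\le n}V^i\le\dim F_{nN}(A)=\sum_{k\le nN}\dim\Gr_k(A)$, and since $\Gr(A)$ is generated by $\bar V\subset\bigoplus_{k\le N}\Gr_k$, $\sum_{k\le nN}\dim\Gr_k(A)\le\dim\sum_{i\le nN}\bar V^i$; the factor $nN$ vs.\ $n$ is absorbed in the $\log$, giving $\GKdim A\le\GKdim\Gr(A)$. \emph{(d)} If $\bar y_1,\dots,\bar y_m$ are homogeneous elements of $\Gr(A)$ such that $\{\bar y_1^{k_1}\cdots\bar y_m^{k_m}\}$ spans $\Gr(A)$, lift each $\bar y_j=\lt(y_j)$; then a degree-induction (on $k\in\BN$) shows $\{y_1^{k_1}\cdots y_m^{k_m}\mid k_i\in\BN\}$ spans $A$: any $a\in F_k(A)$ differs from an $\cR$-combination of such monomials by an element of $F_{<k}(A)$, because $\lt$ of such a monomial is the corresponding monomial in the $\bar y_j$, up to $q$-scalars which are units — and these span $\Gr_k(A)$. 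Induction on $k$ closes the argument since $F_{<k}\subset F_{k-1}$ and the filtration is exhaustive with $F_{<0}=0$ forcing $F_0=\Gr_0$.

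\textbf{The main obstacle} I anticipate is purely bookkeeping rather than conceptual: in (d), when lifting the monomial basis, one must be careful that the $y_j$'s need not $q$-commute in $A$ even if the $\bar y_j$ do in $\Gr(A)$, so the span statement genuinely requires the ordered-monomial induction and a clean statement that $\lt$ of an ordered monomial $y_1^{k_1}\cdots y_m^{k_m}$ equals $\bar y_1^{k_1}\cdots\bar y_m^{k_m}$ (which could be $0$ in $\Gr(A)$ — but then one uses that such vanishing monomials are redundant in a spanning set). Similarly in (c), the inequality $V\subset F_N(A)$ and the comparison of word-length filtration against the given $\BN$-filtration must be tracked with the right constant; I would state this as a lemma ``$\dim\sum_{i\le n}V^i$ and $\dim F_{cn}(A)$ are $\limsup\log/\log n$-equivalent'' and dispatch it. None of these steps is deep; the content is entirely in the elementary multiplicativity of $\lt$ established at the outset, and the rest follows the classical filtered-to-graded transfer principle.
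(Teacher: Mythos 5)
Your proof follows the standard filtered-to-graded transfer principle, which is precisely what the paper invokes by citing \cite{Zhang} for (1) and \cite{MR} for (2a)--(2c) and a one-line sketch for (2d); parts (2a), (2b) and (2d) are correctly argued along these classical lines. However, there is a genuine gap running through your treatment of (1) and (2c): you assert that $\bar V := \operatorname{span}\lt(V)$ generates $\Gr^F(A)$ because every homogeneous element of $\Gr^F(A)$ is $\lt(a)$ for some $a$ which is a polynomial in the generators of $A$. This inference fails because $\lt$ is not additive: if $a=\sum c_w w$ is a sum of words whose leading terms cancel, $\lt(a)$ sits in a lower graded piece and is not a priori a polynomial in $\lt$ of generators. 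Concretely, take $A=\kkk[x]$ with the algebra $\BN$-filtration $F_k=\{\deg\le 2^k\}$: here $A$ is generated by $V=F_1$, each $F_k$ is finite-dimensional, yet $\bar V$ does not generate $\Gr^F(A)$ (indeed $\Gr^F(A)$ is not finitely generated at all, $\Gr_k\Gr_l=0$ for $k,l\ge2$, and one checks $\GKdim\Gr^F(A)=0$ while $\GKdim A=1$). So your argument for (1) does not close as stated, and (2c) is even false without an implicit hypothesis that $\Gr^F(A)$ is finitely generated — under the paper's definition, $\GKdim\Gr^F(A)$ is not defined otherwise.

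The fix for (1) is routine but should be said: the GK dimension of $\Gr^F(A)$ is a supremum over all finite-dimensional subspaces $W\subset\Gr^F(A)$; given such a $W$ with a homogeneous basis $\lt(w_1),\dots,\lt(w_s)$, replace $V$ by $V'=V+\operatorname{span}\{w_1,\dots,w_s\}$, which is still a finite generating subspace of $A$, and then your dimension inequality $\dim\bigl(\sum_{i\le n}W^i\bigr)\le\dim\bigl(\sum_{i\le n}\lt(V')^i\bigr)\le\dim\bigl(\sum_{i\le n}(V')^i\bigr)$ gives the bound for every $W$, hence for the supremum. For (2c) you additionally need $\Gr^F(A)$ finitely generated (which is automatic in every application in the paper, where $\Gr^F(A)$ is identified with a monomial subalgebra of a quantum torus); then take $V$ containing lifts of the generators of $\Gr^F(A)$ and the word-length comparison goes through with the constant bookkeeping you outline. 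One last small inaccuracy: you say ``$\lt\bigl(\sum_{i\le n}V^i\bigr)$ spans $\sum_{i\le n}\bar V^i$'' — the containment is the other way (the $\bar V$-span sits inside the span of the leading terms); the correct and useful bound is $\dim\bigl(\sum_{i\le n}\bar V^i\bigr)\le\dim\operatorname{span}\lt\bigl(\sum_{i\le n}V^i\bigr)\le\dim\bigl(\sum_{i\le n}V^i\bigr)$.
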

\bpr
(1)  is \cite[Lemma 2.1 (7)]{Zhang}.\\
(2) (a), (b,) and (c) are standard;  see Theorems  1.6.6, 1.6.9, and  8.6.5 of \cite{MR}.\\
(d)  follows inductively by lifting spanning sets of  $\bigoplus_{k'\leq k} G_{k'}$ to $F_k$.
\epr

\section{The LRY skein algebra}\label{secLRY}

In this section we give the definition of the LRY (L\^e-Roger-Yang) skein algebra and its reduced version,  and present basic facts, including a description of a basis of the LRY algebra and the cutting homomorphism.

 In the theory of decorated and holed Teichmuller spaces, an ideal point in the interior of the surface is equipped with either a horocycle (decoration) or a holed structure.  We call the first type a {\bf marked} puncture, and the second a {\bf holed} puncture. To be technically easier to describe  diagrams on the surfaces, we will present a holed puncture as still an ideal point, while a marked point as a {\bf circle boundary component}.
 
 Throughout $\cR$ is a ground ring, which is a commutative $\ZQ$-domain.

\subsection{Punctured surfaces} In this subsection we define and fix conventions concerning punctured surfaces, compact arcs, and ideal arcs.

\bdf  A {\bf punctured surface} 
 $\fS$ is a surface of the form $\mathfrak{S}=\bfS\backslash \mathcal{P}$ where $\overline{\mathfrak{S}}$ is a compact, oriented surface with (potentially empty) boundary $\pbfS$ and $\mathcal{P}$ is a  finite set. An element of $\cP$ is called a puncture, or ideal point, of $\fS$; it is a boundary puncture if it lies on the boundary of $\overline{\fS}$, otherwise it is an interior puncture.  

Each connected component of $\pfS$ which is an open interval is called a {\bf boundary edge}. 
\edf
Note that $\bfS$ is uniquely determined by $\fS$.  The set of all circle components of $\pfS$ is denoted by $\cM(\fS)$, or simply  $\mathcal{M}$ when the surface is clear from context.
In figures we will depict punctures by small squares.

An {\bf ideal arc} on $\fS$ is a smooth embedding $a: (0,1) \embed \fS$ which can be extended to an immersion $\bar a: [0,1] \to \bfS$ such that $\bar a(0), \bar a(1) \in \cP$. This ideal arc is {\bf trivial} if the image of $\bar a$ bounds a disk in $\bfS$, which forces  $\bar a(0)=\bar a(1)$. 

\def\tfS{ { \widetilde{\fS}  }}

\def\ptfS{ {\partial { \tfS }}}

A {\bf $\pfS$-arc} is a smooth  proper embedding $a: [0,1] \embed \fS$. A $\pfS$-arc is {\bf trivial} if it can be homotoped in $\fS$ relative its boundary points into $\pfS$.

A {\bf loop on $\fS$} is  a simple closed curve on $\fS$. A loop is {\bf trivial} if  it bounds a disk in $\fS$. A loop is {\bf peripheral} if  it is parallel to an element of $\cM$, i.e.  circle boundary component.

The {\bf thickening of $\fS$} is $\tfS := \fS \times (-1,1)$, an oriented 3-dimensional manifold with   boundary  $\ptfS = \partial\mathfrak{S}\times (-1,1)$. Each connected component of $\ptfS$ is $c\times (-1,1)$ where $c$ is either a boundary edge or an element of $\cM$. In the first case we call $c\times (-1,1)$ a {\bf boundary wall}, and in the latter case a {\bf  boundary well}. For a point $z=(x,t)\in \fS\times (-1,1)$ the number $t$ is called its {\bf height}. The tangent space $T_z(\tfS)$ can be identified with $T_x(\fS) \times (-1,1)$, and a tangent vector in $T_z(\tfS)$ is  {\bf vertical} if it is parallel to the component $(-1,1)$ and has the positive direction of $(-1,1)$.

We will often identify $\fS$ with the copy $\fS \times \{0\}\subset \tfS$.

\subsection{Tangles, diagrams, and states} To prepare for the definition of the LRY algebra, we define stated tangles and their diagrams.

\def\pal{{\partial \al}}
\bdf

 (a) A tangle over $\fS$, or a $\ptfS$-tangle, is a  compact 1-dimensional non-oriented proper submanifold $\al \embed \tfS$ equipped with a normal vector field, called the framing, such that
\begin{itemize}
\item at each boundary point of $\al$ the framing vector is  vertical, and
\item boundary points of $\al$ on the same connected component of $\ptfS$ have distinct heights.
\end{itemize}
We will denote the set of endpoints of $\al$ by $\pal$.

(b) The height order on $\pal$ is the partial order which compares the height of endpoints lying on the same connected component of $\ptfS$.

(c) Two endpoints of $\al$ are consecutive if with respect to the height order there is no boundary point lying between them.

(d) The empty set, by convention, is a $\ptfS$-tangle.

(e) {\bf Isotopies of  $\ptfS$-tangles} are considered  in the class of $\ptfS$-tangles. 
\edf

 {Note that isotopies of $\ptfS$-tangles preserve the height order as exchanging consecutive endpoints would pass through a representative where the heights are not distinct.}

As usual, $\ptfS$-tangles are depicted by their diagrams on $\fS$, as follows.
Every $\ptfS$-tangle is isotopic to one with vertical framing.
Suppose a vertically framed $\ptfS$-tangle $\al$ is in general position with respect to
the standard projection $\pi: \fS \times (-1,1) \to \fS$, i.e.  the restriction $\pi |_{\al}:\al \to \fS$ is an immersion with transverse  double points as the only possible singularities and there are no double points on the boundary of $\fS$. 
Then $D=\pi (\al)$, together with the over/under passing information at every double point, is called a  {\bf  $\pfS$-tangle  diagram}, or a {\bf tangle diagram on $\fS$}.

Such a tangle diagram $D$ is {\bf boundary ordered} if it is equipped with a 
linear order on $D \cap b$ for each boundary component $b$ of $\fS$.
{\bf Isotopies} of  $\pfS$-tangle diagrams are  ambient isotopies in $\fS$.

A $\pfS$-tangle diagram $D$ of a $\ptfS$-tangle $\al$ inherits a boundary order from the height order of $\al$.
Clearly $D$, with this boundary order, determines the isotopy class of $\al$.
 When there is no confusion, we identify a boundary ordered $\pfS$-tangle diagram with the isotopy class of $\ptfS$-tangles.

\def\ori{{\mathfrak o}} A {\bf marked orientation}
 $\ori$ consists of 
 an {\bf orientation} of $\pfS$ and a point on each circle component of $\pfS$, called the {\bf initial point} of that component.  
 A $\partial \fS$-tangle diagram  $D$ is said to have   the {\bf $\ori$-order} if the height order is given by the direction of $\ori$. This means on a boundary component $b$ the height order  is  increasing when one goes along $b$ in  the direction of $\ori$, starting at the initial point if $b$ is a circle. 
 
 If the orientation $\ori$ of each component  is   induced from that $\fS$, the $\ori$-order is called a {\bf positive order}.
 

\def\pwal{\partial_w(\al)} 
 
 For a $\ptfS$-tangle $\al$,  or a $\pfS$-tangle diagram $\al$, the {\bf wall boundary} $\pwal$ is the set of all endpoints of $\al$ lying in the boundary walls of $\mathfrak{S}$.
    A {\bf  state}  of $\al$  is a function $s:\pwal\rightarrow\{\pm\}$.    Note that there are no states assigned to endpoints on the boundary wells.

 \subsection{The LRY Skein Algebra}  \label{ssLRY}

\bdf \label{defLRY}
 The LRY (L\^e-Roger-Yang) skein algebra $\SS$ of the punctured surface $\fS$ is the {$\ZQ$-module} freely spanned by the isotopy classes of stated $\ptfS$-tangles subject to the relations (A)-(F): \edf \begin{align}
\label{eq.skein}
\begin{tikzpicture}[scale=0.8,baseline=0.3cm]
\fill[gray!20!white] (-0.1,0)rectangle(1.1,1);
\begin{knot}[clip width=8,background color=gray!20!white]
\strand[very thick] (1,1)--(0,0);
\strand[very thick] (0,1)--(1,0);
\end{knot}
\end{tikzpicture}
&=q
\begin{tikzpicture}[scale=0.8,baseline=0.3cm]
\fill[gray!20!white] (-0.1,0)rectangle(1.1,1);
\draw[very thick] (0,0)..controls (0.5,0.5)..(0,1);
\draw[very thick] (1,0)..controls (0.5,0.5)..(1,1);
\end{tikzpicture}
+q^{-1}
\begin{tikzpicture}[scale=0.8,baseline=0.3cm]
\fill[gray!20!white] (-0.1,0)rectangle(1.1,1);
\draw[very thick] (0,0)..controls (0.5,0.5)..(1,0);
\draw[very thick] (0,1)..controls (0.5,0.5)..(1,1);
\end{tikzpicture}\, ,   \tag{A}\\
\label{eq.loop}
\begin{tikzpicture}[scale=0.8,baseline=0.3cm]
\fill[gray!20!white] (0,0)rectangle(1,1);
\draw[very thick] (0.5,0.5)circle(0.3);
\end{tikzpicture}
&=(-q^2 -q^{-2})
\begin{tikzpicture}[scale=0.8,baseline=0.3cm]
\fill[gray!20!white] (0,0)rectangle(1,1);
\end{tikzpicture}\, , \tag{B}\\
\label{eq.arcs}
\relarc{$+$}{$-$}&=q^{-1/2}\relemp,\quad
\relarc{$+$}{$+$}=0, \quad \relarc{$-$}{$-$}= 0,   \tag{C} \\
\label{eq.order}
\relup{$-$}{$+$}&=q^2\relup{$+$}{$-$}+q^{-1/2}\relconn\, ,  \tag{D}
\end{align}
\begin{align}
&\hspace{-10mm}\begin{array}{c}\includegraphics[scale=0.15]{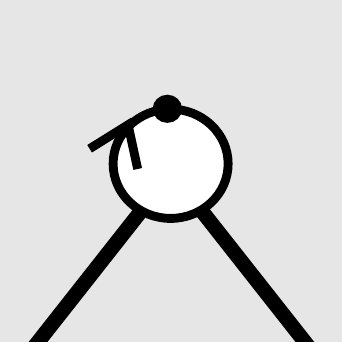}\end{array}=q^{1/2}\begin{array}{c}\includegraphics[scale=0.16]{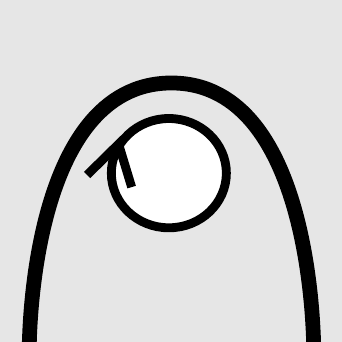}\end{array}+q^{-1/2}\begin{array}{c}\includegraphics[scale=0.16]{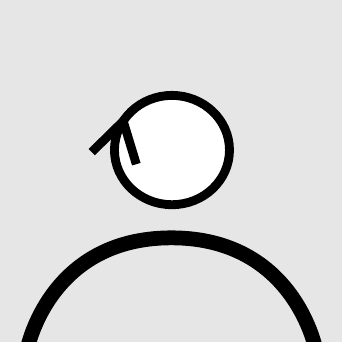}\end{array},  \tag{E} \label{eqE}\\
&\hspace{-10mm}\begin{array}{c}\includegraphics[scale=0.15]{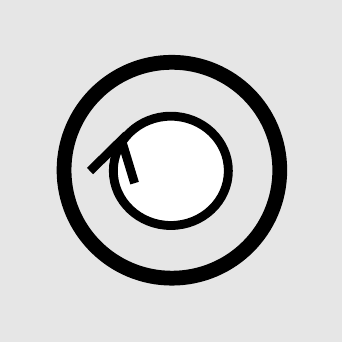}\end{array}=(q+q^{-1})\begin{array}{c}\includegraphics[scale=0.16]{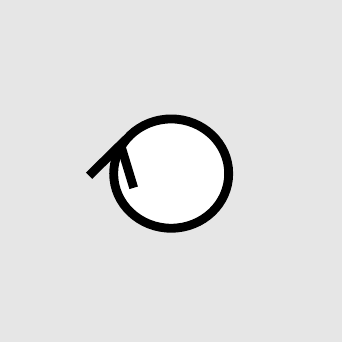}\end{array}, \tag{F} \label{eqF}
\end{align}
where multiplication is defined by stacking with respect to $(-1,1)$.

We adopt the following convention for interpreting these figures, 
as well as all other figures in the paper.
Each shaded part $S$ is a closed square, or square without an interior of a disk in \eqref{eqE} and \eqref{eqF}, subset of $\fS$, and the drawn diagram on $S$ is the diagram of $\al \cap (S \times J)$, where $J$ is a closed connected interval subset of  $(-1,1)$. We assume that $\al$ does not intersect $S \times \partial J$.
Each vertical line in Relations \eqref{eq.arcs} and \eqref{eq.order} is a part of $\pfS$; the endpoints on it are consecutive and ordered by the drawn direction. Each directed circle in \eqref{eqE} and \eqref{eqF} is a circle boundary component of $\fS$, with  the initial point at the top position. The circle component in (F) is not contractible.

We will call $S \times J$ a {\bf support} of the relation.

The skein module $\SS$ has an $\cR$-algebra structure, where the  product of two stated $\ptfS$-tangles $\al$ and $\beta$ is the result of stacking $\al$ above $\beta$.

The algebra $\SS$ admits a natural reflection $\omega$,  defined so that if 
$D$ is a stated boundary ordered $\pfS$-tangle diagram, then $\omega(D)$ is the result of switching all the crossing and reversing the height order on each boundary edge. It is easy to check that
 $\omega$ respects all the defining relations, and hence gives a well-defined anti-involution.

\brem There are several special cases:

\begin{itemize}\item  When  $\fS$ has no boundary, only \eqref{eq.skein} and \eqref{eq.loop} are used, and   $\SS$ is the usual Kauffman bracket skein algebra \cite{Prz,Turaev}.
 
\item When there is no circular boundary component, only \eqref{eq.skein}-\eqref{eq.order} are used, and $\SS$ is the  stated skein algebra introduced by the third author \cite{Le:triangular}. 
   
\item  When $\fS$ has no boundary edges, only  \eqref{eq.skein}, \eqref{eq.loop}, \eqref{eqE}, and \eqref{eqF} are used, and $\SS$ is almost the Roger-Yang skein algebra \cite{RY}, introduced to quantize Mondello's Poisson structure of {Penner's} decorated Teichm\"uller space.  See Subsection \ref{ssRY}.

\end{itemize}

  \erem

\subsection{Change of ground ring} For a ground ring $\cR$, the skein algebra $\SSR$ is defined identically to Definition \ref{defLRY}, with $\ZQ$ replaced by $\cR$. The right exactness of the tensor product implies 
\be \SSR=  \cS(\fS) \ot_\ZQ  \cR.
\label{eqRRR}
\ee

\subsection{A basis of {$\SSR$}} We now describe a basis of the $\cR$-module {$\SSR$}.

\bdf  \label{defSimpleDia}
  A {\bf simple diagram  on $\fS$} is a $\pfS$-tangle diagram $\al$ which has no crossing, no trivial $\pfS$-arc, no trivial loop, no peripheral loop, and additionally   each circle boundary component of $\fS$ intersects $\al$ at at most one point.
\edf

A state on a boundary ordered $\pfS$-tangle diagram $\al$ is {\bf increasing} if the state function $s: \pwal \to \{ \pm \}$ is an increasing function, i.e. $x \le y$ in the height order implies $s(x) \le s(y)$. Here as usual we identify $+$ with $1$ and $-$ with $-1$, so that $ - < +$.
  
\begin{theorem}\label{basisDiamond}
The set $B(\fS)$ of isotopy classes of increasingly stated, positively ordered, simple diagrams on a punctured surface $\fS$ is a free  $\cR$-basis for the $\cR$-module {$\SSR$}.
\end{theorem}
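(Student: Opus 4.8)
\medskip\noindent\textbf{Plan of proof.}\ By the base-change identity \eqref{eqRRR}, $\SSR\cong\cS(\fS)\ot_\ZQ\cR$, and under this isomorphism $B(\fS)\subset\SSR$ is the image of the corresponding set in $\cS(\fS)$ along $-\ot_\ZQ\cR$. Hence if $B(\fS)$ is a free $\ZQ$-basis of $\cS(\fS)$, it is a free $\cR$-basis of $\SSR$ for every ground ring $\cR$, and so it is enough to take $\cR=\ZQ$ and prove (i) that $B(\fS)$ spans $\cS(\fS)$ over $\ZQ$, and (ii) that $B(\fS)$ is $\ZQ$-linearly independent.

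For the spanning statement (i) the plan is the usual diagrammatic reduction. Given a stated $\ptfS$-tangle, choose a diagram $D$ and rewrite it by the defining relations, applied roughly in the order (A), (B), (F), (C), (E), (D): resolve every crossing with (A); delete every trivial loop with (B) and every peripheral loop with (F); delete every turnback on a boundary wall (a trivial $\partial\fS$-arc meeting a wall) with (C); use (E) on consecutive pairs of endpoints lying on a boundary well until $D$ meets each well in at most one point; and finally, after an isotopy making the spatial order along each wall agree with the height order --- which introduces crossings with forced over/under information, again removed by (A) --- use (D) repeatedly to make the state function increasing. Termination is by a well-founded lexicographic complexity built from the number of crossings, the number of trivial/peripheral/turnback components, the number of intersection points with $\bigcup\cM$, and the number of height-descents of the states along the walls, each of (A), (C), (D), (E) contributing only strictly simpler correction terms. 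This runs exactly as in the spanning part of the stated-skein basis theorem \cite{Le:triangular}, the one new feature being the use of (E) to control the boundary wells.

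The independence statement (ii) is the main obstacle, and the plan is to reduce it to cases already known. When $\cM=\emptyset$, $\cS(\fS)$ is the stated skein algebra and (ii) is the basis theorem of \cite{Le:triangular}; when moreover $\fS$ has no boundary walls, it is the classical theorem that simple diagrams form a basis of the Kauffman bracket skein algebra \cite{Prz,PS2}. For a general $\fS$, I would filter $\cS(\fS)$ by the total geometric intersection number with a fixed system of loops $\{\gamma_C\}_{C\in\cM}$, one encircling each boundary well $C$: relations (A)--(D) preserve this number, whereas on the leading term both (E) and (F) strictly decrease it (for (E), the two reconnected terms meet $\gamma_C$ in two fewer points than the arc turning back at $C$). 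Thus (E) and (F) ``switch off'' in the associated graded algebra $\Gr\,\cS(\fS)$, which one expects to identify as the algebra defined by (A)--(D) alone, extended by commuting ``winding'' variables recording the $\gamma_C$-intersection numbers; the two cited basis theorems then present $\Gr\,\cS(\fS)$ as free on the images of $B(\fS)$, and freeness lifts to $\cS(\fS)$ by the leading-term argument of Lemma \ref{rLift1}.

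The delicate points, where I expect the actual work to lie, are: (a) verifying that this intersection-number filtration respects the product and computing $\Gr\,\cS(\fS)$ precisely, i.e.\ showing that changing a $\gamma_C$-intersection number and changing an interior isotopy type genuinely decouple, so that no defining relation secretly yields a nontrivial $\ZQ$-linear relation among normal forms at a fixed filtration degree; and (b) matching the ``increasingly stated, positively ordered, simple'' normalization of $B(\fS)$ with an honest monomial-type basis of the graded object. An alternative, more geometric route is to cut $\fS$ along a family of ideal arcs into ideal triangles, once-decorated monogons, and pieces with no walls or wells, and to deduce (ii) from injectivity of the state-sum splitting map (Theorem \ref{t.splitting2}) together with an explicit basis of each elementary piece; this avoids the filtration bookkeeping but is mildly circular, since the splitting map is itself defined via Theorem \ref{basisDiamond} --- one repairs this by establishing the theorem for the elementary pieces directly first. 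In either approach, the crux is the mutual independence of the three families of skein relations (A)--(B), (C)--(D), (E)--(F).
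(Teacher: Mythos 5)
Your proposal takes a genuinely different route from the paper, and it is worth spelling out the difference. The paper proves the basis theorem in one stroke via the diamond lemma (Sikora--Westbury confluence): one introduces a complexity function
$c(D)=2(\#\text{crossings})+|D|+|\partial D|+\#\text{inversions}+m^2$
showing that the rewriting system $\to$ defined by the relations (A)--(F) terminates, then checks local confluence by observing that a support of (E) or (F) can always be shrunk to be disjoint from a support of (A)--(D), reducing the overlap analysis to the two already-treated cases (stated skein relations in \cite{Le:triangular}, Roger--Yang relations in \cite{RY}). The irreducible diagrams are then a basis automatically; spanning and independence are not separated. Your plan splits the two, proves spanning by essentially the same termination argument, and proposes to obtain independence by filtering by $\sum_{C\in\cM}I(\gamma_C,\cdot)$ and reducing to the known basis theorem for the stated skein algebra on the associated graded. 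That reduction, if completed, would be a nice alternative, but as stated it has the following concrete gaps.

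First, the assertion that relation (F) strictly decreases the $\gamma_C$-intersection number is false: both sides of (F) have zero geometric intersection with $\gamma_C$ (a peripheral loop is parallel to, hence disjoint from, $\gamma_C$), so (F) remains an active relation in filtration degree zero and the associated graded is \emph{not} ``the algebra defined by (A)--(D) alone extended by winding variables.'' One can repair this by adding a component count (or the number of peripheral loops) to the grading vector, which is exactly what the paper's complexity function does via the $|D|$ term; but you need to do so, and then the clean reduction to the stated skein basis theorem is no longer immediate. Second, even granting that (E) and (F) ``switch off'' in the graded, the identification of $\Gr\,\cS(\fS)$ with a stated skein algebra of an explicit subsurface, and the verification that this identification sends $B(\fS)$ to a known basis, is precisely the hard content --- you flag it yourself under (a) and (b), but nothing in the proposal supplies it, and without it the independence claim does not follow. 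Third, the alternative cutting route via Theorem~\ref{t.splitting2} is, as you correctly note, circular as phrased; the ``repair'' of first proving the theorem for elementary pieces is itself most naturally carried out by a confluence argument, i.e.\ by the route the paper actually takes. In short: your approach could plausibly work, but the decisive step (the structure of the associated graded and the behavior of (F)) is left open, whereas the paper's diamond-lemma argument closes the whole question at once by piggy-backing on the local confluence already verified in \cite{Le:triangular} and \cite{RY}.
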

Note that for a simple diagram $\al$ we don't need to specify an order on $\pal \cap b$, where $b$ is a circle component, since $|\al \cap b | \le 1$.

\begin{proof}
 The proof is almost identical to the one in \cite{Le:triangular}, where the case when $\pfS$ has no circle component was considered. Actually all the technical points have been covered in \cite{Le:triangular,RY}. We will be brief.

We will use  the diamond lemma, in the language of confluence theory for graphs due to Sikora-Westbury \cite{SW}.  
Let $\tilde{B}$ be the set of all positively ordered stated $\partial\fS-$tangle diagrams.  Denote $\cR\tilde{B}$ the $\cR$-module freely spanned by $\tilde B$. Define the binary relation $\rightarrow$ on $\cR\tilde{B}$, by $D\rightarrow E$ if $D\in \tilde{B}$ and $E\in \cR\tilde{B}$ where $D$ is the left hand side of one of the LRY defining relations and $E$ is the corresponding right hand side.  This is then extended $\cR$ linearly from $\tilde{B}$ to $\cR\tilde{B}$.  If  $D\to E_1 \dots \to E_k=E$,  we say $E$ is a descendant of $D$.

If $\rightarrow$ is {\bf terminal} and {\bf locally confluent}, then the subset of $\tilde{B}$ of {\bf irreducible elements}, i.e. elements having no descendants, is a basis for the LRY skein algebra. Here terminality  means we cannot have an infinite sequence $E_1 \to E_2 \to E_3 \to \dots$. Additionally, local confluence means for $D\in \tilde B$, if $D\to E_1$ and $B\to E_2$, then $E_1$ and $E_2$ have a common descendant.

To prove terminality it is enough to introduce a complexity function $c: \tilde{B}\to \BN$,  such that if $D \to E$, where $D \in \tilde B$, then $E$ is an $\cR$-linear combination of diagrams with less complexity. It is easy to check that the following function satisfies this requirement:

$$c(D) =2 (\# \text{crossings}) + |D| + |\partial D| + \#\text{inversions} + m^2,$$
where $|D|$ is the number of components, an inversion is a pair of boundary points where a negative state is higher than a positive state, and $m$ is the number of $\partial D$ which are on circle components of $\pfS$.

Now we show that $\rightarrow$ is locally confluent on $\tilde{B}$. Assume  $D\rightarrow E_1$ and $D\rightarrow E_2$. 

For each defining relation the change of tangles happens in the  support.
Suppose  the support $P_1$ of $D \to E_1$ is disjoint from the support $P_2$ of $D\to E_2$. Then applying $P_2$ to $E_1$ is the same as applying $P_1$ to $ E_2$, and the result is a common descendant of $E_1$ and $E_2$.

Consider the case when $P_1$ and $P_2$ are not disjoint. A support of \eqref{eqE} or \eqref{eqF} can be shrunk so that  it is  disjoint from supports of \eqref{eq.skein}-\eqref{eq.order}. Hence either both $P_1$ and $P_2$ are from \eqref{eq.skein}-\eqref{eq.order} or both are from \eqref{eqE} or \eqref{eqF}. The former is the case of the stated skein algebra and in \cite{Le:triangular} it is proven that $P_1$ and $P_2$ are locally confluent. The latter is the case of RY algebra and in \cite{RY} it is proved that $P_1$ and $P_2$ are locally confluent.

 Finally,  we observe that $B(\fS)$ is  exactly the set of irreducible elements. 
\end{proof}

\bexa\label{ssExam}
 An important punctured surface is the {\bf  1-marked monogon} $\fm$, which is the  result of removing one puncture from the boundary $S^1 \times \{0\}$  of the annulus $S^1 \times [0,1]$.  Here $S^1$ is the standard circle. Theorem \ref{basisDiamond} shows that $\cS(\fm;\cR)$ has the free $\cR$-basis
\be 
B(\fm) = \{ b(k,l) \mid 0 \le l \le k \in \BN \}
\label{eq.basisfm}
\ee
where $b(k,l)$ is the positively ordered $\partial  \fm$-tangle diagram $b(k)$ depicted in Figure \ref{fig:uu_generator0} with negative states on the first lowest $l$ endpoints and positive states on other endpoints.

\begin{figure}[htpb!]
    \centering
    \includegraphics[width=120pt]{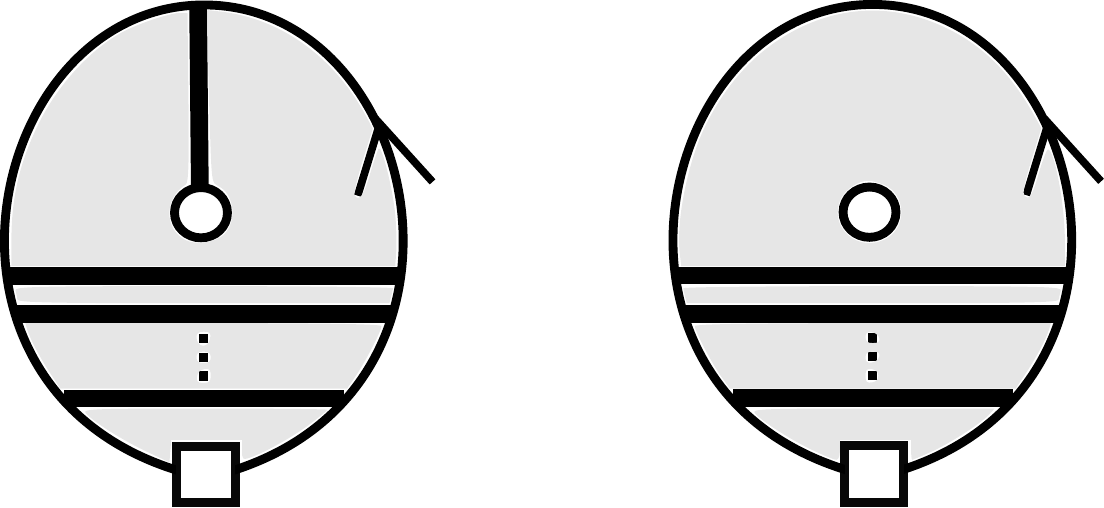}
    \caption{$b(k)$; left $k$ odd, right $k$ even. There are $\lfloor \frac{k}{2} \rfloor$ horizontal lines.} 
    \label{fig:uu_generator0}
\end{figure}

\eexa

\def\RY{{\mathrm{RY}}}
\subsection{Relation to the Roger-Yang skein algebra} \label{ssRY}

We now describe the precise relationship between our {$\SSR$} and the Roger-Yang skein algebra of \cite{RY}, which is defined only  when $\fS$ has no boundary.   The Roger-Yang skein algebra is almost identically the LRY skein algebra in this case with a specific choice of ground ring.

Assume $\fS$ has no boundary edge. Let $\cM$ be the set of circle boundary components.
 Let
$$\cR'_\fS= \cR[m^{\pm 1/2}, m\in \cM], \quad \cR_\fS= \cR[m^{\pm 1}, m\in \cM]$$
be respectively the ring of Laurent polynomials having coefficients in $\cR$ and variables $m^{1/2}$ (respectively $m$), with $m \in \cM$. 
The Roger-Yang skein algebra \cite{RY}, denoted by $\cS^\RY(\fS;\cR)$, is the $\cR_\fS$-algebra freely spanned by $\ptfS$-tangles subject to relations \eqref{eq.skein}, \eqref{eq.loop}, \eqref{eqE}',   and \eqref{eqF}. Here \eqref{eqE}' is the same as \eqref{eqE}, except the left hand side is multiplied by $m$, which is the  drawn circle component, see the first relation of Equation (\ref{eqEF}). Consider the slight extension
$$ \SRY(\fS; \cR)'  =  \SRY(\fS;\cR)\ot_{\cR_\fS} \cR'_\fS  .$$

\bpro \label{rRYRY}
 (a) There is an $\cR$-algebra  isomorphism
$$f:\SRY(\fS; \cR)' \xra{ \cong} \cS(\fS; \cR'_\fS)=\SS{\ot_\ZQ} \cR'_\fS$$
such that for all boundary ordered $\pfS$-tangle diagrams $\al$, 
\be f(\al)= (\prod_{m\in \cM}m^{-|\al \cap m|/2}) \al.
\label{eqSRY}
\ee

(b) If  {$\SSR$} is a domain then $\cS^\RY(\fS; \cR)$ is a domain.

(c) If  {$\SSR$} is Noetherian then $\cS^\RY(\fS; \cR)$ is Noetherian.

(d) If $\SS$ is orderly finitely generated, then $\SRYSR$ is orderly finitely generated.

(e) For the Gelfand-Kirillov dimension over $\cR$,
\be 
\GKdim({\SSR}) + |\cM| = \GKdim (\cS^\RY(\fS;\cR)).
\ee
\epro 
\emph{Proof}.  (a) From the defining relation it is clear that $f$ is a well-defined algebra homomorphism. By \cite{RY} the set $B(\fS)$ is a free $\cR'_\fS$-basis of $\cS^\RY(\fS; {\cR})'$, and by Theorem \ref{basisDiamond} it is a free $\cR'_\fS$-basis of $\cS(\fS, \cR'_\fS)$. It follows that $f$ is bijective.

(b) If {$\SSR$} is a domain  then by \cite[Theorem 1.2.9]{MR}, the  polynomial extension ${\SSR}\ot_\cR \cR'_\fS$ is a domain.  As  a subalgebra of a domain, $\SRY(\fS; {\cR})$ is  a domain.

(c) If {$\SSR$} is Noetherian   then by \cite[Theorem 1.2.9]{MR}, the  polynomial extension ${\SSR}\ot_\cR \cR'_\fS=\SRY(\fS {;\cR})'$ is Noetherian. 
Let $G=\{ g: \cM \to \BZ/2= \{0,1\}\}$, considered as  an abelian group. We have the $G$-gradings of rings:
\begin{align}
\cR'_\fS &= \bigoplus_{g\in G} (\cR'_\fS)_g, \quad (\cR'_\fS)_g := {\prod_{m\in \cM}} m^{g(m)/2  } \cR_\fS,\notag\\
\SRY(\fS {;\cR}) ' &=\bigoplus_{g\in G} (\SRY(\fS {;\cR})')_g, \ (\SRY(\fS {;\cR})')_g:= (\cR'_\fS)_g  \SRY(\fS {;\cR}).
\label{eqGrade2}
\end{align}
Here $g(m)\in \{0,1\}$ is considered as an element of $\BZ$.
As $\SRY(\fS {;\cR})$ is the 0-component, it is also Noetherian.

(d) Assume $\SS$ is orderly finitely generated. Then $\SRY(\fS;  {\cR})' =\SS{\ot_\ZQ} \cR'_\fS$ is 
orderly finitely generated, with a generating set $\{a_1, \dots, a_k\}$. Consider the grading \eqref{eqGrade2}. Replacing each $a_i$ by the list of all its homogeneous components, 
 we can assume that each $a_i$ is $G$-homogeneous. Assume $a_i \in  (\SRY(\fS {;\cR})')_{g_i}$.
Then  
$$b_i = \prod_{m \in \cM} m^{g_i(m)/2} a_i\in (\SRY(\fS {;\cR})')_0 = \SRYSR.$$

As each $m^{1/2}$ is invertible $\cR'_\fS$, the list $\{b_1,\dots, b_k\}$ also orderly generates $\SRY(\fS;  {\cR})'$ over $\cR_\fS'$, and we will show that it orderly generates $\SRYSR$ over $\cR_\fS$.

 Let $a\in \SRYSR$. As $a\in \SRY(\fS;  {\cR})'$ we have
\begin{align*}
a&= \sum c_{\bn} b_1^{n_1} \dots b_k^{n_k}, \quad c_\bn \in \cR'_\fS, \bn= (n_1, \dots, n_k)\in \BN^k.
\end{align*}
By taking the 0-homogeneous part,  in the above identity we can replace each $c_\bn$ by its 0-homogeneous components $(c_\bn) _0\in \cR_\fS$     This shows $\SRYSR$ is orderly finitely generated by $\{b_1, \dots, b_k\}$.

(e)
By \cite[ Corollary 8.2.15]{MR}, we have 
$$ \GKdim({\SSR}\ot_\cR \cR'_\fS) = \GKdim({\SSR}) + |\cM|.$$
Then from \cite[Proposition 8.2.9.ii]{MR}, we have that since $\SRY(\fS {;\cR})'$ is finite dimensional as a $\cS^\RY(\fS {;\cR})$-module that 
$$ \GKdim(\cS^\RY(\fS {;\cR})) = \GKdim({\SSR}) + |\cM|.\qed$$

\def\reordno{  \raisebox{-8pt}{\incl{.8 cm}{reord_no}} }

\def\reordnod{  \raisebox{-8pt}{\incl{.8 cm}{reord_nod}} }

\def\reordoneall{  \raisebox{-8pt}{\incl{.8 cm}{reord1all}} }

\def\reordoneallp{  \raisebox{-8pt}{\incl{.8 cm}{reord1allp}} }

\def\reordthree{  \raisebox{-8pt}{\incl{.8 cm}{reord3}} }
\def\Coeff{{\mathrm{Coef}}}

\subsection{Filtrations} \label{sec.fil}
\def\bqq{{\, \overset{(q)}= \, }}
Let $\cE$ be a finite set of ideal arcs and simple loops on $\fS$. For $k\in \BN$  define the $\RS$-submodule $F_k^\cE({\SSR}) \subset {\SSR}$ by
\be 
F_k^\cE({\SSR}) = \cR\text{-span of \{stated $\pfS$-tangle diagram $\al$},\quad  \sum _{e\in \cE} I(e, \al) \le k   \},
\ee
where $I(e, \al)$ is the geometric intersection number, i.e.
$$ I(e, \al) =\min\{  |\al \cap e| \quad  \mid \al' \in \ \text{isotopy {class} of} \ \al\}.$$
It is easy to see that the filtration $(F_k^\cE({\SSR})_{k=0}^\infty$ is compatible with the product, and
\be F_k^\cE({\SSR}) = F_k^\cE({\SS}) \ot_\ZQ \cR.
\ee

Assume now the components of $\cE$ are disjoint. It is  known that any $\pfS$-tangle diagram $\al$ can be isotoped to a {\bf taut}  position with respect to $\cE$, $\al'$ {so that} 
$$ |\al' \cap e| = I(\al', e) \quad \text{for all} \ e\in \cE .$$

From Theorem \ref{basisDiamond} we have
\bpro \label{r.basisGr}
The  set 
$B^\cE_{k}(\fS): = \{ \al \in B(\fS) \mid \sum _{e\in \cE} I(e, \al) \le k\}$ is  a free $\RS$-basis of $F_k^\cE({\SSR}) $. Hence the lead term maps $B(\fS)$ bijectively onto a free $\cR$-basis of $\Gr^\cE({\SSR})$.
\epro
The following will be useful.
\bpro 
\label{r.switch2} Let $\cE$ {be a} collection of boundary edges.
Suppose $\al$ is a stated  boundary ordered $\pfS$-tangle diagrams with $\sum_{e\in \cE} |\al \cap e|= k$,
 and $\al'$ is the result of changing arbitrarily the height order of $\al$. Then 
\be  \al \bqq \al' \mod F^\cE_{k-1}(\SS).
\label{eq.ex1}
\ee
In particular if $\al= \al_1 \sqcup \al_2$ then
\be  \al \bqq \al_1 \al_2 \mod F^\cE_{k-1}(\SS).
\label{eq.ex2}
\ee
\epro
\bpr

By the height exchange formula of \cite[Lemma 2.4]{CL}, for $\nu, \nu'\in \{\pm\}$,
\be
\label{eq.sign}
\begin{array}{c}\includegraphics[scale=0.7]{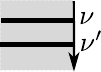}\end{array} =
 q^{\nu-\nu'-1}\,  \begin{array}{c}\includegraphics[scale=0.7]{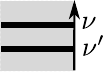}\end{array} + \delta_{\nu, -\nu'} \, \Coeff\,  \begin{array}{c}\includegraphics[scale=0.7]{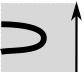}\end{array}, \quad \Coeff \in \Zq.
 \ee
 From here we have \eqref{eq.ex1}.  When $\al= \al_1 \sqcup \al_2$ the product $\al_1 \al_2$ is obtained from $\al$ by changing the height order. Hence we have \eqref{eq.ex2}.
\epr

\subsection{Grading} \label{sec.grade}
Let $e$ be a boundary edge of $\fS$. For a stated $\pfS$-tangle diagram $\al$ let
\begin{align*}
\deg_e(\al) &:=\sum _{u \in \pal \cap e} s(u) \in \BZ \\
\end{align*}

Since $\deg_e$ is preserved by the  relations \eqref{eq.skein}-\eqref{eqF}, it descends to a grading of the $\RS$-algebra
\be 
{\SSR} = \bigoplus_{k \in \BZ} {\SSR}_{\deg_e=k}.
\label{eq.deg1}
\ee
 where
 $$ {\SSR}_{\deg_e=k}:= \RS\text{-span of \{stated $\pfS$-tangle diagram $\al$},\quad  \deg_e(\al) = k   \}.$$

 \def\sS{{\mathscr S}}
\def\bSS{{\overline \sS(\fS)}}
\def\bSSR{{\overline \sS(\fS;\cR)}}

\subsection{Reduced LRY {skein} algebras and bad arcs} We define now the reduced version of ${\SSR}$, following \cite{CL}. 

 A {\bf corner arc} is a non-trivial $\pfS$-arc which cuts out a boundary puncture, as in {Figure}~\ref{fig:badarc}. A {\bf bad arc} is a stated corner arc with states  given in Figure~\ref{fig:badarc}.

\begin{figure}[htpb!]
    \centering
    \includegraphics[scale=.25]{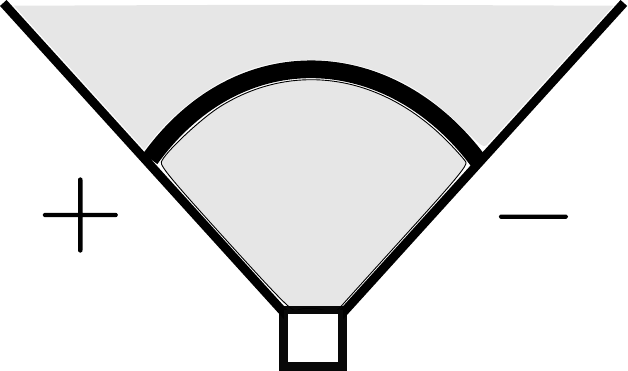}
    \caption{An example of a bad arc.}
    \label{fig:badarc}
\end{figure}

The {\bf reduced LRY skein algebra} is defined as
\[\bSS:=\SS/\mathcal{I}, \quad \bSSR := \bSS \ot_\ZQ \cR,\]
where $\mathcal{I} \subset {\SSR}$ is the two sided ideal generated by bad arcs. Since $\cI$ is invariant under the reflection $\omega$ defined in Subsection \ref{ssLRY}, $\omega$ descends to a reflection $\bar \omega$ of $\bSS$. 

 Following an identical argument to Theorem $7.1$ of Costantino-L\^e \cite{CL}, using height exchange relations on the boundary we have the following result.
\begin{theorem}
The set $\bar B(\fS)$ of isotopy classes of increasingly stated, positively ordered, simple diagrams  having no bad arcs is a free  $\cR$-basis for $\cR$-module {$\bSSR$}.
\end{theorem}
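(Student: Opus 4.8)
The plan is to mimic the proof of Theorem~\ref{basisDiamond}, replacing the diamond lemma input so that bad arcs become an additional family of reduction rules, and then identifying the irreducible elements. Concretely, I would enlarge the rewriting system on $\cR\tilde B$ used in the proof of Theorem~\ref{basisDiamond} by adjoining, for each bad arc $\beta$ (a stated corner arc with the states of Figure~\ref{fig:badarc}), the relation $\beta \to 0$; more precisely, whenever a diagram $D\in\tilde B$ contains a sub-tangle supported in a bicorner neighborhood of a boundary puncture that is a bad arc, we set $D\to 0$. Since $\cI$ is by definition the two-sided ideal generated by bad arcs, quotienting $\SS$ by $\cI$ is the same as imposing these extra reductions; so the set of irreducible elements of the enlarged system is exactly a free $\cR$-basis of $\bSSR$, by the confluence-theory version of the diamond lemma of Sikora--Westbury \cite{SW}, provided the enlarged system is still terminal and locally confluent.

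First I would re-examine termination: the complexity function $c(D)=2(\#\text{crossings})+|D|+|\partial D|+\#\text{inversions}+m^2$ from the proof of Theorem~\ref{basisDiamond} still works, since the bad-arc reduction $\beta\to 0$ sends a diagram to $0$, which vacuously has smaller (or no) complexity, and all the old reductions are untouched. Next, and this is the crux, I would verify local confluence of the overlaps involving the new rule. Overlaps of a bad-arc reduction with a reduction supported in a region disjoint from the corner neighborhood are handled exactly as in Theorem~\ref{basisDiamond} (apply the two supports independently). The genuinely new overlaps are: (i) a bad arc overlapping a height-exchange reduction \eqref{eq.order} at the same boundary edge near the puncture, and (ii) a bad arc overlapping the arc-state relations \eqref{eq.arcs} or the skein relation \eqref{eq.skein} when a bad arc is created or destroyed by a crossing change or a state-contraction. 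For these I would invoke the computation already carried out in Costantino--L\^e \cite[Theorem 7.1]{CL}: there, for the stated skein algebra (no circle boundary components), precisely these overlaps are shown to be confluent using the height-exchange relations on the boundary, and since bad arcs only involve boundary edges (not the circle boundary wells, where no states live), the Roger--Yang relations \eqref{eqE}, \eqref{eqF} play no role and their supports can be shrunk to be disjoint from any corner neighborhood, exactly as in the proof of Theorem~\ref{basisDiamond}. Thus the new overlaps reduce verbatim to the stated-skein case treated in \cite{CL}.

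Finally I would identify the irreducible elements of the enlarged system. An element of $\tilde B$ is irreducible for the old system iff it lies in $B(\fS)$, i.e. it is an increasingly stated, positively ordered simple diagram; adding the bad-arc reductions removes exactly those elements of $B(\fS)$ containing a bad arc. Hence the irreducible set is $\bar B(\fS)$, the set of isotopy classes of increasingly stated, positively ordered simple diagrams with no bad arc, and the diamond lemma gives that its image is a free $\cR$-basis of $\bSSR$. As in Theorem~\ref{basisDiamond}, the change-of-ground-ring identity $\bSSR=\bSS\ot_\ZQ\cR$ (right exactness of $\ot$) lets me reduce to $\cR=\ZQ$, so the statement holds over an arbitrary ground ring.

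The main obstacle I expect is bookkeeping the local confluence of overlaps of type (ii), where a crossing resolution or a state contraction can turn a non-bad configuration into one containing a bad arc (or vice versa), so that one branch reduces to $0$ while the other must be shown to also reduce to $0$; this is the content of \cite[Theorem 7.1]{CL} and I would lean on that reference rather than redo it, emphasizing only that the presence of circle boundary components does not interfere because bad arcs and the relations \eqref{eqE}--\eqref{eqF} have disjoint supports and no states are attached to boundary wells.
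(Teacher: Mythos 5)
Your outline follows essentially the same route as the paper, which simply records that the argument of \cite[Theorem~7.1]{CL} carries over verbatim (the Roger--Yang relations \eqref{eqE}--\eqref{eqF} having supports that can be shrunk away from any corner neighborhood, so they cannot create or destroy a bad arc).

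One step in your reasoning deserves to be proved rather than asserted. The Sikora--Westbury confluence machinery applied to your enlarged rewriting system produces a basis of the module quotient $\SS/J$, where $J$ is the $\cR$-span of all tangle diagrams containing a bad arc in a corner region, and this is not \emph{a priori} equal to $\bSSR=\SS/\cI$. The inclusion $\cI\subseteq J$ is immediate (a stacked product $a\beta b$ visibly contains $\beta$ as a sub-tangle in the corner), but the reverse inclusion $J\subseteq\cI$ --- that every diagram containing a bad arc can be written as an $\SS$-linear combination of left/right products through a bad arc --- is not a definitional tautology; it is precisely where the ``height exchange relations on the boundary'' cited in the paper's one-line proof do the real work, because the two boundary endpoints of a bad arc sit at opposite ends of the height orders on their respective edges and cannot be peeled off without passing through the exchange relation \eqref{eq.order}. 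This is part of what \cite[Theorem~7.1]{CL} establishes, alongside the overlap/confluence checks you listed. Since you defer to that reference anyway your plan is sound in substance, but the sentence ``quotienting $\SS$ by $\cI$ is the same as imposing these extra reductions'' overstates what comes for free and misattributes the nontrivial content to the local-confluence bookkeeping rather than to the identification $J=\cI$.
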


\subsection{Cutting homomorphism}\label{sec.cut}

We now present a main feature of our stated skein algebra $\SSR$: the cutting homomorphism.

Let $c$ be an ideal arc in the interior of a punctured surface $\fS $. The cutting $\Cut_c(\fS)$ is a punctured  surface having two boundary edges $c_1, c_2$ such that $\fS= \Cut_c(\fS)/(c_1=c_2)$, with $c=c_1=c_2$.

 Assume $\pfS$-tangle diagram $D$ is transverse to $c$. Let $h$ be a linear order on the set $D \cap c$. Let $p: \Cut_c(\fS) \to \fS$ be the natural projection map.
For a map $s: D \cap c \to \{\pm \}$, let $(D,h,s)$ be the stated tangle diagram over $\Cut_c(\fS)$ which is $p^{-1}(D)$ where the height order on $c_1 \cup c_2$ is induced (via $p$) from $h$, and the states on $c_1 \cup c_2$ are induced (via $p$) from $s$.

\begin{theorem}
\label{t.splitting2}  
Suppose $c$ is an interior ideal arc of a punctured  surface $\fS$ and $\cR$ is a ground ring.

(a)   There is a unique $\RS$-algebra embedding
\[\Theta_c\colon {\SSR} \embed \mathscr{S}(\Cut_c(\fS);{\cR})\]
such that if  $D$ is a stated $c$-transverse tangle diagram of a stated tangle $\al$ over $\fS$ and $h$ is any linear order on $D \cap c$, then
\begin{equation}\label{eq.cut00}
\Theta_c(\al) =\sum_{s: D \cap c \to \{\pm \}} (D, h, s).
\end{equation}

(b)  The  homomorphism $\Theta_c$ descends to an algebra embedding of reduced LRY skein algebras
\[\bar{\Theta}_c:{\bSSR})\embed \overline{\mathscr{S}}(\Cut_c(\fS);{\cR}).\]


\end{theorem}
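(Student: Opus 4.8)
The plan is to prove Theorem \ref{t.splitting2} in two stages, first establishing the cutting homomorphism $\Theta_c$ on $\SSR$ and then passing to the reduced quotient. For part (a), I would first verify that the formula \eqref{eq.cut00} is well-defined, i.e. independent of the chosen diagram $D$ of $\al$ and of the auxiliary linear order $h$ on $D\cap c$. Independence of $h$ follows from the height-exchange relation \eqref{eq.sign} (applied now on the boundary edges $c_1,c_2$ of $\Cut_c(\fS)$): summing over all states $s$, the correction term in the height-exchange formula telescopes away, so reordering $D\cap c$ does not change the right-hand side. Independence of the planar isotopy representative $D$ and invariance under the defining relations \eqref{eq.skein}--\eqref{eqF} is checked relation by relation; each relation has a support disjoint from (or transversally crossing) $c$, and since the state-sum is taken over $D\cap c$ which is unaffected by the local move, the relation is preserved after summing. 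This is exactly the argument of the third author in \cite{Le:triangular} and of Costantino--L\^e in \cite{CL}; here only relations \eqref{eqE},\eqref{eqF} are new, but their supports can be shrunk to avoid $c$ entirely, so nothing changes. That $\Theta_c$ is an algebra map is immediate from the definition of the product by stacking, since the state-sum over $c$ commutes with stacking.

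The substantive point in part (a) is injectivity. Here I would use the basis $B(\fS)$ from Theorem \ref{basisDiamond} together with a filtration argument. Equip $\SSR$ with a filtration $F^\cE$ as in Subsection \ref{sec.fil}, where $\cE$ is (or contains) the single arc $c$ regarded suitably; the associated graded object has basis the lead terms of $B(\fS)$. For a simple diagram $\al\in B(\fS)$ put into taut position with respect to $c$, the terms $(D,h,s)$ appearing in $\Theta_c(\al)$ are themselves simple diagrams on $\Cut_c(\fS)$ with exactly $|\al\cap c|=I(\al,c)$ points on $c_1\cup c_2$; after straightening and resolving any trivial arcs/crossings that appear one checks that the ``top'' state contribution (say all $+$, in the induced order) survives with an invertible coefficient and its image in $B(\Cut_c(\fS))$ is not hit by any other basis element of $B(\fS)$. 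Thus $\Theta_c$ carries the basis $B(\fS)$ to a triangular (with respect to an appropriate partial order on $B(\Cut_c(\fS))$), unit-coefficient system of vectors, hence is injective. Equivalently, one composes $\Theta_c$ with the obvious ``re-gluing'' projection that forgets the states on $c_1,c_2$ and identifies $c_1=c_2$ to recover a multiple of the identity on lead terms, and invokes Lemma \ref{rLift1}. Because all constructions are defined over $\ZQ$ and $\SSR = \cS(\fS)\ot_\ZQ\cR$ is free over $\cR$ by Theorem \ref{basisDiamond}, it suffices to prove injectivity for $\cR=\ZQ$ and then base-change is harmless.

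For part (b), the reduced algebras are $\bSSR = \SSR/\cI$ and $\overline{\mathscr S}(\Cut_c(\fS);\cR) = \mathscr S(\Cut_c(\fS);\cR)/\cI'$, where $\cI,\cI'$ are the two-sided ideals generated by bad arcs. To see $\Theta_c$ descends, I must check $\Theta_c(\cI)\subset \cI'$: it is enough to show $\Theta_c$ sends a bad arc of $\fS$ into $\cI'$. A bad arc $\beta$ cuts out a boundary puncture of $\fS$; choosing a representative transverse to $c$ with minimal intersection, either $\beta$ is disjoint from $c$ (then $\Theta_c(\beta)=\beta$ is still a bad arc on $\Cut_c(\fS)$, so lies in $\cI'$), or $\beta$ meets $c$, in which case resolving the state-sum produces, term by term, diagrams each of which contains a corner arc with a bad state near the same boundary puncture, hence lies in $\cI'$. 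This is the same computation as in \cite{CL}. Finally, injectivity of the induced map $\bar\Theta_c$ follows from injectivity of $\Theta_c$ together with the fact, from the reduced basis theorem, that $\bar B(\fS)$ and $\bar B(\Cut_c(\fS))$ are the images of the no-bad-arc simple diagrams and that the triangularity established in part (a) restricts to these sub-bases: a no-bad-arc simple diagram on $\fS$ has state-sum image whose surviving ``top'' term is a no-bad-arc simple diagram on $\Cut_c(\fS)$. I expect the main obstacle to be the bookkeeping in the injectivity argument of part (a): precisely identifying which term in the state-sum $\Theta_c(\al)$ is the lead term with respect to a well-chosen filtration and checking the resulting transition matrix is genuinely triangular with invertible diagonal, especially tracking the interplay between the height order on $c_1\cup c_2$, the increasing-state normalization of the basis, and the possible appearance of trivial loops/arcs after cutting.
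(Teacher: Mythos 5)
Your argument follows the same route as the paper, which defers entirely to Theorem~3.1 of \cite{Le:triangular} for part~(a) and Theorem~7.6 of \cite{CL} for part~(b); you correctly identify the crucial point that relations \eqref{eqE} and \eqref{eqF} have supports which can be shrunk away from $c$, so the boundary-relation analysis of the cited works carries over unchanged (this observation is precisely what the paper records in lieu of a proof). One caveat worth flagging: the ``re-gluing projection'' that forgets states on $c_1,c_2$ and identifies $c_1=c_2$ is not a well-defined $\cR$-linear map out of $\mathscr{S}(\Cut_c(\fS);\cR)$ (the discarded state labels have nowhere to live and the operation is incompatible with relation~\eqref{eq.arcs}), so your ``equivalent'' alternative should be dropped; the triangularity argument is the one that works, and the secondary order needed to isolate the all-$+$ term is by the state-sum grading $\deg_{c_1}+\deg_{c_2}$ of Subsection~\ref{sec.grade} rather than by intersection number with $c_1\cup c_2$, since the latter is constant across the terms of the state sum for a fixed $\alpha$ — but you do correctly flag this bookkeeping as the main remaining work.
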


The proof of part (a) is identical to the similar theorem for the stated skein algebra, 
 Theorem $3.1$ of \cite{Le:triangular}, as the proof concerns only the boundary relations. Similarly, the proof part (b) is identical to the proof of the similar result, 
 Theorem 7.6 of \cite{CL}.

 \section{Ideal triangulations}
 \label{secTrig}
  \def\PP{{\mathbb P}}
  
  We will provide definitions and conventions concerning ideal triangulations. To each triangulation  we associate two antisymmetric integral matrices, giving rise to two quantum tori which are the target spaces of the quantum traces that will be constructed later. 
  We also show how to use ideal triangulations to parameterize the basis $B(\fS)$ of the skein algebra ${\SSR}$.

\subsection{Ideal triangulations}  We now define ideal triangulations of triangulable surfaces.

\bdf (a) An $n$-gon is the result of removing $n$ points from the boundary of the standard oriented closed disk. 

(b) A punctured surface $\fS$ is triangulable if each connected component of it has at least one puncture and is not one of the following  exceptional surfaces:

\begin{enumerate}

\item A sphere with $ \le 2$ punctures,
\item The $n$-gon with $n=1$ or $2$.
\end{enumerate}

 (c) An ideal triangulation, or simply a triangulation, of  a triangulable surface $\fS$ is  a maximal collection $\Delta$ of non-trivial ideal arcs which are pairwise disjoint and non-isotopic.
 
 (d) A triangulated surface $(\fS,\Delta)$ is a triangulable surface $\fS$ equipped with a triangulation $\Delta$.

\edf


It should be noted that the {\bf 1-marked monogon} $\fm$, which by definition is the  annulus $S^1 \times [0,1]$ with one puncture in $S^1 \times \{0\}$ removed, is considered triangulable by our definition. The only triangulation of $\fm$, up to isotopy,  consists of the  boundary edge.

An ideal arc of $\fS$ is {\bf boundary} if it is isotopic to a boundary edge. Every boundary edge is isotopic to exactly one element of the triangulation $\Delta$, since we already excluded the bigon. Let $\Dd\subset \Delta$ denote the subset of all boundary elements. Then $\Do= \Delta\setminus \Dd$ is the set of interior edges.

Cutting the surface along edges in  $\Do$  (see the definition of cutting in Subsection \ref{sec.cut}), we get a collection $\cF(\Delta)$ of {\bf faces of the triangulation}, where each face is either a triangle or a 1-marked monogon, see Figure \ref{fig:TrigMon}.

\begin{figure}[htpb!]
    \centering
    \includegraphics[scale=.25]{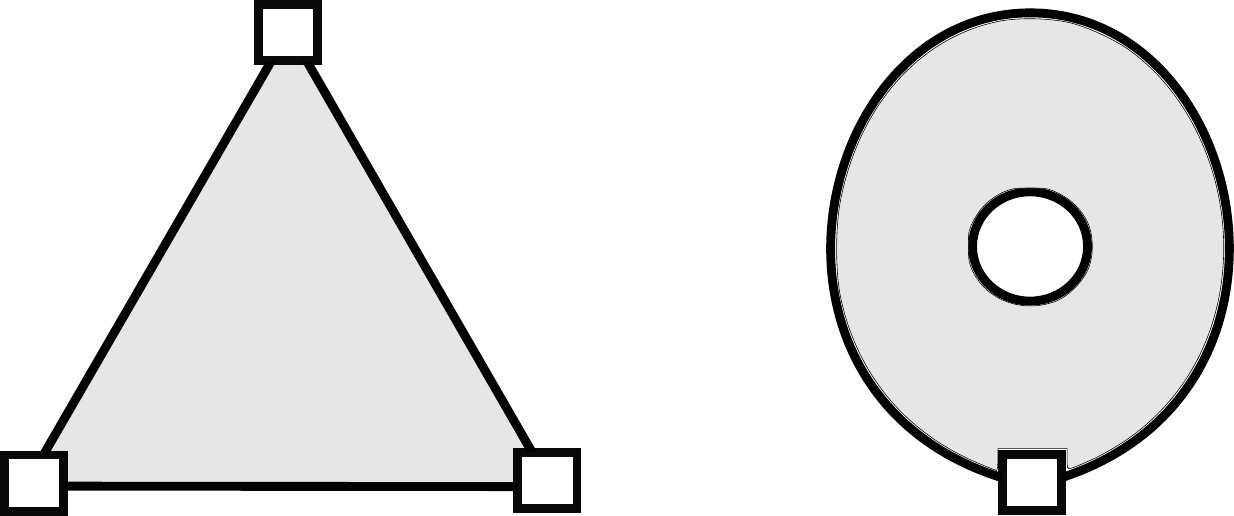}
    \caption{}
    \label{fig:TrigMon}
\end{figure}

There is a natural  projection
$\pr: \bigsqcup_{\tau\in\cF(\Delta)}\tau \onto \fS$, which identifies certain pairs of edges of the faces. If $a,b,c$ are the edges of a triangle $\tau$ in $\cF(\Delta)$, then we call $(\pr(a), \pr(b), \pr(c))$ a {\bf triangular triple}. 
Two of the edges may coincide. 

\subsection{Quantum tori associated to triangulations} \label{secXtori}

\newcommand{\facedef}[2]{
\begin{tikzpicture}[scale=0.8,baseline=0.28cm]
\draw[fill=gray!20!white] (0,1)--(0.6,0)--(1,1);
\draw[inner sep=0pt] (0.1,0.5)node{\vphantom{$b$}#1} (1,0.5)node{\vphantom{$b$}#2};
\draw[fill=white] (0.6,0)circle(2pt);
\end{tikzpicture}
}

We now define the quantum $X$-torus associated to a triangulation. These spaces will serve as the target spaces of the quantum traces.
Fix a triangulated surface $(\fS,\Delta)$.

Let $\bQ=\bQ_\Delta: \Delta\times \Delta \to \BZ$ be the anti-symmetric function defined by
\begin{equation}\label{eq.Q}
\bQ_\Delta(a,b) = \#\left( \begin{array}{c}\includegraphics[scale=0.16]{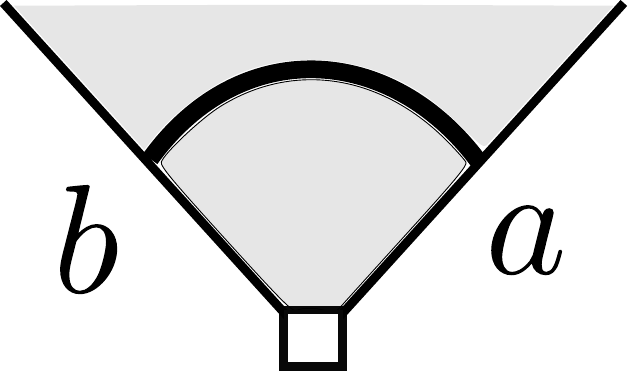}\end{array} \right)- \#\left( \begin{array}{c}\includegraphics[scale=0.16]{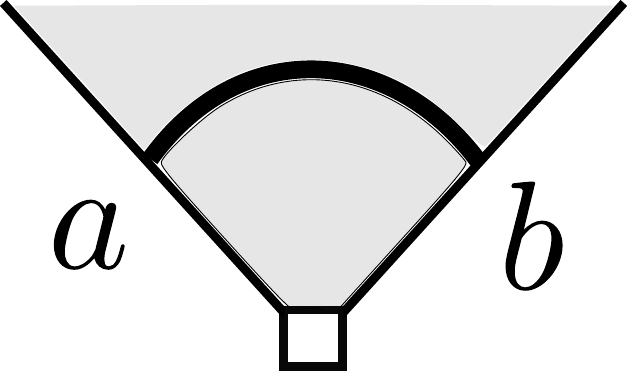}\end{array} \right).
\end{equation}
Here each shaded part is a corner of an ideal triangle. Thus, the right hand side of \eqref{eq.Q} is the number of corners where $b$ is counterclockwise to $a$ minus the number of corners where $a$ is clockwise to $b$, as viewed from the puncture. 

\def\bD{\DD}
\def\bx{{\bar x}}
\def\sXo{{{\sX}^\diamond}}

Let $\mQ_\Delta$ be the double of $\bQ_\Delta$ along $\Dd$. Explicitly, 
let $\hDd = \{ \hat e \mid e \in \Dd\}$ be a copy of $\Dd$ and
 $ \DD=\Delta\sqcup \hDd$. Then $\mQ_\Delta:\DD \times \DD \to \BZ$ is the extension of $\bQ_\Delta$, defined  so that the values of $\mQ_\Delta$ on the extension set $(\bD \times \bD) \setminus (\Delta\times \Delta)$ are 0 except
\[\mQ_\Delta(\hat e , e)=1=-\mQ_\Delta(e,\hat e) \text{ for all } e\in \Dd.\]
The geometric origin of $\bQ_\Delta$ will be given in Subsection~\ref{sec.extended}.

Let $\bsX(\fS,\Delta)$ and $\sX(\fS,\Delta)$ be the quantum $\ZQ$-tori associated to $\mQ$ and $\bQ$:
 \begin{align}
 \bsX(\fS,\Delta) &= \ZQ\la \bx_e^{\pm 1}, e \in \Delta \mid \bx_e \bx_{e'} = q^{ \bQ_\Delta(e,e') } \, \bx_{e'} \bx_{e'}\ra \\
 \sX(\fS,\Delta) &= \ZQ\la x_e^{\pm 1}, e \in \DD \mid x_e x_{e'} = q^{ \mQ_\Delta(e,e') } \, x_{e'} x_{e'}\ra ,
 \end{align}
 which are algebras with reflection. Let
  $\bsX(\fS,\Delta{;\cR})$ and $\sX(\fS,\Delta{;\cR})$ be the versions over $\RS$:
$$ \bsX(\fS,\Delta{;\cR}) = \bsX(\fS,\Delta) \ot_\ZQ \cR ,    \quad  \sX(\fS,\Delta{;\cR}) = \sX(\fS,\Delta) \ot_\ZQ \cR.  $$

There is an obvious embedding $\bsX(\fS,\Delta{;\cR})  \embed \sX(\fS,\Delta{;\cR}) $ given by $\bx_e \to x_e$ for all $e\in \Delta$.
 
 However,  there is no algebra projection $\sX(\fS,\Delta)  \onto \bsX(\fS,\Delta{;\cR}) $. So we will consider the $\RS$-subalgebra 
   $\sXo(\fS,\Delta{;\cR}) \subset  \sX(\fS,\Delta{;\cR})$  generated by $\{x_e^{\pm 1}, e \in \Delta\}$ and $\{x _{e}, e\in \hDd\}$. Note that the powers of $x_e$ with $e\in \hDd$ must be non-negative in $\sXo(\fS,\Delta{;\cR})$.

   There is an algebra surjective homomorphism $\pr: \sXo(\fS,\Delta{;\cR})\onto \bsX(\fS,\Delta{;\cR})$ given by 
 \be 
  \pr( x_e) = \begin{cases} \bx _e \quad &\text{if} \ e \in \Delta \\
 0 &\text{if} \ e \in \hDd.
 \end{cases}   
 \label{eq.pr1}
 \ee

 \brem (a) When $\fS$ has no circle boundary component,  the form $\bQ$, known as the Fock form, is related to the Weil-Petersson Poisson structure of the enhanced (or holed) Teichm\"uller space in shear coordinates, see e.g. \cite{Fock,BW,FG}. The matrix $\bQ_\Delta$ is called the face matrix in \cite{Le:Qtrace}. Also in this case $\bsX(\fS,\Delta{;\cR})$ is the square root version of the Chekhov-Fock algebra \cite{BW,CF}, or more generally the quantum $X$-space of Fock and Goncharov \cite{FG2}.

(b) When $\fS$ has no circle boundary component, the form $\mQ$ was introduced in \cite{LY2}. The generalization given here is straightforward.
\erem
\subsection{Cutting a triangulation} We show how the quantum torus $\bsX(\fS,\Delta{;\cR})$ behaves under the operation of cutting the surface along an edge in $\Do$. 
 
Let $e\in \Do$ be an interior edge. The cut surface $\Cut_e\fS$, with two boundary edges $e',e''$ which glue back to give $e$, has the triangulation $\Cut_e\Delta= \Delta \cup \{e', e''\} \setminus \{e\} $. From the definition of $\bQ_\Delta$ we have
\bpro 
\label{r.cuttri}
There is an embedding $\bar \Theta_e\colon\bsX(\fS, \Delta{;\cR}) \embed \bsX(\Cut_e\fS, \Cut_e\Delta{;\cR})$ given by 
$$ \bar \Theta_e(\bx_a)= \begin{cases} \bx_a \quad &\text{if} \ a\neq e\\
[\bx_{e'} \bx_{e''}]_\Weyl & \text{if} \ a=e.
\end{cases}
$$
Under this embedding $\bsX(\fS, \Delta)$ is equal to the monomial subalgebra of $\bsX(\Cut_e\fS, \Cut_e\Delta{;\cR})$ which is $\RS$-spanned by monomials $\{\bx^\bk  \mid \bk\colon\Cut_e\Delta\to \BZ,\ \bk(e') = \bk(e'')\}$.
\epro
 
 \brem There is no simple cutting operation for the full torus $\sX(\fS,\Delta{;\cR})$.

 \erem

 \subsection{Triangulation Coordinates
}
\label{marktricoord}  Given a triangulation $\Delta$, we show how to parameterize the set $B(\fS)$, the $\RS$-basis of the $\RS$-module ${\SSR}$, using  a submonoid of $\BZ^r$.

Recall that $B(\fS)$  consists of isotopy classes of increasingly stated, positively ordered, simple diagrams  on  $\fS$. For $\al\in B(\fS)$ define $\bn_\al: \DD \to \BN$ by
\be \begin{cases}
{\bn}_\alpha(e):=I(\alpha,e) & \text{if} \ e\in \Delta \\
{\bn}_\alpha(\hat{e})=2 (\# \{ \text{negative states on} \ \pal \cap e \} )& \text{if} \ e\in\Delta_\partial
\end{cases}.
\label{eq.bbn}
\ee

\def\LD{{\Lambda_\Delta}}

\begin{theorem} \label{thm.tri-coord}
Let $\Delta$ be an ideal triangulation of a triangulable  punctured surface $\fS$.  

(a) The map $\nu: B(\fS) \to \BN^\DD$ given by $\nu(\al)= \bn_\al$, maps $B(\fS)$ bijectively onto the  submonoid $\Lambda_\Delta\subset \mathbb{Z}^{\DD}$ consisting of $\bn\in\mathbb{N}^{\DD}$ such that 

\begin{enumerate}
 \item for any triangular triple $a,b,c$,  one has $\bn(a)+\bn(b)+\bn(c)\in 2\mathbb{N}$ and $\bn(a)\leq \bn(b)+\bn(c)$,
    \item for $e\in\Delta_\partial$ one has  
    $\bn(\hat{e})\in 2\mathbb{N}$ and 
    $\bn(\hat{e})\leq 2\bn(e)$.

\end{enumerate}

(b) The submonoid $\LD\subset \BN^\DD$ is a finitely generated $\BN$-module and the free abelian group generated by $\LD$ (in $\BZ^\DD$) has rank 
\be r(\fS)=|\DD| =2 \abs{\Pd}+ 2 |\cM|-3 \chi(\fS), \label{eqr}\ee where $\chi(\fS)$ is the Euler characteristic of $\fS$ and $\abs{\Pd}$ is the number of components of $\Pd$.

\end{theorem}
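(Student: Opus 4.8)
The plan is to prove part (a) by exhibiting mutually inverse maps between $B(\fS)$ and $\LD$, then deduce part (b) by a Euler-characteristic count together with a standard normal-surface style finite generation argument.

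\textbf{Part (a).} First I would check that $\nu$ lands in $\LD$, i.e. that for $\al \in B(\fS)$ the vector $\bn_\al$ satisfies conditions (1) and (2). Condition (1) is the classical ``triangle inequality'' for a simple multicurve crossing an ideal triangle: cut $\fS$ along $\Do$, so that $\al$ becomes a disjoint union of arcs in the faces of $\cF(\Delta)$; in each triangle $\tau$ with edges $a,b,c$, a taut simple diagram meets $\tau$ in a union of corner arcs, and counting endpoints on each side gives exactly $\bn(a)+\bn(b)+\bn(c) \in 2\BN$ and each $\bn(a) \le \bn(b)+\bn(c)$. For the $1$-marked monogon face the relevant constraint is the one on $\bn(\hat e)$. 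Condition (2) follows because, on a boundary edge $e$, an increasingly stated diagram has its negative states below its positive states, and by definition $\bn_\al(\hat e) = 2\#\{\text{negative states}\}$, while the total number of endpoints on $e$ is $\bn_\al(e) = I(\al,e)$, forcing $\bn(\hat e) \in 2\BN$ and $\bn(\hat e) \le 2\bn(e)$. Then I would construct the inverse: given $\bn \in \LD$, reconstruct a simple diagram face by face. In each triangle, condition (1) guarantees the non-negative integers $(\bn(b)+\bn(c)-\bn(a))/2$, etc., are the numbers of corner arcs of each of the three types, and these glue up consistently along interior edges because the number of endpoints $\bn(e)$ on a shared edge $e$ matches from both sides; for boundary edges, condition (2) lets us place exactly $\bn(\hat e)/2$ negative states (at the bottom, to make the state increasing) and $\bn(e) - \bn(\hat e)/2$ positive states. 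Uniqueness of the taut simple diagram with prescribed intersection pattern (no trivial arcs, no trivial/peripheral loops, at most one intersection with each circle boundary — which is encoded by $\bn(e) \le 1$ there) is the standard fact underlying Dehn--Thurston/normal coordinates, and is exactly the content already used in \cite{Le:triangular}. This gives a bijection $B(\fS) \leftrightarrow \LD$, and since both conditions (1), (2) are visibly preserved under addition, $\LD$ is a submonoid.

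\textbf{Part (b).} For the rank computation, note $\LD \subset \BN^\DD$ spans a subgroup of $\BZ^\DD$ of full rank $|\DD|$: indeed conditions (1) and (2) are inequalities (plus parity), so $\LD$ contains a translate of a full-dimensional cone — concretely, taking $\bn$ supported with large enough equal values makes all strict inequalities hold, and one can perturb in every coordinate direction while staying in $\LD\otimes\BQ$. Hence the $\BZ$-rank is $|\DD|$. Then I must identify $|\DD| = |\Delta| + |\Dd|$ with the right-hand side of \eqref{eqr}. This is a bookkeeping computation: a standard Euler characteristic count for an ideal triangulation gives the number of interior edges and faces in terms of $\chi(\fS)$ and the number of punctures/boundary components, and then $|\Dd|$ contributes the extra ``hatted'' copies. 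I would run the count by cutting into faces: if there are $F$ faces (triangles and $1$-marked monogons) and $E_\partial = |\Dd|$ boundary edges and $E_\circ = |\Do|$ interior edges, then $|\Delta| = E_\partial + E_\circ$, and gluing relations plus $\chi(\fS)$ pin these down; the $|\cM|$ term comes from the $1$-marked monogon faces (one per circle boundary component) and the $|\Pd|$ term from boundary punctures. This reproduces $|\DD| = 2|\Pd| + 2|\cM| - 3\chi(\fS)$.

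\textbf{Finite generation.} Finally, $\LD$ is finitely generated as an $\BN$-module because it is the set of lattice points in a rational polyhedral cone (cut out by the finitely many homogeneous linear inequalities of (1), (2) together with the parity/congruence conditions, which define a finite-index sublattice intersected with the cone); by Gordan's lemma such a monoid is finitely generated. I expect the main obstacle to be not any single step but assembling the gluing argument for the inverse map cleanly — in particular making sure that the reconstruction in the $1$-marked monogon faces is compatible with the state conventions, and that ``increasingly stated, positively ordered'' really does remove all the remaining ambiguity so the map is a genuine bijection rather than merely a surjection; this is where one has to invoke the taut-position normalization and the results of \cite{Le:triangular,RY} most carefully.
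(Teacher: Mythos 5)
Your proposal follows essentially the paper's own proof: verify (1)--(2) face by face, reconstruct the diagram from $\bn$ via normal-coordinate uniqueness on the triangulated part (the paper cites Matveev) plus the canonical diagram $b(\bn(e_\mu))$ in each $1$-marked monogon, state via $\bn(\hat e)$, obtain the rank by observing $\Lambda_\Delta$ contains a finite-index sublattice of $\BZ^{\tD}$, and use Gordan-type finite generation. One small correction: $\hat e$ is indexed by $\pfS$-boundary edges $e\in\Delta_\partial$, not by monogon faces (whose bounding edge $e_\mu$ is \emph{interior}), so $\bn(\hat e)$ plays no role in the monogon face --- that face imposes no independent constraint on $\bn$, and any parity condition on $\bn(e_\mu)$ already comes from the triangular triple on the other side of $e_\mu$.
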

\def\he{{\hat e}}
\def\bbn{\bn}

\begin{proof} When $\cM=\emptyset$ this is \cite[Proposition 6.3]{LY2}, whose proof is  modified  here.

If $\al\in B(\fS)$ it is easy to show that $\bn_\al\in \Lambda_\Delta$.

Let $\bn\in \Lambda_\Delta$. We will show there is a unique $\al\in B(\fS)$ such that $\bbn_\al= \bn$. Let $\fS'$ be the result of cutting off all the 1-marked monogon faces from $\fS$. It is well known \cite{Matveev} that (1) implies that there is a unique (non-stated) simple $\pfS'$-tangle diagram $\al'$ such that $I(\al', a) = \bn(a)$ for $a\in \Delta$. In each 1-marked monogon $\tau$ with boundary edge $e$ let $\al_\tau$ be $b(\bn(e))$  of Figure \ref{fig:uu_generator0}, which has $\bn(e)$ points on $e$.


We can patch all $\al_\tau$, for all 1-marked monogon faces $\tau$, together with $\al'$ to get a simple $\pfS$-tangle diagram $\al$, where the height order is the positive one. The number $\bn(\hat e)$ tells us how to uniquely state $\pal$:  on each boundary edge $e$, the first lowest  $\bn(\hat e)/2$ boundary points  have negative states, while the remaining ones have positive states.  This shows $\nu$ is bijective.

(b) As $\LD\subset \BZ^\DD$ is defined by a finite number of rationally linear inequalities, it is a finitely generated $\BN$-module. 

To compute the rank, we show that the group generated by $\Lambda_\Delta$ contains $(2\mathbb{Z})^\DD$. Let $\mathbf{2}\in\mathbb{Z}^{\DD}$ be the constant map $2$, and $\mathbf{d}_a\in\mathbb{Z}^{\DD}$ be the indicator function on $a\in\DD$. It is clear that $\mathbf{2}$ and $\mathbf{2}+2\mathbf{d}_a$ are in $\Lambda$ for all $a\in\DD$. Thus the difference $2\mathbf{d}_a$ is in the group generated by $\Lambda_\Delta$, and they span $(2\mathbb{Z})^{\DD}$.  Then the final equality is a standard Euler characteristic argument.
\end{proof}

 \section{Quantum traces for triangulated surfaces}
 \label{secQtraces}

 For a triangulation $\Delta$ of a punctured surface $\fS$ we will construct two quantum traces. The first embeds the reduced skein algebra ${\bSSR}$ into the quantum torus $\bsX(\fS,\Delta{;\cR})$, while the second embeds ${\SSR}$ into the quantum torus $\sX(\fS,\Delta{;\cR})$.

 We will show that ${\SSR}$ is orderly finitely generated and calculate the Gelfand-Kirillov dimension of ${\SSR}$. Additionally, we show that ${\SSR}$ and ${\bSSR}$, for any punctured surface (not necessarily triangulable), are Noetherian if the ground ring $\cR$ is.

\subsection{Quantum traces}  \label{ssQtraces}
\def\btr{{\overline{\tr}}}
We formulate the main results about the existence of quantum traces and their consequences. Recall that $\bSS$ and $\bsX(\fS,\Delta) $ are $\ZQ$-algebras with reflection.

\bthm   \label{thm.embed3}

Suppose $(\fS,\Delta)$ is a triangulated surface and the ground ring $\cR$ is a commutative $\ZQ$-domain. 
There exists a reflection invariant algebra homomorphism 
$$ \btr_\Delta: \bSS \to \bsX(\fS,\Delta)$$
with the following properties. 

(a)  The $\cR$-algebra homomorphism 
$$\btr_{\Delta; \cR}:= \btr_\Delta\ot_\ZQ \cR : \bSSR \to   \bsX(\fS,\Delta; \cR)$$
is injective. Consequently $\bSSR$ is a domain.

(b) The map $\btr_{\Delta; \cR}$ is compatible  with the cutting homomorphism in the following sense: For an edge $e\in \Do$, the diagram
\be
\begin{tikzcd}
{\bSSR}\arrow[r,hook,"\btr_{\Delta; \cR}"]
\arrow[d,"\bar \Theta_e"]  
& \bsX(\fS,\Delta{;\cR})
\arrow[d,"\bar \Theta_e"] \\
\bcS   (  \Cut_e\fS {;\cR})\arrow[r,hook,"\btr_{\Cut_e\Delta; \cR}"] & \bsX(\Cut_e\fS,\Cut_e\Delta{;\cR})
\end{tikzcd}
\label{eq.dia9}
\ee

\ethm

To formulate the result for the unreduced quantum trace recall that
 we  defined algebra filtrations $(F^\Delta_k({\SSR}))_{k=0}^\infty$ in  Subsection \ref{sec.fil}: 
 $$F^\Delta_k({\SSR})= \RS\text{-span of } \ \{\text{stated $\pfS$-tangle diagrams $\al$ with}\  \sum _{e\in \Delta} I(e, \al) \le k\}.$$

The quantum torus $\sX(\fS,\Delta{;\cR})$ has the algebra filtration  $(F_k^\Delta(\sX(\fS,\Delta{;\cR})))_{k=0}^\infty$ where
$$ F_k^\Delta(\sX(\fS,\Delta{;\cR}))= 
 \RS\text{-span of } \ \{ x^\bk \mid  \bk \in  \BZ^{ \tD  }, \  \sum_{e\in {\Delta}} \bk(e) \le k\}.
 $$
The associated graded algebra $\Gr^\Delta(\sX(\fS,\Delta{;\cR}))$ is canonically isomorphic to the quantum torus $\sX(\fS,\Delta{;\cR})$, and we identify them accordingly.

\bthm   \label{thm.embed3e}  Suppose $(\fS,\Delta)$ is a triangulated surface and the ground ring $\cR$ is a commutative $\ZQ$-domain. 
 There exists a reflection invariant algebra homomorphism 
$$ \tr_\Delta: \SS \to \sXo(\fS,\Delta) \subset \sX(\fS,\Delta)$$
with the following properties.

(a) The following $\cR$-algebra homomorphism is injective:
$$\tr_{\Delta; \cR}:= \tr_\Delta\ot_\ZQ \cR : \SSR \to   \sXo(\fS,\Delta; \cR).$$
Consequently $\SSR$ is a domain. Moreover, the following diagram is commutative
\be
\begin{tikzcd}
{\SSR}\arrow[r,hook,"\tr_{\Delta;\cR}"]
\arrow[d,"\pr"]  
& \sXo(\fS,\Delta{;\cR})
\arrow[d,"\pr"] \\
{\bSSR}\arrow[r,hook,"\btr_{\Delta;\cR}"] & \bsX(\fS,\Delta{;\cR})
\end{tikzcd}
\ee

(b)
The quantum trace $\tr_{\Delta;\cR}$ respects the filtrations $F^\Delta$ in the sense that for $k\in \BN$,
\be 
\tr_{\Delta;\cR} (F^\Delta_k({\SSR})  ) \subset  F^\Delta_k(\sX(\fS, \Delta {;\cR})).
\label{eq.in5}
\ee
Moreover, its associated  graded homomorphism gives an algebra isomorphism
\be  \Gr(\tr_{\Delta;\cR}): \Gr^\Delta({\SSR}) \xrightarrow{\cong} \bT(\mQ,\Lambda_\Delta{;\cR})
\ee
 where $\bT(\mQ,\Lambda_\Delta{;\cR})$ is the monomial subalgebra associated to the 
 monoid $\Lambda_\Delta$ of Theorem~\ref{thm.tri-coord}.

(c) The GK dimension of ${\SSR}$ over $\RS$ is $r(\fS)$
defined by \eqref{eqr}.

\ethm

The proofs of  Theorems \ref{thm.embed3} and \ref{thm.embed3e} are given in Subsections \ref{ssProof1}-\ref{ssProofe}. Here is a  corollary.

\begin{theorem}\label{thm-noether1}
Let $\fS$ be any punctured surface (not necessarily triangulable).
\begin{enumerate}
\item As an $\cR$-algebra, $\cS(\fS)$ is orderly finitely generated.
\item If $\cR$ is Noetherian, then ${\SSR}$ and ${\bSSR}$  are Noetherian domains.
\end{enumerate}
\end{theorem}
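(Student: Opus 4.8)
The plan is to bootstrap the statement for an arbitrary punctured surface from the triangulable case, which is covered by Theorems \ref{thm.embed3} and \ref{thm.embed3e} together with the monoid finiteness in Theorem \ref{thm.tri-coord}(b) and Lemma \ref{r.mono}. First I would reduce to the triangulable case. A punctured surface $\fS$ is a disjoint union of connected punctured surfaces, and $\cS(\fS;\cR)$ is the tensor product (over $\cR$) of the skein algebras of the components; a finite tensor product of orderly finitely generated algebras is orderly finitely generated, and a finite tensor product of Noetherian domains over a Noetherian ground ring is a Noetherian domain (using \cite[Theorem 1.2.9]{MR} iteratively, as in the proof of Proposition \ref{rRYRY}). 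So it suffices to treat a connected $\fS$. If $\fS$ is triangulable, fix a triangulation $\Delta$. By Theorem \ref{thm.embed3e}(b), $\Gr^\Delta(\cS(\fS;\cR))\cong \bT(\mQ,\Lambda_\Delta;\cR)$, and by Theorem \ref{thm.tri-coord}(b), $\Lambda_\Delta$ is a finitely generated $\BN$-module; hence Lemma \ref{r.mono} gives that $\bT(\mQ,\Lambda_\Delta;\cR)$ is orderly finitely generated, and Noetherian when $\cR$ is. Then Proposition \ref{liftfacts}(2)(d) lifts orderly finite generation to $\cS(\fS;\cR)$, and Proposition \ref{liftfacts}(2)(b) lifts the Noetherian property; integrality is part of Theorem \ref{thm.embed3e}(a) (or again Proposition \ref{liftfacts}(2)(a) applied to the quantum torus, which is a domain). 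For the reduced algebra $\bcS(\fS;\cR)$ one argues identically using $\btr_\Delta$ from Theorem \ref{thm.embed3}; alternatively, since $\bcS(\fS;\cR)$ is a quotient of $\cS(\fS;\cR)$, the Noetherian property passes to it immediately.

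The remaining case is a connected non-triangulable $\fS$: a sphere with at most two punctures, or a monogon or bigon. The plan is to handle these directly, by noting that in each case $\cS(\fS;\cR)$ is very small. For the sphere with $\le 1$ puncture the skein algebra is just $\cR$ (every loop is trivial or peripheral, and peripheral loops around the unique puncture are isotopic to trivial ones on a sphere), hence trivially orderly finitely generated and Noetherian. For the sphere with two punctures $\fS$ retracts onto an annulus, so $\cS(\fS;\cR)\cong\cR[z]$ with $z$ the core curve — a polynomial ring in one variable — which is orderly finitely generated and Noetherian whenever $\cR$ is, and is a domain. For the monogon and bigon one computes directly from the defining relations (A)--(F): using relations (C) and (D) every stated arc reduces to a monomial in a small number of generators with an ordering, so Theorem \ref{basisDiamond} already exhibits a basis of $B(\fS)$ that is indexed by a finitely generated monoid; explicitly, the monogon's basis is $\{b(k,l)\mid 0\le l\le k\}$ computed in Example \ref{ssExam}, and one reads off orderly finite generation and the Noetherian/domain properties from this. (For the bigon one similarly finds $\cS(\fS;\cR)$ identified with a quantum-plane-type algebra, which is an iterated Ore extension of $\cR$, hence a Noetherian domain when $\cR$ is.)

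The main obstacle I anticipate is not any single hard argument but the bookkeeping of the exceptional surfaces: one must verify honestly that the naive pictures of the sphere-with-few-punctures, the monogon, and the bigon give the claimed small algebras, rather than waving at it, because these are precisely the cases excluded from the triangulation machinery and so nothing upstream covers them. A secondary care point is making sure the passage from a connected triangulable $\fS$ back to the disconnected case really does preserve "orderly" finite generation (not just finite generation): for a tensor product $A\ot_\cR B$ with $A$ orderly generated by $y_1,\dots,y_m$ and $B$ by $z_1,\dots,z_n$, the monomials $(y_1\ot 1)^{a_1}\cdots(y_m\ot 1)^{a_m}(1\ot z_1)^{b_1}\cdots(1\ot z_n)^{b_n}$ span because the two factors commute with each other, so this is routine — but it should be stated. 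Everything else is a direct appeal to the already-proved Theorems \ref{thm.embed3}, \ref{thm.embed3e}, \ref{thm.tri-coord} and to Lemma \ref{r.mono} and Proposition \ref{liftfacts}.
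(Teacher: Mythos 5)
Your reduction to the triangulable case is the same as the paper's, and the chain Theorem~\ref{thm.embed3e}(b) $\to$ Theorem~\ref{thm.tri-coord}(b) $\to$ Lemma~\ref{r.mono} $\to$ Proposition~\ref{liftfacts} is exactly what the paper does. The problem is in your treatment of the non-triangulable case, where you have misidentified which surfaces need to be handled. You write ``the remaining case is a connected non-triangulable $\fS$: a sphere with at most two punctures, or a monogon or bigon.'' That is not the whole list. The definition requires each connected component to have at least one puncture in $\cP$, and $\cP$ does \emph{not} include the decorated punctures (circle boundary components). Thus \emph{every} surface with $\cP=\emptyset$ is non-triangulable: the closed surface $\Sigma_g$ for any $g$, and more generally any $\Sigma_{g,m}$ having only decorated punctures. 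These are not small algebras you can compute by hand, and your case-by-case plan simply does not cover them.

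The paper sidesteps all of this with one uniform move: given any non-triangulable $\fS$, remove a few \emph{interior points} from $\fS$ to produce $\fS'=\fS\setminus\{p_1,\dots,p_k\}$, a punctured surface with enough ideal points on every component to be triangulable. The inclusion $\fS'\hookrightarrow\fS$ induces a \emph{surjection} $\cS(\fS';\cR)\onto\SSR$ (any skein on $\fS$ can be isotoped off finitely many points), and orderly finite generation and Noetherianity both pass down through surjections. No exceptional-case enumeration is needed, and the closed and only-decorated surfaces are handled automatically. Your decomposition into connected components and the observation that these properties survive tensor products are fine but unnecessary once you use this trick, since it already works component-by-component. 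As a smaller correction, you conflate the monogon (the $1$-gon, a disk with one boundary puncture, which is excluded from triangulability) with the $1$-marked monogon $\fm$ of Example~\ref{ssExam} (an annulus with one boundary puncture on one circle, which \emph{is} triangulable). The $1$-gon has only the empty simple diagram, so $\cS(1\text{-gon};\cR)=\cR$; the basis $\{b(k,l)\}$ you cite belongs to $\fm$, not to the $1$-gon.

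One further caution worth recording: the surjection argument settles orderly finite generation and Noetherianity but does \emph{not} by itself transfer the domain property (quotients of domains need not be domains). For the triangulable case integrality is supplied by $\tr_{\Delta;\cR}$ in Theorem~\ref{thm.embed3e}(a), and for the exceptional $\Sigma_{g,m}$ it needs the later Theorem~\ref{thmMain2} or the Appendix; be careful not to claim more than the surjection actually gives you.
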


\bpr First assume that $\fS$ is triangulable, with a triangulation $\Delta$.

  By Theorem \ref{thm.embed3}(b), there is an algebra $\BN$-filtration of ${\SSR}$ such that the associated graded algebra is the monomial algebra $\bT(\mQ, \Lambda_\Delta{;\cR})$. 
  Since $\Lambda$ is a finitely generated $\BN$-module by Theorem~\ref{thm.tri-coord}, the algebra $\bT(\mQ, \Lambda_\Delta{;\cR})$ is orderly finitely generated by Lemma~\ref{r.mono}. By Proposition~\ref{liftfacts}, the algebra ${\SSR}$ is orderly finitely generated.
  
  Suppose  $\cR$ is Noetherian.  Then $\bT(\mQ, \Lambda_\Delta{;\cR})$ is Noetherian by Lemma \ref{r.mono}. By Proposition~\ref{liftfacts}, the algebra ${\SSR}$ is Noetherian.

Now assume $\fS$ is not triangulable. By removing a few punctures from $\fS$ we get a new punctured surface $\fS'$ which is triangulable. The embedding $\fS' \embed \fS$ induces a surjective algebra homomorphism $\cS(\fS'{;\cR}) \onto {\SSR}$. Since both orderly finite generation and Noetherianity are preserved under projections, the case of $\fS$ follows.

As ${\bSSR}$ is a quotient of ${\SSR}$, the statements concerning ${\bSSR}$ also hold true.
\epr

  \brem(a) When $\pfS=\emptyset$ we have $\btr_\Delta= \tr_\Delta$, and the map was constructed by Bonahon and Wong \cite{BW}. 
  
  (b) When $\cM=\emptyset$  Theorems \ref{thm.embed3} and  \ref{thm.embed3e} were proved in \cite{LY2}. 
  
 (c) The reduced quantum trace $\btr_\Delta$ is not injective on ${\SSR}$ while the unreduced one $\tr_\Delta$ is. The unreduced quantum trace helps to establish many properties of ${\SSR}$. Later when we define the boundary reduced skein algebra to prove the main theorems, it is the unreduced quantum trace that descends to this new version of skein algebra, but not the reduced quantum trace.  
  \erem

 \def\eqbu {{\qeq }}
\subsection{On the proof of Theorem \ref{thm.embed3}} \label{ssProof1}
The proof  is  parallel to that of the similar case when $\fS$ has no circle boundary component considered in \cite{Le:triangular,CL,LY2}. We first cut the surface into faces, construct the reduced quantum trace for each face, then patch the quantum traces of all faces to get the quantum trace for $\fS$. The new problem here is besides triangle faces we will also have  1-marked monogon faces. We will construct the reduced quantum trace for 1-marked monogon, then proceed similarly as in \cite{LY2}.

\no{
For two elements $x,y$ in an $\RS$-algebra we write
$ x \eqbu y$ if 
$$ x = (q^k v_1^{k_1} \dots v_r ^{k_r}) \, y, \ \text{where} \ v_1, \dots, v_r \in \cM;\  k,k_1, \dots, k_r\in \BZ.$$
}

\def\fT{{\mathfrak T}}
\def\bST{{\overline{\mathscr{S}}(\ft)}}
\def\bSTR{{\overline{\mathscr{S}}(\ft;\cR)}}
\def\BAT{{\bT^A(\ft)}}
\def\BXT{{\overline {\sX}(\ft)}}
\def\BXTR{{\overline {\sX}(\ft;\cR)}}
\def\btr{{ \overline{\tr}  }}
\subsection{Quantum trace for an ideal triangle} Here we recall the reduced quantum trace for triangles, already considered in \cite{Le:triangular,CL,LY2}.

Let $\ft$ be the ideal triangle, with boundary edges $a,b,c$ in counterclockwise order as in Figure \ref{fig:triangle}. Let  $\alpha,\beta,\gamma$ be the stated $\partial \ft$-arc given in \ref{fig:triangle}, with states $+$ at all endpoints. 

\begin{figure}[h]
    \centering
    \includegraphics[width=80pt]{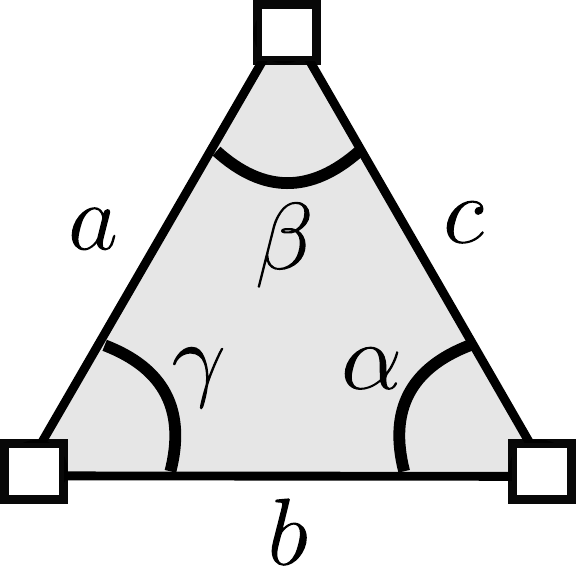}
    \caption{The ideal triangle $\ft$}
    \label{fig:triangle}
\end{figure}
Since $\ft$ has a unique triangulation, $\Delta=\{a,b,c\}$, we will drop $\Delta$ from the notation $\bsX(\ft,\Delta{;\cR})$. By definition, $\BXTR= \BXT\ot_\ZQ \cR$, where
\begin{align}
\BXT&=\ZQ\langle \bx_a^{\pm},\bx_b^{\pm},\bx_c^{\pm}\rangle/(q\bx_ a\bx_b=\bx_b\bx_a,q\bx _b\bx _c=\bx_c\bx_b,q\bx_c\bx_a=\bx_a\bx_c)
\end{align}
Recall that $\deg_e(D)$ is defined in Subsection \ref{sec.grade}.


\begin{theorem}[Theorem $7.11$ of \cite{CL} and  Lemma 6.10 of \cite{LY2}] \hfill
\label{thm.trtri}

(a) There is a unique reflection invariant $\ZQ$-algebra embedding $\btr_\ft\colon {\bST\embed \BXT}$ such that 
$$  \btr_\ft (\al) = [\bx_b\bx_ c]_\Weyl, \btr_\ft (\beta) = [\bx_ a\bx_ c]_\Weyl, \btr_\ft (\gamma) = [\bx_ a\bx_ b]_\Weyl.$$

Additionally, the map $\btr_{\ft, \cR}:= \btr_\ft \ot_\ZQ \cR$ is injective for any ground ring $\cR$.

(b) If $\al$ is 
stated $\partial \ft$-tangle diagram, then there exists $C\in \ZQ$ such that
\be 
\btr_\ft(\al) = C \,  (\bx_a)^{ \deg_a \al } \, (\bx_b)^{ \deg_b \al }\, (\bx_c)^{ \deg_c \al}. 
\ee

(c) If $\al$ is either (i) a product of non-bad stated $\partial \ft$-arcs, or (ii) a disjoint union of positively stated corner arcs, then
\be 
\btr_\ft(\al) \qeq  (\bx_a)^{ \deg_a \al } \, (\bx_b)^{ \deg_b \al }\, (\bx_c)^{ \deg_c \al}.
\label{eq.deg8a}
\ee

\end{theorem}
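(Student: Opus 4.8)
The plan is to deduce parts (a)--(c) from the corresponding facts for the (reduced) stated skein algebra of the ideal triangle, namely Theorem~7.11 of \cite{CL} and Lemma~6.10 of \cite{LY2}. Since $\ft$ has no circle boundary components, relations \eqref{eqE} and \eqref{eqF} are vacuous on it, so $\bST$ and $\BXT$ coincide with the reduced stated skein algebra and the $X$-type quantum torus of the triangle treated there, and those proofs transfer verbatim. Below I recall their structure and point out where the actual work sits.

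\emph{Part (a).} The first step is to fix a presentation of $\bST$. By the reduced analogue of Theorem~\ref{basisDiamond}, elements of $\bar B(\ft)$ are built out of parallel copies of corner arcs; combining this with the height-exchange relations \eqref{eq.arcs} and \eqref{eq.order} one checks (following \cite{CL}) that the positively stated corner arcs $\al,\beta,\gamma$ generate $\bST$, that they pairwise $q$-commute with commutation matrix equal to the matrix $\bQ_\ft$ of the target $\BXT$, and that the Weyl-normalized monomials in $\al,\beta,\gamma$ form a $\ZQ$-basis of $\bST$. The prescription $\al\mapsto[\bx_b\bx_c]_\Weyl$, $\beta\mapsto[\bx_a\bx_c]_\Weyl$, $\gamma\mapsto[\bx_a\bx_b]_\Weyl$ is then the multiplicatively linear homomorphism attached to $H=\left(\begin{smallmatrix}0&1&1\\1&0&1\\1&1&0\end{smallmatrix}\right)$ (one checks $H\,\bQ_\ft\,H^{T}=\bQ_\ft$), so it extends to a well-defined algebra homomorphism $\btr_\ft$, reflection invariant because all Weyl monomials are, and unique because $\al,\beta,\gamma$ generate $\bST$. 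For injectivity note $\det H=2\ne0$, so $\bk\mapsto\bk H$ is injective on $\BZ^{3}$; hence $\btr_\ft$ carries the monomial basis of $\bST$ to a family of pairwise distinct Weyl monomials $\bx^{\bk}$, which is $\ZQ$-linearly independent in the quantum torus $\BXT$. Since $\bST$ and $\BXT$ are free $\ZQ$-modules, $\btr_\ft\ot_\ZQ\cR$ stays injective for every ground ring $\cR$; in particular $\bSTR$ embeds into the domain $\BXTR$.

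\emph{Part (b).} Put on $\bST$ the $\BZ^{3}$-grading by $(\deg_a,\deg_b,\deg_c)$ of Subsection~\ref{sec.grade} (each $\deg_e$ is preserved by all the defining relations) and on $\BXT$ the $\BZ^{3}$-grading by the exponent of the basis monomials $\bx^{\bk}$. On the generators $\btr_\ft$ is homogeneous of the correct tridegree --- $\al$ has tridegree $(0,1,1)$ and $[\bx_b\bx_c]_\Weyl$ has exponent $(0,1,1)$, and likewise for $\beta,\gamma$ --- so $\btr_\ft$ is a graded algebra map. Any stated $\partial\ft$-tangle diagram $\al$ is homogeneous of tridegree $(\deg_a\al,\deg_b\al,\deg_c\al)$, and the homogeneous component of $\BXT$ of that tridegree is the free rank-one $\ZQ$-module on $\bx_a^{\deg_a\al}\bx_b^{\deg_b\al}\bx_c^{\deg_c\al}$; therefore $\btr_\ft(\al)=C\,\bx_a^{\deg_a\al}\bx_b^{\deg_b\al}\bx_c^{\deg_c\al}$ for a unique $C\in\ZQ$.

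\emph{Part (c), and the main obstacle.} Here one must upgrade $C\in\ZQ$ to $C\qeq1$, i.e. $C$ a unit $q^{k}$. Case (ii) is immediate: a disjoint union of positively stated corner arcs $q$-commutes past itself, so by Proposition~\ref{r.switch2} it is $q$-proportional to a product $\al^{i}\beta^{j}\gamma^{k}$, whose image $[\bx_b\bx_c]_\Weyl^{i}[\bx_a\bx_c]_\Weyl^{j}[\bx_a\bx_b]_\Weyl^{k}$ is $q$-proportional to a single Weyl monomial, and comparison with part (b) gives $C\qeq1$ along with the stated exponents. Case (i) --- a product $\al_1\cdots\al_n$ of arbitrary non-bad stated $\partial\ft$-arcs --- is the delicate one and is essentially the content of the cited computations. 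The point is that each factor, because it is a non-bad stated arc, reduces under relations \eqref{eq.arcs}, \eqref{eq.order} (and \eqref{eq.loop}) --- modulo a single power of $q$ --- to a Laurent monomial $\al^{i}\beta^{j}\gamma^{k}$ with $i,j,k\in\BZ$ or to the empty diagram, with no surviving additive corrections; it is precisely the non-badness hypothesis that prevents a non-monomial $\ZQ$-coefficient from appearing (an arbitrary stated returning arc, by contrast, picks up such a coefficient on reduction). Granting this, $\al_1\cdots\al_n$ is $q$-proportional to a Laurent monomial in $\al,\beta,\gamma$ and one concludes as in case (ii). Once the presentation in (a) and the gradings in (b) are in place everything else is formal, so the arc-by-arc bookkeeping in case~(i) --- and the essential use of the hypotheses there --- is where the real effort lies.
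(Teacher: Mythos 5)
Your proposal is correct and takes essentially the same approach as the paper: the paper proves nothing here but simply attributes the theorem to Theorem 7.11 of \cite{CL} and Lemma 6.10 of \cite{LY2}, and you rightly note that since $\ft$ has no circle boundary component the LRY and stated skein algebras coincide, so those results apply verbatim. Your reconstruction of the internal structure of the cited proofs — the presentation of $\bST$ and the multiplicatively linear map with $\det H = 2$ for (a), the $\BZ^{3}$-grading by $(\deg_a,\deg_b,\deg_c)$ for (b), and the explicit arc-by-arc reductions as the actual content behind (c) — accurately locates the work in the references.
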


\def\pfm{{\partial \fm}  }
\def\bSM{{ \overline{\sS}(\fm)}}
\def\bSMR{{ \overline{\sS}(\fm;\cR)}}
\def\bXM{{ \overline{\sX}(\fm)}}
\def\bXMR{{ \overline{\sX}(\fm;\cR)}}
\subsection{Quantum trace for 1-marked monogon $\fm$}

 In this Subsection  we construct the reduced quantum trace for the  1-marked monogon $\fm$, by giving an explicit  presentation of the algebra $\cS(\fm{;\cR})$.

 Let $e$ be the boundary edge of $\fm$ and $u_\pm$ be the stated $ \pfm$-arc depicted in Figure \ref{fig:monogon}, with state $\pm$. Since $\fm$ has a unique triangulation, $\Delta=\{e\}$, we will drop $\Delta$ in notations.
From the definition, ${\bXM}= \ZQ[\bx_e^{\pm 1}]$, which is commutative.

\begin{figure}[h]
    \centering
    \includegraphics[width=50pt]{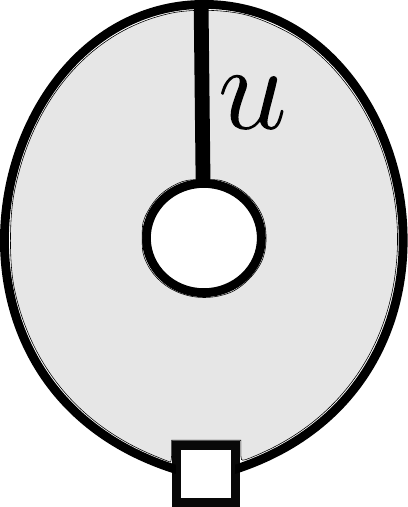}
    \caption{}
    \label{fig:monogon}
\end{figure}

\def\bSm{{\overline{\cS}(\fm)}}
\def\bSmR{{\overline{\cS}(\fm;\cR)}}
\begin{theorem} [Reduced quantum trace for 1-marked monogon]\label{thm.trmon} \hfill

(a) There is a unique reflection invariant $\ZQ$-algebra isomorphism  
\be 
 \btr_\fm\colon {\bSm\to \bXM}, \quad \btr_\fm(u_+) = \bx_e, \ \btr_\fm(u_-) = (\bx_e)^{-1}.
  \label{eq.trmon}
  \ee

(b) If $\al\in {\bSM}$ is  a 
stated $\partial \fm$-tangle diagram then there exists $C \in \ZQ$ such that
\be 
\btr_\fm(\al) = C \, (\bx_e)^{\deg_e \al  }. 
\label{eq.deg9}
\ee

(c) If $\al$ is a boundary ordered positively stated simple diagram with $k$ endpoints on $e$, then
 \be 
\btr_\fm(\al) \qeq (\bx_e)^{k  }.  \label{eq.deg9a}
\ee

\end{theorem}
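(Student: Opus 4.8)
The plan is to make the reduced basis of $\bSm$ explicit, to identify the multiplication on it, and to read off the isomorphism; parts (b) and (c) then fall out of the $\deg_e$-grading. By Example~\ref{ssExam}, $\cS(\fm;\cR)$ has $\cR$-basis $\{b(k,l):0\le l\le k\}$, and the arcs $u_+,u_-$ of Figure~\ref{fig:monogon} are $b(1,0),b(1,1)$. First I would record the shape of $b(k)$ (Figure~\ref{fig:uu_generator0}): it is $\lfloor k/2\rfloor$ nested corner arcs at the boundary puncture of $\fm$ together with, when $k$ is odd, one arc from the boundary edge $e$ to the circle component $m$, and on $e$ the endpoints of the nested corner arcs interleave in the height order. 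Combined with the reduced basis theorem (increasingly stated simple diagrams carrying no bad arc), a short count then shows that $b(k,l)$ contains a bad arc whenever $0<l<k$, since one of the nested corner arcs receives the ``bad'' pair of opposite states. Hence $\bar B(\fm)=\{b(k,0):k\ge0\}\cup\{b(k,k):k\ge1\}$, a free $\ZQ$-basis of $\bSm$ that I relabel $\{w_n\}_{n\in\BZ}$ with $w_k=b(k,0)$ and $w_{-k}=b(k,k)$.

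The computational heart is to prove that in $\bSm$ one has $u_+^{\,k}\qeq b(k,0)$, $u_-^{\,k}\qeq b(k,k)$, and $u_+u_-\qeq 1$. For $u_+u_-$, a stack of two arcs from $e$ to $m$, relation~\eqref{eqE} at $m$ expresses it as a $q^{1/2}$-multiple of a trivial $\pfm$-arc plus a $q^{-1/2}$-multiple of a stated corner arc; relation~\eqref{eq.arcs} evaluates the first to a $q$-power of the empty diagram, and the second --- after at most one use of \eqref{eq.order}, whose turnback term is a contractible loop removed by \eqref{eq.loop} --- is exactly a bad arc, hence $0$ in $\bSm$. The identities $u_\pm^{\,k}\qeq b(k,0)$ resp.\ $b(k,k)$ are handled in the same spirit: resolve the $k$ endpoints on $m$ with \eqref{eqE}, the resulting loops around $m$ with \eqref{eqF}, and bring the endpoints on $e$ into positive order with \eqref{eq.order}--\eqref{eq.arcs}, using at each stage that a corner-arc term produced with a mixed state pair is bad and drops out. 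This last bookkeeping --- verifying that every such term is literally a bad arc, so that no spurious factor $q+q^{-1}$ or $-q^2-q^{-2}$ survives --- is the one delicate point, and it is the $1$-marked monogon analogue of the triangle computations in \cite{CL,LY2}.

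Granting these identities, I would finish (a) as follows. Each of $u_+,u_-$ meets $e$ in a single point, hence is reflection invariant, so $\omega(u_+u_-)=u_-u_+$; writing $u_+u_-=q^{c}$ forces $u_-u_+=q^{-c}$, and comparing $(u_+u_-)u_+$ with $u_+(u_-u_+)$ in the free module $\bSm$ gives $c=0$, so $u_+$ is a unit with $u_+^{-1}=u_-$. Then $\bSm$ is generated over $\ZQ$ by the invertible element $u_+$, so the $\ZQ$-algebra homomorphism $\ZQ[\bx_e^{\pm1}]\to\bSm$ with $\bx_e\mapsto u_+$ is well defined; by the identities above it carries $\{\bx_e^{\,n}\}_{n\in\BZ}$ onto $\{w_n\}_{n\in\BZ}$ up to invertible scalars, hence is bijective. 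Let $\btr_\fm$ be its inverse. Then $\btr_\fm(u_+)=\bx_e$ and $\btr_\fm(u_-)=\btr_\fm(u_+^{-1})=\bx_e^{-1}$; reflection invariance holds because $\btr_\fm^{-1}$ and the two reflections are (anti)homomorphisms agreeing on $u_\pm$, whose images $\bx_e^{\pm1}$ are reflection invariant, and uniqueness is immediate since $u_\pm$ generate $\bSm$.

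For (b), the grading $\deg_e$ of Subsection~\ref{sec.grade} makes $\bSm$ a $\BZ$-graded algebra, and $\btr_\fm$ is graded since $\deg_e(u_\pm)=\pm1$ equals the $\bx_e$-degree of $\bx_e^{\pm1}$; as the degree-$n$ part of $\bXM$ is the free rank-one $\ZQ$-module $\ZQ\,\bx_e^{\,n}$, any stated $\pfm$-tangle diagram $\al$ with $\deg_e(\al)=n$ satisfies $\btr_\fm(\al)=C(\bx_e)^n$ for some $C\in\ZQ$. For (c), a positively stated simple diagram with $k$ endpoints on $e$ is, up to reordering, $b(k,0)$, so $\btr_\fm$ of it is $\qeq\btr_\fm(u_+^{\,k})=(\bx_e)^k$ by the identities above.
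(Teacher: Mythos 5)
Your proof is correct, but it takes a genuinely different path from the paper's. The paper first proves Theorem~\ref{monogonpres}, a full presentation of the \emph{unreduced} algebra $\cS(\fm)=\ZQ\langle u_+,u_-\mid qu_+u_--q^{-1}u_-u_+=q-q^{-1}\rangle$ (established via a filtration-and-rank-count argument in Steps 1--4), and only then observes that the unique bad arc $b(2,1)=q^{1/2}(u_+u_--1)$ kills the extra relation and yields $\bSm\cong\ZQ[u^{\pm 1}]$. Parts (b) and (c) then read off from \eqref{eq.11a}--\eqref{eq.11cc} and the height exchange. You instead bypass the presentation theorem entirely: you identify $\bar B(\fm)=\{b(k,0)\}\cup\{b(k,k)\}$ directly from the reduced basis theorem (correctly — with the nested corner-arc structure of $b(k)$, the outermost arc carries the bad state pair whenever $0<l<k$), establish $u_+u_-=1$ and $u_\pm^k\qeq b(k,0),b(k,k)$ by skein manipulation, and deduce that $u_+$ is a unit generating $\bSm$. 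Your associativity trick to pin down $u_+u_-=q^c\Rightarrow c=0$ is a nice way to avoid tracking $q$-powers in the resolution, though you still need the skein identity to know $u_+u_-$ is a $q$-power of $1$ in the quotient; this is precisely what the paper's identity \eqref{eq.11c} supplies. What each approach buys: the paper's route produces the stronger result (a presentation of the unreduced $\cS(\fm)$, which is useful elsewhere) and confines the diagrammatic computation to the three elements $b(2,0),b(2,1),b(2,2)$; your route is shorter and stays inside $\bSm$, but as you acknowledge it leaves the bookkeeping that all spurious corner-arc terms are literally bad arcs — and the identification of $\bar B(\fm)$ — to the reader's care, both of which depend on the precise nested geometry of $b(k)$ in Figure~\ref{fig:uu_generator0} and the state convention for bad arcs in Figure~\ref{fig:badarc}. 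Parts (b) and (c) are essentially identical in both proofs.
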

For the proof we first find a presentation of $\cS(\fm)$.

\begin{theorem}\label{monogonpres}
The skein algebra $\cS(\fm)$ has the following presentation
\be 
\cS(\fm)=\ZQ\langle u_+,u_- \mid   qu_+ u_- -q^{-1} u_- u_+= q- q^{-1}\rangle.
\label{eq.present1}
\ee
\end{theorem}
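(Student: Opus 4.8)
The plan is to compare $\cS(\fm)$ with the abstract algebra
$$A := \ZQ\langle u_+,u_-\mid q u_+u_- - q^{-1}u_-u_+ = q - q^{-1}\rangle$$
via the tautological map $\phi\colon A \to \cS(\fm)$ sending the generators to the stated radial arcs $u_\pm$ of Figure~\ref{fig:monogon}, which are (up to a power of $q^{1/2}$) the basis elements $b(1,0),b(1,1)$ of Example~\ref{ssExam}. I will show $\phi$ is an isomorphism by exploiting the explicit $\ZQ$-basis $B(\fm)=\{b(k,l):0\le l\le k\}$ of $\cS(\fm)$ from Theorem~\ref{basisDiamond}.

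\emph{Step 1 (the defining relation holds in $\cS(\fm)$, so $\phi$ is well defined).} The elements $u_+u_-$ and $u_-u_+$ have the same underlying curve, namely two parallel copies of the radial arc, and differ only in the relative heights on the boundary edge $e$. Applying the boundary height-exchange relation \eqref{eq.order} near $e$ writes $u_-u_+$ as $q^2u_+u_-$ plus $q^{-1/2}$ times a turnback, which is a trivial arc touching the marked circle of $\fm$ at two points and dipping toward $e$. Resolving the two endpoints of this turnback on the marked circle by the decorated-puncture relation \eqref{eqE}, and evaluating the two resulting closed loops --- a contractible one by \eqref{eq.loop} and a loop around the marked circle by \eqref{eqF} --- collapses the turnback to a scalar multiple of the empty diagram. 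Bookkeeping the resulting powers of $q^{1/2}$ yields exactly $q u_+u_- - q^{-1}u_-u_+ = (q-q^{-1})\,\emptyset$, so $\phi$ is a well-defined $\ZQ$-algebra homomorphism.

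\emph{Step 2 (spanning and triangularity).} In $A$ the defining relation rearranges to $u_-u_+ = q^2u_+u_- - (q^2-1)$, so every word in $u_\pm$ can be straightened into "PBW form"; hence $\{u_+^au_-^b : a,b\in\BN\}$ spans $A$ over $\ZQ$. Next, filter $\cS(\fm)$ by the number of endpoints on $e$, i.e. let $F_k$ be the $\ZQ$-span of $\{b(m,l):m\le k\}$; this is an algebra filtration since stacking adds endpoint counts and relations \eqref{eq.skein}--\eqref{eqF} never raise them. The key claim is that, after reindexing $B(\fm)$ by $\BN^2$ via $(a,b)\mapsto b(a+b,b)$,
$$\phi(u_+^au_-^b) \;=\; c_{a,b}\, b(a+b,\,b) \;+\; (\text{an element of } F_{a+b-1}),\qquad c_{a,b}\in\{\pm q^{n/2}:n\in\BZ\}.$$
To see this, resolve the $a+b$ parallel radial arcs of $u_+^au_-^b$ by pairing consecutive ones and applying \eqref{eqE} at the marked circle: the "long way" merge turns each pair into a horizontal arc around the circle, while the "short way" merge produces a trivial arc near $e$ which, by \eqref{eq.arcs}, only lowers the endpoint count. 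Taking the long-way merge at each step produces the top-filtration term; and since on $e$ the states of $u_+^au_-^b$ are already in increasing order ($b$ copies of $-$ below $a$ copies of $+$), no height-exchange corrections arise at the top level, so that term is a unit multiple of the standard diagram $b(a+b,b)$ of Figure~\ref{fig:uu_generator0}.

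\emph{Step 3 (conclusion).} Granting the claim, the elements $\phi(u_+^au_-^b)$ are triangular with unit leading coefficients relative to the basis $B(\fm)$, hence $\ZQ$-linearly independent; combined with Step~2 this shows $\{u_+^au_-^b\}$ is a $\ZQ$-basis of $A$. Upward induction on $k$ using the claim shows each $b(k,l)$ lies in the image of $\phi$ (subtract off the unit-normalized lift of the leading term and apply the inductive hypothesis to the remainder in $F_{k-1}$), so $\phi$ is surjective; a surjective $\ZQ$-module map carrying a basis to a linearly independent set is an isomorphism, which gives the presentation \eqref{eq.present1}.

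\emph{Main obstacle.} The formal part (Steps 2--3) is routine; the crux is the honest skein computation inside the $1$-marked monogon in Step~1 and in the "key claim" of Step~2 --- correctly identifying the turnback produced by \eqref{eq.order} and evaluating it through \eqref{eqE}, \eqref{eq.loop}, \eqref{eqF} to obtain precisely the coefficient $q-q^{-1}$, and checking that a product of parallel radial arcs collapses, at the top of the endpoint filtration, to a unit multiple of the standard basis diagram with the correct states.
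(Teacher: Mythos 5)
Your proposal is correct, and it takes a genuinely different route from the paper. Step 1 (verifying $qu_+u_- - q^{-1}u_-u_+ = q-q^{-1}$ by an explicit skein calculation in the monogon) coincides with the paper's first step. After that, the paper's argument proves $F_k(\cS(\fm)) = F_1(\cS(\fm))^k$ by induction using the height-exchange rule, deduces that $f_k\colon F_k(Y)\to F_k(\cS(\fm))$ is surjective, and finishes by a rank count: both $F_k(Y)$ and $F_k(\cS(\fm))$ are free $\ZQ$-modules of the same finite rank, so a surjection is an isomorphism. You instead establish a triangularity statement — $\phi(u_+^a u_-^b)$ equals a unit times $b(a+b,b)$ modulo $F_{a+b-1}$ — and extract injectivity and surjectivity simultaneously from it. Your route has a real advantage: the paper's rank count tacitly requires knowing that $\{y_+^{k_1}y_-^{k_2}\}$ is a free $\ZQ$-basis of $Y$, a PBW-type assertion the paper dismisses as "easy to see from the defining relation" but does not actually prove; in your scheme, linear independence of $\{u_+^a u_-^b\}$ in $A$ is automatic once $\phi$ carries the spanning set to linearly independent elements of $\cS(\fm)$. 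The tradeoff is that your "key claim" demands controlling the leading term of $u_+^a u_-^b$ in the endpoint filtration for all $(a,b)$, which is a more substantial skein computation in the $1$-marked monogon than the paper ever performs (the paper only needs the degree-$\le 2$ identities \eqref{eq.11b}--\eqref{eq.11cc} and the multiplicativity coming from the height-exchange rule). You correctly flag this as the crux, and the small checks you can read off from \eqref{eq.11b}--\eqref{eq.11cc} (degrees $\le 2$) confirm the claim, but a complete proof would need to carry out the resolution of the $a+b$ parallel radial arcs at the circle boundary via \eqref{eqE}, \eqref{eq.loop}, \eqref{eqF} and verify the unit leading coefficient carefully. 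One minor correction: $u_\pm$ are \emph{exactly} the basis elements $b(1,0)$ and $b(1,1)$, not merely so up to a power of $q^{1/2}$.
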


\def\Sm{{\cS(\fm)}}
\def\SmR{{\cS(\fm;\cR)}}
\begin{proof} Let $Y$ be the algebra given by the right hand side of \eqref{eq.present1}, with $u_+, u_-$ replaced respectively by $y_+, y_-$.

Let $F_k(\Sm)\subset \Sm$ be the $\ZQ$-submodule spanned by stated $\pfm$-tangle diagram   $\alpha$ with $|\partial \alpha\cap e|\leq k$.
 By Theorem \ref{basisDiamond} and Proposition \ref{r.basisGr}, the sets
$$ B(\fm) = \{ b(k,l) \mid 0\le l\le k \in \BN \}, \ B_k: =\{ b(k',l) \mid 0\le l \le k'\le k \}$$
are respectively free 
$\ZQ$-bases of ${\Sm}$ and $F_k({\Sm})$, where $b(k,l)$ is given in Example~\ref{ssExam}. Note that $| B_k | = k(k+1)/2$.

{\bf (Step 1)}.
A simple calculation using the defining relations shows that in ${\Sm}$
\begin{align}
 qu_+ u_- -q^{-1} u_- u_+&= q- q^{-1}.
\label{eq.11a}
\\
b(2,0) &= q^{1/2} (u_+)^2, \label{eq.11b}\\
b(2,1)&=  q^{1/2}(  u_+ u_- -1  ) , \label{eq.11c} \\
b(2,2)&= q^{1/2} (u_-)^2  \label{eq.11cc}
\end{align}

Identity \eqref{eq.11a} implies the map $f: Y \to {\Sm}$ given by $f(y_+)= u_+, f(y_-)= u_-$ is a well-defined $\cR$-algebra homomorphism. 

 {\bf (Step 2)}. Let us prove that $F_k({\Sm}) = (F_1({\Sm}))^k$. 
 
 As $F_k F_l \subset F_{k+l}$ we have $(F_1({\Sm}))^k\subset F_k({\Sm})$. We will prove the converse inclusion by induction on $k$.
 The statement for $k=2$ is true since the basis $B_2$ of $F_2({\Sm})$ consists of $b(1,0)=u_+, b(1,1)=u_-, b(2,0), b(2,1), b(2,2)$, and Identities \eqref{eq.11b}--\eqref{eq.11c} show that they are all in $F_1({\Sm})^2$.

Let $k \ge 3$. Consider an element $b(k,l)$ of the basis $B_k$ of $F_k({\Sm})$. Let
$a_1, \dots, a_r$ be the stated connected components of $b(k,l)$. Then $k= 2r+ \epsilon$ where $\epsilon=0$ or 1.  Proposition \ref{r.switch2} shows that
\be  b(k,l) \bqq  a_1 \dots a_ r\mod F_{k-1}({\Sm}). \label{eq.166}
\ee
Each $a_i$ has at most two end points and hence belongs to $F_2({\Sm})$.
 
When $\epsilon=0$ we have $k=2r$ and $a_1 \dots a_ r\in (F_{2}({\Sm}))^r = (F_1({\Sm}))^{2r}= (F_1({\Sm}))^{k} $. 

When $\epsilon=1$ one of the $a_i$ is $u_{\pm }$ which is in $F_1({\Sm})$. Hence we also have $a_1 \dots a_ r\in (F_1({\Sm}))^{k}$. By induction,  the right hand side of \eqref{eq.166} is in $(F_1({\Sm}))^{k}$. 

{\bf (Step 3)}. Let us prove that $f(F_k(Y)) =F_k({\Sm})$,  where $F_k(Y)\subset Y$ be the $\cR$-span of all monomials in $y_+, y_-$ of degree $ \le k$.
 
 Since $F_1(Y)$ is $\cR$ spanned by $y_+$ and $y_-$, we have $F_k(Y) = (F_1(Y) )^k$. By definition $f(F_1(Y))= F_1({\Sm})$. Hence
 $$ f( F_k(Y) ) = f ( (F_1(Y) )^k  ) = (F_1({\Sm}))^k = F_k({\Sm}).$$
 
{\bf (Step 4)} From Step 3, let denote $f_k\colon F_k(Y) \to F_k({\Sm})$ the restriction of $f$ on $F_k(Y)$, with target space $F_k({\Sm})$.  Step 3 shows that  $f_k$ is surjective. 
 We now prove $f_k$ is bijective.

From the defining relation it is easy to see that $F_k(Y)$ is free over $\cR$ with basis $(y_+)^{k_1}(y_-)^{k_2}$ with $k_1 + k_2\le k$. It follows that $F_k(Y)$ is free over $\cR$ with rank $k(k+1)/2$, the same as that of $F_k({\Sm})$.  As $f_k\colon F_k(Y)\onto F_k({\Sm})$ is a surjective $\cR$-linear map between two free $\cR$-modules of the  same finite rank, we conclude that $f_k$ is an isomorphism.
  
  Thus $f$ is bijective, and hence an algebra isomorphism.
 \end{proof}

\def\bad{{\mathrm{bad}}}
\def\bSm{{\overline\cS(\fm)}}
\begin{proof}[Proof of Theorem \ref{thm.trmon}]

(a)  The only bad arc is $b(2,1)$.
 By  \eqref{eq.11c} we have
$ b(2,1)= q^{1/2}(  u_+ u_- -1  )$.
Hence from the presentation of ${\Sm}$ given by Theorem \ref{monogonpres}, we get
\be 
{\bSm} = \ZQ\langle u_+,u_- \mid  q u_+ u_- -q^{-1} u_- u_+= q- q^{-1},\ u_+ u_- -1 \rangle \cong\ZQ[u^{\pm 1}], \label{eq.pres2}
\ee
where the last isomorphism is given by $u_\pm \to  u^{\pm1}$.
Thus,  there is a unique $\ZQ$-algebra isomorphism $\btr_\fm\colon {\bSm} \to \ZQ[\bx_e^{\pm 1}]$ given by $ \btr_\fm(u_+)=\bx_e$. Then  $ \btr_\fm(u_-)= (\bx_e)^{-1}$.

Both $u_+$ and $\bx_{e}$ are reflection invariant. Hence $\btr_\fm$ is reflection invariant. 

(b)  
As ${\Sm}$ is $\ZQ$-spanned by $(u_+)^k (u_-)^l$ and $\deg_e(u_\pm)= \pm 1$, the diagram $\al$ is a $\ZQ$-linear combination of $(u_+)^k (u_-)^l$ with $k-l=\deg_e \al$. Hence from \eqref{eq.trmon} we get $\btr_\fm(y)\in  (\bx_e)^{\deg_e(\al)} \, \ZQ$.

(c) Each of the components $\al_1, \dots, \al_k$ of $\al$ is either $u_+$ or $b(2,0)$. By Identity \eqref{eq.11b} we have $b(2,0)= q^{1/2} (u_+)^2$. The height exchange relation of \cite[Lemma 2.4]{CL} shows that $\al \qeq \al_1 \dots \al_k $. Hence 
$\al  \qeq (u_+)^{m},$ for some $m\in \BN$.
\end{proof}

\subsection{Proof of Theorem \ref{thm.embed3}}

\def\bsX{{\overline{\sX}}}
\def\Do{{\mathring \Delta}}
\def\TR{{\overline{\mathrm{TR}}}}
\def\Zq{{\BZ[q^{\pm 1/2}]}}
\def\bphi{{\overline{\varphi}}}
\def\Eo{{\mathring E}}
\label{reducedqtrace}

Let $\Do \subset \Delta$ be the set of interior edges and $\Eo  :=\bigcup_{e\in \Do} e$.
By cutting $\fS$ along edges in  $\Do$ we get $\fS'$, which is the disjoint union of all faces in $\cF= \cF(\Delta)$.
 We identify $\bcS(\fS'{})$ with $\bigotimes_{\tau \in \cF} \bcS(\tau{}) $. Every edge $e\in \Do$ is cut into two edges $e'$ and $e''$ of $\fS'$. Using Proposition \ref{r.cuttri}, we  identify  
 $\bsX(\fS, \Delta{})$ with the $\RS$-submodule of $\bigotimes_{\tau \in \cF} \bsX(\tau{}) $ spanned monomials $z$ with property $\deg_{\bx_{e'}}(z) = \deg_{\bx_{e''}}(z)$ for all $e\in \Do$.

 Consider the composition
\be  \bphi_\Delta\colon {\bSS} \xhookrightarrow{\Cut  }  \bigotimes_{\tau \in \cF} \bcS(\tau{}) \xhookrightarrow{\TR } \bigotimes_{\tau \in \cF} \bsX(\tau{}), \quad \text{where} \  \TR = \bigotimes_{\tau \in \cF} \btr_\tau .
\label{eq.bvphi0} 
\ee
 We now show the image of $\bphi_\Delta$ is in $\bsX(\fS,\Delta{})$.
Assume $y\in {\bSS}$ is represented by a stated $\pfS$-tangle diagram $D$ which is transverse to $\Eo$. It is enough to show that $\bphi_\Delta(y) \in \bsX(\fS,\Delta{})$.

Fix an orientation $h$ of $\Eo$, and use it to order elements in each $e\in \Do$. By definition,  
\be \bphi_\Delta(y)= \sum_{s: D \cap \Eo \to  \{\pm \}  }  \TR ((D,h,s)),
\label{eq.bvphi} 
\ee
where $(D,h,s)\in \bcS(\fS'{})$ is obtained by cutting $D$ along all $e\in \Do$ and the state for the newly created boundary points are given by $s$. Note that for $e\in \Do$ we have
$$ \deg_{e'}(D,h,s) =\deg_{e''}(D,h,s)$$
because both are equal to 
\be
d(e,s):= \sum_{u\in  D \cap e} s(u) . \label{eq.degee}
\ee
By Theorems \ref{thm.trtri}(b) and \ref{thm.trmon}(b) we have that 
$\TR(( D,h,s))$ is homogeneous in both $\bx_{e'}$ and $\bx_{e''}$ of the same degree $d(e,s) $. Thus $\bphi_\Delta(y) \in \bsX(\fS,\Delta{})$. This shows  $ \text{Image}(\bphi_\Delta)\subset \bsX(\fS,\Delta{})$. 

Let  $\btr_\Delta$ be the same as $\bphi_\Delta$ but with the target space restricted to $\bsX(\fS,\Delta{})$:
$$ \btr_\Delta\colon {\bSS} \embed \bsX(\fS,\Delta{}).$$
By construction, $\btr_\Delta$ is compatible with the cutting homomorphism. All the component homomorphisms in \eqref{eq.bvphi0} are reflection invariant; hence so is $\btr_\Delta$. When changing the ground ring from $\ZQ$ to $\cR$, all component homomorphisms in \eqref{eq.bvphi0} are still injective; hence so is $\btr_{\Delta, \cR}$.

\def\input{draws/#}1{\input{draws/#1}}
\def\Ybl{{ \bsX}}
\def\surface{\fS}
\def\cZ{{\mathcal Z}}
\def\he{{\hat e}}

\newcommand{\extY}{\bar{\mathcal{Y}}}
\newcommand{\extYbl}{\bar{\mathcal{Y}}^\mathrm{bl}}
\newcommand{\extX}{\bar{\mathfrak{X}}}

\newcommand{\rd}{{\mathrm{rd}}}
\def\bY{{\bar {\cY}}}
\def\bYbl{\bY^{\mathrm{bl}}}
\def\bal{{\mathrm{bl}}}
\def\bYbl{\bY^\bal}
\def\sXd{{\overset\diamond{\sX}}}

\subsection{Proof of Theorem \ref{thm.embed3e}(a)}  \label{sec.extended}

We now define the unreduced quantum trace $\tr_\Delta$. The construction and proof are almost identical to those in \cite{LY2}: The idea is to embed $\fS$ into a bigger surface $\fS^\ast$ and use the reduced quantum trace of $\fS^\ast$.

 Let $\fS^\ast$ be the result of attaching an ideal triangle $\ft_e$ to each boundary edge $e$ of $\fS$ by identifying $e$ with an edge of $\ft_e$. Denote the other two edges of $\ft_e$ by $\hat e ,\hat e'$ as in Figure~\ref{fig:extSurf}.

There is a smooth embedding $\iota: \fS \embed \fS^\ast$ which maps $e$ to $\hat e$ and is the identity outside a small neighborhood of $e$ for every boundary edge $e$, see Figure~\ref{fig:extSurf}. In particular $\iota(a)=a$ for all $a\in \Do$.

\begin{figure}[h]
\centering
\includegraphics[width=300pt]{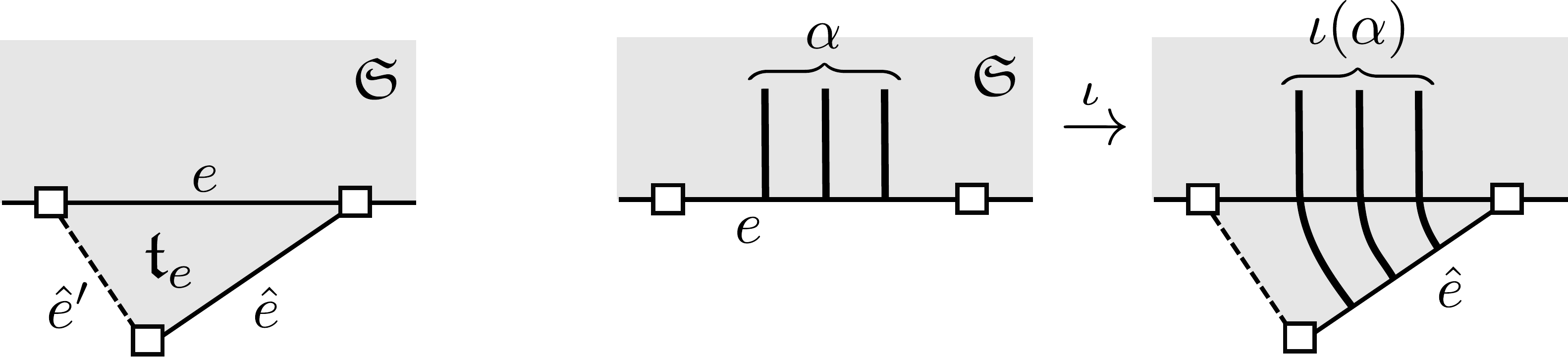}
\caption{Left: attaching triangle $\ft_e$. Right: the embedding $\iota:\fS\embed \fS^\ast$..}\label{fig:extSurf}
\end{figure}

 Note that $\iota$ induces a $\RS$-algebra homomorphism $\iota_*\colon {\SSR} \to \cS(\fS^\ast{;\cR})$, sending a stated $\pfS$-tangle diagram $\al$ to $\iota(\al)$.

Then $\Delta^\ast=\Delta\cup (\bigcup_{e\in \Dd} \{\hat e , \hat e'\})$ is an ideal triangulation of $\fS^\ast$.  We will show that the following composition
\begin{equation}\label{eq.phiD}
\phi_\Delta\colon {\SS} \xrightarrow{\iota_\ast} \cS(\fS^\ast{}) \xrightarrow{p} \bcS(\fS^\ast{}) \xhookrightarrow{\btr_{\Delta^\ast}} \Ybl(\fS^\ast;\Delta^\ast{})
\end{equation}
defines the quantum trace $\tr_\Delta$ we are looking for. 
It is important to note that under $\iota$ the image of a $\pfS$-arc is never a corner arc, and a fortiori it is not a bad arc. Let $\phi_{\Delta, \cR}= \phi_\Delta \ot_\ZQ \cR$. The composition $p\circ \iota_\ast$ maps the $\RS$-basis $B(\fS)$ of ${\SSR}$ injectively into the $\RS$-basis $\overline{B} (\fS^\ast)$ of $\bcS  (\fS^\ast{;\cR})$. This shows $p\circ \iota_\ast$ is injective. Hence the composition $\phi_{\Delta,\cR}$ is injective.

Since $\bQ_{\Delta^\ast}(e, \hat e)=1=-\mQ_\Delta(e,\hat e)$, we can identify the quantum torus $\sX(\fS,\Delta{})$ as a subalgebra of $\bsX(\fS^\ast;\Delta^\ast{})$ via the embedding 
\begin{equation}
\label{eq.yz}
\begin{cases}
x_a \mapsto \bx_a, & a\in \Do, \\
x_e \mapsto [\bx _e \bx _{\hat e}], &e\in \Dd,\\
x_{\hat e}\mapsto \bx _{\hat e}^{-1}, &e\in \Dd.
\end{cases}
\end{equation}
Under this identification $\sX(\fS,\Delta{})$ is $\RS$-spanned by monomials not involving $\bar x_{\hat e'}$ for all $e\in \Dd$.

\blem The image of $\phi_\Delta$ is in $\sXd(\fS,\Delta{})$.
\elem
\bpr Let $D$ be  stated $\pfS$-tangle diagram.
The analog of \eqref{eq.bvphi} is

\be \phi_\Delta(D)= \sum_{s: D \cap E \to  \{\pm \}  }  \TR ((\iota(D),h,s)),
\label{eq.vphi} 
\ee
where $(\iota(D),h,s)$ is the stated tangle diagram over $\bigsqcup_{\tau \in \cF(\Delta^\ast)} \tau$  obtained by cutting $\iota(D)$ along $e\in E$ and the state for the newly created boundary points are given by $s$.

 Since $\iota(D)\cap \he '= \emptyset$, none of the $\TR ((\iota(D),h,s))$ 
  involve $x_{\he'}$. Hence $\phi_\Delta(D)\in \sX(\fS,\Delta{})$. 
  
Let $e\in \Dd$. Then  $(\iota(D),h,s)\cap \ft_e$ consists of 
 several  stated  arcs $a_1, \dots, a_r$ which are parallel to each other. Then  $(\iota(D),h,s;e)\qeq a_1 \dots a_r$ in $\bcS(\ft_e{})$. To be non-zero,  each $a_i$ cannot be a bad arc, and this forces the state of $a_i \cap e$ is greater than or equal to that of $a_i \cap \he$. It then follows from the definition \eqref{eq.yz} that  the degree of $x_{\he}$ in $\btr_{\ft_e}((\iota(D),h,s) \cap \ft_e )$ is  non-negative. Hence the image of $\phi_\Delta$ is in $\sXd(\fS,\Delta{})$.
\epr
Define $\tr\colon {\SS} \to \sXd(\fS,\Delta{})$ as $\phi_\Delta$, with the target space restricted to $\sXd(\fS,\Delta{})$.

The projection $\pr\colon \sXd(\fS,\Delta{}) \onto \bsX(\fS,\Delta{})$ kills all the $x_{\he }$ with $e\in \Dd$. 
By removing all the terms of the right hand side of \eqref{eq.vphi} which has positive power of $x_{\he }$  for some $e\in \Dd$, we get the right hand side of \eqref{eq.bvphi}. Hence  $\pr (\phi_\Delta(D))= \bphi_\Delta(D)$. The commutativity of diagram \eqref{eq.dia9} follows. 

All the component maps in \eqref{eq.phiD} are reflection invariant, hence so is $\tr_\Delta$. As $\phi_{\Delta,\cR}$ is injective, so is $\tr_{\Delta, \cR}$.
 This completes the proof of Theorem \ref{thm.embed3e}(a).

\subsection{Top degree term of $\tr_\Delta$} We now calculate the top degree part of $\tr_\Delta(\al)$ for $\al\in B(\fS)$.
Again we proceed by adapting the argument of \cite{LY2}.

\begin{lemma}\label{r.phi1} Let  $\al\in B(\fS)$
 with $\sum_{e\in \Delta} \bn_\al(e)=k$, where
 $\bn_\al$ is defined by \eqref{eq.bbn}. Then
\begin{equation}
\label{eq.ss0}
\tr_\Delta(\al)\,  \qeq\,   x^{{\bn}_\al}   \mod F^\Delta_{k-1} ( \sX(\fS,\Delta)).
\end{equation}
\end{lemma}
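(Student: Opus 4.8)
The plan is to adapt, essentially verbatim, the top-degree computation of \cite{LY2}; the one genuinely new ingredient is the $1$-marked monogon face, whose reduced quantum trace is controlled by Theorem~\ref{thm.trmon}(b),(c) exactly as the triangle is controlled by Theorem~\ref{thm.trtri}(b),(c). First I would fix a representative $D$ of $\al$ that is taut with respect to $\Do$, so that $|D\cap a| = I(\al,a) = \bn_\al(a)$ for $a\in\Do$; since $\al$ is simple, automatically $|D\cap e| = \bn_\al(e)$ for $e\in\Dd$ as well. Pushing $D$ across the attached triangles by $\iota$ produces $\iota(D)$, which meets each $f\in\Delta$ transversally in exactly $\bn_\al(f)$ points, and by \eqref{eq.vphi} (recall $\tr_\Delta$ is $\phi_\Delta$ with restricted target) we may write
\[
\tr_\Delta(\al)\;=\;\sum_{s}\;\TR\!\big((\iota(D),h,s)\big),
\]
the sum over all states $s$ on these intersection points. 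So it suffices to prove that every summand lies in $F^\Delta_{k-1}(\sX(\fS,\Delta))$ except the one with $s\equiv +$, which should be $q$-proportional to $x^{\bn_\al}$.

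Next I would compute the $F^\Delta$-filtration degree of a single summand. Because $\al$ is simple and taut, in each face $\tau\in\cF(\Delta^\ast)$ the restriction $(\iota(D),h,s)\cap\tau$ is a disjoint union of stated corner arcs and stated transverse arcs in a triangle, or a copy of the standard diagram $b(m)$ of Figure~\ref{fig:uu_generator0} in a $1$-marked monogon; hence by Theorem~\ref{thm.trtri}(b) and Theorem~\ref{thm.trmon}(b) the face contribution $\btr_\tau((\iota(D),h,s)\cap\tau)$ is a scalar times the Weyl-ordered monomial $\prod_{f}\bx_f^{\,\deg_f}$, where $\deg_f$ is the sum of the states of $(\iota(D),h,s)$ on the edge $f$ of $\tau$. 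Multiplying the face contributions and re-expressing the product in the variables of $\sX(\fS,\Delta)$ via the identification \eqref{eq.yz} --- under which $\bx_a = x_a$ for $a\in\Do$, $\bx_{\hat e}\qeq x_{\hat e}^{-1}$ and $\bx_e\qeq x_e x_{\hat e}$ for $e\in\Dd$, and $\iota(D)$ avoids $\hat e'$ altogether --- one gets
\[
\TR\!\big((\iota(D),h,s)\big)\;\qeq\;\prod_{a\in\Do}x_a^{\,d(a,s)}\ \prod_{e\in\Dd}x_e^{\,d(e,s)}\,x_{\hat e}^{\,d(e,s)-\deg_e(\al)},
\qquad d(f,s):=\sum_{u\in \iota(D)\cap f}s(u).
\]
Since the $x_{\hat e}$ are not counted in the $F^\Delta$-filtration, this monomial has filtration degree $\sum_{f\in\Delta}d(f,s)\le\sum_{f\in\Delta}\bn_\al(f)=k$, with equality if and only if $s(u)=+$ for every $u$; so every summand with $s\not\equiv+$ lies in $F^\Delta_{k-1}$.

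Finally I would identify the remaining summand. For $s\equiv+$, each face restriction has positive states on all edge-points created by the cut: in the triangle faces of $\cF(\Delta)$ it is then a product of positively stated corner arcs and positively stated transverse arcs, hence a product of non-bad arcs; in each attached triangle $\ft_e$ the arcs run from $e$ (state $+$) to $\hat e$, and since the state on $e$ is $\ge$ the state on $\hat e$ no bad arc occurs (as noted in the proof that $\mathrm{Image}(\phi_\Delta)\subset\sXd(\fS,\Delta)$); and in a $1$-marked monogon face Theorem~\ref{thm.trmon}(c) applies. Therefore Theorem~\ref{thm.trtri}(c) and Theorem~\ref{thm.trmon}(c) promote the ``scalar times monomial'' of the previous step to a genuine $q$-proportionality. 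Here $d(a,s)=\bn_\al(a)$ for $a\in\Do$, $d(e,s)=\bn_\al(e)$ for $e\in\Dd$, and by the definition \eqref{eq.bbn} of $\bn_\al(\hat e)$ one has $\deg_e(\al)=\bn_\al(e)-\bn_\al(\hat e)$, so $d(e,s)-\deg_e(\al)=\bn_\al(\hat e)$. Substituting, the top summand is $q$-proportional to $\prod_{a\in\Do}x_a^{\bn_\al(a)}\prod_{e\in\Dd}x_e^{\bn_\al(e)}x_{\hat e}^{\bn_\al(\hat e)}\qeq x^{\bn_\al}$, which is \eqref{eq.ss0}.

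I expect the main obstacle to be the face-by-face bookkeeping underlying the two displays: verifying that in taut position the cut really produces, in each face, only the configurations covered by Theorems~\ref{thm.trtri} and \ref{thm.trmon} (no bad arcs, no returning $(+,-)$-arcs created by the cutting), that the endpoints of $D$ pushed onto the outer edges $\hat e$ by $\iota$ contribute the fixed quantity $\deg_e(\al)$ and not an $s$-dependent one, and that the various $q$-powers coming from Weyl normalization and from \eqref{eq.yz} compose correctly so that the final relation is $q$-proportionality on the nose. Conceptually this is routine once \eqref{eq.yz} is in place, but it is where essentially all the actual computation lives.
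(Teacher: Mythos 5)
Your proof is correct and follows essentially the same route as the paper's: take a taut representative, expand $\tr_\Delta(\al)$ via the state sum \eqref{eq.vphi}, use Theorems~\ref{thm.trtri}(b) and \ref{thm.trmon}(b) to bound the $F^\Delta$-degree of each summand by $\sum_{f\in\Delta}d(f,s)\le k$ with equality only for $s\equiv+$, and then identify the top summand via Theorems~\ref{thm.trtri}(c), \ref{thm.trmon}(c) and the embedding \eqref{eq.yz}. You have merely written out the intermediate monomial in the $x_e,x_{\hat e}$ variables for general $s$, which the paper leaves implicit.
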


\def\Coeff{{\mathrm{Coeff}}}
\def\ma{{\mathsf{max}}}
\begin{proof} We assume  $\al$ is represented by $D$ which is taut with respect to $\Delta$, so that $\bn_\al(e)= |D\cap e|$ for all $e\in \Delta$.
In \eqref{eq.vphi} the degree of $x_e$ in $\TR(\iota(D), h,s)$ is 
$$ d(s,e)= \sum _{u \in \iota(D) \cap e} s(u) \le |e \cap D|= \bn_\al(e).$$
The maximal of $d(s,e)$ is $\bn_\al(e)$,  achieved when $s=s_+$, which takes value $+$ at every point of intersection between $D$ and $\Delta$. 

Consider the term $(\iota(D), h,s_+ )$. Let $\tau$ be a face of $\Delta^\ast$. 
Then $(\iota(D), h,s_+ )\cap \tau$ consists of several stated $\partial \tau$-arcs $a_1, \dots, a_r$.

First assume $\tau$ is not one of $\ft_e, e \in \Dd$. Then all edges of $\tau$ are in $\Delta$. Hence all states of $a_i$  are positive.
By \eqref{eq.deg8a} and \eqref{eq.deg9a},
\be 
\btr_\tau( (\iota(D), h,s_+ )\cap \tau  ) \, \eqbu\,  \prod_{a} (\bx_a)^{\bn_\al(a) }
\label{eq.tr7}
\ee
 where $a$ runs in the set of edges of $\tau$.

Now consider the case  $\tau=\ft_e, e\in \Dd$. Then each $a_i$ is a corner arc connecting $e$ and $\he$. Since the states on $e$ are positive, each $a_i$ is not a bad arc. From \eqref{eq.deg8a} we get 
\be 
\btr_\tau( (\iota(D), h,s_+ )\cap \tau  )\, \eqbu\,  (\bx_e \bx_{\he})^{\bn_\al(e) } (\bx_{\he})^{-\bn_\al(\he) }
\label{eq.tr8}
\ee

Combining all the faces of $\Delta^\ast$ and using the embedding \eqref{eq.yz}, we get \eqref{eq.ss0}.
\end{proof}

\def\bYt{ A^{(2)}}
\def\bYtp{ A'^{(2)}}
\def\trD{{\tr_\Delta}}
\def\trDR{{\tr_{\Delta;\cR}}}
\def\FD{F^\Delta}
\def\XS{{\sX(\fS, \Delta)}}
\def\XSR{{\sX(\fS, \Delta;\cR)}}
\def\LT{{\mathsf{lt}}}

\subsection{Proof of Theorem \ref{thm.embed3e}(b)}
Let us prove \eqref{eq.in5}, which says $\trDR$ respects the filtrations.

By Proposition \ref{r.basisGr} the set
$$ B_k: = \{ \al\in B(\fS) \mid \sum_{e\in \Delta} \bn_\al(e) \le k \}$$
is an $\RS$-basis of $F_k^\Delta({\SSR}))$.  From Lemma \ref{r.phi1} we have
\be
\trDR(\al) \, \eqbu \,x^{\bn_\al} \mod \FD_{k-1}({\XSR}) \quad \text{ for } \ \al \in B_k \setminus B_{k-1}.
\label{eqTopdeg}
\ee
It follows that for all $\al \in B_k$ we have $\trDR(\al) \in \FD_k( {\XSR})$. Hence we have \eqref{eq.in5}.

Consider the associated graded homomorphism of $\trDR$:
$$ \Gr(\trDR)\colon \Gr^\Delta({\SSR}) \to \Gr^\Delta({\XSR}).$$
Recall the lead term $\lt(\al)$ is defined in Subsection \ref{ssFiltr}.  The set 
$ \lt(B):= \{ \lt(\al) \mid \al \in B(\fS)\}$
is a free $\RS$-basis of $\Gr^\Delta({\SSR})$. From \eqref{eqTopdeg} we have
\be 
\Gr(\trDR)(\LT(\al))\, \eqbu \, x^{\bn_\al} \quad \text{for all} \ \al \in B(\fS). \label{eqiso4}
\ee

By Theorem \ref{thm.tri-coord} the  map $B(\fS) \to \Lambda_\Delta$ given by $\al \to \bn_\al$ is a bijection. Besides the set $\{x^\bn  \mid \bn \in \Lambda_\Delta\}$ is a free $\RS$-basis of $\bT(\mQ,\Lambda_\Delta;\cR)$. Hence $\Gr(\trDR)$ maps the $\RS$-basis $\lt(B)$ of $\Gr^\Delta({\SSR})$ isomorphically onto the $\RS$-basis $\{x^\bn  \mid \bn \in \Lambda_\Delta\}$ of $\bT(\mQ, \Lambda_\Delta; \cR)$. It follows that $\Gr(\trDR)$ is an {isomorphism}. This completes the proof of Theorem \ref{thm.embed3e}(b).

\subsection{Proof of Theorem~\ref{thm.embed3e}(c)} \label{ssProofe}

 By Theorem \ref{thm.tri-coord} the submonoid $\LD$ is finitely generated $\BN$-module which has rank $r(\fS)$. Hence Lemma~\ref{lemma-GKdim} shows that the monomial algebra $\bT(\mQ,\Lambda_\Delta;\RS)$ has GK dimension $r(\fS)$ over $\RS$.

For each $k$, the $\RS$-module $F_k^\Delta({\SSR})$ is free of finite rank. By Lemma~\ref{liftfacts}, the GK dimension of ${\SSR}$ over $\RS$ is also $r(\fS)$. This completes the proof of Theorem  \ref{thm.embed3e}.

\newcommand{\stateS}{\mathscr{S}}
\newcommand{\reduceS}{\mathscr{S}^\rd}

\subsection{Naturality of the quantum traces with respect to triangulation changes} 

\def\bXSbl{{ \bsX^\bal(\fS, \Delta) }}
\def\bXSblR{{ \bsX^\bal(\fS, \Delta;\cR) }}
\def\bYbl{\bsX^\bal  }
A natural question to ask is what the relations between quantum traces associated to different triangulations are. 
The following facts can be proved, and the proofs are almost identical to the ones (for $\pfS$ having no circle components) given in \cite{LY2}.  The following will not be necessary for our purposes so we state it here with the proof omitted and leave the details for the interested reader.

A map $\bn\colon \Delta\to \BZ$ is balanced if $\bn(a) + \bn(b) + \bn(c)$ is even for any triangular triple $(a,b,c)$. 
Let ${\bXSblR}$ be the $\RS$-submodule spanned by $\bx^\bn$ with balanced $\bn$. Then ${\bXSblR}$ is also a quantum torus and hence has a division algebra of fractions $\Fr({\bXSblR})$.

{\bf FACT.} For two ideal triangulations $\Delta,\Delta'$, there is an  algebra isomorphism
\[\bTheta_{\Delta\Delta'}\colon\Fr(\bYbl(\fS;\Delta'{;\cR}))\to \Fr(\bYbl(\fS;\Delta{;\cR}))\]
satisfying the following properties

\begin{itemize}
\item For three triangulations $\Delta, \Delta', \Delta''$ we have
  $\bTheta_{\Delta''\Delta'} \circ \bTheta_{\Delta'\Delta} = \bTheta_{\Delta''\Delta}$ and $\bTheta_{\Delta \Delta}=\id$.

\item The quantum trace $\btr_\Delta$ is compatible with coordinate changes, i.e.,
$$
\bTheta_{\Delta' \Delta} \circ \btr_\Delta= \btr_{\Delta'}.
$$

\end{itemize}

\def\uB{{\underline{\mathrm{B}}}}

\section{The boundary simplified skein algebra}
\label{secMRY}

To prove Theorem \ref{thmSg} and \ref{thmSRY} we will use a version of a quantum trace for a simplified version of a skein algebra, meaning we introduce a subquotient such that all the near boundary arcs are equal to 0. This extra condition behaves well with respect to the filtration defined by the intersection number with the boundary components. In this section we discuss the boundary simplified skein algebra.

Throughout the section $\fS$ is a punctured surface and $\cR$ is a ground ring.

\def\Bp{{B^+}}
\subsection{The Muller-Roger-Yang (MRY) skein algebra } \label{ssMRY}
We define the MRY skein algebra. 

 The $\ZQ$-submodule $ {\SSp}$ of $ {\SS}$ spanned by boundary ordered $\pfS$-tangle diagrams with only positive states is a subalgebra of $ {\SS}$, called the {\bf MRY skein algebra} of $\fS$. Let $\SSpR= \SSp \ot_\ZQ \cR$.
 Since states are positive everywhere, we don't need to specify the states in Figures. 

\blem[Height exchange rule ] \label{rHeight}
(a)   In $\SSp$ one has
\be 
\begin{array}{c}\includegraphics[scale=0.7]{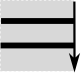}\end{array} = q^{-1} \begin{array}{c}\includegraphics[scale=0.7]{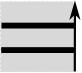}\end{array}
\label{eqHeight}
\ee
(b) Let $\al$ be a boundary ordered simple  $\pfS$-diagram  with connected components $\al_1, \dots, \al_k$. We have,
\be \al \qeq \al_1 \dots \al_k \ \text{ in $\SSp$}. \label{eqMulti}
\ee
There is a unique  $l\in \BZ$ such that we have $\omega(\al) = q^{l} \al$, where $\omega$ is the reflection.

\elem
\bpr  (a) is a special case of \eqref{eq.sign}. (b) follows immediately from (a).
\epr
With $\al$ as in Lemma \ref{rHeight} define the reflection normalization
\be 
[ \al]_\omega = q^{l/2} \al. \label{eqRefl}
\ee
Then $[\al]_\omega$ is reflection invariant.

It is easy to check, though we don't need it in the sequence, that
\be 
[ \al]_\omega = q^{ - \sum \frac{\bn_\al(e) (\bn_\al(e) -1)}{2} } (\al, h^+),
\ee
where $(\al, h^+)$ is $\al$ with the positive boundary order, and the sum is over all boundary edges.

\bpro \label{rMRYbasis}
The subset $\Bp(\fS)$ of $B(\fS)$  of all $\alpha\in B(\fS)$ with positive states is  a free $\cR$-basis of $ {\SSpR}$.
\epro
\bpr  As a subset of a basis, $\Bp(\fS)$ is $\cR$-linear independent. On the other hand, 
the skein relations and the height exchange show that $\Bp(\fS)$ spans $\cS^+(\fS {;\cR})$ over~$\cR$.  Thus $B^+(\fS)$ is a free $\cR$-basis of $\cS^+(\fS {;\cR})$.
\epr

\brem (a) We can define $\cS^+(\fS {;\cR})$ by using the non-stated tangles, and relations which are the only relations in (A)-(F) which do not contain negative states.

(b) When $\fS$ has no circle boundary and no interior punctures, the algebra $\cS^+(\fS {;\cR})$ is isomorphic to the Muller skein algebra \cite{Muller}, as proved in \cite{Le:triangular}.
\erem

\def\utr{{\underline{\tr}}}
\def\bbl{{\mathbf l}}
\def\mP{{\mathsf P}}
\def\uD{{\underline{\Delta}}}
\def\uA{{\underline{\cA}}}
\def\uP{{\underline{\mP}}}
\def\uv{\overleftarrow }
\subsection{A quantum trace } 

\newcommand{\vertexdef}[2]{
\begin{tikzpicture}[scale=0.8,baseline=0.28cm]
\fill[gray!20!white] (-0.2,0) rectangle (1.3,1);
\draw (0,1)--(0.6,0)--(1,1) (-0.2,0)--(1.3,0);
\draw (0.1,0.5)node{\vphantom{$b$}#1} (0.98,0.5)node{\vphantom{$b$}#2};
\draw[fill=white] (0.6,0)circle(2pt);
\end{tikzpicture}
}

In this subsection we show that $ {\SSpR}$, for a certain class of surface, is sandwiched between a quantum space and its quantum torus, defined by an  antisymmetric integral matrix corresponding to a triangulation of $\fS$.

 Let $\fS$ be a punctured surface without interior puncture. Thus
 $\fS=\bfS\setminus \cP$, where $\bfS$ is a compact surface and $\cP$ is a finite subset of the boundary $\pbfS$.

\bdf
A {\bf quasi-ideal arc} is either an ideal arc or an embedding $a: [0,1) \embed \fS$ which can be extended to a proper embedding $\bar a: [0,1] \embed \bfS$ such that $\bar a(1)\in \cP$ while $a(0)$ is in one of the circle boundary component of $\fS$.

A quasi-ideal multiarc is a disjoint union of quasi-ideal arcs. For a quasi-ideal multiarc $\al$ the $\pfS$-tangle diagram $\uv\al$ is obtained by moving the branches of $\al$ near every ideal point  to the left as in Figure \ref{fig:uv}.
\edf
\begin{figure}[h]
    \centering
    \includegraphics[width=130pt]{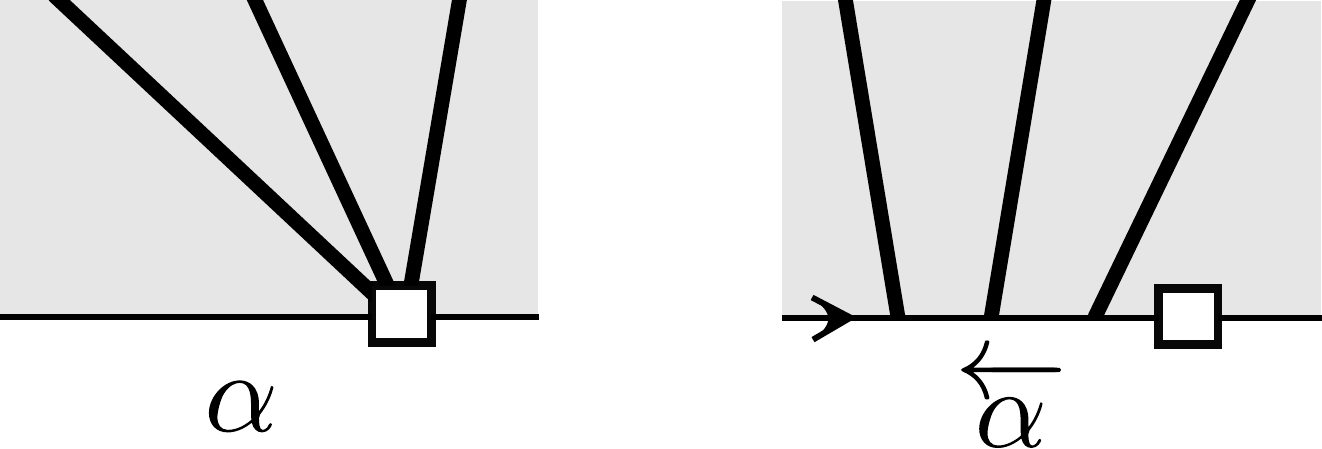}
    \caption{Moving left: From quasi-ideal multiarc $\al$ to $\uv\al$. If a component of $\al$ is a boundary edge, we first slightly isotope it to an interior position and then do the move-left operation.}
    \label{fig:uv}
\end{figure}

 For disjoint quasi-ideal arcs $c$ and $e$ define  $\mP(c,e)$ by 
$$ \mP(c,e) =\ \#\left( \begin{array}{c}\includegraphics[scale=0.16]{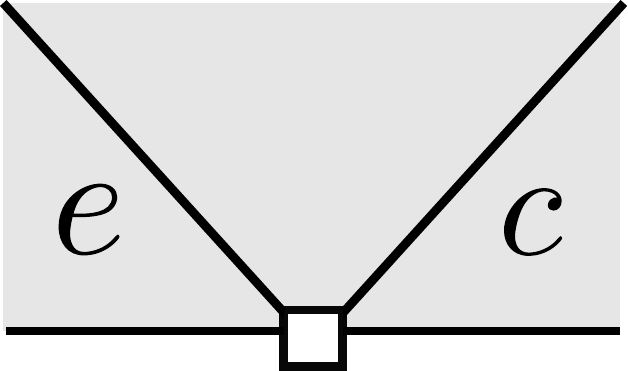}\end{array} \right) - \#\left( \begin{array}{c}\includegraphics[scale=0.16]{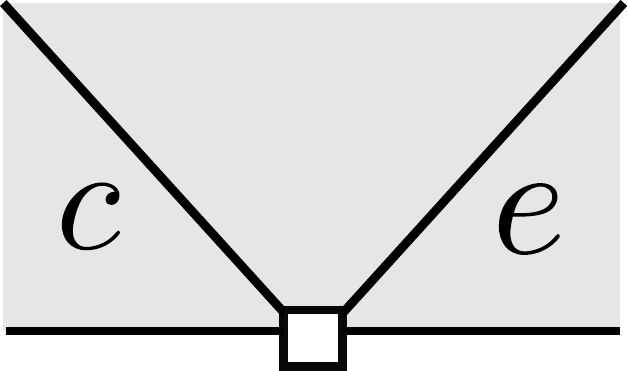}\end{array} \right). $$The right hand side counts the number of times a half-edge of $c$ meets a half-edge of $e$ at an ideal vertex as an algebraic intersection, meaning where the count adds 1 if $c$ is clockwise to $e$ and $-1$ otherwise.

\def\ASD{\cA(\fS,\Delta)}
\def\ASDR{\cA(\fS,\Delta;\cR)}
\def\ASDp{\cA_+(\fS,\Delta)}
\def\ASDo{\Ad(\fS,\Delta)}
\def\trAD{\tr_\Delta^A}
\def\trADR{\tr_{\Delta;\cR}^A}

Assume further that $\fS$  has  a  triangulation $\Delta$. 
Each circle boundary component $\mu\in \cM$ is in a 1-marked monogon bounded by an edge $e_\mu\in \Delta$, called a {\bf monogon edge}, see Figure~\ref{fig:amu}. Choose a quasi-ideal arc $c_\mu$ in the monogon, connecting $\mu$ and the ideal vertex of the monogon as in Figure \ref{fig:amu}. Let $\Dbu$ be the collection $\Delta$ where each monogon edge $e_\mu$ is replaced by the quasi-ideal arc $c_\mu$. Let $\ASD$
 be the quantum torus associated to the restriction of $\mP$ to $\Dbu \times \Dbu$:
\be 
\ASD
=\ZQ \la \aaa_c^{\pm 1}, c \in \Dbu \ra /( \aaa_c \aaa_e= q^{\Pbu(c,e)} \aaa_e \aaa_c) .
\ee
The set of monomials $\{ \aaa^\bk \mid \bk \in \BZ^\Dbu\}$ is a $\ZQ$-basis of $\ASD$. Consider subalgebras
\begin{align*}
\ASDp &= \ZQ\text{-span of} \ \{\aaa^\bk \mid \bk \in \BN^\Dbu   \} \quad \text{(quantum space ) }\\
\ASDo &= \ZQ\text{-span of} \ \{\aaa^\bk \mid \bk \in \BZ^\Dbu, \bk(e) \ge 0\ \text{for boundary edges} \ e   \}
\end{align*}

\begin{figure}[h]
    \centering
    \includegraphics[width=200pt]{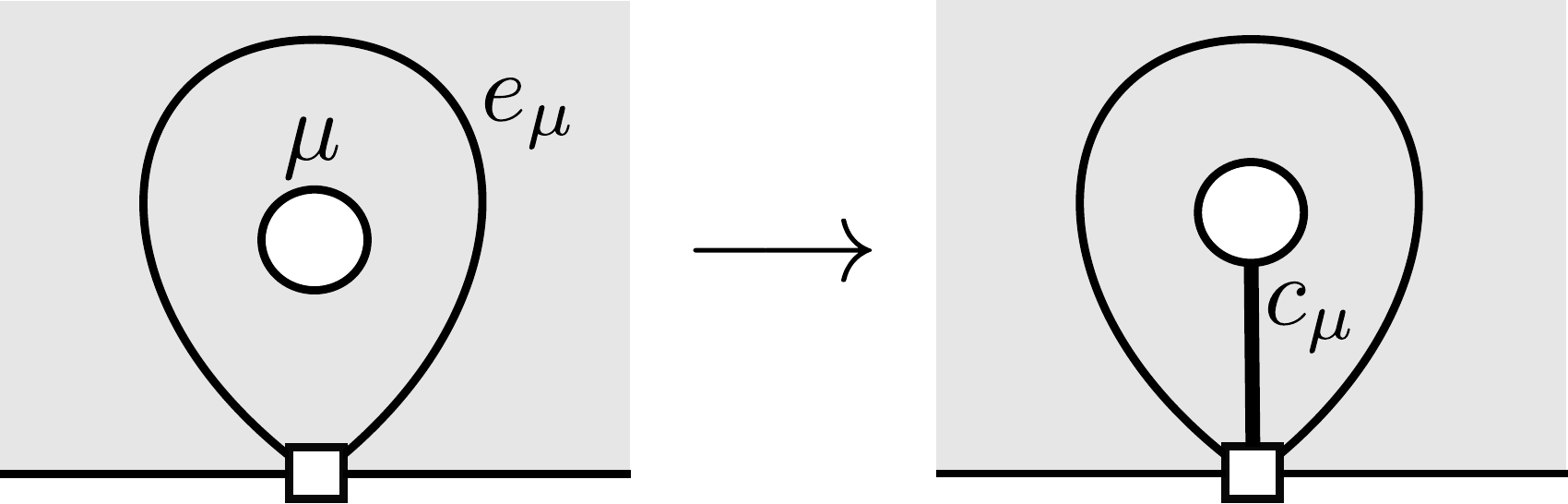}
    \caption{}
    \label{fig:amu}
\end{figure}

\def\ovec{\overrightarrow}

As usual let $\ASDR= \ASD \ot_\ZQ \cR$. Define $\cA_+(\fS,\Delta {;\cR})$ and $\Ad(\fS,\Delta {;\cR})$ similarly.
\bthm \label{thmAtrace}
Suppose $\Delta$ is a triangulation of a triangulable punctured surface $\fS$ having no interior puncture. There is a unique reflection invariant $\ZQ$-algebra embedding 
$$\trAD \colon {\SSp} \embed \ASD$$ such that for all $c\in \Dbu$,
\be \tr_\Delta^A([\uv{  c }]_\omega)= \aaa_c\ee
 Moreover $\trADR := \tr_\Delta^A \ot_\ZQ \cR$ is injective, and
\be 
\cA_+(\fS,\Delta {;\cR}) \subset  \trADR(  {\SSpR} ) \subset \Ad(\fS,\Delta {;\cR}).
\ee

\ethm

\bpr When $\fS$ has no circular boundary component the statement was proved in \cite{Muller}. See also \cite{LP,LY2} for related results. Our proof follows closely  \cite{LY2}.

An important fact is that $ {\SSpR}$  is a domain, as it is a subring of  $ {\SSR}$,  which is a domain by Theorem \ref{thm.embed3e}.

Let $A_c= [\uv{  c }]_\omega$ for $c\in \Dbu$. From the height exchange formula \eqref{eqHeight} we have 
$$A_c A_e = q^{\mP(c,e)} A_e A_c.$$

Hence there is a $\ZQ$-algebra homomorphism
$$\iota \colon \cA_+(\fS,\Delta) \to {\SSp}, \  \iota(\aaa_c)=A_c.$$
Clearly $\iota$ reflection invariant. Let $\iota_\cR= \iota \ot _\ZQ \cR$.
\blem The map $\iota_\cR$ is injective.
\elem
\bpr Let $D$ be the result of moving left of the quasi-miltiarc $\cup _{e\in \Dbu}\, e$.  Then $D= \sqcup _{e\in \Dbu} D_e$, where $D_e = \uv e$.

If $D_e$ is an ideal arc and $k\in \BN$  let $D(e,k)$ be $k$ parallel copies of $D_e$, which lie in a small neighborhood of $D_e$. The height exchange rule implies $(D_e)^k \qeq D(e,k)$ in $ {\SSpR}$.

If $D_e$ is a quasi-ideal arc, $e= c_\mu$, let $D(e,k)$ be the diagram in Figure \ref{fig:Dek},  in a small neighborhood of $D_e \cap \mu$.
From defining relations we also have $(D_e)^k \qeq D(e,k)$ in $ {\SSpR}$. 
\begin{figure}[h]
    \centering
    \includegraphics[width=280pt]{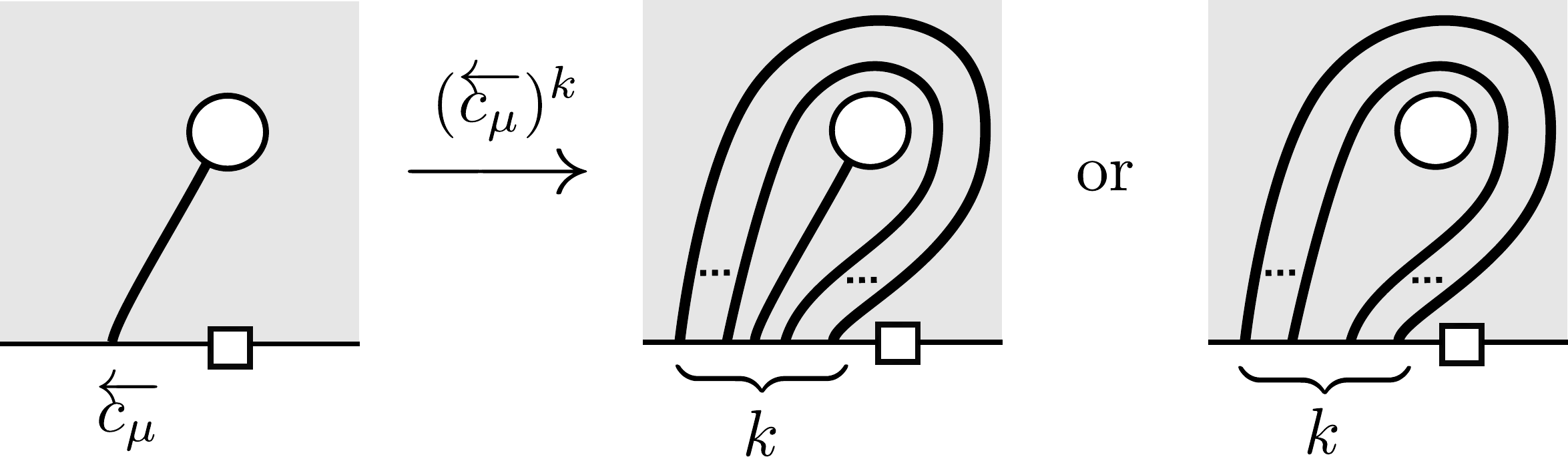}
    \caption{$D(\mu,k)$: left if $k$ odd, right if $k$ even.}
    \label{fig:Dek}
\end{figure}

By the height exchange rule, for $\bk  \in \BN^\Dbu$ the element $\iota(\aaa^\bk)$ is $q$-proportional to $A(\bk):=\sqcup _{e\in \Dbu} D(e, \bk(e))$,  with positive boundary order.  Then $A(\bk)$ is an element of the $\cR$-basis $\Bp(\fS)$ of $ {\SSpR}$ of Proposition \ref{rMRYbasis}. Besides 
$A(\bk)\neq A(\bk')$ as elements of $\Bp(\fS)$ if $\bk\neq \bk'$.
Since $\iota_\cR$ maps the $\cR$-basis $\{ a^\bk \mid \bk \in \BN^\Dbu\}$ injectively into an $\cR$-basis  of $ {\SSpR}$, it is injective.
\epr
Identify $\cA_+(\fS,\Delta {;\cR})$ with its image under $\iota$. For $\al \in  \Bp(\fS)$ let $\ovec \al$ be the quasi-ideal multiarc obtained by reversing the moving left operation of Figure~\ref{fig:uv}.
\blem  \label{rCal2}
For a boundary ordered $\pfS$-tangle diagram $\al$ we have $\aaa^{\bm_\al} \, \al\in  \cA_+(\fS,\Delta)$. Here
 $\bm_\al\colon \Dbu\to \BN$ is defined by $\bm_\al(e)= I( \overrightarrow{\al},e)$.  
\elem
With the lemma and the integrality of $ {\SSpR}$, by \cite[Proposition 2.2]{LY2} 
the algebra embedding $\cA_+(\fS,\Delta {;\cR}) \embed \cA(\fS,\Delta {;\cR})$ extends to  a unique  algebra embedding $  {\SSpR} \embed \cA(\fS,\Delta {;\cR})$, which is reflection invariant when $\cR=\ZQ$.
 Denote this extension by $\trADR$. By construction $\trADR= \trAD \ot _\ZQ \cR$ where  $\trAD =  \trADR$ with $\cR=\ZQ$.
Since $\bm_\al(e)=0$ for each boundary edge $e$, one has $\trADR(\al) \in \aaa^{-\bm_\al} \cA_+(\fS, \Delta  {;\cR})\subset \Ad(\fS, \Delta {;\cR})$. Hence  we have  the theorem.

It remains to prove the lemma. 
Consider 
$$ B^+_0: =   \{ \al\in \Bp(\fS) \mid I(e, \ovec\al)=0\ \text{for all } \ e\in \Dbu.\}.$$

Assume $\al\in \Bp(\fS)$. The proof of \cite[Corollary 4.13]{Muller} shows that $\aaa^{\bm_\al}  \al \in \ZQ B^+_0$. In \cite{Muller} only the case of ideal edges is considered, but the proof there does not concern the endpoints, and works as well for the case when there are quasi-ideal arcs. 

Let us consider now $\cR B^+_0$.
Let $\al\in B^+_0$. The maximality of a triangulation implies that each component of $\ovec \al$ is either a quasi-ideal arc or an ideal edge in $\Delta$, possibly a monogon edge. Thus by moving left back we get that $\al = A(\bk)$ for certain $\bk \in \BN^\Dbu$. This shows $\ZQ B_0 = \cA_+(\fS, \Delta)$. We have the lemma, and hence the theorem. \epr
\brem
\label{rCal} Lemma \ref{rCal2} shows how to calculate $\tr_\Delta^A(\al)$, for a boundary ordered $\pfS$-tangle diagram. First we find $\bm_\al$ of Lemma \ref{rCal2}. Then $z=\aaa^{\bm_\al} \al\in \cA_+(\fS,\Delta)$ can be calculated by using the skein relation. Then $\tr_\Delta^A(\al) = \aaa^{-\bm_\al} z$.
\erem
 
\subsection{Boundary simplified skein algebra} Now we define the version of skein algebra that will be used in the proof of Theorem \ref{thmSg} and Theorem \ref{thmSRY}.
Recall that $\fS= \bfS \setminus \cP$.

\bdf \label{defNearB}
 (a) A $\pfS$-arc is {\bf near boundary} if as an arc in $\bfS$ it is homotopic relative its boundary to a subset of $\pbfS$.

(b) A {\bf strongly simple diagram} on $\fS$ is a simple diagram having no near boundary arc.

(b) The {\bf boundary simplified skein algebra} $ {\uSSR}$ is 
$${\uSS} = \cS^+(\fS)/\cI^{\partial},\quad \uSSR= \uSS \ot_\ZQ \cR,
$$ 
where  $\cI^{\partial}$ is the two-sided ideal generated by near boundary arcs.
\edf
Since $\cI^\partial$ is invariant under $\omega$, the reflection $\omega$ descends to a reflection of $\uSS$.

Let $\uB(\fS)$ be the set of all isotopy classes of strongly simple diagrams. Let $h$ be a choice, for each $\al\in \uB(\fS)$, of a boundary order for $\al$. 
\bpro \label{rUbasis}
The set
$(\uB(\fS), h)$  is a free $\cR$-basis of $ {\uSSR}$.
\epro
\bpr With the height exchange rule of Lemma \ref{rHeight}, we can assume  $h$ is the positive order. Using the skein relations it is easy to see that $(\uB(\fS), h)$ spans $ {\uSSR}$.  {\stno{over $\cR$.}}

Let $\cR B'$ be the $\cR$-span of $B'= \Bp(\fS) \setminus \uB(\fS)$. To prove the proposition, it is enough to show that $ \cR B'=\cI^\partial$, which is reduced to: If $\al$ is a non-trivial near boundary arc and $\beta\in \uB(\fS,h)$ then $\al \beta, \beta\al \in \cR B'$. We will use induction on $|\al \cap \beta|$.

Suppose $|\al \cap \beta|=0$. 
By the height exchange rule 
$$ \al \beta \qeq \beta \al\qeq (\al \cup \beta) \in  B'.$$
The case  $|\al \cap \beta| >0$ is reduced to the case with smaller $|\al \cap \beta|$ by (resolving the rightmost crossing)
\begin{align*}
\begin{array}{c}\includegraphics[scale=0.21]{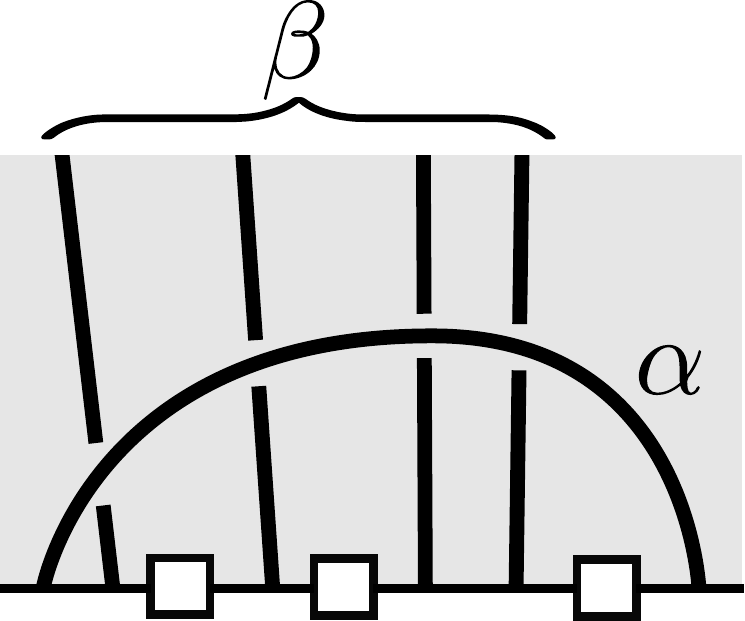}\end{array}=q\begin{array}{c}\includegraphics[scale=0.21]{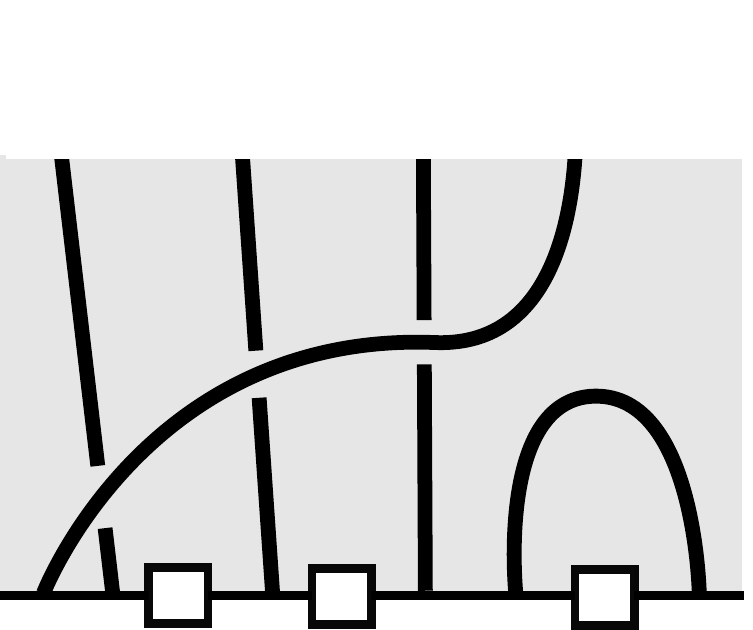}\end{array}+q^{-1}\begin{array}{c}\includegraphics[scale=0.21]{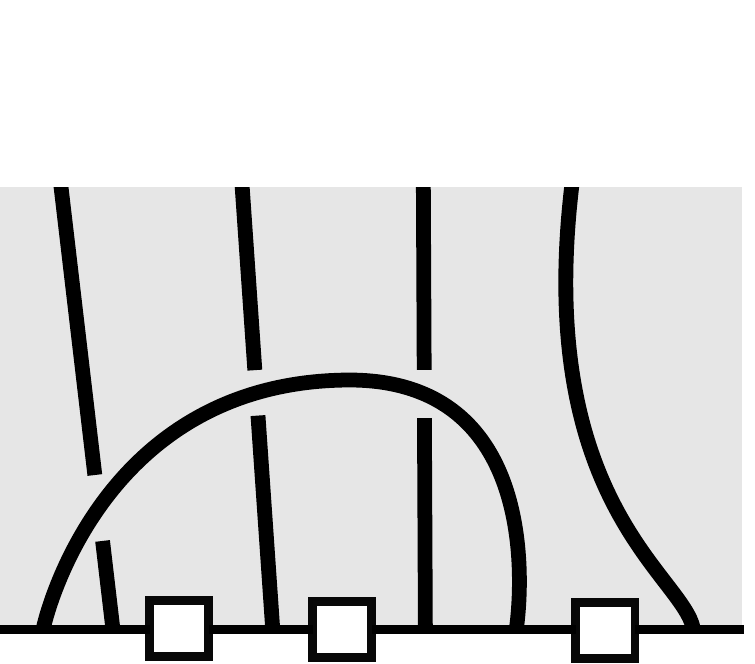}\end{array}
\end{align*}
Here the first term can be further resolved into a linear combination of crossingless diagrams containing a near boundary arc (the lower right arc), and the second diagram is a diagram of the desired form with one fewer crossing.
\epr

\def\uASD {\uA(\fS, \Delta)}

Let $\Dd$ be the set of all boundary edges, and $\uD := \Dbu \setminus \Dd$. Let $\uP$ be the restriction of $\mP$ to $\uD \times \uD$.
Define the smaller quantum torus based on interior quasi-ideal edges:
\be 
\uASD:= \bT(\uP) \subset \ASD, \quad 
\uA(\fS, \Delta {;\cR}):=  \uASD \ot _\ZQ \cR.
\ee

There is a $\ZQ$-algebra surjection $\Ad(\fS,\Delta) \onto \uA(\fS,\Delta)$ given by $\aaa_e \to 0$ for $e\in \Dd$.
\bthm
\label{thm.emb2}
Let $\Delta$ be a triangulation of a triangulable punctured surface $\fS$ having no interior puncture. Assume that  each component of $\pbfS$ has at most one ideal point.

Then 
$\tr^A_\Delta\colon  {\SSp} \to \Ad(\fS,\Delta)$ descends to a $\ZQ$-algebra homomorphism
\be 
\utr^A_\Delta\colon  {\uSS} \to \uASD. \label{equtrr}
\ee
\ethm
\bpr

For  $e\in \Dd$ the moved left $\uv e$ is the only near boundary arc in a neighborhood of $e$. Thus $\cI^\partial$ is the ideal generated by $\uv e$ with $e\in \Dd$. By construction, $\tr^A_\Delta(\uv e)\qeq \aaa_e$. Hence $\tr^A_\Delta$ descends to an algebra homomorphism as given in \eqref{equtrr}.
\epr
\bpro With the assumption of Theorem \ref{thm.emb2}, we have
\be  \utr^A_\Delta(\ell_\mu) = [\aaa_e \aaa_c^{-1} ]_\Weyl +  [\aaa_c \aaa_e^{-1} ]_\Weyl,
\label{eqellmu}
\ee
where $\ell_\mu$ is the loop near a boundary edge $\mu$, and $e, c$ are ideal arcs as in Figure \ref{ellmu}.
\epro
\bpr The product $\ell_\mu \uv e \uv c$ can be calculated, resolving the two crossings and using the skein relations. See the right part Figure \ref{ellmu}, where we need to do the move-left operation. The third term is 0 due a defining relation, while the last term is 0 due to  the presence of a near boundary arc. Thus 
$$ \ell_\mu \uv e \uv c =  q^{k} (\uv e)^2 +  q^l (\uv c)^2,$$
Using  $\utr^A_\Delta(\uv e  )= \aaa_e$, we get, for some $k',l'\in \BZ$,
$$  \utr^A_\Delta(\ell_\mu) = q^{k'} [\aaa_e \aaa_c^{-1} ]_\Weyl +  q^{l'}[\aaa_c \aaa_e^{-1} ]_\Weyl.
$$
We can show $k'=l'=0$ and hence\eqref{eqellmu} by carrying the exact calculation of powers of $q$ in each step. Alternatively, we have $k'=l'=0$ directly from reflection invariance.
\begin{figure}[h]
    \centering
    \includegraphics[width=480pt]{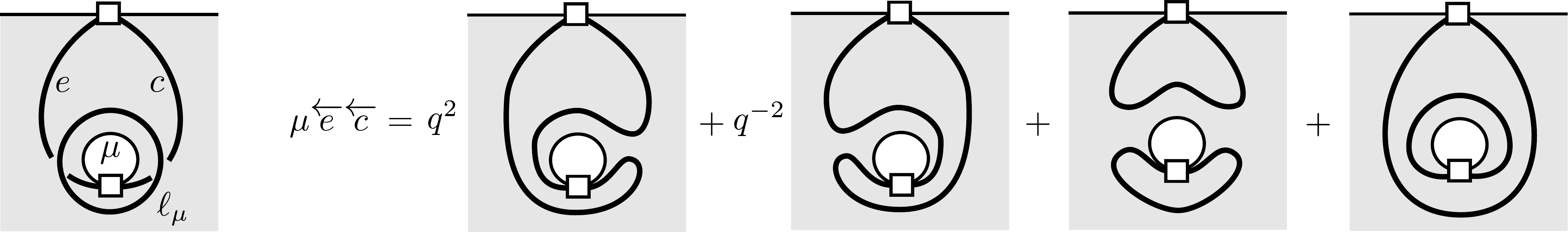}
    \caption{}\label{ellmu}
\end{figure}
\epr

\brem
When there is no circle boundary component, the algebra $ {\uSSR}$ and the quantum trace $\utr^A_\Delta$ were defined in \cite{Le:Qtrace}. The algebra  $ {\uSSR}$  is also studied in \cite{PS2}. 
\erem

\section{Dehn-Thurston coordinates and modification}\label{secDT}

Let $\Sigma_{g,m}$ denote a compact oriented surface of genus $g$ with $m$ boundary components.
To understand the skein algebras of $\fS=\Sigma_{g,m}$ we need a parameterization of the basis $B(\fS)$, similar to the Dehn-Thurston (DT) parameterization. The widely used DT coordinates do not behave well under skein products. In this section we will
 introduce a {\bf modified} version of DT coordinates which will be shown in later sections to pick up the highest degree term in the product of skeins.

Recall that $B(\fS)$ is the set of isotopy classes of unoriented, compact,  1-dimensional  proper submanifolds of $\Sigma_{g,m}$ which intersect each  boundary component at most one point.

We construct our DT coordinates for elementary pieces, which we call basic DT pairs of pants. Then we combine them together to get the global DT coordinates for $B(\fS)$.

\subsection{Three basic DT pairs of pants}

The surface $\Sigma_{0,3}$ is called a pair of pants, see Figure ~\ref{fig:pants}.
 
\begin{figure}[h]
    \centering
    \includegraphics[width=380pt]{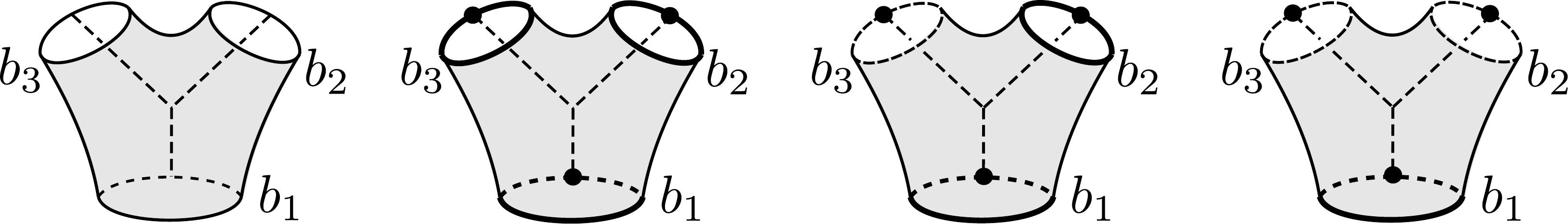}
    \caption{From left to right $\Sigma_{0,3}$, $\PP_3, \PP_2, \PP_1$}
    \label{fig:pants}
\end{figure}

A {\bf DT pair of pants $\PP_j$}, for $j\in \{1,2,3\}$, is the surface $\Sigma_{0,3}$ with boundary components $b_1, b_2, b_3$, equipped with  a $Y$-graph embedded as in Figure \ref{fig:pants}, where  the boundary components $b_i$ with $ i \le j$ are declared to be {\bf bold}, while the remaining ones are {\bf dashed}.   
Up to isotopy, the identity is the only self-diffeomorphism preserving the orientation, the $Y$-graph, and the enumeration of the boundary components.  Ignoring the enumeration of the boundary components,  the group of such self-diffeomorphisms of $\PP_3$  is the cyclic group $\BZ/3$, generated by a rotation by $120^o$  in Figure \ref{fig:pants}.

 
 If $c$ is a loop, ie a simple closed curve, on a an oriented surface $\Sigma$, then by the result of {\bf cutting $\Sigma$ along $c$} we mean a surface $\Sigma'$ having boundary components $c', c''$ and a diffeomorphism $f: c' \to c''$ such that $\Sigma= \Sigma'/(u= f(u), u\in c')$, where $c$ is the common image of $c'$ and $c''$.

\subsection{DT datum}\label{Sec;DTdatum} We introduce the notion of a DT datum, which will determine DT coordinates. We will make the following assumption\\
\be (g,m)\not \in \{  (0, k), (1,0), k \le 4\}
\label{eqExcep}
\ee

\def\SC{{\fS_\cC}}
\def\cCt{{\cC^{(2)}}}

A {\bf pants decomposition} of $\fS=\Sigma_{g,m}$ is a maximal collection $\cC$ of disjoint non-trivial, non-peripheral loops which are pairwise non-isotopic.
By cutting $\fS$ along all $c\in \cC$ we get a compact oriented surface $\SC$ whose connected components are copies of $\Sigma_{0,3}$, with a projection $\pr: \SC \onto \fS$. A component $c\in \cC$ lifts  to two boundary components $c', c''$ of $\SC$, and  denote by $\cCt$ the set of all such lifts. 
A connected component of $\SC$ is called  a {\bf face} of the pants decomposition $\cC$.

A triple $(a,b,c)\in \cC$ is {\bf triangular} if they are the images of the three boundary components of a face under the projection $\SC \onto \fS$. Note that  two of the three $a,b,c$ might be equal.


A {\bf dual graph $\Gamma$ of $\cC$} is a trivalent graph embedded into $\fS$, transverse to each $c\in \cC$  such that its preimage in each face $\tau$ is a $Y$-graph, denoted by $\Gamma_\tau$, as  in Figure~\ref{fig:pants}. Each face $\tau$, equipped  with the graph $\Gamma_\tau$ is one of $\PP_1, \PP_2, \PP_3$, where the bold boundary components are declared to be the ones in $\cCt$.

\bdf  \label{defLength}

 A DT datum of $\fS$ consists of a pants decomposition $\cC$ and a dual graph $\Gamma$.

Fix a DT datum $(\cC, \Gamma)$. The  {\bf bold vertices} of $(\Gamma, \cC)$ are
elements of  $\Gamma \cap (\bigcup_{c\in \cC} c)$, as well as 
their lifts in $\SC$.

For a simple diagram  $\al$  on $\fS$ its 
{\bf length coordinate}  at $c\in \cC$ is $n_\al(c) :=I(\al, c)$.

Such an  $\al$ is good with respect to $(\cC, \Gamma)$ if $\al$ does not contain any bold vertex and $\al$ is taut with respect to $\cC$, meaning  $|\al \cap c| = n_\al(c)$ for all $c\in \cC$.
\edf

The following is standard, and is the basis of all the definitions of DT coordinates.

\blem \label{rSlides}
 Two good simple diagrams are isotopic in $\fS$ if and only if they are related by a sequence of t-slides and loop-slides as seen in Figure \ref{fig:slide}.

\elem

\begin{figure}[h]
    \centering
    \includegraphics[width=330pt]{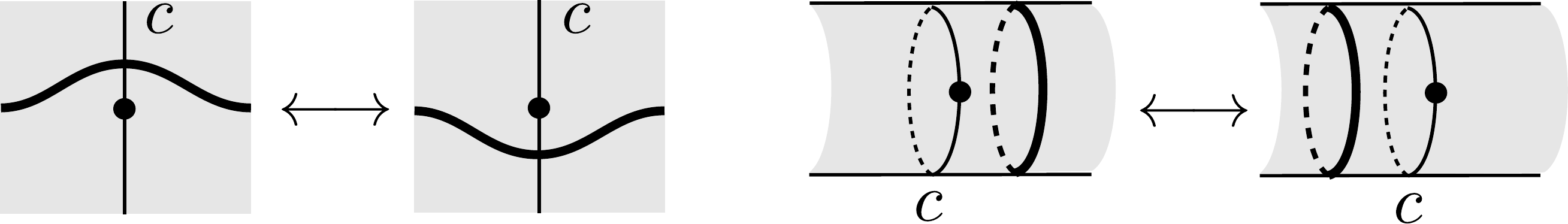}
    \caption{Left: $t$-slide, Right: loop-slide}
    \label{fig:slide}
\end{figure}

\def\hC{{ \hat \cC}}
\def\tiC{{ \tilde \cC}}

An easy Euler characteristic count shows that
\be 
|\cC|= 3g-3+m .
\ee

\subsection{Difference between our twist coordinates and the usual ones}
Before giving the detailed definition, let us point out the difference between our twist coordinates and the ones used for examples in \cite{Luo,FKL,HP}.

In the DT pair of pants $\PP_3$ with boundary components $b_1, b_1, b_3$, a curve is {\bf standard}
 if it is one of the three curves $\ell_1,a_{23}, a_{11}$ of Figure \ref{fig:standard03} or their images under the actions of $\BZ/3$.
 
\begin{figure}[h]
    \centering
    \includegraphics[width=380pt]{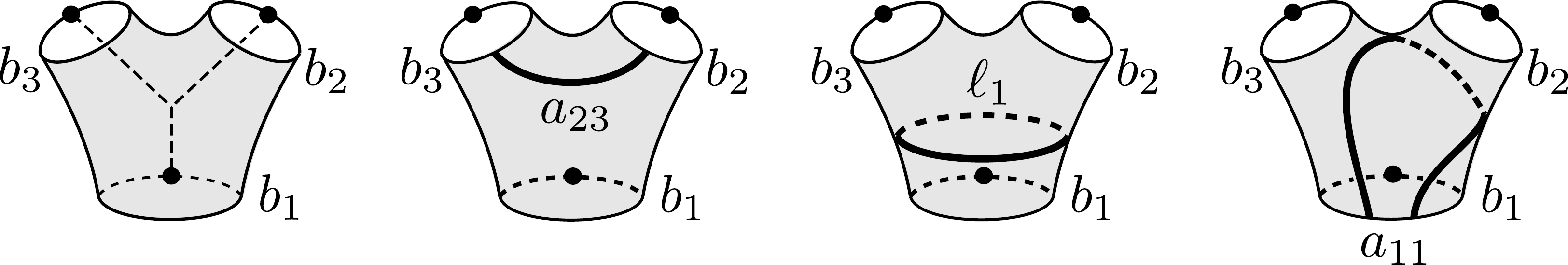}
    \caption{Standard curves on $\PP_3$.}
    \label{fig:standard03}
\end{figure} 
 
 For $i=1,2,3$, the arc $a_{ii}$ will be called a {\bf standard return arc}; it has two endpoints on $b_i$, and we say that it 
 {\bf approaches $b_{i+1}$}. Here indices are taken modulo 3.

In the usual definition of \cite{Luo,FKL}, the twist coordinate $t'_i(\al)$, where $\al$ is a simple diagram,  is defined so that the contribution of each standard return arc  is 0. Our modified version of the twist coordinate is the following
\be 
t_i(\al)= 2 \times  [ t'_i(\al) + \text{number of  return arcs approaching}\ b_{i}].
\ee
This modification is very important as it will equal to the highest degree term  in certain filtrations of the skein algebra. The factor 2 is used so that we can later accommodate the case of $\PP_1$, where the twist coordinates might be odd.

\subsection{Patching together strongly simple diagrams} 
\label{ssStrong} We show how to patch together strongly simple diagrams on faces to get a strongly simple diagram on $\fS$.

\def\Pb{\check \tau}
\def\pPb{\partial{\Pb  }}
\def\PPb{{\check{\PP  }}}

For a face $\tau$ of the DT datum let $\Pb$ be obtained from $\tau$ by removing all the bold vertices.

Recall that $\uB(\Pb)$ is the set of all isotopy classes of strongly simple diagrams.

For a bold boundary component $c$ we define the {\bf twist on $c$  map} $\theta_c: \uB(\Pb) \to \uB(\Pb)$, which is a permutation of $\uB(\Pb)$ as follows. If $\al \cap c=\emptyset$ then $\theta_c(\al)= \al$; otherwise $\theta_c(\al)$ is given by
$$\begin{array}{c}   \incl{1.22cm}{twist02}\end{array}.$$

\def\patch{{\mathsf{patch}}}
Suppose $\al= (\al_\tau)_{\tau\in \cF}\in \prod_{\tau \in \cF} \uB(\Pb)$, where $\cF$ is the set of all faces. Let $\al^\sqcup=\sqcup _\tau \al_\tau$.
 Then $\al$ is  {\bf matched} if $|\al^\sqcup \cap c'| = |\al^\sqcup \cap c''|$ for each $c\in \cC$.
For matched $\al$  we can patch the $\al_\tau$ together to get  $\patch(\al) \in B(\fS)$. Let $(\prod_{\tau \in \cF} \uB(\Pb))^*$ be the set of all matched elements. From Lemma \ref{rSlides} we get

\blem \label{rSlides2}
Two elements $\al, \al'\in (\prod_{\tau \in \cF} \uB(\Pb))^*$ are patched to the same element of $B(\fS)$ if they are related by a sequence of 
\begin{itemize}\item  t-moves:
$$
\al \leftrightarrow \theta_{c'}^{-1} \circ \theta_{c''}(\al) ,\ c\in \cC.
$$
\item loop-moves: Assume $\al$ contains $\ell_{c'}$, then the loop-move at $c$ is
$$ \al \leftrightarrow (\al \setminus \ell_{c'} ) \cup \ell_{c''},$$

\end{itemize}
\elem

\subsection{DT coordinates for $\PP_3$} \label{ssP3}
\def\rot{{\mathrm{rot}   }}
\def\loo{\ell}
\def\arcc{\omega}
\def\Add{{\mathsf{Add}}}

We now introduce DT coordinates for strongly simple diagrams on $\PPb_3$.

Recall that $\PP_3$ has 3 bold boundary components $b_1, b_2, b_3$.
Let  $\theta_i$ be the twist map on $b_i$. For each $n\in \BZ$, we  call $\theta_i^n(a_{ii})$  a {\bf return arc approaching} $b_{i+1}$.

For $i=1,2,3$ define $\Add_i: \BZ^3 \to \BZ$ by (with indices taken mod 3)
\be
\Add_i(n_1, n_2, n_3  ) = \max(0,  {n_{i-1} - n_{i} - n_{i+1}} ). \label{eqAdd3}
\ee
For a strongly simple diagram $\al$, if $n_i= |\al \cap b_i|$ then
 the number of return arcs approaching $b_{i}$ is $\frac 12 \Add_i(n_1, n_2, n_3  ).$

A strongly simple diagram is {\bf standard} if each connected component of it is standard, i.e. one of  $a_{ij}, \ell_i$.  We use the convention $a_{ij}= a_{ji}$.

\bpro 
\label{rP3}
(a) There is a unique injective  map
$$ \nu: \uB( \PPb_3) \to \BN^{3} \times \BZ^{3}, \quad \nu(\al)= (n_1(\al),n_2(\al), n_3(\al), t_1(\al), t_2(\al), t_3(\al)), $$
satisfying the following conditions $(1)-(6)$:

\begin{enumerate}

\item length coordinates: For bold $b_i$ we have
$ 
n_i(\al) = |\al \cap b_i|.
$
\item disjoint additivity: If $\al_1,\dots, \al_k$ are components of $\al$ then $\nu(\al) = \sum_{j=1}^k \nu(\al_j)$.
\item The twist increases the twist coordinate by 2 : The twist $\theta_i$ on a bold $b_i$ changes only the coordinate $t_i$, and the change is given by
\be 
t_{ i }(\theta_i (\al)) =  \begin{cases} t_i( \al) +2,  & n_i(\al) >0 \\
t_i( \al),  & n_i(\al) =0.\end{cases}
\ee
\item Twist coordinates of boundary curves: For bold $b_i$,
\be 
t_j(\ell_i)=  2 \delta_{ij}. \label{eqloop}
\ee

\item Standard straight arcs: For all applicable $i,j,k$ with $j\neq  k$,
\begin{align}
t_i(a_{jk}) &= 0  
\end{align}

\item Standard return arcs: With indices taken modulo 3,
\begin{align}
t_i(a_{jj}) &=  2 \delta_{i-1,j}. \label{eqDT3}
\end{align}
\end{enumerate}

(b) The image of $\Lambda_3 :=\nu(\uB(\PPb_3))$ inside $\mathbb{N}^3\times \mathbb{Z}^3$ is  the subset  constrained by 

\begin{itemize}
\item[(i) ] $n_1+n_2+n_3$  and all $t_i$ are even, and

 \item[(ii)] if $n_i=0$ then $t_i\geq \Add_i(n_1, n_2, n_3)$.
 
\end{itemize}

Moreover, the set $\Lambda_3$ is a submonoid of $\BZ^6$.
\epro

Note that in $\PP_3$,  each of $b_1, b_2,$ and $b_3$ are bold.   To ensure a consistent statement of the proposition for $\PP_2$ and $\PP_1$ later, we use ``For bold $b_i$" in the formulation.

\def\Lo{{\mathring \Lambda}}

\bpr 

(a) Define $n_i(\al)$ by condition (1).
The standard twist coordiantes, $t'_i(\al)$ (in e.g. \cite{Luo,FKL}), are defined as follows.  There are integers $k_1, k_2, k_3$ such that $(\theta_1)^{-k_1} (\theta_2)^{-k_2} (\theta_3)^{-k_3}(\al)  $ is standard. If $n_i(\al)\neq 0$ then define $t'_i(\al)= k_i$. Otherwise, $t'_i(\al)$ is the number of components of $\al$ isotopic to $b_i$ in $\PP_3$.

The standard DT coordinates $(n,t')$  satisfy  all the requirements, except that the right hand side of 
\eqref{eqDT3}  is $0$ and the right hand side of \eqref{eqloop} is $\delta_{ij}$, and the slide increases the twist coordinate by 1.   Defining the new twist coordinates by 
\be t_i(\al) = 2 t'_i(\al) + \Add_i(n_1(\al), n_2(\al), n_3(\al)).
\label{eqNewt}
\ee
Then $\nu$ is injective and  satisfies (1) and (3)-(6). Let us prove (2). Clearly the $n_i(\al)$ coordinates are additive. Let us prove $t_i$ is additive. Note that $d= \Add_i(n_1(\al), n_2(\al), n_3(\al))$ is twice the number of return arcs approaching $b_i$. By definition
$$ t_i(\al_l) =   \begin{cases} 2t'_i(\al_l) + 2  &  \al_l \ \text{is a return arc approaching} \ b_i \\ 
2t'_i(\al_l) & \text{otherwise}.
\end{cases}
$$

Hence, from the additivity property of $t'_i$, $$t_i(\al) = 2t'_i(\al) + d =   {\sum_{l=1}^k 2t_i'(\al_l)+d= \sum_{l=1}^k t_i(\al_l).}$$

(b) The image of the old coordinates $(\vec{n},\vec{t'})$ is the submonoid $\Lambda'$ satisfying : $n_1+ n_2 + n_3\in 2\BZ$, and $t_i'\ge 0$ whenever $n_i=0$. These translate to the conditions (i) and (ii) for $(\vec{n},\vec{t})$.

Let us now prove $\Lambda_3$ is a monoid.  There are  two observations: \\ 
First, 
$\{ (\bn, \bt)  \in (\BN_{>0})^3\times(2\BZ)^3 \mid n_1 + n_2 + n_3 \in 2 \BN\} $ is  a subset of $\Lambda_3$.\\
Second,  the function $\Add_i$ satisfies $\Add_i(\bn' + \bn'') \le \Add_i(\bn') + \Add_i(\bn'')$.

 Let $(\bn', \bt'), (\bn'', \bt'')\in \Lambda_3$. We have to show $(\bn, \bt)= (\bn'+ \bn'', \bt'+ \bt'') \in \Lambda_3$.

  If $\bn\in (\BN_{>0})^3$ then $(\bn, \bt) \in \Lambda_3$ by the first observation.
 Assume, say $n_3=0$. Then $n_3'= n''_3=0$. Then $t_3'\ge  \Add_3(\bn'), t_3''\ge \Add_3(\bn'')$. Hence $ t_3= t'_3+ t''_3 \ge  \Add_3(\bn)$ by
  the second observation. This completes the proof.
\epr


\subsection{Coordinates in $\PP_2$} \label{secP2}

We now introduce  DT coordinates for strongly simple diagrams on $\PPb_2$, whose boundary components $b_1, b_2$ are bold while $b_3$ is dashed. 
 See Figure~\ref{fig:standard02}. 

\begin{figure}[htpb!]
    \includegraphics[height=1.7cm]{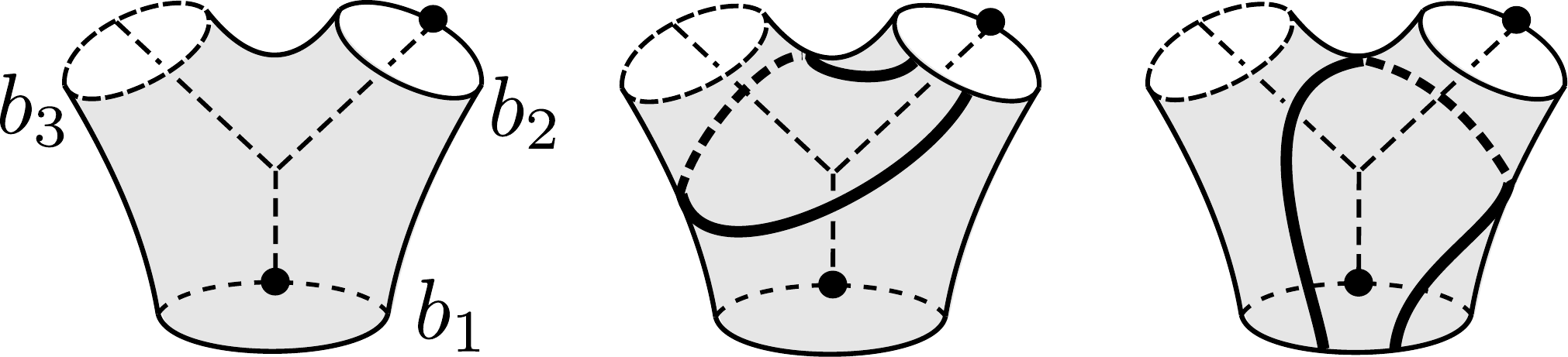}
    \caption{left: $\PP_2$ \quad middle :$a_{22}$\quad right: $a_{11}$}
    \label{fig:standard02}
\end{figure}

We use the same standard arcs and curves $a_{ij}, \ell_i$ as in the case of $\PP_3$. Since $b_3$ is dashed, a strongly simple diagram has at most one endpoint on $b_3$. In particular, we never encounter $a_{33}$ nor $\ell_3$. The parameterized set $\uB(\PPb_2)$ is not the one usually considered in the literature. 


\bpro\label{rP2}
 (a) There is a unique injective  map
$$ \nu: \uB( \PPb_2) \to \BN^{2} \times \BZ^{2}, \quad \nu(\al)= (n_1(\al),n_2(\al),  t_1(\al), t_2(\al)), $$
satisfying the conditions (1)-(5) of Proposition \ref{rP3}, and in addition, the twist values  of the standard return arcs are
\begin{align}
  {(t_1(a_{11}), t_2(a_{11})))=(2,0)}, \quad (t_1(a_{22})), t_2(a_{22})))=(0,0). 
\end{align}

  {(b) The image  $\Lambda_2:=\nu(\uB(\PP_2))$ is the submonoid of $\mathbb{N}^2\times \mathbb{Z}^2$ constrained by  all $t_i$ are even, and  if $n_i=0$ then $t_i\geq 0$ $(i\in \{1,2\})$.}

\epro
\bpr 
(a) Let $\al \in \uB(\PP_2)$. Note that $\PP_2 \subset \PP_3$. Considering $\al$ as a strongly simple diagram in $\PPb_3$, we can define the standard DT coordinates $n_i(
\al), t'_i(\al)$ for $i=1,2,3$. The definition of strongly simple implies that $t'_3=0$ and $n_3(\alpha)\in \{0,1\}$. Since $n_1(\alpha)+n_2(\alpha)+n_3(\alpha)$ is even,  $n_3\in \{0,1\}$ is uniquely determined. Thus $\al$ is totally determined by $n_1(\al), n_2(\al), t'_1(\al), t'_2(\al)$.

Define 
\be 
t_1(\al)= {2}t'_1(\al)+\max(0, n_1(\al)-n_2(\al)-n_3(\al)),\quad t_2(\al)= {2}  t'_2(\al).
\label{eqPP2}
\ee
(This modification is different from the $\PP_3$ case. This is forced upon us because of the later compatibility between the product and the lead term.)

We can easily check that  $\nu$ satisfies all desired properties.

(b) The claim immediately follows from (a) and properties of $n_i$ and $t'_i$. \epr

Equation \eqref{eqPP2} can be rewritten as $t_i= 2 t'_i + \Add_i(n_1, n_2)$, where
 $\Add_i\colon \BN^2 \to \BN$, for $i=1,2$, is defined as follows. First, let $n_3=0$ or $1$ according as $n_1+n_2$ is even or odd. Then
\be 
\label{eqAdd2}
\Add_1(n_1, n_2) = \max (0, n_1- n_2-n_3), \  \Add_2(n_1, n_2) = 0.
\ee
Note that if $n_i=0$ for an index $i=1,2$ then $\Add_i(n_1, n_2)=0$.

\subsection{Coordinates in $\PP_1$} \label{ssP1}

We now introduce  the  DT coordinates for strongly simple diagrams on $\PPb_1$, where $b_1$ is bold and $b_2, b_3$ are dashed. See Figure \ref{fig:standard01}.  We use the same notation $a_{ij},\ell_i$ for the curves as in the case of $\PP_3$.  We don't have $a_{22}, a_{33}, \ell_2, \ell_3$.

\begin{figure}[htpb!]
    \includegraphics[height=2.1cm]{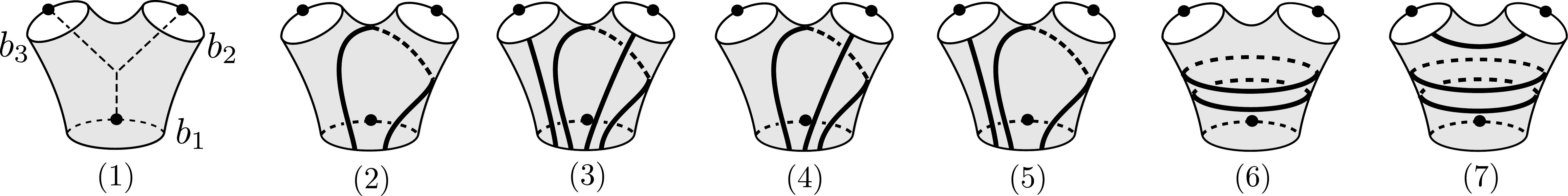}
    \caption{ $\PP_1$ followed by the curves with coordinates $(n,t)$ which are (even, even), (even, odd), (odd, even), (odd, odd),  (0, even), and (0, odd)}
    \label{fig:standard01}
\end{figure}
 
\no{
\FIGc{PP1}
{$\PP_1$ followed by the curves with coordinates $(n,t)$ which are (even, even), (even, odd), (odd, even), (odd, odd),  (0, even), and (0, odd)}{2.3cm}
}

\bpro\label{rP1}
 (a) There is a unique injective  map
$$ \nu: \uB( \PPb_1) \to \BN \times \BZ, \quad \nu(\al)= (n_1(\al),  t_1(\al)), $$
satisfying the conditions (1)-(4) of Proposition \ref{rP3}, and additionally,
\begin{align}\label{eq:t_1}
 t_1(a_{11}) =t_1(a_{12}) =  0, t_1(a_{13}) = -1, \ 
 t_1(a_{23}) = 1
\end{align}

(b) The image  $\Lambda_1:=\nu(\uB(\PP_1))\subset \mathbb{N}\times \mathbb{Z}$ is a submonoid defined by: If   {$n_1=0$ then $t_1\ge 0$}.

\epro
\bpr 
 Let $\al \in \uB(\PPb_1)$. Define $n_1(\al)= |\al\cap b_1|$. 
We give a geometric definition of $t_1$.    This geometric definition will give direct justification for properties (1)-(4) of the lemma.  We proceed in two separate cases:

Case 1: $n_1(\al)= 0$. See (6)-(7) of Figure \ref{fig:standard01}.  Here $\al$ consists of $l$ copies of $\ell_1$, and $d$ copies of $a_{23}$ where $d=0$ or 1.  Define $t_1(\al)= 2l+d$.

Case 2:  $n_1\ge 1$.  There is a unique $k\in \BZ$ such that  $\theta_1^k (\al) $ is standard. Thus $\theta_1^k (\al) $ consists of $l$ copies of $a_{11}$, $d_{2}$ copies of $a_{12}$ and $d_{3}$ copies of $a_{13}$, where $d_2, d_3 \in \{0,1\}$. See (2)-(5) of Figure \ref{fig:standard01}. Define $t_1(\al) = -2k - d_3$.

With these explicit values, one can easily check all the claims of the Proposition.
\epr

We note that the definition implies
\be 
t_1(\al) \equiv |\al \cap b_3| = n_3 \mod 2,
\label{eqn3}
\ee
which uniquely determines $n_3\in \{0,1\}$, and hence also  $n_2\in \{0,1\}$, since $n_1+n_2+n_3\in 2\BZ$. 
Besides
\be t_1(\al) - 2t'_1(\al) = \#(a_{23}) - \# (a_{13})= \frac 12 [\max(0, n_2+n_3-n_1) - \max (0, n_1 + n_3 -n_2)   ]
\label{eqAdd1}
\ee

Unlike the case of $\PP_2$ and $\PP_3$, the right hand side of \eqref{eqAdd1} does not depend solely on $n_1$.

\subsection{Dehn-Thurston Coordinates} We now can define DT coordinates for simple diagrams on $\fS= \Sigma_{g,m}$, equipped with a DT datum $(\cC, \Gamma)$. Recall that we are assuming
\be (g,m) \neq (1,0),(0,k), \ \text{for} \ k \le 4.\label{eqAss1}
\ee

  Recall that $\SC$ is the disjoint union of faces of the pants decomposition, and each $c\in \cC$ lifts to two boundary components $c', c''$ of $\SC$. The set of all $c', c''$ is denoted by $\cCt$. A boundary component $b\in \cCt$ is said of type $\PP_i$ if the face containing $b$ is of type $\PP_i$. Since $(g,m) \neq (0,4)$ one of $c',c''$ must not be of type $\PP_1$, and we will assume the $c'$ is not of type $\PP_1$ for all $c\in \cC$.

Let $\tau$ be a face of the pants decomposition. 
An identification of $(\tau, \Gamma \cap \tau)$ with one of the three DT pairs of pants $\PP_1, \PP_2, \PP_3$ is called a {\bf characteristic map} of $\tau$. For type $\PP_1$ and $\PP_2$ there is a unique characteristic map up to isotopy.  
For type $\PP_3$ there are three possibilities of the characteristic maps, but the definition given below is independent of the choice. 

Let $b\in \cCt$ be a bold component of $\partial\tau$. Let $\al\subset \fS$ be a  simple diagram in good position with respect to $(\cC, \Gamma)$. 
Define $\al_\tau= \pr^{-1}(\al) \cap \tau$. 
Assuming that under the characteristic identification $b$ is the $i$-th boundary component,
let
\be 
t(\al;b)= t_i(\alpha_\tau).
\ee 
Define $\nu(\al) := (n_\al, t_\al) \in \BN^\cC \times \BZ^\cC$, where
\begin{align}
& n_\al: \cC\to \BN,& &\text{given by}&  \ n_\al(c) &= |\al \cap c| \notag   \\
&t_\al: \cC \to \BZ, &  &\text{given by}& t_\al(c)&= t(\al;c') + t(\al;c''). 
\label{eqtalc}
\end{align}

\def\fA{{\mathfrak A}}
\def\fB{{\mathfrak B}}

\bpro \label{rDTcoord}
Let $(\cC, \Gamma)$ be a DT datum of $\fS= \Sigma_{g,m}$ where $(g,m) \neq (1,0),(0,k)$ for $k \le 4$. Then $\nu$ gives a well-defined injective map
$ \nu: B(\fS) \to \BN^\cC \times \BZ^\cC$.

\epro

\begin{proof} To prove the well-definedness  we show that if $\al$ and $\al'$ are isotopic then $\nu(\al) = \nu(\al')$. 

First, since $n_\al(c) = I(\al, c)$, clearly $n_\al(c)= n_{\al'}(c)$.
By Lemma \ref{rSlides} the two diagrams $\al$ and $\al'$ are related by a sequence of t-slides and loop-slides, which preserve  $t_\al$ by Properties (3) and (4) of Proposition \ref{rP3}. This shows $\nu$ is well-defined.

Now we prove the injectivity. Assume $\nu(\al)= \nu(\beta)$. 
Let $c\in \cC$. Then $ {s}:= t(\al; c') - t(\beta;c')$ is even since both $t(\al; c')$ and $t(\beta;c')$ are even, as $c'$ is not of type $\PP_1$.
From
$$t(\al;c') + t(\al; c'') = t_\al(c)= t_\beta(c)= t(\beta;c') + t(\beta; c'')$$ we have  $ {s}= t(\beta;c'')-  t(\al; c'')$.
 Then after $ {s}/2$ twists on $c$, we can bring $\al$ to $\beta$ with $t(\beta;c')= t(\al;c')$ and $t(\beta;c'')= t(\al;c'')$. Repeating this procedure to all $c\in \cC$, and observe that twists are isotopies, we can isotope $\beta$ to $\gamma$ with $t(\gamma;c')= t(\al;c')$ and $t(\gamma;c'')= t(\al;c'')$ for all $c\in \cC$. This shows $\gamma_\tau$ is isotopic to $\al_\tau$ for all faces $\tau$. Hence $\al$ is isotopic to $\gamma$, and moreover to $\beta$.
\end{proof}

To describe the image of $\nu$ let us call $c\in \cC$ {\bf even} if both $c'$ and $c''$ are not of type $\PP_1$. The reason is if $c$ is even, then $t_\al(c)$ is even for all $\al \in B(\fS)$. Note that if $c$ is not even then $c''$ must be of type 1, since $c'$ is never of type  $\PP_1$.

For $\bn: \cC\to \BN$ and  $b\in \cCt$,  
we define $\Add(b; \bn)\in 2 \BZ$ as follows. Let $\tau$ be the face containing $b$. If $\tau= \PP_1$
 let $\Add(b; \bn)=0$. Assume  $\tau= \PP_2$ or $\tau= \PP_3$. Under the characteristic map the bold boundary components of $\tau$ are $b_1,$ and $b_2$ if $\tau= \PP_2$ and $b_1, b_2$, and  $b_3$ if $\tau= \PP_3$.   Additionally, let $b= b_i$ for some index $i$. 
 Then define 
\begin{align}
\Add(b; \bn) = \begin{cases} \Add_i(\bn(\pr(b_1)), \bn(\pr(b_2))   )  & \text{if} \ \tau = \PP_2 \\ 
\Add_i(\bn(\pr((b_1)), \bn(\pr(b_2)), \bn(\pr(b_3))   )   & \text{if} \ \tau = \PP_3 
\end{cases}  
\end{align} 
If $\bn(\pr(b))=0$ and $b$ is of type $\PP_1$ or $\PP_2$, then $\Add(b; \bn) = 0$.

\bpro \label{rDTcoord2} Assume the assumptions of Proposition \ref{rDTcoord}. 

(a) The image $ \nu( B(\fS)  )$  is the submonoid $\Lambda_{\cC,\Gamma}$ of $ \BN^\cC \times \BZ^\cC$ consisting of $(\bn, \bt)$ satisfying
\begin{enumerate}

\item [(i)] if $(a,b,c)\in \cC$ is a triangular triple then $\bn(a)+ \bn(b) + \bn(c) \in 2 \BZ$,

\item [(ii)] if $c$ is even then $\bt(c) \in 2\BZ$, and

\item [(iii)] if $\bn(c) =0$ then $\bt(c) \ge \Add(c'; \bn) + \Add(c''; \bn)$.

\end{enumerate}

(b) For each $i=1,\dots, r$, there is a piecewise linear function $h_i: \BR^{\cC \cup \cM}\to \BR$ such that for all $\al \in B(\fS)$ we have
\be t_i(\al) - 2t'_i(\al) = h_i(I_\al),\  \text{where}  \ I_\al(c) = I(\al, c).
\ee

\epro

Here a function $h: \BR^k \to \BR$ is {\bf piecewise linear} if it is continuous and there are a finite number of co-dimension 1 linear subspaces $H_1, \dots, H_d\subset \BR^k$ such that in each connected component of the complement of all $H_1, \dots, H_d$ the function is linear. 

Also $t'_i$ is the standard twist coordinate defined as in Subsection \ref{ssP3}.
\def\tbn{{\tilde\bn}}
\def\tbt{{\tilde\bt}}

\begin{proof} (a) First we have $ \nu( B(\fS)  ) \subset \Lambda_{\cC,\Gamma}$. This follows from the descriptions of $\Lambda_j$ in Propositions \ref{rP3}, \ref{rP2}, and \ref{rP1}. 
In particular, (iii) follows from \eqref{eqtalc}. 

\no{
 For each element in $B(\fS)$ choose the a fixed representative $\al$ which  is a simple diagram good with respect to $\cC, \Gamma$. Recall that   $\al_\tau= \pr^{-1} (\al) \cap \tau$ for a face $\tau$.
Then $\nu$ is the composition of the following maps
\be 
B(\fS) \xrightarrow{  } \prod_{\tau \in \cF}\Lambda_\tau \subset  \BN^ \cCt \times \BZ^\cCt   \xrightarrow{f  } \BN^\cC \times \BZ^\cC.
\ee
Here, if  $\nu_\tau$ denotes our modified DT coordinates on the face $\tau$, then 
\begin{itemize}
\item the first map sends $\al$, the chosen representative, to $\prod_{\tau} \nu_\tau(\al_\tau)$,
\item $\Lambda_\tau =\Lambda_j$ for $\tau = \PP_j$, and
\item $f: \BN^ \cCt \to \BN^ \cC, g: \BZ^ \cCt \to \BZ^ \cC $ are the $\BN$-linear map defined by
\be 
f(\bn)(c)= \bn(c'), \quad g(\bt)(c)= \bt(c')+ \bt(c'').
\ee
\end{itemize}
The linearity of $f,g$ implies that the $\Lambda_{\cC,\Gamma}= \nu( B(\fS)  )$  is a submonoid of $ \BN^\cC \times \BZ^\cC$. The descriptions of $\Lambda_j$ in Propositions \ref{rP3}, \ref{rP2}, and \ref{rP1} implies the properties (1), (2), and (3). In particular, (3) follows from \eqref{eqtalc}.
}

Let us now prove the converse $ \Lambda_{\cC,\Gamma}\subset \nu( B(\fS)  )$.
Assume $(\bn, \bt) \in \Lambda_{\cC,\Gamma}$. Recall that $c'$ is not of type $\PP_1$ for 
$c\in \cC$.  Define $\tbn, \tbt : \cCt \to \BZ$ by
\begin{align*}
\tbn(c')= \tbn(c'')= \bn(c), \quad \tbt(c')= \Add(c';\tbn), \ \tbt(c'') = \bt(c) - \tbt(c').
\end{align*}
From (iii) , $\tbt(c')= \Add(c';\tbn),  and \tbt(c') + \tbt(c'')= \bt(c)  $ we have, for all $b\in \cCt$,
\be 
\text{if $\tbn(b)=0$ then} \  \tbt(b) \ge  \Add(b;\tbn).
\label{edIneq5}
\ee

For each face $\tau$ of the {pants decomposition}, let $(\tbn, \tbt)_\tau$ be
the restriction  of $(\tbn, \tbt)$ to the bold boundary components of $\tau$. Let us show that  $(\tbn, \tbt)_\tau \in \Lambda_\tau$, where $\Lambda_\tau$ is identified with $\Lambda_j$ under the characteristic map of $\tau$. 

Assume $\tau=\PP_3$.  From (i) we have $\tbn(b_1)+ \tbn(b_2) + \tbn(b_3)\in 2 \BZ$. If $b_i= c'$ for some $c\in \cC$ then $\tbt(b_i) = \Add(c'; \bn)\in 2 \BZ$. If $b_i= c''$ then, since $b_i$ is not of type $\PP_1$, $c$ must be even. Hence $\tbt(b_i)= \bt(c) - \Add(c';\tbn)$ is even.
Finally, when $\tbn(b_i)=0$, then we have $\tbt(b_i) \ge  \Add_i(\tbn)$ due to \eqref{edIneq5}. All the requirements for being an element of $\Lambda_\tau$ are satisfied, and  we have $(\tbn, \tbt)_\tau \in \Lambda_\tau$.

Assume  $\tau=\PP_2$. The exact same  argument for the $\PP_3$ case  shows that $\tbt(b_i)$ are even.   When $\tbn(b_i)=0$,  due to \eqref{edIneq5} we have $\tbt(b_i) \ge  \Add(b_i;\tbn)=0$. Thus we have $(\tbn, \tbt)_\tau \in \Lambda_\tau$.

Assume $\tau=\PP_1$, which has only one bold edge $b_1$. When $\tbn(b_i)=0$,  we have $\tbt(b_i) \ge  \Add(b_i;\tbn)=0$. Thus we have $(\tbn, \tbt)_\tau \in \Lambda_\tau$.

\no{
\blue{We proceed by directly checking the claim for each possibility of $\tau$.  

First assume that $\tau=\PP_3$ and recall that $\Lambda_3$ is the submonoid of $\mathbb{N}^3\times\mathbb{Z}^3 $ constrained by $n_1+n_2+n_3$ and all $t_i$ are even and if $n_i=0$ then $t_i\geq \Add_i(n_1,n_2,n_3)$.   Then we have a triangular triple$(x_1,x_2,x_3)$ where under the characteristic map $(x_1^{\epsilon_1},x_2^{\epsilon_2},x_3^{\epsilon_3})$ are all bold and cobound $\PP_3$,  where $\epsilon_i$ can denote either one or two primes.  Thus $(\tbn,\tbt)_\tau=(\bn(x_1^{\epsilon_1}),\bn(x_2^{\epsilon_2}),\bn(x_3^{\epsilon_3}),\tbt(x_1^{\epsilon_1}),\tbt(x_2^{\epsilon_2}),\tbt(x_3^{\epsilon_3}))$.   Then by definition $\tbn(x_1^{\epsilon_1})+\tbn(x_2^{\epsilon_2})+\tbn(x_3^{\epsilon_3})=\bn(x_1)+\bn(x_2)+\bn(x_3)$ is even.    Then we have both $\Add(x_i';\bn)$ and $\bt(x_i)-\Add(x_i';\bn)$ will be even, meaning $\tbt(x_i^{\epsilon_i})$ is even.   Now assume that $\tbn(x_i^{\epsilon_i})=\bn(x_i)=0$.  Then $\bt(x_i)\geq \Add(x_i';\bn)+\Add(x_i'';\bn)$.   Thus we have $\tbt(x_i^{\epsilon_i})\geq \Add( x_i^{\epsilon_i};\bn)=\Add_i(\bn(pr((x_1)),\bn(pr(x_2)),\bn(pr(x_3)))$ as desired.

Next assume that $\tau=\PP_2$ and recall that $\Lambda_2$ is the submonoid of $\mathbb{N}^2\times\mathbb{Z}^2 $ constrained by all $t_i$ are even and if $n_i=0$ then $t_i\geq 0$.   Then we have a pair $(x_1,x_2)$ where under the characteristic map $(x_1^{\epsilon_1},x_2^{\epsilon_2})$ are bold and cobound $\PP_2$,  where $\epsilon_i$ can denote either one or two primes.  Thus $(\tbn,\tbt)_\tau=(\bn(x_1^{\epsilon_1}),\bn(x_2^{\epsilon_2}),\tbt(x_1^{\epsilon_1}),\tbt(x_2^{\epsilon_2}))$.   Then we have both $\Add(x_i';\bn)$ and $\bt(x_i)-\Add(x_i';\bn)$ will be even, meaning $\tbt(x_i^{\epsilon_i})$ is even.   Now assume that $\tbn(x_i^{\epsilon_i})=\bn(x_i)=0$.  Then $\bt(x_i)\geq \Add(x_i';\bn)+\Add(x_i'';\bn)$.   Thus we have $\tbt(x_i^{\epsilon_i})\geq \Add( x_i^{\epsilon_i};\bn)=\Add_i(\bn(pr((x_1)),\bn(pr(x_2)))\geq 0,$ as desired.

Finally assume that $\tau=\PP_1$ and recall that $\Lambda_1$ is the submonoid of $\mathbb{N}\times \mathbb{Z}$ constrained by if $n_1=0,$ then $t_1\geq 0$.  Then we have $x_1$, where under the characteristic map $x_1''$ is bold and bounds $\PP_1$.  Then $(\tbn,\tbt)_\tau=(\bn(x_1),t(x_1)-\Add(x_1';\bn)$.    Then assume that $\bn(x_1)=0$, so $\bt(x_1)\geq \Add(x_i';\bn)+0.$  Thus we have $\tbt(x_1)\geq \Add(x_i';\bn)-\Add(x_i';\bn)=0$, as desired.  }
}

Hence $(\tbn, \tbt)_\tau$ determines a strongly simple diagram $\al_\tau$ in $\tau$. These $\al_\tau$'s can be patched together to give $\al \in B(\fS)$, with $\nu(\al) = (\bn, \bt)$.

(b)This follows immediately from the construction, using \eqref{eqAdd1}, \eqref{eqAdd2}, and \eqref{eqNewt}.
\end{proof}

\section{The three basic DT pairs of pants}
\label{secPPP}
Roughly speaking, to construct the embedding of Theorem \ref{thmSg} and Theorem \ref{thmSRY} we first cut the surface $\Sigma_{g,m}$ into pairs of pants, construct a similar map for each pair of pants, then patch them up to get the global map.  In this section we construct the ``local" maps for pairs pants, by composing the map $\utr^A_\Delta$ with a simple multiplicatively linear map.

\def\ddd{\mathbbm{d}}
\def\bnu{{\boldsymbol{\nu}}}
\def\YPj{{\cY(\PPb_j)}}

\subsection{Quantum tori associated to $\PP_j$}\label{ssQtPj} To each DT pair of pants $\tau$ we associate a quantum torus $\cY(\Pb {;\cR})$, which will be the target space of a quantum trace map.

 Recall that $\PP_j$, with $j=1,2,3$, is the DT pair of pants with the embedded $Y$ graph, with boundary components $b_1, b_2, b_3$, see Figure \ref{fig:standard03}. Here $j$ means the first $j$ boundary components are bold. Also $\PPb_j$ is obtained from $\PP_j$ by removing the endpoints of the $Y$-graph lying on the bold components. 
The $Y$-graph gives rise to the coordinate map 
$$ \nu : \uB(\PPb_j) \xra{\cong} \Lambda_j \subset \BN^j \times 
\BZ^j$$

Define the quantum torus $\cY(\PPb_3{})$, where  indices are taken mod 3:
\be 
\cY(\PPb_3{})=  \ZQ \la x^{\pm 1}_i, u_i^{\pm 1}, i=1,2,3 \ra/ (x_{i+1} x_i = q x_i x_{i+1}, u_i u_j = u_j u_i, u_i x_j = q^{\delta_{ij}} x_ju_i   ).
\ee

Define $\cY(\PPb_2{})$  by dropping $x_3$ and $ u_3$,  and define $\cY(\PPb_1{})$  by  dropping $x_2, x_3, u_2, u_3$:
\begin{align}
\cY(\PPb_2{})& =  \ZQ\la x_1^{\pm 1},x_2^{\pm 1},  u_1^{\pm1},  u_2^{\pm1}\ra /( u_i x_j = q^{\delta_{ij}} x_j u_i, x_2 x_1 =q x_1 x_2, u_1 u_2 = u_2 u_1)
\\
\cY(\PPb_1{}) &=   \ZQ\la x_1^{\pm 1}, u_1^{\pm1} \ra /( u_1 x_1= qx_1 u_1).
\end{align}

For $\bnu=(\bn, \bt)\in \BZ^j \times \BZ^j$ let $Y^\bnu=[x^\bn u^\bt]_\Weyl$. Then 
\be B(\cY(\PPb_j{})):=\{Y^\bnu \mid \bnu \in \BZ^{2j}\}
\label{eqBaseY}
\ee
is a $\ZQ$-basis of $\cY(\PPb_j{})$. 
For $i\le j$ let
$$ \cY(\PPb_j{})_{\deg_i =k} = \cR\text{-span of monomials having  degree of $x_i =k$}.$$

We will use the following function $\ddd_\tau$  to define a preorder (a linear order without the antisymmetric property) on $\uB(\PPb_j)$ and related filtrations.  Define
 $\ddd_{\PP_j}:\BN^j \times \BZ^j \to \BZ^{3}$  by
\begin{align}
\ddd_{\PP_3} ((n_1, n_2, n_3, t_1, t_2, t_3))&= (n_1+n_2+n_3, t_1+t_2+t_3, 0)\\
\ddd _{\PP_2}((n_1, n_2, t_1, t_2))&= (n_1+n_2, t_1+t_2, t_1)\\
\ddd_{\PP_1}((n_1, t_1)) &= (n_1, t_1, 0)
\end{align}
The reason  why $\ddd_\tau$ is defined as above will be clear in the proof of Theorem \ref{thmbtr}. Note that only $\ddd_{\PP_2}$ needs the third coordinate in its image. This is dictated by the complexity of the quantum trace for $\PP_2$  that will be constructed.
 
Using the lexicographic order on $\BZ^3$, for $\bk\in \BZ^3$ define
\be 
F_\bk (\cY(\PPb_j{})) = \cR\text{-span of } \{ Y^\bbl \mid \ddd_{\PP_j}(\bbl) \le \bk\}.
\ee

For two monomials $Y^\bnu,Y^{\bnu'}\in B(\cY(\PPb_j{}))$, we say $Y^\bnu$ has {\bf higher $\ddd_{\PP_j}$-degree} if $\ddd_{\PP_j}(\bnu) > \ddd_{\PP_j}(\bnu')$. We emphasize that this is not a linear order, but a preorder on $B(\cY(\PPb_j{}))$.

Let  $\cY(\PPb_3; \cR)= \cY(\PPb_j) \ot_\ZQ \cR$. Define $ \cY(\PPb_j; \cR)_{\deg_i =k}$ and $F_\bk (\cY(\PPb_j;\cR)) $ similarly.

\subsection{Quantum traces for $\uS(\PP_j)$ } \label{ssUtr}

We now formulate the main results of this section.

\def\YPj{{ \cY(\PPb_j )  }}
\def\YPjR{{ \cY(\PPb_j;\cR)  }}

\bthm \label{thmbtr}

For $j=1,2,3$ there is a reflection invariant $\ZQ$-algebra homomorphism
$$ \utr: \uS(\PPb_j{}) \to \cY(\PPb_j {})$$
having the following properties: For $i\le j$ and $\al \in \uB(\PPb_j)$ with $|\al \cap b_i| = k$,

\begin{enumerate}

\item Boundary grading:
\be \utr 
\left( 
\al 
\right) 
\ \in \  {\YPj}_{\deg_i=k} .  \label{eqGrade_utr}
\ee

\item Near boundary loop: The value of the loop $\ell_i$ is given by
\be \utr(\ell_i) = u_i^2 + u_i^{-2}. \label{eqLoop_utr}
\ee

\item Twist:  if $|\al \cap b_i|=k\neq 0$ then
\be 
\utr([\theta_i(\al)]_\omega)=  q^{-k} \, (u_i)^2\, \utr([\al]_\omega) = [  (u_i)^2\,\, \utr([\al]_\omega) ]_\Weyl.
\label{eqTwist_utr}
\ee

\item Highest order term: 
\be 
\utr(\al) \eqbu Y^{\nu(\al)} + F_{<\, \ddd_{\PP_j}(\nu(\al)) }( {\YPj}). \label{eqHighdeg_utr}
\ee
\end{enumerate}

\ethm
Part(1)--(3) will be used to glue the maps $\utr$ along bold boundary components. We will prove the Theorem in the subsequent subsections.

\brem 
From the theorem it is easy to show that $\utr$ is injective.
\erem

\def\vcup{{ \overrightarrow{\cup}}}

\subsection{Reduction to 1-component case} 

\blem \label{rReduct}
Assume there is a reflection invariant $ {\BZ_q}$-algebra homomorphism 
$ \utr\colon \uS(\PPb_j) \embed \cY(\PPb_j)$
satisfying all the conditions (1)-(4) of Theorem \ref{thmbtr} for all 1-component $\al \in \uB(\PPb_j)$. Then we also have (1)-(4) for all $\al\in \uB(\PPb_j)$, i.e. we have the Theorem \ref{thmbtr}.
\elem

\bpr Since \eqref{eqLoop_utr} concerns only 1-component elements, it holds true by assumption.

Let $\al\in \uB(\fS)$ have  connected components $\al_1, \dots, \al_k$ with $k \ge 2$. By \eqref{eqMulti}
\be \al \qeq \al_1 \dots \al_k \label{eq1comp}
\ee 

The multiplicative nature of $\utr$, the additive nature of the monomial degree and the $\ddd_{\PP_j}$-degree show that \eqref{eqGrade_utr}  and \eqref{eqHighdeg_utr}
also hold for $\al$.

It remains to prove \eqref{eqTwist_utr}. 
Recall that $u_i$ commutes with all  variables except for $x_i$, for which $u_i x_i= q x_i  u_i$. Hence from \eqref{eqGrade_utr} we get $u_i ^2\  \utr(\al) = q^{2k} \utr(\al)\, u^{2k}_i$. By definition 
$$[u_i^2\,\utr([\al]_\omega)) ]_\Weyl = q^{-k} u_i^2\, tr([\al]_\omega)).$$
Since $\utr([\al]_\omega))$ is reflection invariant, $[u_i^2\,\utr([\al]_\omega)) ]_\Weyl$ is reflection invariant by Lemma \ref{rReflection}. 

Since $k=|\al \cap b_i| >0$ one of components of $\al$, say $\al_1$, intersects $b_i$.
 The components of $\theta_i(\al_1\al_2 \dots \al_k)$ are $\theta_i(\al_1), \al_2, \dots,\al_k$, as seen in Figure \ref{fig16}.

\begin{figure}[htpb!]
    \includegraphics[height=2.2cm]{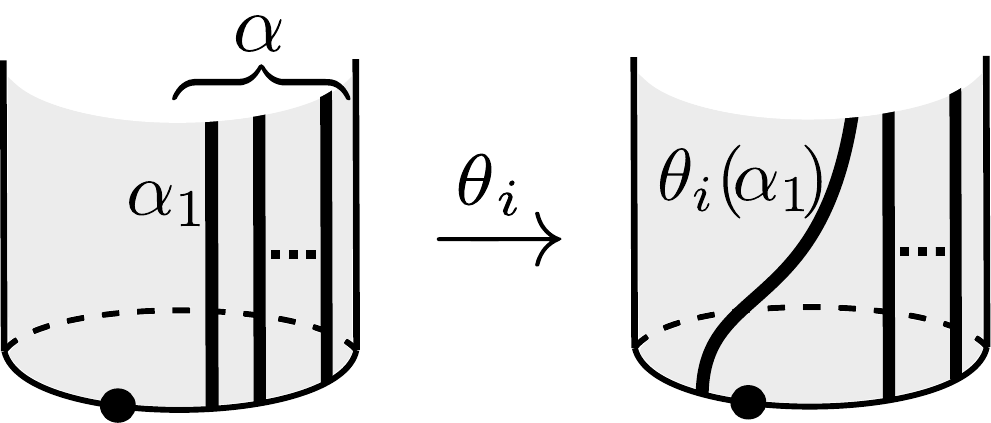}
    \caption{} \label{fig16}
\end{figure}

By  the height exchange rule \eqref{eqMulti}, we get 
$$ \theta_i(\al_1\al_2 \dots \al_k)\qeq \theta_i(\al_1) \al_2 \dots \al_k.$$
Using \eqref{eq1comp}, the above identity,  then Identity \eqref{eqTwist_utr} for $\al_1$, we get
$$ \utr([\theta_i(\al)]_\omega) \qeq \utr(\theta_i(\al_1)) \utr( \al_2 \dots \al_k) \qeq [ u_i^2\, \utr([\al]_\omega))]_\Weyl.$$
From the reflection invariance, we get 
$ \utr([\theta_i(\al)]_\omega) =[ u_i^2\, \utr([\al]_\omega))]_\Weyl,$
which proves \eqref{eqTwist_utr}.
\epr

\subsection{Near boundary arc and slide over bold vertex} Consider $\PP_j$. Fix $i\le j$.
\blem \label{rIn} Let   $\al$ be boundary ordered $\partial\PPb_j$-tangle diagrams with $|\al \cap b_i|= k\neq 0$.

(a) One has

\begin{align}
\ell_i \, \al &= q^k \theta_i(\al) + q^{-k} \theta_i^{-1}(\al)
\label{eqReso2}
\end{align}

(b) If a $ {\BZ_q}$-algebra homomorphism $f: \uS(\PPb_j) \to \cY(\PPb_j)$ satisfies $f(\ell_i) = u_i^2 + u_i^{-2}$, 
then we have the following recursive relation: For all $m\in \BZ$,
\be 
q^k f(\theta_i^{m+2} (\al)) -  ( u_i^2 + u_i^{-2} )  f(\theta_i^{m+1} (\al))+  q^{-k} f(\theta_i^{m} (\al)) = 0. 
\label{eqRecur}
\ee

\elem
\bpr (a) Use the skein relation (A) to resolve the crossings of $\ell_i \vcup \al$, where $\ell_i \vcup \al$ denotes a union of $\ell_i$ and $\al$ with all over/under information assigned to be $\ell_i$ is higher than $\al$. See Figure \ref{fig:loopPr}. To have no near boundary arcs there are only  two ways to resolve all the $k$ crossings, and they give \eqref{eqReso2}. 

\begin{figure}[htpb!]
    \includegraphics[height=1.5cm]{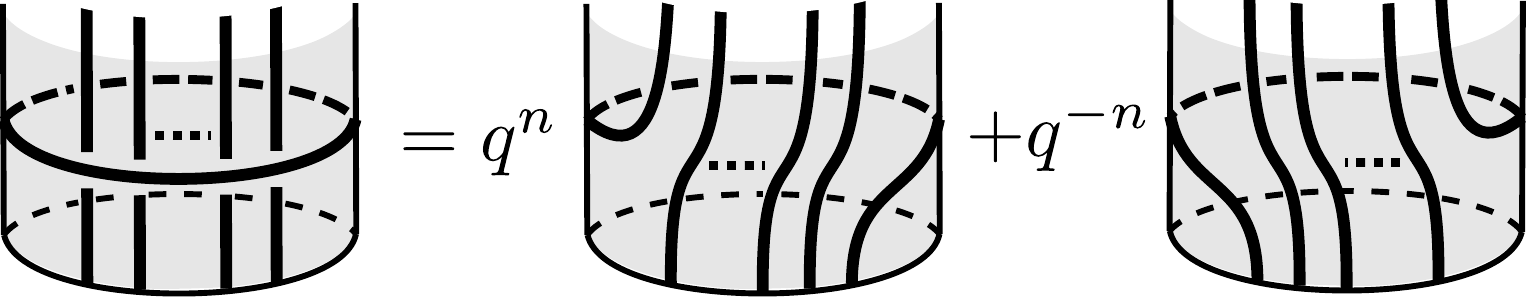}
    \caption{The only two surviving terms of $\ell_i \vcup \al$}
    \label{fig:loopPr}
\end{figure}

(b) Using $ u_i^2 + u_i^{-2}= f(\ell_i)$ then 
Identity \eqref{eqReso2}, we get
 $$  ( u_i^2 + u_i^{-2} )  f(\theta_i^{m+1} (\al)) = f(\ell_i\,\theta_i^{m+1}(\al)) = q^k f(\theta_i^{m+2} (\al)) +  q^{-k}f(\theta_i^{m} (\al)),$$
which  proves\eqref{eqRecur}.
\epr

\subsection{Proof of Theorem \ref{thmbtr} for $\PP_3$}
\begin{proof} By Theorem \ref{thm.emb2}, the  triangulation $\Delta$ of $\PPb_3$, with interior edges set $\Do=\{ e_1, e_2, e_3, e'_1, e'_2, e'_3\}$  given in Figure \ref{fig:trian_P3}, gives rise to a $ {\BZ_q}$-algebra map
$\utr^A_\Delta\colon \uS(\PPb_3) \to \uA(\PPb_3, \Delta),$
where 
$$ \uA(\PPb_3, \Delta) =  {\BZ_q}\la \aaa_e^{\pm1}, e \in \Do  \ra / \aaa_e \aaa_c = q^{\uP(a,c)}  \aaa_c \aaa_e $$
Here $\uP: \Do \times \Do$ is the unique antisymmetric function defined by 
$$\uP(e'_i, e_i)=2,\ \uP ( e'_i,  e'_{i+1} ) = \uP ( e'_{i}, e_{i+1} ) = \uP (  e'_{i+1}, e_{i})=1,  \ \text{indices are taken mod } \ 3. $$
\begin{figure}[htpb!]
    \includegraphics[height=3.5cm]{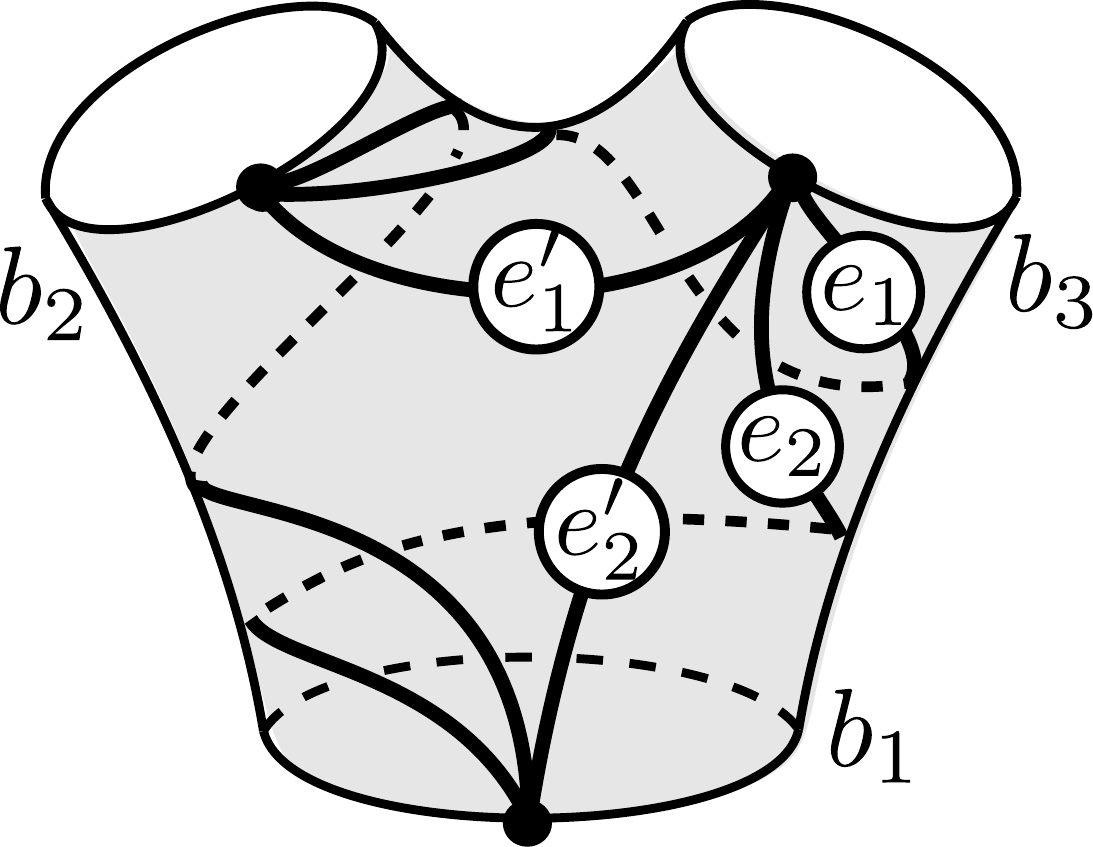}
    \caption{}
    \label{fig:trian_P3}
\end{figure}
There is a $ {\BZ_q}$-algebra homomorphism 
$g: \uA(\PPb_3, \Delta) \to \cY(\PPb_3)  $ given by 
$$ g(\aaa_{e_i})= [x_{i+1} x_{i+2}]_\Weyl, \ g(\aaa_{e'_i})= [u_{i+2}^2x_{i+1} x_{i+2}]_\Weyl.$$ 
We claim that  $\utr:= g \circ \utr^A_\Delta\colon \uS(\PPb_3) \to \cY(\PPb_3)$ satisfies all the requirements of the theorem. Clearly $\utr$ is reflection invariant.

\blem \label{rIndent1} 
For $i\neq k$ in $\{1,2,3\}$ and $m\in \BZ$  we have
\begin{align} 
\utr(\ell_i) &= u_i^2 + u_i^{-2} \label{eqaell33}\\
\utr(\theta_i^m( a_{ik}))&= [u_i^{2m}x_i x_k]_\Weyl \label{eqa23} \\
 \utr( [\theta_i^m(a_{ii})]_\omega)) &=  [u_i^{2m} u_{i+1}^2 x_i^2]_\Weyl+[u_{i}^{2m+2} u_{i+2}^{-2} x_i^2]_\Weyl. \label{eqgamma1}
\end{align}
\elem

\bpr  Let us prove \eqref{eqaell33}.
 By \eqref{eqellmu} we have 
$$ \utr^A_\Delta (\ell_i) = [\aaa_{e'_2} \aaa_{e_2}^{-1}]_\Weyl + [\aaa_{e'_2}^{-1} \aaa_{e_2}]_\Weyl. $$
Applying $g$ to both sides, we get \eqref{eqaell33}.

Let us prove \eqref{eqa23}. 
Calculate $\utr^A_\Delta(a_{ik})$ and $\utr^A_\Delta(\theta_i(a_{ik}))$ using the method described in Remark \ref{rCal}, then apply $g$, we get
$$\utr(a_{ik}))= [x_i x_k] ,\quad   \utr(\theta_i(a_{ik})) = [u_i^2x_i x_k].$$

\no{Give pictures, more details for $a_{11}$, then say that the others are similarI think we could just say following a similar calculation to that for Equation \ref{eqellmu}}

Thus \eqref{eqa23} is true for $m=0$ and $m=1$.
Using the recursion relation \eqref{eqRecur} we can easily see that  if \eqref{eqa23} is true for $m$ and $m+1$, then it is true for $m+2$ and $m-1$. Hence \eqref{eqa23} is true for all $m\in \BZ$.

\no{Give  more details here.}

The same proof works for \eqref{eqgamma1}.
\epr

Let now prove the Theorem. Suppose  $\al\in \uB(\PPb_3)$ has one component. Then $\al$ is one of the curves on the left hand sides of the identities in Lemma \ref{rIndent1}. Identities of Lemma \ref{rIndent1} show that all conditions (1)-(3) of Theorem \ref{thmbtr} are satisfied for $\al$. We will show now that (4) is also satisfied. There are two observations:

(i) In each right hand side of \eqref{eqaell33}-\eqref{eqgamma1}, the first monomial is $\ddd_{\PP_3}$-dominant in the sense that it 
has $\ddd_{\PP_3}$ higher than any other terms. For example, the first and second monomials  of the right hand side of
\eqref{eqgamma1} are respectively $Y^\bnu$ and $Y^{\bnu'}$, where
$$ \bnu = ( 2,0,0,2m, 2,0 ) , \ \bnu' = ( 2,0,0,2m+2, -2,0 ). $$
By definition  $\ddd_{\PP_3}(\bnu) > \ddd_{\PP_3}(\bnu')$. The definition of $\ddd_{\PP_3}$ was designed so that this is true.

(ii) The exponent of that $\ddd_{\PP_3}$-dominant monomial is exactly the $\nu(\al)$. The modified DT coordinate was designed for this purpose.

 Thus we also have condition (4) for $\al$. By Lemma \ref{rReduct} we have the theorem.
\end{proof}

\subsection{Proof of Theorem \ref{thmbtr} for $\PP_2$} 
\begin{proof} By Theorem \ref{thm.emb2}, the  triangulation $\Delta$ of $\PPb_2$, with interior edges set $\Do=\{ a,b,c,e\}$  given in Figure \ref{fig:trian_P2}, gives rise to
  a $ {\BZ_q}$-algebra map
$$\utr^A_\Delta: \uS(\PPb_2) \to \uA(\PPb_2, \Delta)=\bT(\uP),$$
where $\uP: \{ a,b,c,d\} \times \{ a,b,c,d\}$ is the antisymmetric function given by
$$ \uP(d,a) = \uP(b,a), \ \uP(c,d) = \uP(a,c) = \uP(b,c)  =1,\ \uP(d,b) =0.  $$
\begin{figure}[htpb!]
    \includegraphics[height=2.7cm]{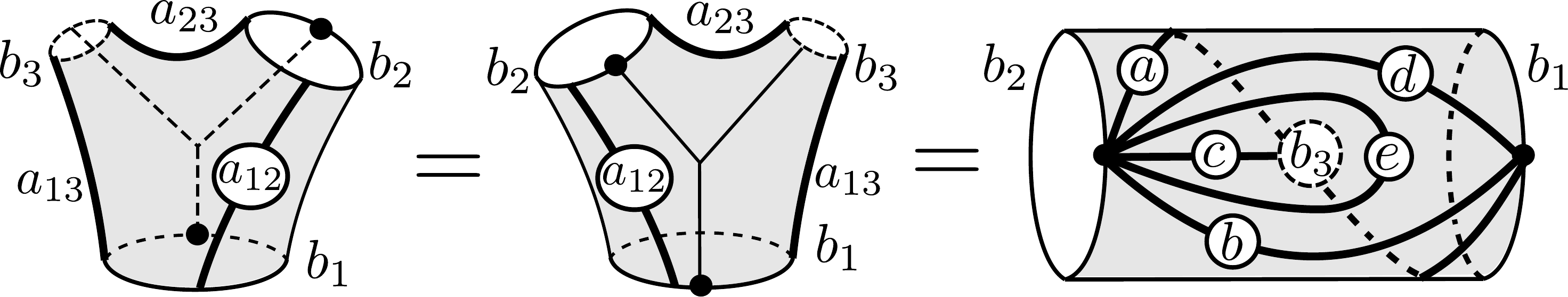}
    \caption{}
    \label{fig:trian_P2}
\end{figure}
There is a $ {\BZ_q}$-algebra homomorphism 
$g\colon \uA(\PPb_2, \Delta) \to \cY(\PPb_2)  $ given by 
$$ g(\aaa_{a})= [x_1 x_2]_\Weyl, \ g(\aaa_b)= [u_{2}^2 x_{1}x_{2}]_\Weyl, \ g(\aaa_c) =x_2, \ g(\aaa_d)= [u_1^2 x_1 x_2 ]_\Weyl.$$ 
Let  $\utr:= g \circ \utr^A_\Delta\colon \uS(\PPb_2) \to \cY(\PPb_2)$. Clearly $\utr$ is reflection invariant.

\blem \label{rIndent2}   For $i=1,2$ and $m\in \BZ$  we have

\begin{align} 
\utr(\ell_i) & = u_i^2 + u_i^{-2} \label{eqell12} \\
\utr(\theta_i^m( a_{12}))&= [u_i^{2m}x_1 x_2]_\Weyl  \label{eqa12}
 \\ \utr(\theta_1^m( a_{13}))&= [u_1^{2m}x_1]_\Weyl + [u_1^{2m-2} u_2^2x_1]_\Weyl  \label{eqa13} \\
  \utr(\theta_2^m( a_{23}))&= [u_2^{2m}x_2]_\Weyl   \label{eqa23bis}\\
\utr( [\theta_1^m(a_{11})]_\omega) &=  [u_1^{2m+2} x_1^2]_\Weyl+[u_{1}^{2m-2} u_{2}^{4} x_1^2]_\Weyl + (q+ q^{-1})[u_{1}^{2m} u_{2}^{2} x_1^2]_\Weyl. \label{eqa11}\\
\utr( [\theta_1^m(a_{22})]_\omega) &=  [u_2^{2m} x_2^2]_\Weyl. \label{eqa22}
\end{align}
\elem

\bpr 
Let us prove  \eqref{eqell12}, which is \eqref{eqLoop_utr}. By \eqref{eqellmu} we have 
$$ \utr^A_\Delta (\ell_1) = [\aaa_{a} \aaa_{d}^{-1}]_\Weyl + [\aaa_{a}^{-1} \aaa_{d}]_\Weyl, \ \utr^A_\Delta (\ell_2) = [\aaa_{a} \aaa_{b}^{-1}]_\Weyl + [\aaa_{a}^{-1} \aaa_{b}]_\Weyl. $$
Applying $g$ to both sides, we get \eqref{eqell12}.

Proof of \eqref{eqa12}-\eqref{eqa23bis}. 
Calculate $\utr^A_\Delta(a_{ik})$ and $\utr^A_\Delta(\theta_i(a_{ik}))$, for applicable indices, using the method described in Remark \ref{rCal}, then apply $g$, we get 
 \eqref{eqa12}-\eqref{eqa23bis}  for $m=0$ and $m=1$.
Using the recursion relation \eqref{eqRecur} we get \eqref{eqa12}-\eqref{eqa23bis}  for all $m\in \BZ$.

The same proof works for \eqref{eqa11} and \eqref{eqa22}. Alternatively, we can use the identity $[a_{11}]_\omega =  [\theta_1(a_{13}^2)]_\omega$ and $[a_{22}]_\omega= a_{23}^2$, and Identities \eqref{eqa12}-\eqref{eqa13} to get \eqref{eqa11} and \eqref{eqa22}.
\epr

Let now prove the Theorem. Suppose  $\al\in \uB(\PPb_2)$ has one component. Then $\al$ is one of  the curves on the left hand sides of \eqref{eqell12}-\eqref{eqa22}. On each right hand side of  \eqref{eqell12}-\eqref{eqa22}, the first monomial is $\PP_2$-dominant and has exponent equal to the DT coordinates of the curve of the left hand side. 
 The definitions of $\ddd_{\PP_2}$ and DT coordinates were designed for this property. 

The explicit values of $\utr(\al)$ given in 
Lemma \ref{rIndent2} show that all conditions (1)-(4) of Theorem~\ref{thmbtr} are satisfied for $\al$. Hence by Lemma \ref{rReduct} we have the theorem.
\end{proof}

\subsection{Proof of Theorem \ref{thmbtr} for $\PP_1$}

\begin{proof} By Theorem \ref{thm.emb2}, the triangulation $\Delta$ of $\PPb_1$, with interior edges set $\Do=\{ a,b\}$  given in Figure \ref{fig:trian_P1}, gives rise to
  a $ {\BZ_q}$-algebra map
$$\utr^A_\Delta\colon \uS(\PPb_1) \to \uA(\PPb_1, \Delta)= {\BZ_q}\la \aaa_c^{\pm1}, \aaa_d^{\pm1}\ra/( \aaa_d \aaa_c= q \aaa_c \aaa_d).$$
\begin{figure}[htpb!]
    \includegraphics[height=2.6cm]{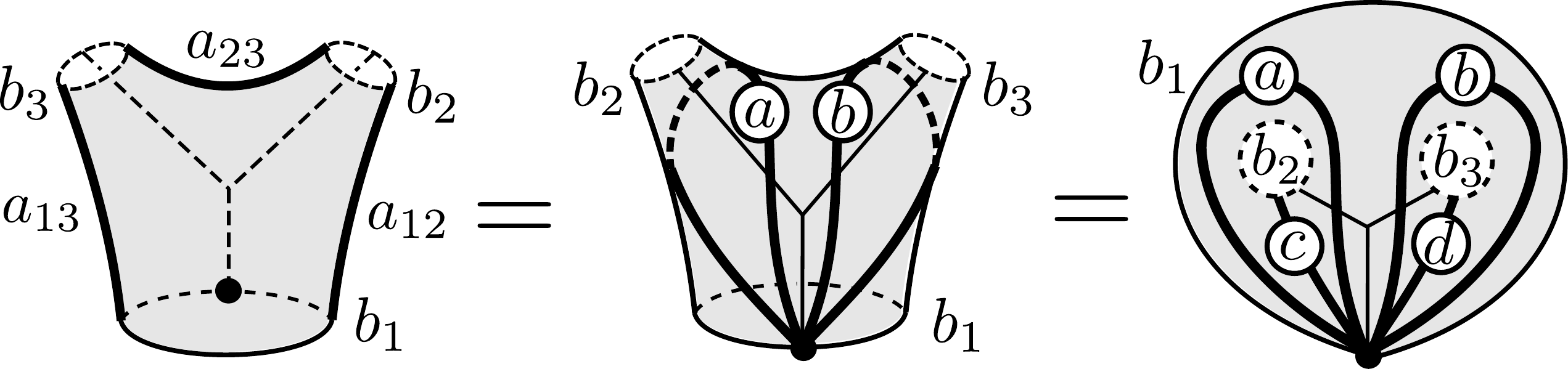}
    \caption{}
    \label{fig:trian_P1}
\end{figure}

There is a $ {\BZ_q}$-algebra homomorphism 
$g\colon \uA(\PPb_1, \Delta) \to \cY(\PPb_1)  $ given by 
$$ g(\aaa_{c})= x_1, \ g(\aaa_d)= [u_{1} x_{1}]_\Weyl.$$ 
Let  $\utr:= g \circ \utr^A_\Delta:\colon\uS(\PPb_1) \to \cY(\PPb_1)$. Clearly $\utr$ is reflection invariant.

\blem \label{rIndent11}
 For  $m\in \BZ$  we have
\begin{align} 
\utr(\ell_1) & = u_1^2 + u_1^{-2}  \label{eqell1111}\\
\utr(a_{23})&= u_1+ u_1^{-1}  \label{eqa2323}
 \\ \utr(\theta_1^m( a_{12}))&= [u_1^{2m}x_1]_\Weyl, \
  \utr(\theta_1^m( a_{13}))= [u_1^{2m-1}x_1]_\Weyl   \label{eqq1213}\\
\utr( [\theta_1^m(a_{11})]_\omega) &=  [u_1^{2m} x_1^2]_\Weyl. \label{eqa1111}
\end{align}
\elem

\bpr Let us prove \eqref{eqell1111}.
 By \eqref{eqellmu} we have 
$$ \utr^A_\Delta (\ell_1) = [\uv{a} (\uv{b})^{-1}]_\Weyl +[\uv{b} (\uv{a})^{-1}]_\Weyl. $$
Using $\uv{a}= \aaa_c^2, \uv(b) = \aaa_d^2$ and the map
 $g$, we get \eqref{eqell1111}.

Calculate $\utr^A_\Delta(a_{ik})$ and $\utr^A_\Delta(\theta_1(a_{ik}))$, for applicable indices, using the method described in Remark \ref{rCal}, then apply $g$, we get 
 \eqref{eqa2323}  and \eqref{eqq1213}  for $m=0$ and $m=1$.
Using the recursion relation \eqref{eqRecur} we get \eqref{eqa2323}  and \eqref{eqq1213} 
   for all $m\in \BZ$.

The same proof works for \eqref{eqa1111}. Alternatively, we can use the identity $[a_{11}]_\omega =  [\theta_1(\aaa_{a_{12}}^2)]_\omega$ and \eqref{eqq1213} to get \eqref{eqa1111}.
\epr

Let now prove the Theorem. Suppose  $\al\in \uB(\PPb_2)$ has one component. Then $\al$ is  one of the curves on the left hand sides of \eqref{eqell1111}-\eqref{eqa1111}. The explicit values of $\utr(\al)$ given in 
Lemma \ref{rIndent11} and the definitions of $\ddd$ and DT coordinates show that all conditions (1)-(4) of Theorem \ref{thmbtr} are satisfied for $\al$. Again the definition of $\ddd$ and the twist DT coordinates are defined for this purpose. By Lemma \ref{rReduct} we have the theorem.
\end{proof}

\def\xxx{\orange{xxx}}
\section{Degenerations of $\cS(\Sigma_{g,m} {;\cR})$ and quantum tori}
\label{secDegen}

We will prove a refinement of Theorem \ref{thmSRY} concerning the skein algebra $\cS(\Sigma_{g,m} {;\cR})$. In particular we show that $\cS(\Sigma_{g,m} {;\cR})$ is a domain,  has a degeneration equal to a monomial algebra, and calculate its Gelfand-Kirillov dimension.

Throughout this section we fix $\fS= \Sigma_{g, m}$, with  $(g,m) \neq (0,k), (1,0)$, for $k \le 4$.  These exceptional cases will be considered in the Appendix. 

\subsection{Quantum torus associated with a DT datum $(\cC,\Gamma)$} We now define a main object appearing in the result. Let $\mQ\colon \cC \times \cC\to \BZ$ be
 the antisymmetric matrix given by 

\begin{align*}
\mQ(a,c)=
\# \left(\begin{array}{c}\includegraphics[scale=0.17]{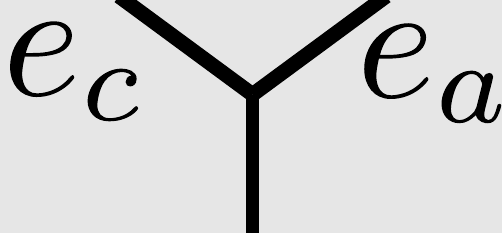}\end{array}\right)-\#\left(\begin{array}{c}\includegraphics[scale=0.17]{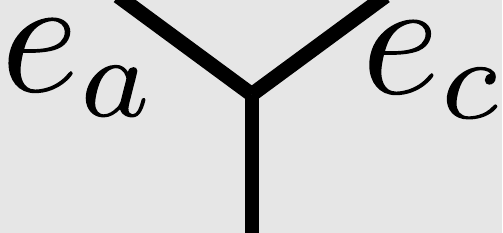}\end{array}\right).
\end{align*}

Here $e_a$ is the edge of $\Gamma$ dual to $a\in \cC$, and the right hand side is the signed number of times a half-edge of $e_a$ meets a half-edge of $e_c$ at a vertex of $\Gamma$, where the sign is $+1$ if $e_a$ is clockwise to $e_c$, and $-1$ otherwise. Let $\tmQ$ be the symplectic double of $\mQ$:
($\Id_r$ and $0_r$ are respectively the $r\times r$ identity matrix and the $r\times r$ 0 matrix):
$$ \tmQ =  \begin{bmatrix} \mQ &  -\Id_r \\ \Id_r & 0_r
\end{bmatrix}$$
Define the
 quantum torus $\cY(\fS,\cC,\Gamma{})= \bT(\tmQ{})$ and $\cY(\fS,\cC,\Gamma; \cR) = \cY(\fS,\cC,\Gamma{}) \ot_\ZQ \cR$. Thus,
$$ \cY(\fS,\cC,\Gamma{})= \ZQ\la y_c^{\pm 1}, u_{c}^{\pm 1}, c \in \cC \ra / ( y_a y_c = q^{\mQ(a,c)} y_c y_a, u_a u_c= u_c u_a, u_a y_c= q^{\delta_{ac}} y_c u_a  ). $$
For $\bnu=(\bn, \bt)\in \BZ^r \times \BZ^r$ let $Y^\bnu= [y^\bn u^\bt]_\Weyl$. The following is a $\ZQ$-basis of $\cY(\fS,\cC,\Gamma{})$:
\be B(\cY(\fS,\cC,\Gamma))= \{ Y^\bnu \mid \bnu \in \BZ^{2r} \}. 
\label{eqBasisY}
\ee

\def\tiF{{\tau \in \cF}}

\subsection{Order of $\cC$ and filtrations} \label{ssN1}  To simplify some technical steps, we will choose $\cC$, $\Gamma$ and an order on $\cC$, and use it to define filtrations on ${\SSR}$.

The curves in $\cC$ cut $\fS$ into $\SC= \bigsqcup_{\tiF} \tau$, where each  face $\tau$  is one of the three DT pairs of pants $\PP_1, \PP_2$, and $\PP_3$.  
There is a projection $\pr: \SC \onto \fS$ identifying pairs of bold boundary components in $\cCt$. 
For $c\in \cC$ we denote $c', c''\in \cCt$ the lifts of $c$.

Choose  $\cC=(c_1, c_2, \dots, c_r)$ such that the first $m+1$ elements (for $g\ge 1$) or $m-3$ (for  $g=0$) are as in the following 

\begin{align*}
\begin{array}{c}\includegraphics[scale=0.3]{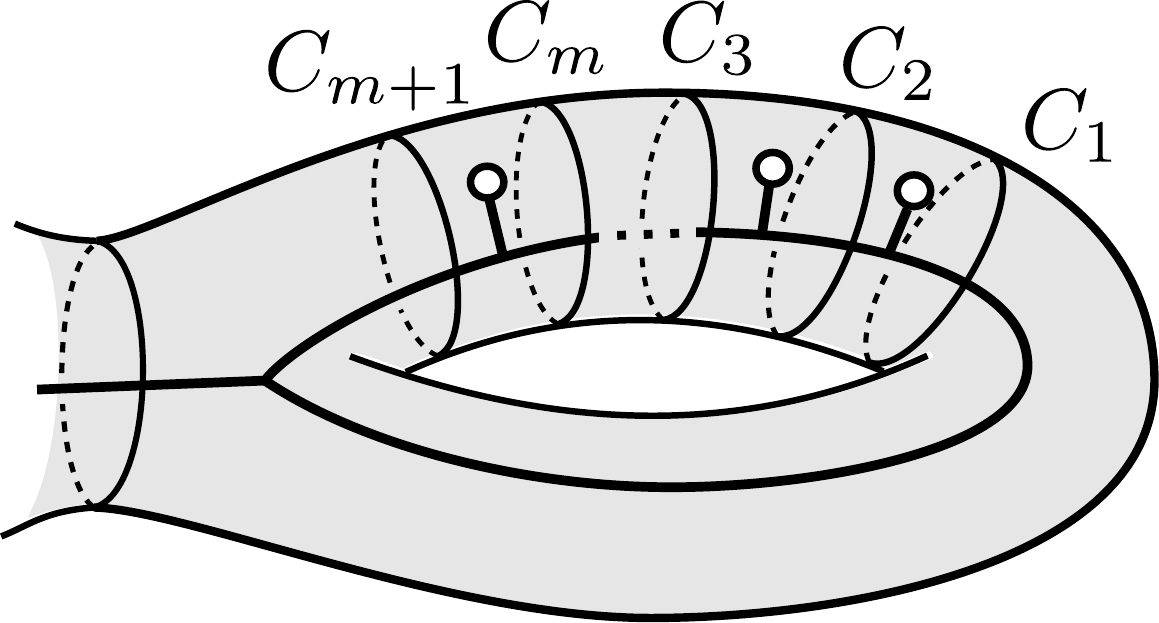}\end{array}\qquad
\begin{array}{c}\includegraphics[scale=0.3]{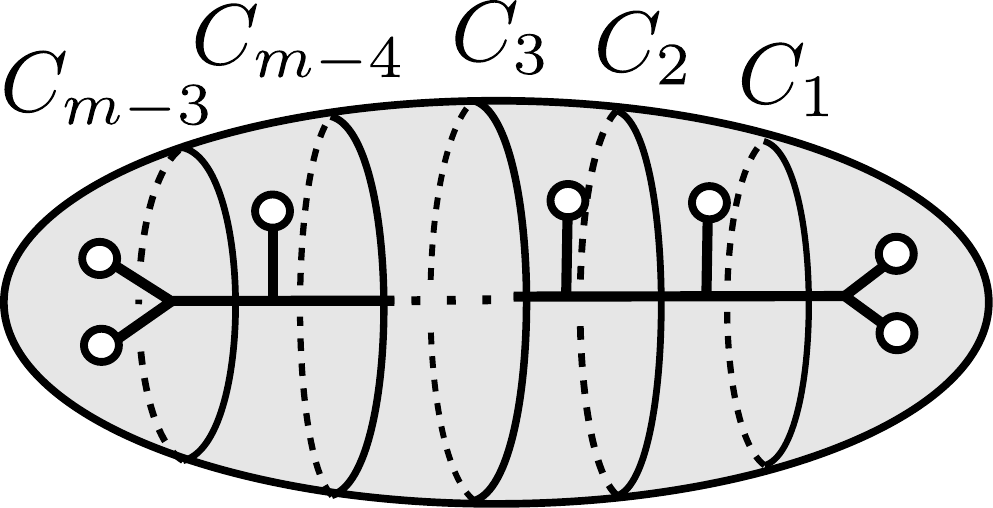}\end{array}.
\end{align*}
This choice of order satisfies the following condition

{\bf (N1)} In a $\PP_2$ face, the curve $\pr(f(b_1))$ comes before $\pr(f(b_2))$ if they are different. Here $f$ is the characteristic map of the face.

Condition {\bf (N1)} is used to transfer certain inequalities from faces to inequalities for $\fS$. 

There are $\barm$ faces of type $\PP_2$, where $\barm =m$ if $g\ge1$ and $\barm =m-4$ if $g=0$.

Let  $\mu: \Lambda_{\cC, \Gamma} \to B(\fS)$ be the inverse of the coordinate map $\nu: B(\fS) \xra{\cong} \Lambda_{\cC,\Gamma}\subset \BN^{\cC}\times \BZ^{\cC}$.  To compare elements of $\Lambda_{\cC,\Gamma}$ or $B(\fS)$, we define $\ddd_1$ and $\ddd$ as follows
 \begin{align}
\ddd_1&: \BN^\cC \times \BZ^{ \cC} \to \BN, \quad \ddd_1(\bn, \bt) = \sum_{c\in \cC} \bn(c)\\
\ddd&: \BN^\cC \times \BZ^{ \cC} \to \BZ^{\barm+2}, \quad \ddd(\bn, \bt) = ( \sum_{c\in \cC} \bn(c), \sum_{c\in \cC}  \bt(c), \bt(c_1), \dots, \bt(c_\barm)).
\end{align}
For $\al, \beta\in B(\fS)$ define $\ddd_1(\al)=\ddd_1(\nu(\al)),\ddd(\al) = \ddd(\nu(\al))$, and $\la \al, \beta \ra_\tmQ= \la \nu(\al), \nu(\beta )\ra_\tmQ$.

Using the lexicographic order on $\BN \times \BZ^{\barm+1}$, for $k\in \BN$ and $\bk\in \BN \times \BZ^{\barm+1}$ define 
\begin{align}
F_k( {\SSR}) &:= \cR\text{-span of } \ \{ \al \mid \ddd_1(\al) \le k \} \label{eqFil1}\\
E_\bk( {\SSR}) &:= \cR\text{-span of } \ \{ \al \mid \ddd(\al)  \le \bk\} \label{eqFil2}\\
E_\bk(\cY(\fS,\cC,\Gamma {;\cR})) &:= \cR\text{-span of } \ \{ Y^\bl \mid \ddd(\bl )  \le \bk\}
\label{eqFil3}
\end{align}
The first \eqref{eqFil1} defines an $\BN$-filtration on $ {\SSR}$  compatible with the product.
We denote the associated graded algebra by $\Gr^F( {\SSR})$. Using the lead term map, see Proposition~\ref{r.basisGr}, we will consider $B(\fS)$ also as an $\cR$-basis of $\Gr^F( {\SSR})$. 

The second \eqref{eqFil2} defines a $(\BN \times \BZ^{\barm+1})$-filtration on $ {\SSR}$, which will be shown to respect the product later.

The third \eqref{eqFil3} defines an $(\BN \times \BZ^{\barm+1})$-filtration on $\cY(\fS,\cC,\Gamma {;\cR})$ compatible with the product, and its associated graded algebra is canonically isomorphic to $\cY(\fS,\cC,\Gamma {;\cR})$. This is because $\cY(\fS,\cC,\Gamma {;\cR})$ is a graded algebra, graded by the exponents of the monomials.

When $\cR=\ZQ$ we drop $\cR$ from the notation. Using the bases of $\SS$ and $\cY(\fS,\cC,\Gamma)$, we get
\begin{align}
F_k( {\SSR})&= F_k(\SS) \ot_\ZQ \cR    \label{eqR1} \\
 E_\bk( {\SSR}) &= E_\bk( {\SS})  \ot_\ZQ \cR  \label{eqR2}   \\
  E_\bk(\cY(\fS,\cC,\Gamma {;\cR})) &= E_\bk(\cY(\fS,\cC,\Gamma)) \ot_\ZQ \cR. \label{eqR3}
\end{align}


\subsection{Main results} We now formulate the main result of this section.

\bthm \label{thmMain2} Let $\fS= \Sigma_{g, m}$ with a DT datum $(\cC, \Gamma)$, where $\cC=(c_1,\dots, c_r)$ satisfying  condition {\bf (N1)} of subsection \ref{ssN1}.
 Here $(g,m) \neq (0,k), (1,0)$, for $k \le 4$.  The ground ring $\cR$ is a commutative $\ZQ$-domain.

(a)  There is an $\cR$-algebra embedding 
\be 
\phi\colon \Gr^F( {\SSR}) \embed \cY(\fS,\cC,\Gamma {;\cR})
\ee
with the following property: For $\al \in B(\fS)$ we have
\be
 \phi(\al) = Y^{\nu(\al)} + E_{< \ddd(\nu(\al) )  }(\cY(\fS,\cC,\Gamma {;\cR})). \label{eqtopdeg}
\ee
Consequently $ {\SSR}$ is a domain.

(b) For $\bk, \bl  \in \Lambda_{\cC,\Gamma}$ we have the following identity in $ {\SSR}$:
\be 
\mu(\bk) \mu(\bl) =  q^{\frac 12 \la \bk, \bl\ra _{\tmQ}  } \mu (\bk + \bl) 
\mod E_{ < \ddd(\bk + \bl) }( {\SSR}),  \label{eqProdTop}
\ee
where $\mu$ was defined in Section \ref{ssN1}.

(c) The $(\BN\times \BZ^{\barm+1})$-filtration $(E_\bk( {\SSR}))$ is compatible with the product, and its associated graded algebra is isomorphic to the monomial algebra $\bT(\tmQ,\Lambda_{\cC,\Gamma } {;\cR})$:
\be 
\Gr^E( {\SSR}) \xar{ \cong} \bT(\tmQ,\Lambda_{\cC,\Gamma } {;\cR}).
\ee

(d) If the ground ring $\cR$ is Noetherian, then so is $ {\SSR}$.

(e) The Gelfand-Kirillov dimension of $ {\SSR}$ is $2r$, where $r=3g-3+m$.
\ethm

\def\DDD{\mathsf{D}}

\def\uTR{{\underline{\mathsf{TR}}}}
\def\UniF{\bigsqcup _{\tau\in \cF}}
\def\tY{{\tilde \cY}}
\def\cSC{{ \check\fS_\cC}}
\def\EEE{{ \cY^\diamond}}

Theorem \ref{thmMain2} is proved in Subsections \ref{ssFaces}-\ref{ssProofL}. Let us now prove Theorem \ref{thmSRY}. 

\bpr
[Proof of Theorem \ref{thmSRY}] We assume $(g,m) \neq (0,4)$, which will be handled in Appendix. We use the notation of Subsection \ref{ssRY}.

(b) By Theorem \ref{thmMain2}(a),  $\SSR$ is a domain. By Proposition \ref{rRYRY}(b), $\SRYSR$ is a domain.

By Theorem \ref{thm-noether1}, $\SSR$
is orderly finite generated. Hence $\SRYSR$ is orderly finitely generated by Proposition \ref{rRYRY}(d), 

Assume $\cR$ is Noetherian. Then $\SSR$ is Noetherian by Theorem \ref{thmMain2}(d). By Proposition \ref{rRYRY}(c), $\SRYSR$ is Noetherian.

(d) follows from Theorem \ref{thmMain2}(e)  and Proposition \ref{rRYRY}(e).

(a) \& (c):  Using the filtration $F$  (or $E$) for the ground ring $\cR_\fS'$, then restricting it to the 0-component of the grading \eqref{eqGrade2}, we get the desired filtration. \epr

\subsection{Combining the faces} \label{ssFaces}
We analyze the decomposition $\SC = \UniF \tau$.

To simplify the notation, we will drop $\cC, \Gamma$ and 
 write $B, \Lambda, \cY(\fS {;\cR})$ for respectively $B(\fS),\Lambda_{\cC, \Gamma},\cY(\fS,\cC,\Gamma {;\cR})$.
For $a,b\in B$  their product in $\Gr^F( {\SSR})$  is  denoted by $a*b$.

Let $\cSC= \UniF \Pb$. Then $\uS(\cSC {;\cR}) = \bigotimes_{\tau \in \cF} \uS(\Pb {;\cR})$. Taking the tensor product, we have an $\cR$-algebra homomorphism
$$ \uTR= \bigotimes_{\tau \in \cF} \utr_\tau \, :  \uS(\cSC {;\cR}) \to \tY:=  \bigotimes_{\tau \in \cF} \cY(\Pb {;\cR}).$$
Recall that $\cY(\Pb;\cR)$ is the $\cR$-algebra of Laurent polynomials in $x_a, u_a$, where $a$ runs the set of bold boundary components of $\tau$, and they $q$-commute as described in Section \ref{ssQtPj}.
We identify  $\tY$ with
the $\cR$-algebra of polynomials in  variables $x^{\pm1}_a, u_a^{\pm1}, a \in \cCt$, where the variables corresponding to boundary components of a face $\tau$ are $q$-commuting by the rules of $\cY(\tau {;\cR})$, otherwise they are commuting. An $\cR$-basis of $\tY$ is $ \{Y^\bn \mid \bn \in \BZ^  \cCt\times \BZ^  \cCt\}$.

Let $\EEE\subset \tY $ be the $\cR$-span of $Y^\bn$ in which the degrees of $x_{c'}$ and $x_{c''}$ are equal for all $c\in \cC$. Equivalently, $\EEE$ is  generated by all $u^{\pm 1}_a$ with $ a\in \cCt$, and all $(x_{c'} x_{c''})^{\pm1}$ with $c\in \cC$. The following explains where the matrix $\mQ$ comes from.

\blem \label{rIsoY}
There is an $\cR$-algebra isomorphism
\be 
\cY(\fS {;\cR}) \xar{\cong} \EEE/(u_{c'} = u_{c''}, c\in \cC), \ \text{with } x_c \to [x_{c'} x_{c''}]_\Weyl, \ u_c\to u_{c'} = u_{c''}.
\ee
\elem
\bpr This follows right away from the definition of $\cY(\fS {;\cR})$. 
\epr
We will identify $\cY(\fS {;\cR})$ with  $\EEE/(u_{c'} = u_{c''}, c\in \cC)$ via the isomorphism of Lemma \ref{rIsoY}.

An $\cR$-basis of $\uS(\cSC {;\cR}) $ is $\uB(\cSC)=\prod_{\tau \in \cF} \uB(\Pb)$. Recall that an element $\al \in \uB(\cSC)$ is matched if $|\al \cap c'| =|\al \cap c''|$ for all $c\in \cC$. 
Let $(\uS(\SC {;\cR}))^\diamond $ be the $\cR$-submodule of $\uS(\cSC {;\cR})$ spanned by all boundary ordered {\em matched} elements. Theorem \ref{thmbtr}(1) implies that $\uTR((\uS(\cSC {;\cR}))^\diamond)\subset \EEE$.
Consider the composition
\be 
\Phi\colon (\uS(\cSC {;\cR}))^\diamond \xra{\uTR} \EEE \onto \cY(\fS {;\cR}).
\ee

\subsection{Definition of $\phi$} In this subsection we define $\phi\colon \Gr^F( {\SSR}) \to \cY(\fS {;\cR})$. First we will define  $\phi(\al)$ for  $\al$ in the basis $B= B(\fS)$. Then we extend $\phi$ linearly, and show that it is an algebra homomorphism.

\def\uTR{{\underline{\mathsf{TR}}}}
\def\Lift{{\mathsf{Lift}}}
\def\cSC{{\check\fS_\cC}}
\def\pcSC{\partial \cSC}
\def\pSC{\partial \cSC}
\def\tiF{{\tau \in \cF}}
\def\ttD{{\tilde D}}

Let $\al\in B$. Choose a diagram $D$ in good position representing $\al$. Let $h$ be a choice of a linear order on each set $D \cap c, c\in \cC$. We can lift $(D,h)$ to an element $\Lift(D, h)$, a boundary ordered $\pSC$-tangle diagram, consisting of the lift of $D$ and the boundary order lifting $h$. As $\Lift(D,h)$ is a matched $\pSC$-tangle diagram, we can define $\phi(D,h):=\Phi(\Lift(D,h))$.

\blem $\phi(D,h)$ depends only on $\al$. Consequently we have a map $\phi\colon B \to \cY(\fS {;\cR})$.
\elem
\bpr
Let us show $\phi(D,h)$ does not depend on $h$. It is enough to show that $\phi(D,h)$ does not change if we exchange the $h$-order of two consecutive points on $\al \cap c, c \in \cC$.
 The height exchange formula \eqref{eqHeight} tells us the change in height on $c'$ results in a factor $q^\epsilon$ for an $\epsilon\in \{\pm 1\}$, while the change in $c''$ is in the opposite direction and results in the factor  $q^{-\epsilon}$. Thus $\phi(D,h)$ does not change. We will drop $h$ in the notation in $\phi(D,h)$.
 
Let us show  $\phi(D)= \phi(D')$ when $D,D'$ are  good diagram representatives $\al$. Lemma~\ref{rSlides2} reduces the proof to the case when $D'$ is obtained from $D$ by a t-move or a loop-move.
 
$\bullet$ the loop-move at $c\in \cC$. On the lift this move is $\ell_{c'} \leftrightarrow \ell_{c''}$.
 By \eqref{eqLoop_utr}, we have 
 $$\uTR(\ell_{c'})= u_{c'}^2 + u_{c'}^{-2}, \ \uTR(\ell_{c''})= u_{c''}^2 + u_{c''}^{-2}. $$ 
 As $u_{c'}= u_{c''}$ in $\cY(\fS {;\cR})$, we get that $\uTR(\ell_{c'})= \uTR(\ell_{c''})$, implying $\phi(D) = \phi(D')$.
 
$\bullet$ t-move at $c\in \cC$. We have 
 $$ \Lift(D',h) = \theta_{c'}^{\epsilon}  \theta_{c''}^{-\epsilon} ( \Lift(D,h)),$$
 where $\epsilon\in \{ \pm 1\}$, depending on the direction of the t-move. Identity \eqref{eqTwist_utr} shows that $\uTR(\Lift(D',h)) = \uTR( \Lift(D,h))$, implying $\phi(D)= \phi(D')$.
\epr
 Extend   $\phi$ linearly to  map, also denoted by $\phi\colon \Gr^F( {\SSR}) \to \cY(\fS {;\cR})$.

\blem The  map $\phi\colon \Gr^F( {\SSR}) \to \cY(\fS {;\cR})$ is an $\cR$-algebra homomorphism.
\elem

\def\Cr{\mathsf{Cr}}
\def\Res{{\mathsf {Res}}}
\def\dcup{{\vec \cup   }}
\bpr
Let $\al, \al'\in B$. We need to show
\be
\phi(\al * \al')=\phi(\al) \phi(\al').
\label{eqAlgPhi}
\ee 
Let $D$ and $D'$ be respectively good position representatives of $\al$ and $\al'$. We can assume that $D\cap D'$ does not intersect any $c\in \cC$. Choose an order $h$ (respectively $h'$) on  $D \cap c$ (respectively $D'\cap  c$), for each $c\in \cC$.

The product $\al\al'$ in $ {\SSR}$ is presented by the diagram $D \dcup D'$, which is the union of $D$ and $D'$ with $D$ above $D'$. Similarly, the product $\Lift(D,h)\,  \Lift(D', h')$ in $\uS(\cSC {;\cR})$ is given by the diagram $\Lift(D,h) \dcup \Lift(D', h')$.

Now we use the defining relations \eqref{eq.skein}-\eqref{eqF} to  present $D \dcup D'$ as a linear sum of basis elements. Recall that $\cM$ is the set of boundary components of $\fS= \Sigma_{g,m}$. As $D,D'$ are simple, each $s \in \cM$ has at most 2 endpoints of $D \dcup D'$.

If $\gamma$ is the diagram of the left hand side of either \eqref{eq.skein} or \eqref{eqE}  then the first and the second diagrams on the right hand side are called, respectively, the positive and negative resolutions of $\gamma$. 
Let $S$ be the set consisting of (i) all crossings of $D \dcup D'$ and (ii) all $s\in \cM$ having 2 endpoints of $D \dcup D'$. For $\rho\colon S \to \{ \pm \}$, let $(D \dcup D')_\rho$ be the result of first applying to $D \dcup D'$  the  $\rho(s)$-resolution at $s$ for all $s\in S$, then applying the trivial loop relation \eqref{eq.loop} and the peripheral loop relation \eqref{eqF}  to any component that lies entirely in one of the faces. Note that  $(D \dcup D')_\rho$ is 0 in $\Gr^F( {\SSR})$ if in a face $\tau$ it has an arc $a$ which can be homotoped relative $\partial a$ into  $\partial \tau$, i.e. the lift of $a$ is a near boundary arc in $\cSC$. Call such a  $\rho$ non-admissible. Thus in $\Gr^F( {\SSR})$ we have
\be 
\al * \al' = \sum_{\rho \ \text{admissible}} \Coeff(\rho)\,  (D \dcup D')_\rho , \quad \Coeff(\rho)\in \cR. \label{eq98}
\ee

The same procedure associated to $\rho$ applies to $\Lift(D,h) \dcup \Lift(D', h')$, and if $\rho$ is non-admissible then the resulting diagram is 0 due to the presence of near boundary arcs. Hence,
 in $\uS(\cSC {;\cR})$, 
\begin{align}
\Lift(D,h)\,  \Lift(D', h') &=  \sum_{\rho \ \text{admissible}} \Coeff(\rho)\,   (\Lift(D,h) \dcup \Lift(D', h'))_\rho \, .\label{eq97a}
\end{align}
 From \eqref{eq98} and \eqref{eq97a} we get \eqref{eqAlgPhi}.
\epr

It should be noted that if we work in the ordinary skein algebra, $ {\SSR}$, then a non-admissible $\rho$ would contribute non-trivially to $\al  \al'$ in the analog of \eqref{eq98},  but would contribute 0 to the product $\Lift(D,h)\,  \Lift(D', h')$ in $\uS(\cSC {;\cR})$. This is the reason we work with the associated graded algebra $\Gr^F{( {\SSR})}$.

\subsection{Proof of Theorem \ref{thmMain2}} \label{ssProofL}
 
 Let us  prove \eqref{eqtopdeg}, which follows fairly easily from~\eqref{eqHighdeg_utr} and Property {\bf (N1)} of the numeration of $\cC$ given in Subsection \ref{ssN1}. Here are the details.
 
Due to \eqref{eqR3}, it is enough to prove \eqref{eqtopdeg} for $\cR=\ZQ$, which is assumed now.

\def\tord{{\widetilde{\ddd}}} 
 \def\Ctau{{\cC_\tau^{(2)}   }}
 \def\tbn{{\tilde\bn}}
 \def\tbt{{\tilde\bt}}
 For a face $\tau $ let $\Ctau\subset \cCt$ be the set of bold boundary components of $\tau$. For a function $\bn: \Ctau\to \BZ$ let $\tbn: \cCt \to \BZ$ be its 0 extension, which takes value 0 on $\cCt\setminus \Ctau$.

Consider the following  functions
 \begin{align*}
& \ddd_\tau: \BN^{\Ctau} \times \BZ^{\Ctau}  \to \BZ^3, \quad \text{defined in Subsection \ref{ssQtPj}} \\
& \tord: \BN^{\cCt} \times \BZ^{\cCt}  \to (\frac 12 \BZ)^{\barm+2}, \ (\bn, \bt) \to (\frac 12\sum_{a\in \cCt} \bn(a), \sum_{a\in \cCt} \bt(a), \bt(c_1') + \bt(c_1''), \dots, \bt(c_\barm') + \bt(c_\barm'')).
 \end{align*}
Property (N1) guarantees the extension from $\Ctau$ to $\cCt$ preserves the orders defined by $\ord$:
\blem \label{rOrder2}
If
$\ddd_\tau((\bn, \bt)) \le \ddd_\tau((\bn', \bt'))$ then $\tord((\tbn, \tbt)) \le  \tord((\tbn', \tbt'))$.
\elem
 Assume $\al \in B$. Let $\al_\tau$ be the lift of $\al$ in face $\tau$, with some boundary order. By \eqref{eqHighdeg_utr},
$$ \utr(\al_\tau) \qeq (Y_\tau)^{\nu(\al_\tau)} + E_{ < \ddd(\nu(\al_\tau)  ) }(\cY(\Pb)).$$
Consider $\cY(\Pb)$ as a subalgebra of $\tY$. By  Lemma \ref{rOrder2}
order is preserved by extension. Hence
$$ \utr(\al_\tau) \qeq  Y^{\widetilde{\nu(\al_\tau)}} + E_{ < \tord(\widetilde{\nu(\al_\tau)}  ) }(\tY).$$
Taking the product over all faces and denoting $N(\al) = \sum _{\tau \in \cF} \widetilde{\nu(\al_\tau)}$, we get
\be  \uTR(\al) \qeq Y^{N(\al)  } +  E_{ < \tord(N(\al) ) }(\tY).
\ee
The projection $\EEE \onto \cY(\fS)$ sends $Y^{N(\al)  }$ to $Y^{\nu(\al)}$. Besides $\tord(N(\al))= \ddd(\nu(\al))$. Thus,
$$ \phi(\al) \qeq Y^{\nu(\al)  } +  E_{ < \ddd(\nu(\al) ) }(\cY(\fS)).$$
Using the reflection invariance of Lemma \ref{rReflection}, we can replace $\qeq$ by $=$, and obtain \eqref{eqtopdeg}.

Return to the general ground ring $\cR$. Define the $(\BN\times \BZ^{\barm +1})$-filtration of  $\Gr^F( {\SSR})$ by
$$ E_\bk(\Gr^F( {\SSR})) = 
\cR\text{-span of } \ \{ \al\in B(\fS) \mid \ddd(\nu(\al))  \le \bk\}.  $$

Equation \eqref{eqtopdeg} implies $\phi$ respects the $(\BN\times \BZ^{\barm +1})$-filtrations of its domain and target space, and $\Gr^E(\phi)(\al) = Y^{\nu(\al)}$. 
Since $\Gr^E(\phi)$ maps the basis $B(\fS)$ of
 $\Gr^F( {\SSR})$ bijectively onto the basis 
$\{Y^\bk \mid \bk \in \Lambda\}$ of $\bT(\tmQ,\Lambda {;\cR})$, the map $\Gr^E(\phi)$ is a linear $\cR$-isomorphism. By Lemma~\ref{rLift1}, the map $\phi$ is injective. 

 Since $\Gr^F( {\SSR})$ embeds into the domain $\bT(\tQ,\Lambda {;\cR})$, it is a domain. By Proposition \ref{liftfacts}, $ {\SSR}$ is a domain. This completes the proof of part (a).

\def\bj{{\mathbf j}}
\def\bi{{\mathbf i}}
\def\Supp{{\mathsf{Supp}}}

(b) Due to \eqref{eqR2}, it is enough to prove \eqref{eqProdTop} for $\cR=\ZQ$, which is assumed now.
Let $\bk, \bl \in \Lambda$. 
By \eqref{eqtopdeg},
\begin{align*}
\phi(\mu(\bk)) & = Y^{\bk} + E_{< \ddd(\bk )  }(\cY(\fS {})) \\
\phi(\mu(\bl)) & = Y^{\bl} + E_{< \ddd(\bl )  }(\cY(\fS {})) \\
\phi(\mu(\bk+ \bl)) & = Y^{\bk+ \bl} + E_{< \ddd(\bk+ \bl )  }(\cY(\fS {}))
\end{align*}
Using  $Y^\bk Y^\bl = q^{\frac 12 \la \bk, \bl\ra   } Y^{\bk+\bl}$, we get
\be 
\phi(\mu(\bk) * \mu(\bl) - q^{\frac 12 \la \bk, \bl\ra  } \mu (\bk + \bl)) \ \in E_{< \ddd(\bk+ \bl )  }(\cY(\fS {}))
\ee
As $\phi^{-1} (E_{ < \ddd(\bk+ \bl ) } (\cY(\fS {}))) = E_{ < \ddd(\bk+ \bl ) } (\Gr^F({\SS}))$ by Lemma \ref{rLift1}, we have, in $\Gr^F({\SS})$,
\be 
\mu(\bk) * \mu(\bl) =  q^{\frac 12 \la \bk, \bl \ra  } \mu(\bk+\bl) +   \sum c_\bj\, \mu(\bj), \quad c_\bj\in \cR, \ddd(\bj) < \ddd(\bk+ \bl).
\label{eq101a}
\ee 
By definition of $\Gr^F({\SS})$, we have $\ddd_1(\bj)= \ddd_1(\bk+\bl)$. Lifting \eqref{eq101a} to ${\SS}$, we have
\be 
\mu(\bk) \mu(\bl) =  q^{\frac 12 \la \bk, \bl \ra  } \mu(\bk+\bl) +   \sum c_\bj\, \mu(\bj) + \sum c_\bi\, \mu(\bi), \quad c_\bi\in \cR, \ddd_1(\bi) < \ddd_1(\bk+ \bl). \label{eq101h}
\ee
Since $ \ddd_1(\bi) < \ddd_1(\bk+ \bl)$ implies $\ddd(\bi) < \ddd(\bk+ \bl)$, the last two sums in \eqref{eq101h} are in $E_{ < \ddd(\bk+\bl)}( {\SSR})$, and we get 
 \eqref{eqProdTop}.


(c) follows right away from Identity \eqref{eqProdTop}.

(d) was proved by Theorem \ref{thm-noether1}.

(e)   {
 By (d) and Proposition \ref{liftfacts}, and then Lemma \ref{lemma-GKdim}, we have
$$ \GKdim ( {\SSR}) \ge \GKdim (\Gr^E( {\SSR}))= 2r.$$

To prove the converse inequality, we use the following lemma. Define the norm on $\BR^k$ by
$$ \|(x_1, \dots, x_k) \|_\infty := \max | x_i|. $$
\blem There exists  a finite collection $\cD$ of simple closed curves on $\fS$ such that ${\|\nu(\al)\|_\infty} \le \sum_{c \in \cD} I(\al, c)$ for all $\al \in B(\fS)$.  
\elem
\begin{proof} Recall the standard DT twist coordinates $t'_i(\al)$, as seen in Section \ref{secDT}. 
From \cite[Proposition 4.4]{Luo}, there exist a collection  $\cD'$ of simple closed curves and continuous, homogeneous functions $f_i, g_i': \BR^{\cD'} \to \BR$ 
for each $i=1,\dots, r$ such that 
$$ n_i(\al) = f_i (I_\al) , \quad t_i'(\al) = g'_i (I_\al ), \ \text{where} \ I_\al(c) = I(\al, c).$$
The set $\cD'$ contains $\cC$ and  $\cM$.
Since $t_i(\al) -2 t_i'(\al) $ is a piece-wise linear function in the variables $I_\al( c),   c\in \cC \cup \cM$, and piece-wise linear functions are continuous and homogeneous, there are continuous, homogeneous functions $g_i: \BR^{\cD'} \to \BR$ such that $t_i(\al) = g_i (I_\al )$.

Continuity of $f_i, g_i$ implies there is $d\in \BN$ such that for all $x\in \BR^{\cD'}$ with $\|x \|_\infty \le 1$ we have $|f_i(x)|, |g_i(x)| \le d$.  {Homogeneity} implies that for all $x\in \BR^{\cD'}$ we have 
$$ |f_i(x)|\le d \|x\|_\infty,\quad   |g_i(x)| \le d \|x\|_\infty. $$
Replacing each curve in $\cD'$ by $d$ parallel copies of it, from $\cD'$ we get $\cD$ satisfying the requirement of the lemma.
\end{proof}

As we calculate the GK dimension, we can assume $\cR$ is a field. By Theorem \ref{thm-noether1}, $ {\SSR}$ has a finite set $S$ of generators. As $B(\fS)$ is an $\cR$-basis, each element of $x\in S$ is a linear combination of elements of a finite set $\Supp(x) \subset B(\fS)$. Replacing $S$ by $\bigcup_x \Supp(x)$, we can assume that  $S \subset B(\fS)$. Let $V_1$ be the $\cR$-span of $S$ and $V_n = (V_1)^n$. 
 
The lemma implies $W_n \subset U_n$, where
\begin{align*}
W_n :=\cR\text{-span} \{ \al \in B(\fS) \mid \sum_{c \in \cD} I(\al, c) \le n\}, \
U_n :=\cR\text{-span}  \{ \al \in B(\fS) \mid \|\nu(\al)\|_\infty \le n\}.
\end{align*} 
Note that $W_n W_l \subset W_{n + l}$, see Subsection \ref{sec.fil}.
Since $S$ is finite, there is $k\in \BN$ such that $V_1 \subset W_k$. Hence,
$$   V_n = (V_1)^n \subset (W_k)^n \subset W_{kn} \subset U_{kn}.$$
Since the number of  points $x\in \BZ^{2r}$ with $\|x \|_\infty \le k$ is $(2k+1)^{2r}$, we have
$$ \dim_\cR (V_n) \le \dim_\cR(U_{kn})  \le (2kn+1)^{2r},$$
from which we conclude $\GKdim(\SS) \le 2r$. \qed

\appendix
\def\RfS{{\cR_\fS}}

\section{Exceptional cases}
  {

In this appendix we show that the Roger-Yang skein algebras $\SRY(\Sigma_{g,m})$, for the exceptional cases $(g,m)=(0,k), (1,0)$ with $k \le 4$, are domains. Recall that 
$$\RfS= \cR[ v^{\pm 1}, v \in \cM], \ \RfS'=  \cR[ v^{\pm 1/2}, v \in \cM] .$$

\def\SSY{{\SRY(\fS)}}
\subsection{The case $(g,m)=(1,0), (0,1), (0,2), (0,3)$} These are simpler cases.

Let $\fS= \Sigma_{1,0}$. Then $\SSY$ is  a domain, as it embeds into a quantum torus \cite{FrG}.

Let $\fS= \Sigma_{0,1}$. Then $\SSY= \RfS$ by Theorem \ref{basisDiamond}. Hence it is a domain.

Let $\fS=\Sigma_{0,2}$.  The algebra $ {\SSR}$ is commutative and has a presentation \cite{Wong0}
$$ {\SSR} =\RfS [\al ]/ (p(\al)), \quad p(\al) =  v_1 v_2\al^2+(q-q^{-1})^2 \in \RfS [\al ].  $$
Here $\cM=\{ v_1, v_2\}$. Since $p(\al)$ is irreducible in $\RfS [\al ]$, the quotient $ {\SSR}$ is a domain.

Let $\fS= \Sigma_{0,3}$. The algebra $ {\SSR}$ is commutative and has a presentation \cite{Wong0}
$$ {\SSR} =\RfS [\al_1, \al_2, \al_3 ]/ (v_{i+1}\al_i \al_{i+1} = \delta \al_{i+2}, v_{i+1} v_{i+2} \al_i^2 = \delta^2), \delta = q^{1/2} + q^{-1/2}.  $$
Here $\cM= \{v_1, v_2, v_3\}$, indices are taken mod 3, and $\al_i$ is the arc connecting $v_{i+1}$ and $v_{i+2}$. From the presentation we get an algebra homomorphism $f\colon {\SSR} \to \RfS'$ given by $f(\al_i)= \delta v_{i+1}^{-1/2}v_{i+2}^{-1/2} $. From Theorem \ref{basisDiamond},
$$ \SSY = \RfS \oplus \RfS \,  \al_1 \oplus \RfS \,  \al_2 \oplus \RfS \,  \al_3.$$
Since $1, f(\al_1), f(\al_2), f(\al_3)$ are linearly independent over $\RfS$, the map $f$ is injective. Hence, as a subalgebra of a domain, $\SSY$ is a domain.
\brem It should be noted that $\cS(\Sigma_{0,3} {;\cR})$ is not a domain, as $(\al_1-\delta)(\al_1+\delta)=0$ in $\cS(\Sigma_{0,3} {;\cR})$, but neither factor is 0.
\erem

\subsection{The case $\fS=\Sigma_{0,4}$} This case is much more difficult since  $\SSY$ is not commutative. Even though an explicit presentation exists \cite{Moon}, it does not seem to help to prove that $\SRY(\Sigma_{0,4})$ is a domain. Instead, we modify the proof of Theorem \ref{thmMain2}. We sketch here the main steps, leaving the details for the dedicated reader.

Let $\cC=\{b\}=\{b_1\}$ be the pants decomposition, with circle boundary component set $\cM=\{b_2, b_3, b_4, b_5\}$ as in Figure \ref{fig:04}. The upper pair of pants $\tau$ is the standard $\PP_1$. Let $\tau'$ be the lower pair of pants. The graph $\Gamma$ is on the back, like in Figure \ref{fig:trian_P1}.

\begin{figure}[htpb!]
    \includegraphics[height=3.2cm]{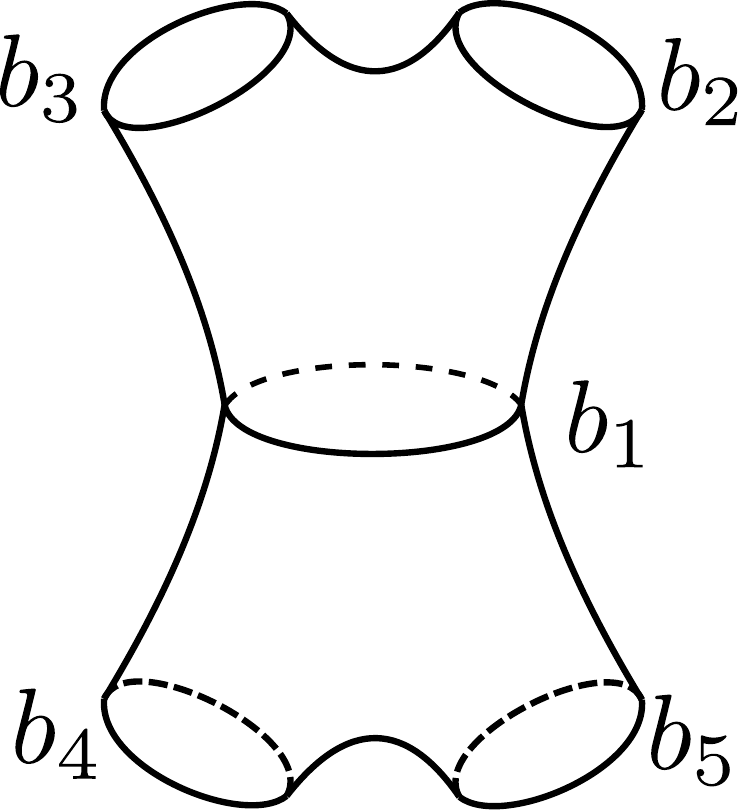}
    \caption{}
    \label{fig:04}
\end{figure}

First we define DT coordinates for $\al\in\uB(\fS)$. Assume $\al$ is in good position. We can define coordinates $(n(\al_\tau), t(\al_\tau))$ and $(n(\al_{\tau'}, t(\al_{\tau'}))$ as in Subsection \ref{ssP1}. As usual let
$$ \nu(\al) = (n(\al), t(\al)) \in \BN \times \BZ, 
n(\al)= n(\al_\tau) = n(\al_{\tau'}), \ t(\al)= t(\al_{\tau})+ t(\al_{\tau'}).$$
  The proof of Proposition \ref{rDTcoord} does not quite work since if $\beta$ is isotopic to $\al$ and also in good position, then it might happen that $t(\al_\tau ) - t(\beta_\tau)$ is odd.  To get rid of the ambiguity, define $\bar \nu(\al) =0$ or $1$ according to whether  $t(\al_\tau)$ is even or odd.
The proof  Proposition \ref{rDTcoord} gives

\blem The map $(\bar \nu, \nu): \uB(\Sigma_{0,4}) \to \{0,1\} \times (\BN \times \BZ)$  is injective, and its image is $\{0,1\} \times \Lambda_1$.
\elem

Let $\cY^\RY= \cY(\fS,\cC,\Gamma {;\cR})\ot_\cR \RfS'=
 \RfS' \la y^{\pm 1} , 
 u^{\pm 1}\ra /( uy = q yu)$.
 Define the $F$-filtration on $\SSY$ using  the same \eqref{eqFil1}, with $\cR$ replaced by $\RfS$. Thus, for $k\in \BN$,
 $$F_k(\SSY)= \RfS\text{-span of } \ \{ \al \in \uB(\fS) \mid n(\al) \le k \}.$$
Similarly, define the $E$-filtration on $\cY^\RY$ using \eqref{eqFil3}, with $\cR$ replaced by $\RfS'$.

 \def\eqbuu{\, {\overset \bullet =}\, }
 
Define the $\RfS$-linear map $\phi^\RY: \Gr^F( \SSY) \to \cY^\RY$ so that for an element $\al$ of the basis $\uB(\fS)$ we have
$$ \phi^\RY(\al) = \left( \prod_{m \in \cM } m^{-\frac 12 |m \cap \al  |   }  \right) \phi(\al).$$
Using \eqref{eqSRY} and that $\phi$ is an algebra homomorphism, one sees that $\phi^\RY$ is an $\RfS$-algebra homomorphism. The proof of \eqref{eqtopdeg} can be repeated for this new setting, with \eqref{eqn3}, and the result is
\be 
\phi^\RY(\al) \eqbuu b_3^{ \bar \nu (\al)/2  } Y^{\nu (\al) } + E_{ \ddd(\nu(\al) } (\cY^\RY),
\ee 
where $x \eqbuu z$ means there are integers $k_i\in \BZ$ such that $x \qeq   (b_3^{ k_3} b_2^{k_2/2} b_4^{k_4/2} b_5^{k_5/2})z$.  From here we see that the map $\phi^\RY$ is injective. It follows that $\Gr^F( \SSY)$, as a subalgebra of a domain, is a domain. Hence $\SSY$ is a domain.
}

\bibliographystyle{hamsalpha}
\bibliography{biblio}

 \end{document}